\renewcommand{\mathbb}{\mathbf}
\newcommand{\chibar}{\overline{\chi}}
\newcommand{\sigmacomp}{\sigma^{\operatorname{co}}}
\newcommand{\sigmasigmacomp}{\sigma|\sigmacomp}
\newcommand{\kbase}{k}
\newcommand{\lbase}{l}
\newcommand{\Pone}{\mathbb{P}^1}
\newcommand{\Fptimes}{\F_p^\times}
\newcommand{\Qptimes}{\Qp^\times}
\newcommand{\Iw}{\mathrm{Iw}}
\newcommand{\tld}[1]{\widetilde{#1}}
\newcommand{\soc}{\operatorname{soc}}
\newcommand{\lbar}[1]{\overline{#1}}
\newcommand{\fourmatrix}[4]{\begin{pmatrix} #1 & #2 \\ #3 & #4 \end{pmatrix}}
\newcommand{\isom}{\xrightarrow{\sim}}
\newcommand{\triv}{\mathrm{triv}}
\newcommand{\fm}{\mathfrak{m}}
\newcommand{\fa}{\mathfrak{a}}
\newcommand{\quoteslim}[1]{%
  \text{``}%
  \mathchoice
    {\varprojlim_{\mathclap{#1}}\mkern-3mu} %
    {\varprojlim_{#1}\mkern-10mu}            %
    {\varprojlim_{#1}\mkern-5mu}            %
    {\varprojlim_{#1}\mkern-5mu}            %
  \text{''}%
}
\newcommand{\colim}{\operatorname{colim}}
\newcommand{\fB}{\mathfrak{B}}
\newcommand{\fT}{\mathfrak{T}}
\newcommand{\fQ}{\mathfrak{Q}}
\newcommand{\To}{\longrightarrow}
\newcommand{\isoto}{\stackrel{\sim}{\To}}
\newcommand{\fg}{\operatorname{fg}}
\newcommand{\fp}{\operatorname{fp}}
\newcommand{\fl}{\operatorname{fl}}
\newcommand{\finl}{\operatorname{f.l.}}
\newcommand{\JH}{\operatorname{JH}}
\def\iso{\buildrel \sim \over \longrightarrow}
\def\inverseiso{\buildrel \sim \over \longleftarrow}
\newcommand{\id}{\operatorname{id}}
\ifpdf \usepackage[bookmarksopen,bookmarksdepth=4]{hyperref} \fi
\let\oldsubsubsection\subsubsection
\renewcommand{\subsubsection}{\@ifstar{\subsubsectionstar}{\newsubsubsection}}
\newcommand{\subsubsectionstar}{\oldsubsubsection*}
\newcommand{\newsubsubsection}[1]{%
  \refstepcounter{equation}%
  \@startsection{subsubsection}{3}%
  {\z@}{.5\linespacing\@plus.7\linespacing}{-.5em}%
  {\normalfont\itshape}{#1}%
}
\newtheorem{theorem}[equation]{Theorem}
\newtheorem{thm}[equation]{Theorem}
\newtheorem{lemma}[equation]{Lemma}
\newtheorem{lem}[equation]{Lemma}
\newtheorem{cor}[equation]{Corollary}
\newtheorem{prop}[equation]{Proposition}
\theoremstyle{definition}
\newtheorem{df}[equation]{Definition}
\newtheorem{defn}[equation]{Definition}
\theoremstyle{remark}
\newtheorem{remark}[equation]{Remark}
\newtheorem{rem}[equation]{Remark}
\newif\iffinalrun
  \newcommand{\need}[1]{}
  \newcommand{\mar}[1]{}
  \newcommand{\need}[1]{{\tiny *** #1}}
  \newcommand{\mar}[1]{\marginpar{\raggedright\tiny  %
		  fixme
      #1}}
  \newcommand{\finalmar}[1]{}
  \newcommand{\finalmar}[1]{\marginpar{\raggedright\tiny   %
      #1}}
\newcommand{\A}{\AA}
\newcommand{\F}{\FF}
\newcommand{\Q}{\QQ}
\newcommand{\Z}{\ZZ}
\renewcommand{\AA}{{\mathbb A}}
\newcommand{\FF}{{\mathbb F}}
\newcommand{\QQ}{{\mathbb Q}}
\newcommand{\ZZ}{{\mathbb Z}}
\newcommand{\bA}{\ensuremath{\mathbf{A}}}
\newcommand{\bF}{\ensuremath{\mathbf{F}}}
\newcommand{\bG}{\ensuremath{\mathbf{G}}}
\newcommand{\bP}{\ensuremath{\mathbf{P}}}
\newcommand{\bQ}{\ensuremath{\mathbf{Q}}}
\newcommand{\bZ}{\ensuremath{\mathbf{Z}}}
\renewcommand{\bf}{\ensuremath{\mathbf{f}}}
\newcommand{\cA}{{\mathcal A}}
\newcommand{\cB}{{\mathcal B}}
\newcommand{\cC}{{\mathcal C}}
\newcommand{\cD}{{\mathcal D}}
\newcommand{\cH}{{\mathcal H}}
\newcommand{\cI}{{\mathcal I}}
\newcommand{\cK}{{\mathcal K}}
\newcommand{\cO}{{\mathcal O}}
\newcommand{\cX}{{\mathcal X}}
\newcommand{\Fbar}{\overline{\F}}
\newcommand{\Qbar}{\overline{\Q}}
\newcommand{\Fp}{\F_p}
\newcommand{\Fpbar}{\Fbar_p}
\newcommand{\Zp}{\Z_p}
\newcommand{\Qp}{\Q_p}
\newcommand{\Qpbar}{\Qbar_p}
\DeclareMathOperator{\QCoh}{QCoh}
\DeclareMathOperator{\coker}{coker}
\DeclareMathOperator{\End}{End}
\DeclareMathOperator{\Ext}{Ext}
\DeclareMathOperator{\Fun}{Fun}
\DeclareMathOperator{\Gal}{Gal}
\DeclareMathOperator{\GL}{GL}
\DeclareMathOperator{\Hom}{Hom}
\DeclareMathOperator{\RHom}{RHom}
\DeclareMathOperator{\Sets}{Sets}
\DeclareMathOperator{\Ind}{Ind}
\DeclareMathOperator{\Mod}{Mod}
\DeclareMathOperator{\PGL}{PGL}
\DeclareMathOperator{\Pro}{Pro}
\DeclareMathOperator{\Spec}{Spec}
\DeclareMathOperator{\Spf}{Spf}
\DeclareMathOperator{\Sym}{Sym}
\newcommand\cInd{c\text{-}\!\Ind}
\newcommand{\ladm}{\mathrm{l.adm}}
\newcommand{\nr}{\mathrm{nr}}
\newcommand{\St}{\mathrm{St}}
\newcommand{\rhobar}{\overline{\rho}}
\newcommand{\op}{\mathrm{op}}
\newcommand{\Res}{\operatorname{Res}}
\begin{document}

\title[Localization of smooth representations of $\mathrm{GL}_2(\mathbf{Q}_p)$]{Localization of smooth $\bm{p}$-power torsion %
representations of~$\bm{\mathrm{GL}_2(\mathbf{Q}_p)}$}

\author[A. Dotto]{Andrea Dotto}\email{andrea.dotto@kcl.ac.uk}\address{Department of Mathematics, King's College London,
London~WC2R 2LS,~UK}
\author[M. Emerton]{Matthew Emerton}\email{emerton@math.uchicago.edu}
\address{Department of Mathematics, University of Chicago,
5734 S.\ University Ave., Chicago, IL 60637, USA}
\author[T. Gee]{Toby Gee} \email{toby.gee@imperial.ac.uk} \address{Department of
  Mathematics, Imperial College London,
  London SW7 2AZ,~UK}
\thanks{AD was supported in various stages of this project by the Engineering and Physical Sciences
  Research Council [EP/L015234/1] (The London School of Geometry and Number Theory),
  the James D. Wolfensohn Fund at the Institute for Advanced Study,
  and a Royal Society University Research Fellowship. 
ME was supported in part by the
  NSF grants %
DMS-1601871,
DMS-1902307, and DMS-1952705.
 TG was 
  supported in part by an ERC Advanced grant, EPSRC grant EP/L025485/1 and a Royal Society Wolfson Research
  Merit Award. This project has received funding from the European Research Council (ERC) under the European Union’s Horizon 2020 research and innovation programme (grant agreement No. 884596).}
\begin{abstract}We show that the category of smooth representations of
  $\GL_2(\Qp)$ on $p$-power torsion modules localizes over a certain projective scheme, and give
  some applications.
\end{abstract}

\maketitle

\setcounter{tocdepth}{1}

\tableofcontents

\section{Introduction}%
In this paper we establish a localization theory
for the category of smooth representations of $\GL_2(\Qp)$ on
$\Zp$-modules on which $p$ is locally nilpotent,
for any prime $p \geq 5$.

\subsection{Initial statement of results}
In order to make a precise statement, we introduce some notation.
Let $\cO$ denote the ring of integers in a finite 
extension $E$ of $\Q_p$ and let $\cA$ denote the
category of smooth representations of $\GL_2(\Q_p)$ on $\cO$-modules
on which $p$ acts locally nilpotently, and which have a central
character equal to some fixed character
$\zeta: \Q_p^{\times} \to \cO^{\times}$. Let~$\F$ denote the residue
field of~$\cO$.

Let $X$ denote a chain of projective lines over $\F$ with ordinary double points, of length $(p- \zeta(-1))/2$.
Our definition of~$X$ is motivated by the
mod~$p$ semisimple local Langlands correspondence
for~$\GL_2(\Q_p)$ due to Breuil~\cite[Defn.\ 1.1]{BreuilGL2II}. %
Indeed, one can think of~$X$ as a moduli space of
semisimple representations $\rhobar:G_{\Qp}\to\GL_2(\Fpbar)$, with the
intersection points of the $\Pone$s corresponding to irreducible
representations. It then follows from Pa\v{s}k\={u}nas's
results~\cite{MR3150248} that the blocks in the
subcategory~$\cA^{\ladm}$ of locally admissible representations with
central character~$\zeta$ are in
bijection with the closed points of~$X$.

Given a closed subset $Y$ of $X$ with open complement
$U\coloneq X\setminus Y$, we let $\cA_Y$ denote the subcategory of $\cA$
consisting of those representations all of whose irreducible
subquotients lie in blocks corresponding to closed points of $Y$. The
subcategory~$\cA_Y$ is localizing (in the usual sense, recalled in
Appendix~\ref{app: localizing cats}), and is in particular a Serre
subcategory of~$\cA$, and we set $\cA_U\coloneq \cA/\cA_Y$.

Our main result shows that the category $\cA$ can be {\em localized}
over~$X$, in the following precise sense (see Theorem~\ref{thm: stack
  of abelian categories} and Remark~\ref{rem: what is a stack of abelian categories}).
\begin{thm}
\label{thm:intro localization}
The collection $\{\cA_U\}$
forms a stack {\em (}of abelian
categories{\em )} over the Zariski site of~$X$.  %
\end{thm}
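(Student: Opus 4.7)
The plan is to verify that $U \mapsto \cA_U$, equipped with the obvious restriction functors, satisfies Zariski descent of both morphisms and objects on the site of opens of $X$. Since $X$ is Noetherian of dimension one, every Zariski cover admits a finite refinement, and by an iterated Mayer--Vietoris argument the stack axiom reduces to the case of covers by two open subsets. Thus I must show that, for every cover $U = U_1 \cup U_2$, the canonical restriction functor
\begin{equation*}
\cA_U \longrightarrow \cA_{U_1} \times_{\cA_{U_1 \cap U_2}} \cA_{U_2}
\end{equation*}
is an equivalence of abelian categories. Setting $Y := X \setminus U$, $Y_i := X \setminus U_i$, and $Y_{12} := X \setminus (U_1 \cap U_2) = Y_1 \cup Y_2$, the problem becomes a set of compatibility statements among the localizing Serre subcategories $\cA_{Y_i}$, $\cA_{Y_{12}}$ and $\cA_Y$ of $\cA$.

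The restriction functors themselves come for free from the universal property of Serre quotients: an inclusion $V \subseteq U$ gives $\cA_{Y_U} \subseteq \cA_{Y_V}$ and hence a canonical exact quotient $\cA_U \to \cA_V$ which is manifestly pseudofunctorial. Descent of morphisms reduces, via the right adjoint section functors $s_Y \colon \cA_U \to \cA$ attached to the localizing subcategories (Appendix~\ref{app: localizing cats}), to the tautological identity $\cA_{Y_1} \cap \cA_{Y_2} = \cA_Y$ of Serre subcategories, which holds since $\cA_Y$ consists by definition of those representations whose irreducible subquotients all lie in blocks corresponding to closed points of $Y$.

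The key step is effectivity of descent data: given $M_i \in \cA_{U_i}$ and an isomorphism $\phi : M_1|_{U_1 \cap U_2} \isomap M_2|_{U_1 \cap U_2}$, the natural candidate is the fiber product in $\cA$ (taken using $\phi$ to match the two maps)
\begin{equation*}
\widetilde{M} \;:=\; s_{Y_1}(M_1) \times_{s_{Y_{12}}(M_1|_{U_1 \cap U_2})} s_{Y_2}(M_2),
\end{equation*}
whose image in $\cA_U$ one then checks restricts correctly to the $M_i$ and satisfies the cocycle condition. Carrying this through depends on a Mayer--Vietoris identity at the level of Serre subcategories: every object of $\cA_{Y_{12}}$ must fit in a short exact sequence with sub- and quotient-object respectively in $\cA_{Y_1}$ and $\cA_{Y_2}$, or equivalently the inclusion $\cA_{Y_1} + \cA_{Y_2} \hookrightarrow \cA_{Y_{12}}$ is an equality of Serre subcategories of $\cA$.

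This identity is the main obstacle. For objects in $\cA^{\ladm}$ it follows immediately from Pa\v{s}k\={u}nas's block decomposition, which gives an orthogonal product $\cA^{\ladm}_{Y_{12}} = \cA^{\ladm}_{Y_1} \times \cA^{\ladm}_{Y_2 \setminus Y_1}$. For general objects of $\cA$, however, there is no such direct block decomposition, and the argument must instead be run in the Pontryagin dual category $\cA^{\vee}$, which is equivalent to a category of compact modules over a product of complete local $\cO$-algebras indexed by the closed points of $X$. At this dual level the required splitting becomes a matter of decomposing idempotents, after which one transports the conclusion back to $\cA$ to produce the desired gluing and thereby close the proof.
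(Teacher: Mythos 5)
Your overall descent strategy—reduce to two-element covers, glue via the fiber product $s_{Y_1}(M_1)\times_{s_{Y_{12}}(\cdots)} s_{Y_2}(M_2)$—is the right shape (this fiber product is exactly the kernel appearing in the paper's \v{C}ech complex for a two-element cover), but the step you identify as the ``main obstacle'' is indeed where the content lives, and your proposed resolution of it does not work. You assert that the Pontryagin dual $\cA^\vee$ is ``equivalent to a category of compact modules over a product of complete local $\cO$-algebras indexed by the closed points of $X$'' and that the gluing is then a matter of decomposing idempotents. That description is correct only for the locally admissible subcategory $\cA^{\ladm}$ (this is Pa\v{s}k\={u}nas's block decomposition). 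It is false for $\cA$ itself: if $\cA^\vee$ admitted such a product decomposition, then $\cA$ would decompose as a product over the closed points of $X$, which would give $\cA$ a large Bernstein centre. But, as recalled in Section~\ref{subsec: Bernstein centre}, the Bernstein centre of $\cA$ is trivial. Concretely, $\cInd_{KZ}^G\sigma$ has irreducible quotients supported at every closed point of an entire irreducible component of $X$ yet admits no nontrivial idempotent endomorphism (its endomorphism ring is $\F[T]$ by Lemma~\ref{lem:Hecke algs}), so it cannot be split by any idempotent decomposition indexed by closed points. The whole difficulty of the theorem is that objects like $\cInd_{KZ}^G\sigma$ are not locally admissible, and for these your idempotent argument has no purchase.

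What the paper does instead is rather different. The key technical input is Lemma~\ref{lem:ext vanishing}, which shows that $\Ext^i_\cA(\pi,(\cInd_{KZ}^G\sigma)[1/g])=0$ when $\pi\in\cA_Y$ and $V(g)=f_\sigma^{-1}(Y)$. Its proof is a Dixmier-type argument: base-change to an uncountable field, observe that the Ext group would be a module over the fraction field $\F(T)$ (which has uncountable $\F$-dimension) while also being of countable $\F$-dimension by Lemma~\ref{lem:countable dim}. From this one gets Proposition~\ref{prop:localizing compact inductions}, which identifies $j_{U*}j_U^*\cInd_{KZ}^G\sigma$ with the localization $(\cInd_{KZ}^G\sigma)[1/g]$ and proves $j_{U*}$ is exact. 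The stack property is then deduced from acyclicity of the \v{C}ech resolution (Proposition~\ref{prop:Cech acyclicity}), whose proof reduces to $\pi=\cInd_{KZ}^G\sigma$ using the Bruhat--Tits tree sequence~\eqref{tree}, and then identifies the \v{C}ech complex with $(\cInd_{KZ}^G\sigma)\otimes_{\cH(\sigma)}K^\bullet(g_0,\ldots,g_n)$, an ordinary affine \v{C}ech complex over $\Spec\F[T]$. None of these ingredients appears in your proposal; without the Ext vanishing lemma (or a substitute) the required compatibilities of your fiber product with restriction cannot be verified.
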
%

\subsection{Applications}\label{subsec: intro applications}In
Section~\ref{sec:examples} we demonstrate that %
our results have many concrete
consequences for the representation theory of~$\GL_2(\Qp)$. In
particular we  compute various  $\Ext^1$ groups between compact
inductions of Serre weights, and between such compact inductions and
irreducible representations. For example, in Proposition~\ref{ColmezquestionI}
we show that if ~$\sigma$ is a Serre weight and~$\pi$ is absolutely irreducible,
then
\[\dim \Ext_{\cA}^1(\pi, \cInd_{KZ}^G(\sigma))\leq 1;\]in addition, we
explicitly describe all of the non-split extensions that
arise. Similarly, in Proposition~\ref{prop: Colmez backwards
  extensions} we compute the dimensions of the~$\Ext^1$ groups for
extensions in the opposite direction (with a genericity assumption), and in Section~\ref{subsec:
  extensions between compact inductions} we compute the~$\Ext^1$
groups between full compact inductions of Serre weights.

In addition, in Section~\ref{subsec: structure of smooth
representations} we make precise the fashion in which the category of
finitely generated smooth representations is built out of finite
length representations (which admit finite filtrations by irreducible
representations), together with compact inductions of Serre
weights. In particular we prove Proposition~\ref{prop:structure},
which shows that every finitely generated representation~$\pi$ admits
a maximal subobject ~$\pi_{\fl}$ which is of finite length, and the
quotient $\pi/\pi_{\fl}$ is a successive extension of submodules of
representations $\cInd_{KZ}^G\sigma$, for $\sigma$ a Serre
weight. (The structure of these submodules is not mysterious; indeed,
as recalled in Remark~\ref{rem: submodules of cInd Serre weight}, most
such submodules are isomorphic to $\cInd_{KZ}^G\sigma$.)

\subsection{The relationship to the Bernstein centre}\label{subsec:
  Bernstein centre}
Our results can be understood by comparison with other, more classical contexts,
in the representation theory of $p$-adic reductive groups.  Note that the particular
choice of coefficient ring~$\cO$ does not affect things much, as long as its residue field~$\F$
has characteristic~$p$.    Thus it makes sense
to compare our results with the classical case of
smooth representations of a $p$-adic reductive group $G$ over a field of characteristic zero.

The first thing to note is that, in this classical case just as in our case,
there are many smooth representations, indeed naturally occurring ones,
that are not admissible.  For example, if $G = \GL_2(\Q_p)$
and $V$ is a finite dimensional representation of $KZ = \GL_2(\Z_p)\Q_p^{\times}$, then
$\cInd_{KZ}^G  V$ is admissible if and only if $V$ has vanishing ``Jacquet module''
(see e.g.~\cite[Thm.~1 supp.]{MR1068417} for a proof of a more general result),
a condition which certainly need not hold in characteristic zero (e.g.\ if $V$ is the
trivial representation). In characteristic~$p$, 
$\cInd_{KZ}^G V$ is \emph{never} admissible, provided $V$ itself is non-zero.
At least morally, this is because in characteristic~$p$, the Jacquet module
of a non-zero $V$ will never vanish.

However,
if $k$ is a field of characteristic zero, if $G$ is any $p$-adic reductive group, 
and if $\mathfrak Z$ denotes the Bernstein centre %
of the category of smooth $G$-representations on $k$-vector spaces,
then Bernstein's results \cite{MR771671}
show that any finitely generated %
smooth $G$-representation is admissible {\em over}~$\mathfrak Z$.
(This is the $p$-adic analogue of a theorem of Harish-Chandra,
to the effect that if $G$ is a real reductive group with Lie algebra~$\mathfrak g$,
then any finitely generated $\bigl(U(\mathfrak g),K\bigr)$-module is admissible over the centre
$\mathfrak Z$ of $U(\mathfrak g)$.)  
Thus we may think of a not-necessarily-admissible smooth $G$-representation
in characteristic zero as being a ``family'' of admissible representations 
parameterized by some subscheme of $\Spec(\mathfrak Z)$.
An analogous result has also recently been proved by Dat--Helm--Kurinczuk--Moss \cite{https://doi.org/10.48550/arxiv.2203.04929}
for the category
of smooth representations of~$G$ over a field of characteristic~$\ell \neq p$,
or more generally over a Noetherian $\Z_{\ell}$-algebra. %

On the other hand, the analogous result
is {\em not} true for our category~$\cA$.
Indeed,
independent results of A.D.\ and Ardakov--Schneider~\cite{https://doi.org/10.48550/arxiv.2105.06128, modpcentres} 
show that the Bernstein centre of~$\cA$
is trivial.
Thus any finitely generated non-admissible representation (such as the $\cInd_{KZ}^G V$
introduced above)
provides a counterexample to the analogue
of Bernstein's result.

The localization theory 
of this paper was developed in part to rectify this absence of an interesting 
Bernstein centre for~$\cA$:
rather than simply regarding objects of $\cA$ as lying over the Spec of its
Bernstein centre, we localize them over the non-affine variety~$X$. 
Then, by forming the 
Bernstein centres of the various categories~$\cA_U$,
we may endow $X$ with the structure of a topologically ringed space,   
and we expect to prove (in forthcoming work) that this endows $X$
with the structure of a formal scheme over~$\cO$ --- which we denote by $\widehat{X}$ --- 
such that $X$ itself is the underlying  reduced  scheme over~$\F$ of~$\widehat{X}$.
The triviality of the Bernstein centre of~$\cA$ 
would then correspond to the fact that the only global sections of the
structure sheaf $\cO_{\widehat{X}}$ are the constant functions.
In turn, this statement itself would be an extension to the thickening $\widehat{X}$
of the fact that the only
globally defined functions on the connected projective variety $X$ are the
constant functions --- thus the 
fact that $X$ is projective, rather than affine, is closely related to
the fact that the
Bernstein centre of $\cA$ is trivial.

\subsection{The relationship with local Langlands}%
Our definition of~$X$ suggests that there is a connection between our results and 
the mod $p$ or $p$-adic local Langlands correspondence, and indeed, this is the
case.  To explain and motivate this, we first recall the analogous result
in the $\ell \neq p$ context.  Namely, if $k$ is a field of characteristic
zero or of characteristic $\ell \neq p$, or more generally if $A$  is a complete DVR whose
residue field is such a field~$k$, then, for any $p$-adic field $F$ and
any $n \geq 1$,  we may consider the moduli stack $\cX$ parameterizing $n$-dimensional
representations of the Weil--Deligne group of~$F$ over $A$-algebras,
and the local Langlands correspondence over~$A$ identifies $\mathfrak Z$
--- the Bernstein centre of $A[\GL_n(F)]$ --- with the ring of functions
on~$\cX$, by a theorem of Helm--Moss \cite{MR3867634}.
Since $\cX$ is a reductive quotient of an affine scheme,
we may rephrase this as saying that $\Spec \mathfrak Z$ is the
moduli space associated to~$\cX$, or more precisely an adequate moduli space in the sense of Alper~\cite{MR3272912}. %

In forthcoming work we expect to prove 
a $p$-adic analogue of this result.  Namely, we will show
that $\widehat{X}$ --- the thickening of $X$ induced by the localized Bernstein centre
of~$\cA$ discussed above --- is a formal moduli space associated to
the %
stack~$\cX_{2, \bQ_p}$
parameterizing $(\varphi,\Gamma)$-modules arising from continuous rank two $p$-adic
representations of~$\Gal(\Qpbar/\Qp)$ 
of appropriately fixed determinant.

\begin{rem}\label{rem: categorical langlands remark about further localizing}
  In the $\ell \neq p$ case, the identification of the Bernstein
  centre with the ring of functions on~$\cX$ is an outward
  manifestation of a deeper phenomenon, namely the categorical local
  Langlands correspondence considered in \cite{benzvi2020coherent,
    fargues--scholze, hellmann2020derived, zhu2020coherent}. In the
  companion paper~\cite{DEGcategoricalLanglands}, we establish
  (using the results of this paper as one of our tools) an analogous
  categorical $p$-adic local Langlands correspondence
  for~$\GL_2(\Qp)$, in the form of a fully faithful functor from the
  derived category of~$\cA$ to an appropriate derived category of
  coherent sheaves on the stack of $(\varphi,\Gamma)$-modules ~$\cX_{2, \bQ_p}$
  mentioned above.

 However, while we anticipate that fully faithful functors of this kind exist
  in great generality (in particular, for the representations of
  $\GL_n(F)$ for any~$n$ and any $p$-adic local field~$F$), we don't
  expect the results of the present paper to generalise in any obvious
  way, even to $\GL_2(F)$ with $F\ne\Qp$.

  Relatedly, on the Galois side of the Langlands correspondence, we do
  not expect the stacks of $(\varphi,\Gamma)$-modules to admit interesting associated moduli spaces
  beyond the cases of~$\GL_2(\bQ_p)$ and $\GL_1(F)$, $F$ being any finite extension of~$\Q_p$. 
For example, if  $\cX_{2, \bQ_{p^2}}$ denotes the analogous stack for~$\GL_2(\Q_{p^2})$,
then the various specialisation relations in reducible
families cause any morphism $\cX_{2, \bQ_{p^2}} \to Z$, with $Z$ being a locally separated
formal algebraic space, to factor through the structure morphism $\cX_{2, \bQ_{p^2}} \to \Spf \cO$.
\end{rem}

\subsection{Methods of proof}
The subcategory $\cA^{\ladm}$
of  $\cA$ consisting of locally
admissible
representations is well-understood, thanks to the
results of Pa\v{s}k\={u}nas~\cite{MR3150248}. %
As already mentioned above, it  factors into a product of {\em
  blocks}, which are labelled
by the closed points of~$X$.  In our perspective, the objects
of~$\cA^{\ladm}$ are supported on finite closed subsets of~$X$,
so that our localization theory, for these objects, is just
a restatement of Pa\v{s}k\={u}nas's results.

The novel aspects of our theory are seen when the objects under consideration
are not locally admissible, since then they will localize over subsets of~$X$ with
non-empty interior.
A basic consequence of the localization theory  is that, if two representations
have disjoint support, then all $\Ext^i$ groups between them vanish.  Conversely, proving such statements is the key to proving Theorem~\ref{thm:intro
  localization}. The key result is Lemma~\ref{lem:ext vanishing},
which (at least morally) shows that  if~$\sigma$ is a Serre weight,
and $\pi$ is a supported on some closed subset~$Y$, then the various $\Ext^i$ between
$\pi$ and $\cInd_{KZ}^G \sigma$ are also supported on~$Y$. It is
proved by exploiting the fact that these $\Ext^i$ groups are typically not
finite dimensional over~$\F$, but  they are of countable dimension.
Since they are also modules over the Hecke algebra~$\cH(\sigma) \cong \F[T]$, this makes them amenable to an application of a well-known technique
of Dixmier~\cite{MR182682}, at least when $\F$ is uncountable; and we
can always replace~$\F$ by an uncountable extension via an appropriate
base-change. With this result in hand, we are able to show that for
any open cover of~$X$, the corresponding
``\v{C}ech resolution'' of any~$\pi$ is acyclic, and we deduce
Theorem~\ref{thm:intro localization} from this acyclicity.

\begin{rem}
  \label{rem: use of p-adic LL}We do not explicitly use the $p$-adic local Langlands
  correspondence in our arguments. However, we do use the results of Pa\v{s}k\={u}nas
~\cite{MR3150248}, whose proofs use $p$-adic local Langlands. In
particular we frequently use the
  classification of blocks in the locally admissible
  category~$\cA^{\ladm}$, and for some of the results in Section~\ref{subsec:completion}, we furthermore make use of Pa\v{s}k\={u}nas's description
  of the Bernstein centres of these blocks.
\end{rem}

\begin{rem}
  \label{rem: p equals 2 or 3}We expect that the results of this paper
  extend to the cases of $p=2$, $3$, with minor modifications to the
  statements (e.g.\ to the definition of~$X$ in the case $p=2$: in
  this case $X$ should consist of a single~$\Pone$). %
  Our arguments would
  necessarily become more complicated in the cases $p=2,3$, so that we could
  no longer argue uniformly in~$p$. Note that this already happens for the
  description of blocks in~\cite{MR3150248}, which is not carried out
  in loc.\ cit.\ for $p=2,3$, but is rather in~\cite{MR3444235, MR4350140}.  %
  Thus we have not attempted to make the modifications of our
  arguments that would be required to treat these cases.
\end{rem}

\subsection{A brief guide to the paper}
\label{sec:guide-paper}
In Section~\ref{subsec:smooth theory} we recall (and in some cases extend) some basic results on
the smooth $p$-adic representation theory of~$\GL_2(\Qp)$;  in particular, we explain
our interpretation of Pa\v{s}k\={u}nas'
classification~\cite{MR3150248} of the blocks of locally admissible
representations in terms of the closed points of~$X$.

We establish our localization theory in Section~\ref{sec: localization
  of G reps}. We begin with the basic definitions of our localizing
categories, and then prove the crucial Lemma~\ref{lem:ext vanishing}
mentioned above, and (with some work) deduce Theorem~\ref{thm:intro
  localization}. The rest of Section~\ref{sec: localization
  of G reps} is devoted to a discussion of a representation-theoretic notion
 of completion along a closed
subset of~$X$, culminating in Theorem~\ref{thm:BL gluing}, which is
an analogue of Beauville--Laszlo gluing in this setting. 

In Section~\ref{sec:examples} we give a variety of examples and
applications of our localization theory, showing in particular how it
can be used to compute extensions between compact inductions
$\cInd_{KZ}^G\sigma$, and extensions between such compact inductions
and admissible representations. Finally in Appendix~\ref{sec: cat
  theory background} we recall some background material in category
theory that we use in the body of the paper.

\subsection{Notation and conventions}\label{subsec: notation and
  conventions}We fix throughout the paper a prime~$p \geq 5$. %
Fix an algebraic closure~$\Qpbar$ of~$\Qp$, and write~$G_{\Qp}$ for the absolute Galois
group~$\Gal(\Qpbar/\Qp)$. %
Let $\cO$ denote the ring of integers in a fixed finite extension
$E$ of $\Q_p$, let~$\varpi$ be a uniformizer of~$\cO$, and let~$\F$ be the residue field of~$\cO$; all
representations considered in this paper will be on
$\cO$-modules. %

We write~$G=\GL_2(\Q_p)$,
$K=\GL_2(\Zp)$, $\Iw$ for the upper-triangular Iwahori subgroup of~$K$, $N$ for the normalizer of~$\Iw$ in~$G$, and~$Z=\Qp^\times$ for the centre of~$G$. 
We fix a
continuous character $\zeta:\Qptimes\to \cO^\times$ throughout the
paper. 
We will say that~$\zeta$ is even, respectively odd, if the restricted character $\bF_p^\times \subset \bZ_p^\times \to \cO^\times$ is a square, respectively is not a square; equivalently, $\zeta$ is even if and only if~$\zeta(-1)=1$.
We write $\omega: \bQ_p^\times \to \bF_p^\times$ for the reduction mod~$p$ of the character $\bQ_p^\times \to \bZ_p^\times, x \mapsto x|x|$. 

If~$A$ is an $\cO$-algebra, and~$f \in A[T]$ is a polynomial, we will write $\nr_{f}$ for the unramified $A[\bQ_p^\times]$-module~$A[T]/f(T)$ on which~$p$ acts as multiplication by~$T$.
(Here ``unramified'' means that $\Z_p^{\times}$ acts trivially.)
If~$\lambda \in A^\times$, we define $\nr_\lambda \coloneq  \nr_{T-\lambda}$.

We always let~$A$ denote a complete Noetherian local~$\cO$-algebra, and we write~$\kbase$ for its residue field.
Here ``local'' is understood in the strong sense that $A$ is a local
ring, and the morphism $\cO \to A$  is a local morphism;
given the first assumption,
this second hypothesis is equivalent to requiring that the residue
field~$\kbase$ be of characteristic~$p$.  We then let $\cA_{A}$ denote the
category of smooth $G$-representations with central character~$\zeta$
on locally $\mathfrak m_A$-torsion $A$-modules. In more detail, $\mathfrak m_A$ denotes
the  maximal ideal of~$A$, a module is {\em locally
  $\mathfrak m_A$-torsion} if each element is annihilated by some power of~$\mathfrak m_A$,
and a representation is \emph{smooth}
if each element is fixed by some open subgroup of~$G$;
so we are considering precisely those representations over~$A$ which are called
smooth in \cite{MR2667882} (and which have the required central character). 
In the case that $A=\cO$ we write $\cA$ for $\cA_A$, and~$\cA^{\ladm}$ for the full subcategory of locally admissible representations.
If~$V$ is a finite length object of~$\cA$ we will write~$\JH(V)$ for
the multiset of Jordan--H\"older factors of~$V$. 

We will write~$I_\zeta$ for the two-sided ideal of~$A[G]$ or~$A[KZ]$
(or other group algebras of groups containing~$Z$) generated
by~$[z]-\zeta(z)$ for~$z \in Z$.
We write~$A[KZ]_\zeta = A[KZ]/I_\zeta$, and similarly for other group algebras.

We write $\cC_{A}$ for the category of smooth
$KZ$-representations with central character~$\zeta$ on locally $\fm_A$-torsion $A$-modules. In the case that $A=\cO$ we write $\cC$ for $\cC_A$, and note that~$p$ is locally nilpotent on all objects of~$\cC$.
A \emph{Serre weight} is an irreducible object of~$\cC_{A}$, or equivalently of~$\cC_\kbase$.
Since every irreducible $\kbase$-representation of~$K$ is defined
over~$\bF_p$, and~$Z$ is acting by~$\zeta$, the set of isomorphism
classes of Serre weights is independent of~$A$.

More explicitly, the isomorphism classes of Serre weights are
represented by the representations $\Sym^b k^2 \otimes \det^{a}$
of~$\GL_2(\Fp)$, where $0\le a<p-1$ and~$0\le b\le p-1$, which we sometimes abbreviate to $\Sym^b \otimes \det^a$. 
It is sometimes
convenient to view~$a$ as an element of~$\Z/(p-1)\Z$, and we will do so
without further comment.

We make some use of tame types, by which we mean the principal series
and cuspidal representations of $E[\GL_2(\Fp)]$. For any pair of characters
$\chi_1\ne\chi_2:\Fptimes\to \cO^\times$, we have the principal series
representation
$I(\chi_1,\chi_2)=\Ind_{B(\Fp)}^{\GL_2(\Fp)}\chi_1\otimes\chi_2$, and
for any character~$\chi:\F_{p^2}^\times\to\cO^{\times}$ which does not
factor through the norm, we have the corresponding cuspidal
representation~$\Theta(\chi)$. (Up to sign, these are the
Deligne--Lusztig inductions of regular characters of the nonsplit
maximal torus in~$\GL_2(\bF_p)$.) These representations are irreducible
of dimensions~$p+1$, $p-1$ respectively, and the only isomorphisms
between them are that $I(\chi_1,\chi_2)\cong I(\chi_2,\chi_1)$ and
$\Theta(\chi)\cong\Theta(\chi^p)$.

The Jordan--H\"older factors of their reductions modulo~$p$ are as
follows. Firstly, if we write~$\chibar_i(x)=x^{n_i}$, then the Jordan--H\"older
factors of the reduction of~$I(\chi_1,\chi_2)$
are~$\Sym^{[n_1-n_2]}\otimes\det^{n_2}$ and~$\Sym^{[n_2-n_1]}\otimes\det^{n_1}$, where
$[n_i-n_j]$ denotes the unique integer in~$(0,p-1)$ congruent to
$n_i-n_j\pmod{p-1}$.
(Note that since the $\chi_i$ are distinct and of prime-to-$p$ order,
their reductions are also distinct, and so $n_i \neq n_j$.)

Secondly, if~$\chibar(x)=x^{i+(p+1)j}$ with~$1\le i\le p$, then
if~$1<i<p$ then the reduction of ~$\Theta(\chi)$ has two
Jordan--H\"older factors, namely~ $\Sym^{i-2}\otimes\det^{1+j}$ 
and~$\Sym^{p-1-i}\otimes\det^{i+j}$; while if~$i=1$ or~$p$, then it is
irreducible and isomorphic to~$\Sym^{p-2}\otimes\det^{1+j}$.

\subsection{Acknowledgements}\label{subsec: acknowledgements}
We would like to thank Pierre Colmez, Gabriel Dospinescu, and Vytautas
Pa\v{s}k\={u}nas for helpful correspondence and conversations, and
comments on an earlier draft of this paper.
We would also like to thank the referee for their careful reading of the paper, and their many helpful corrections and suggestions.
\section{Preliminaries on smooth \texorpdfstring{$\GL_2(\Q_p)$}{GL2(Qp)}-representations}
\label{subsec:smooth theory}

\subsection{Irreducible representations.}\label{subsec: irred reps}

If~$A = \kbase$ is an algebraically closed field of characteristic~$p$, the irreducible objects of~$\cA_{\kbase}$ have been classified in~\cite{BarthelLivneDuke, BreuilGL2I}. 
As we will often need to extend scalars to transcendental extensions of~$\kbase$, in this section we will extend their results to complete Noetherian local $\cO$-algebras~$A$ whose residue field is an arbitrary extension $\kbase/\F_p$.
While for our purposes in this paper we could assume that~$\kbase$ is perfect, for completeness we include the imperfect case.

Note that the maximal ideal of~$A$ acts trivially on every irreducible object of~$\cA_A$, and so the irreducible objects of~$\cA_A$ and~$\cA_k$ coincide.
We will often make implicit use of this observation.
Recall also that the restriction functor %
from~$\cA_A$ to~$\cC_A$ has an exact left adjoint, given by compact induction, and denoted
\[
V \mapsto \cInd_{KZ}^G(V).
\]
The case when~$V=\sigma$ is an irreducible $\kbase[KZ]_\zeta$-module (a
``Serre weight'') is of particular importance. We begin by recalling
the description, due to Barthel and Livn\'e~\cite[Proposition~8, Theorem~19]{BarthelLivneDuke},
of the endomorphism ring of $\cInd_{KZ}^G(\sigma)$, which we denote by $\cH(\sigma)\coloneq \End_{\cA_A} (\cInd_{KZ}^G \sigma)$.
Since~$\sigma$ is a $k$-module, we also have $\cH(\sigma) = \End_{\cA_k}(\cInd_{KZ}^G\sigma)$.

\begin{lemma}
\label{lem:Hecke algs}
Let~$A$ be a complete Noetherian local $\cO$-algebra, and let $\sigma = \Sym^b \otimes \det^a$ be a Serre weight.
We have an isomorphism
\begin{equation}\label{eqn: Hecke algebra of Serre weight}
\cH(\sigma)\cong \kbase[T],
\end{equation}
generated by 
the unique Hecke operator~$T$ supported on $KZ \left ( \begin{smallmatrix} 1&0\\0&p\end{smallmatrix} \right )^{-1}K$,
and normalized as in~\cite[Proposition~8]{BarthelLivneDuke},
i.e.\ sending $\left ( \begin{smallmatrix} 1&0\\0&p\end{smallmatrix} \right )^{-1}$
to $\Sym^b \left ( \begin{smallmatrix} 0&0\\0&1\end{smallmatrix} \right )$.
Furthermore,
$\cInd_{KZ}^G\sigma$ is free over~$k[T]$.
\end{lemma}
\begin{proof}
The proofs of~\cite[Proposition~8, Theorem~19]{BarthelLivneDuke} do not make use of the assumption that~$k$ is algebraically closed.
Alternatively, one can argue by extension of scalars, using~\cite[Lemma~5.1]{MR3150248}.
\end{proof}

\begin{rem}
In what follows we will denote the Hecke operator~$T$ by~$T_p$ in case of a notational clash, for instance if~$T \in \kbase[T]$ is a formal variable.
\end{rem}

We next recall
the classification of $G$-equivariant morphisms
between the compact inductions of Serre weights.
For~$\GL_2$, this is essentially contained in~\cite{BarthelLivneDuke}; see~\cite[Proposition~6.2, Example~6.14]{MR2845621} for a generalization to~$\GL_n$.
As for Lemma~\ref{lem:Hecke algs}, the proofs go through unchanged to the case of non-algebraically closed~$k$.

\begin{lemma}
\label{lem:cind homs}
Let~$A$ be a complete Noetherian local $\cO$-algebra.
If $\sigma$ and $\sigma'$ are distinct Serre weights
then 
\[
\Hom_{\cA_A}(\cInd_{KZ}^G \sigma', \cInd_{KZ}^G \sigma) = 0
\]  
{\em except} in the case when $\{\sigma,\sigma'\} =
\{ \det{}^s, \Sym^{p-1}\otimes \det{}^s\}$
for some~$s$,
in which case  
we have the following:
\begin{enumerate}
\item
All elements of
$$\Hom_{\cA_A}\bigl(\cInd_{KZ}^G \Sym^0\otimes  \det{}^s,
\cInd_{KZ}^G \Sym^{p-1}\otimes\det{}^s\bigr)$$
and of 
$$\Hom_{\cA_A}\bigl(\cInd_{KZ}^G \Sym^{p-1}\otimes\det{}^s,
 \cInd_{KZ}^G \Sym^0 \otimes \det{}^s\bigr)$$
are $k[T]$-equivariant, where $k[T]$ acts on each of source and target
in the natural way, i.e.\ with $T$ acting by the appropriate~$T_p$.
\item Each of  
$$\Hom_{\cA_A}\bigl(\cInd_{KZ}^G \Sym^0\otimes\det{}^s,
\cInd_{KZ}^G \Sym^{p-1}\otimes\det{}^s\bigr)$$
and  
$$\Hom_{\cA_A}\bigl(\cInd_{KZ}^G \Sym^{p-1}\otimes\det{}^s,
 \cInd_{KZ}^G \Sym^0\otimes\det{}^s\bigr)$$
is free of rank~$1$ over~$k[T]$.
\item We can choose $k[T]$-generators 
$$\alpha\in
\Hom_{\cA_A}\bigl(\cInd_{KZ}^G \Sym^0\otimes\det{}^s,
\cInd_{KZ}^G \Sym^{p-1}\otimes\det{}^s\bigr)$$
and 
$$\beta\in
\Hom_{\cA_A}\bigl(\cInd_{KZ}^G \Sym^{p-1}\otimes\det{}^s,
 \cInd_{KZ}^G \Sym^0\otimes\det{}^s\bigr)$$
such  that
$$\alpha\circ\beta=\beta\circ\alpha=T^2-\zeta(p).$$
The cokernels of~$\alpha$ and~$\beta$ have length two if~$\zeta(p)$ is a square in~$k^\times$, and length one otherwise. %
\end{enumerate}
\end{lemma}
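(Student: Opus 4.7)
The plan is to apply Frobenius reciprocity and then analyse the $KZ$-module structure of $\cInd_{KZ}^G \sigma$ via the Bruhat--Tits tree, following the approach of Barthel--Livn\'e.

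First I would invoke Frobenius reciprocity to identify
\[
\Hom_G(\cInd_{KZ}^G \sigma', \cInd_{KZ}^G \sigma) \cong \Hom_{KZ}(\sigma', \cInd_{KZ}^G \sigma),
\]
and then filter the right-hand side using the tree: let $R_n \subset \cInd_{KZ}^G \sigma$ denote the $K$-stable subspace of functions supported on the ball of radius $n$ around the vertex stabilised by $KZ$. One has $R_0 \cong \sigma$ as a $KZ$-module, each quotient $R_n/R_{n-1}$ is (by Mackey) isomorphic as a $K$-representation to a parabolic induction from the standard Iwahori determined by $\sigma$, and the Hecke operator $T$ of Lemma~\ref{lem:Hecke algs} induces $K$-equivariant isomorphisms $R_n/R_{n-1} \xrightarrow{\sim} R_{n+1}/R_n$ for $n\geq 1$.

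Second, I would compute the $\sigma'$-isotypic component of $\cInd_{KZ}^G \sigma|_{KZ}$. Since $\sigma' \neq \sigma$, any nonzero $KZ$-map $\sigma' \to \cInd_{KZ}^G \sigma$ has image missing $R_0$, so descends to a nonzero map into some $R_n/R_{n-1}$; by the $T$-isomorphism it suffices to consider $n=1$. The Jordan--H\"older factors of $R_1/R_0$ as a $K$-representation are exactly those of a tame principal series, and the tables recalled in \S\ref{subsec: notation and conventions} show that for $\sigma = \Sym^b\otimes\det^a$ with $0<b<p-1$ the only Serre weight appearing is $\sigma$ itself, forcing $\Hom_G = 0$. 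The only reducibility of the inducing principal series occurs when $b=0$ or $b=p-1$, and in this case the two Jordan--H\"older factors are precisely $\det^s$ and $\Sym^{p-1}\otimes\det^s$, yielding the exceptional pair in the statement.

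Third, in the exceptional case I would construct $\alpha$ and $\beta$ by choosing generators of the (one-dimensional) $\sigma'$-isotypic component of $R_1/R_0$, lifting to $R_1 \subset \cInd_{KZ}^G\sigma$, and applying Frobenius reciprocity. Statement (1) follows because the $k[T]$-actions on source and target are by central endomorphisms, so naturality forces $k[T]$-equivariance. Statement (2) follows from the one-dimensionality of the relevant isotypic component in each $R_n/R_{n-1}$, together with the $k[T]$-torsion-freeness of $\cInd_{KZ}^G\sigma$ supplied by Lemma~\ref{lem:Hecke algs}. For statement (3), the composition $\alpha\circ\beta \in \End_G(\cInd_{KZ}^G\sigma)$ lies in $k[T]$ by Lemma~\ref{lem:Hecke algs}; a degree count using that $\alpha$ and $\beta$ each shift the tree filtration by one shows $\alpha\circ\beta$ has degree at most $2$ in $T$, and specialising at the two values of $T$ at which the relevant quotients become reducible unramified principal series (whose composition factors pair $\det^s$ and $\Sym^{p-1}\otimes\det^s$, forcing the composition to vanish) pins the composition down to a scalar multiple of $T^2 - 1$. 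The length-$2$ cokernel statement then follows from the composition series of $\cInd_{KZ}^G\sigma/(T\pm 1)$.

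The main obstacle is carrying out the explicit construction of $\alpha$ and $\beta$ in sufficient detail to verify the precise relation $\alpha\circ\beta = T^2-1$ (as opposed to a unit multiple thereof), consistently with the normalisation of the Hecke operator $T$ fixed in Lemma~\ref{lem:Hecke algs}. This is a delicate calculation on the Bruhat--Tits tree in which all sign conventions and normalisations must be tracked compatibly across source and target.
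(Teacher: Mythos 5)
The paper treats this lemma as a recollection from Barthel--Livn\'e~\cite{BarthelLivneDuke} and supplies no proof, so there is no internal argument of the paper to compare against; what follows is an assessment of your proposed reconstruction.

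Your overall strategy --- Frobenius reciprocity followed by a sphere-by-sphere analysis of $\cInd_{KZ}^G\sigma|_{KZ}$ on the tree --- is the right one, and is indeed how Barthel--Livn\'e proceed. But two concrete claims in the middle of the argument are false and the argument does not survive without them. First, the assertion that $T$ induces $K$-equivariant isomorphisms $R_n/R_{n-1}\xrightarrow{\sim}R_{n+1}/R_n$ for $n\geq 1$ fails a dimension count: the sphere of radius $n$ in the $(p+1)$-regular tree has $(p+1)p^{n-1}$ vertices, so $\dim(R_n/R_{n-1}) = (p+1)p^{n-1}\dim\sigma$ grows with $n$. What $T$ actually does on a stalk at a distance-$n$ vertex is spread it onto the $p$ forward neighbours (and one backward neighbour, which dies in the quotient); so $T$ is an injection $R_n/R_{n-1}\hookrightarrow R_{n+1}/R_n$ that is nowhere near surjective, and the reduction to $n=1$ is not available. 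Second, the identification of $R_1/R_0$ with a tame principal series holds only when $\dim\sigma=1$, i.e.\ when $\sigma$ is a twist of $\Sym^0$. In general $R_1/R_0\cong\Ind_{IZ}^{KZ}(\sigma^{g^{-1}}|_{IZ})$ has dimension $(p+1)\dim\sigma$, and its $K$-socle is controlled by $\Hom_{I}(\sigma'|_I,\sigma^{g^{-1}}|_I)$, not by the Jordan--H\"older factors of a $(p+1)$-dimensional principal series of $\GL_2(\F_p)$. Consequently, the claim that for $0<b<p-1$ the only Serre weight appearing in the socle of $R_1/R_0$ is $\sigma$ itself is not justified by the argument given, and neither is the derivation of the exceptional pair $\{\triv\otimes\det^s,\Sym^{p-1}\otimes\det^s\}$ from a reducibility locus of a principal series.

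Beyond this, you rightly flag that the exact relation $\alpha\circ\beta = T^2-1$ (as opposed to a unit multiple) is not established in your sketch, so part (3) and the length-$2$ cokernel statement remain open even if the vanishing part were repaired. A route to (3) that is consistent with the paper's toolkit is to exploit the tree exact sequence~\eqref{tree}: applying $\cInd_{KZ}^G$ to the inclusion $\triv\hookrightarrow \Ind_{IZ}^{KZ}(\triv)$ and its cokernel $\Sym^{p-1}$ expresses $\cInd_{IZ}^G(\triv)$ as an extension of $\cInd_{KZ}^G\Sym^{p-1}$ by $\cInd_{KZ}^G\Sym^0$, and comparing the two compact inductions through this intermediate object is what pins down the composite $\alpha\circ\beta$ (and its cokernels) after one fixes the normalisation of $T$.
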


We record a consequence of Lemma~\ref{lem:cind homs}.

\begin{lemma}\label{quotient by polynomial}
Let~$A$ be a complete Noetherian local $\cO$-algebra with residue field~$k$.
\begin{enumerate}
\item For every nonzero subobject $\pi \subset \cInd_{KZ}^G \sigma$, there exists $f(T) \ne 0 \in k[T]$ such that $f(T)\cInd_{KZ}^G \sigma \subset \pi$.
\item For every irreducible object $\pi \in \cA_A$, 
there exist a Serre weight~$\sigma$, an irreducible polynomial~$f(T) \in k[T]$,
and a surjection
$$\cInd_{KZ}^G \sigma / f(T)\cInd_{KZ}^G \sigma \to \pi.$$
\end{enumerate}
\end{lemma}
\begin{proof}
Proof of part~(1): By smoothness, $\pi$ necessarily contains a Serre weight~$\sigma'$, hence it contains the image of a nonzero morphism $\cInd_{KZ}^G(\sigma') \to \cInd_{KZ}^G(\sigma)$.
Then part~(1) follows from Lemma~\ref{lem:cind homs}. 

Proof of part~(2): Note that~$\pi$ is an object of~$\cA_k$,
and so there exist a Serre weight~$\sigma$ and a nonzero map $\cInd_{KZ}^G(\sigma) \to \pi$, which must be surjective, since~$\pi$ is irreducible.
Applying part~(1) to the kernel of this surjection, we find a nonzero $f(T) \in k[T]$ and a surjection
$\cInd_{KZ}^G \sigma / f(T)\cInd_{KZ}^G \sigma \to \pi.$
Factoring~$f(T)$ and passing to a direct summand, we can assume that~$f(T)$ is a power of an irreducible polynomial.
Then part~(2) follows by filtering $\cInd_{KZ}^G(\sigma)/f^i \cInd_{KZ}^G(\sigma)$ by powers of~$f$.
\end{proof}

By Lemma~\ref{quotient by polynomial}~(2), classifying irreducible objects of~$\cA_A$ amounts to analyzing the structure
of quotients 
$\cInd_{KZ}^G \sigma / f(T)\cInd_{KZ}^G \sigma$ with $f(T)$ irreducible. 
In the case when $k$ is algebraically closed,
so that $f(T) = T - \lambda$ for some $\lambda \in k$,
this was done by Barthel and Livn\'e when $\lambda\neq 0,$
and by Breuil~\cite{BreuilGL2I} when $\lambda= 0$. 
Theorem~\ref{classification} and Lemma~\ref{lem: quotient giving St and trivial} below summarize their results.

\begin{thm}\label{classification} %
Assume that~$\kbase$ is algebraically closed.
Then every irreducible object of~$\cA_{\kbase}$ is isomorphic to a representation in the following list:
\begin{enumerate}%
\item $\eta \circ \det$ for some smooth character $\eta: \Q_p^\times \to \kbase^\times$ such that $\eta^2 = \zeta$;
\item $(\eta \circ \det) \otimes \St$ for some smooth character $\eta: \Q_p^\times \to \kbase^\times$ such that $\eta^2 = \zeta$, 
where~$\St$ is the Steinberg representation of~$G$; or %
\item $\cInd_{KZ}^G(\sigma)/f(T)\cInd_{KZ}^G(\sigma)$, where~$\sigma$ is an irreducible representation of~$\kbase[KZ]_\zeta$ and~$f = T-\lambda$ for some~$\lambda \in \kbase$, and
\[
(\sigma, \lambda) \not \in \{(\Sym^0 \otimes \operatorname{det}^s, \pm \zeta(p)^{1/2}), (\Sym^{p-1} \otimes \operatorname{det}^s, \pm \zeta(p)^{1/2})\}.
\]
Conversely, all these representations are irreducible and pairwise non-isomorphic, with the following exceptions:
\begin{equation}\label{eqn:first exception}
(\cInd_{KZ}^G \Sym^0 \otimes \operatorname{det}^s)/(T-\lambda) \cong (\cInd_{KZ}^G \Sym^{p-1} \otimes \operatorname{det}^s)/(T-\lambda)
\end{equation}
if  $\lambda \ne \pm \zeta(p)^{1/2}$,
and
\begin{equation}\label{eqn:second exception}
(\cInd_{KZ}^G \Sym^r \otimes \operatorname{det}^s)/T \cong  (\cInd_{KZ}^G \Sym^{p-1-r} \otimes \operatorname{det}^{r+s})/T.
\end{equation}
\end{enumerate}
\end{thm}

By~\cite[Theorem~30]{BarthelLivneDuke}, the representations in case~(3) can be described as parabolic inductions from the upper-triangular Borel subgroup~$B$, via the isomorphism %
\begin{equation}\label{eqn:Hecke eigenvalue of parabolic induction}
\cInd_{KZ}^G(\Sym^r \otimes \det\nolimits^s)/(T-\lambda) \isom \Ind_B^G(\nr_{\lambda^{-1}\zeta(p)}\omega^s \otimes  \nr_\lambda\omega^{r+s}).
\end{equation}
We next recall what happens in the excluded cases of part~(3) of the preceding theorem
(without necessarily assuming that $k$ is algebraically closed).
We will employ the notation~$\nr_{T^2-\zeta(p)}$ from Section~\ref{subsec: notation and conventions}.

\begin{lem}
  \label{lem: quotient giving St and trivial}
  There are non-split short exact
  sequences %
{\small
\begin{gather*}
0 \to \cInd_{KZ}^G(\Sym^0 \otimes \det\nolimits^s) \xrightarrow{\alpha} \cInd_{KZ}^G(\Sym^{p-1} \otimes \det\nolimits^s) \to (\omega^s \nr_{T^2-\zeta(p)} \circ \det) \otimes \St \to 0\\
0 \to \cInd_{KZ}^G(\Sym^{p-1} \otimes \det{}^s) \xrightarrow{\beta} \cInd_{KZ}^G(\Sym^{0} \otimes \det{}^s) \to \omega^s \nr_{T^2-\zeta(p)} \circ \det \to 0.
\end{gather*}
}
If~$\zeta(p)$ is furthermore a square in~$k^\times$, then there are non-split short exact sequences
{\small
\begin{gather*}
0\to (\omega^s\nr_{\mp \zeta(p)^{1/2}}\circ \det) \to (\cInd_{KZ}^G\Sym^{p-1}\otimes \det{}^s)/(T\pm \zeta(p)^{1/2})\to (\omega^s\nr_{\mp \zeta(p)^{1/2}}\circ \det)\otimes\St\to 0\\
0\to (\omega^s\nr_{\mp \zeta(p)^{1/2}}\circ \det)\otimes\St \to (\cInd_{KZ}^G\Sym^{0}\otimes \det{}^s)/(T\pm \zeta(p)^{1/2})\to (\omega^s\nr_{\mp \zeta(p)^{1/2}}\circ \det) \to 0.
\end{gather*}
}

\end{lem}

The short exact sequences in Lemma~\ref{lem: quotient giving St and trivial} are induced
by the morphisms of Lemma~\ref{lem:cind homs}(3). 
It is a standard result that they are not split; 
for the first two, this is a consequence of Lemma~\ref{quotient by polynomial}, 
and for the last two, we will give a proof in the course of proving Lemma~\ref{lem: uniserial} below. 

We will now generalize Theorem~\ref{classification} to an arbitrary coefficient field.
See~\cite[Section~5.3]{MR3150248} for a similar result under the assumption that~$\kbase$ is perfect.
We will make use of the following lemma.

\begin{lem}
\label{lem:mapstoquotient}
Let~$A$ be a complete Noetherian local $\cO$-algebra.
Let~$\sigma_1, \sigma_2$ be Serre weights and write~$\pi_i = \cInd_{KZ}^G(\sigma_i)$. 
Assume that $\Hom_{\cA_A}(\pi_1, \pi_2) \ne 0$ and let~$g \in k[T]$ be a monic irreducible polynomial.
\begin{enumerate}
\item The exact sequence
\[
0 \to \pi_2 \xrightarrow{g^n} \pi_2 \to \pi_2/g^n\pi_2 \to 0
\]
induces a surjection $\Hom_{\cA_A}(\pi_1, \pi_2) \to \Hom_{\cA_A}(\pi_1, \pi_2/g^n\pi_2)$. 
\item We have equalities
\[
\Hom_{\cA_A}(\pi_1, \pi_2)/g^n\Hom_{\cA}(\pi_1, \pi_2) = \Hom_{\cA_A}(\pi_1, \pi_2/g^n\pi_2) = \Hom_{\cA_A}(\pi_1/g^n\pi_1, \pi_2/g^n\pi_2), 
\]
and the two $k[T]$-actions \emph{(}by the $T_p$-operator on~$\pi_1$ and~$\pi_2$\emph{)} on this space coincide.
\end{enumerate}
\end{lem}
\begin{proof}
Let~$k[T]$ act on~$\pi_2$ via the Hecke operator~$T_p$.
We begin by proving part~(1).
By Lemma~\ref{lem:cind homs}(2), the image of $\Hom_{\cA_A}(\pi_1, \pi_2) \to \Hom_{\cA_A}(\pi_1, \pi_2/g^n\pi_2)$ is $k[T]$-isomorphic to 
\[
\Hom_{\cA_A}(\pi_1, \pi_2)/g^n \Hom_{\cA_A}(\pi_1, \pi_2) \cong k[T]/g^nk[T] 
\]
and so it has $k$-dimension $n\deg(g)$.
So it suffices to prove that 
\[
\dim_k \Hom _{\cA_A}(\pi_1, \pi_2/g^n\pi_2) \leq n\deg(g).
\]
By d\'evissage we reduce to the case~$n = 1$.
Since~$\pi_1$ and~$\pi_2$ are finitely generated over~$\kbase[G]$, by~\cite[Lemma~5.1]{MR3150248} the inequality can be checked after extending scalars to an algebraic closure~$\overline{\kbase}$ of~$\kbase$.
Replacing~$\kbase$ by~$\overline{\kbase}$ we can assume that~$g$ is a product of linear factors (possibly with multiplicity if~$\kbase$ is not perfect) and so $\pi_2/g\pi_2$ has a finite filtration with graded pieces isomorphic to $\pi_2/(T-\lambda)\pi_2$ for some~$\lambda \in k$.
Now it suffices to prove that %
\[
\dim_k\Hom_{\cA_A}(\pi_1, \pi_2/(T-\lambda)\pi_2) \leq 1.
\]
If~$\pi_2/(T-\lambda)\pi_2$ is irreducible this is true because the $K$-socle of absolutely irreducible $G$-representations is multiplicity-free.
Otherwise, by Theorem~\ref{classification}, we necessarily have that~$\lambda$ is a square root of $\zeta(p)$. 
Then the inequality is true because Lemma~\ref{lem: quotient giving St and trivial} shows that~$\pi_2/(T-\lambda)\pi_2$ has length two, 
and the two irreducible factors have non-isomorphic $K$-socles.
This concludes the proof of~(1), and also shows that
\[\Hom_{\cA_A}(\pi_1, \pi_2)/g^n\Hom_{\cA_A}(\pi_1, \pi_2) \isoto \Hom_{\cA_A}(\pi_1, \pi_2/g^n\pi_2).\]

We now prove the second claim. 
The two $k[T]$-module structures on $\Hom_{\cA_A}(\pi_1, \pi_2)$ agree by Lemma~\ref{lem:cind homs}.
It follows from what we have just proved that they also agree on $\Hom_{\cA_A}(\pi_1, \pi_2/g^n\pi_2)$. 
Since~$g^n = 0$ on the second factor, we deduce that 
\[
\Hom_{\cA_A}(\pi_1, \pi_2/g^n\pi_2) = \Hom_{\cA_A}(\pi_1/g^n\pi_1, \pi_2/g^n\pi_2).
\qedhere
\]
\end{proof}

We can now describe the cokernels of irreducible Hecke polynomials on $\cInd_{KZ}^G \sigma$.

\begin{theorem}\label{classifyirreducibles}
Let $A$ be a complete Noetherian local $\cO$-algebra with residue field~$k$.
\begin{enumerate}
\item Let~$\sigma$ be a Serre weight, let $f(T) \in k[T]$ be an irreducible polynomial, and assume that~$f(T)$ is coprime to~$T^2-\zeta(p)$ if 
 $\sigma \in \{\Sym^0 \otimes \det^a, \Sym^{p-1} \otimes \det^a\}$.
Then $\cInd_{KZ}^G \sigma/f(T) \cInd_{KZ}^G\sigma$ is irreducible.
\item Let~$f(T)$ be an irreducible divisor of~$T^2-\zeta(p)$. Then the representations 
\[
\cInd_{KZ}^G (\Sym^0 \otimes \det\nolimits^a)/f(T) \cInd_{KZ}^G(\Sym^0 \otimes \det\nolimits^a) 
\]
and 
\[
\cInd_{KZ}^G (\Sym^{p-1} \otimes \det\nolimits^a)/f(T) \cInd_{KZ}^G(\Sym^{p-1} \otimes \det\nolimits^a)
\]
have the same semisimplification, which has length~$2$ and is isomorphic to %
\[
\omega^a\nr_{f(T)} \circ \det \oplus \bigl( (\omega^a\nr_{f(T)} \circ \det) \otimes \St \bigr)
\]
where~$\nr_{f(T)}$ is the unramified character defined in Section~\ref{subsec: notation and conventions}.
\item If~$\pi$ is an irreducible object of~$\cA_A$, then there exist a Serre weight~$\sigma = \Sym^r \otimes \det^s$ with $0 \leq r \leq p-2$, and an irreducible $f(T) \in k[T]$,
such that~$\pi$ is an irreducible subquotient of $\cInd_{KZ}^G \sigma/ f(T) \cInd_{KZ}^G \sigma$.
\end{enumerate}
\end{theorem}
\begin{proof}
We begin by proving part~(1).
To ease notation,
write $\pi \coloneq  \cInd_{KZ}^G \sigma%
$, 
and $f := f(T)$.
If~$f$ is a linear polynomial, then~$\pi/f\pi$ is absolutely irreducible by Theorem~\ref{classification}.
Accordingly, we assume that~$\deg(f) > 1$. 
Let~$\pi'$ be a nonzero $k[G]_\zeta$-submodule of~$\pi/f\pi$.
We claim that~$\pi'$ contains the image of a nonzero element of $\Hom_{\kbase[G]}(\pi, \pi/f\pi)$.
Assuming the claim, we conclude the proof of part~(1) as follows. 
By Lemma~\ref{lem:mapstoquotient} we see that $\Hom_{\kbase[G]_\zeta}(\pi, \pi/f\pi) =\Hom_{\kbase[G]_\zeta}(\pi, \pi)/f$. %
Since $k[T]/f k[T] \cong \Hom_{\kbase[G]}(\pi, \pi)/f$ is a field, we obtain that every nonzero element of $\Hom_{\kbase[G]}(\pi, \pi/f\pi)$ is surjective, and so~$\pi' = \pi/f\pi$, which concludes the proof.

We now prove the claim. 
Since~$\pi'$ is nonzero, by Lemma~\ref{quotient by polynomial} it contains the image of a nonzero morphism
\[
\cInd_{KZ}^G(\sigma') \to \pi/f\pi
\]
for some irreducible $\kbase[KZ]_\zeta$-module~$\sigma'$.
Extending scalars to~$\overline{\kbase}$,
and noting that~$f$ is not divisible by~$T$ (by our assumptions that~$f$ is irreducible and~$\deg(f) > 1$), 
we see that 
\[
(\pi/f\pi) \otimes_\kbase \overline{\kbase} \cong \bigoplus_{i=1}^m \cInd_{KZ}^G(\sigma)/(T-\lambda_i)^{n_i}\cInd_{KZ}^G(\sigma)
\]
for some~$\lambda_i \in \bF^\times$ (where possibly $n_i > 1$ if~$k$ is not perfect).
We conclude from Theorem~\ref{classification}
that if~$\sigma' \ne \sigma$ then $\{\sigma,\sigma'\} = \{\Sym^0 \otimes \det^a, \Sym^{p-1} \otimes \det^a\}$ for some~$a$.
However, in this case, we are assuming that~$f(T)$ is coprime to $T^2-\zeta(p)$, and so
$\pi/f\pi$ is a $k[T, 1/(T^2-\zeta(p))]$-module.
Since Lemma~\ref{lem:cind homs}~(3) shows that
\begin{multline}\label{eqn:change of weight isomorphism}
\cInd_{KZ}^G(\Sym^{p-1} \otimes \det{}^s)[1/(T^2-\zeta(p))] \\
\cong \cInd_{KZ}^G(\Sym^0\otimes \det{}^s)[1/(T^2-\zeta(p))],
\end{multline}
we can thus replace~$\sigma'$ by~$\sigma$.
This concludes the proof of part~(1).

Part~(2) is an immediate consequence of Lemma~\ref{lem: quotient giving St and trivial}. %
Given part~(2), part~(3) is an immediate consequence of Lemma~\ref{quotient by polynomial}~(2) and \eqref{eqn:change of weight isomorphism}.
\end{proof}

The following corollaries complete our classification results.

\begin{cor}\label{irreducibleHecke}
Let~$\sigma_1, \sigma_2$ be Serre weights and let~$f_i \in \cH(\sigma_i)$ be irreducible monic polynomials.
Then $\cInd_{KZ}^G(\sigma_1)/f_1$ and $\cInd_{KZ}^G(\sigma_2)/f_2$ have a common irreducible subquotient if and only if one of the following is true:
\begin{enumerate}
\item $\sigma_1 = \sigma_2$ and $f_1 = f_2$, 
\item $\{\sigma_1, \sigma_2\} = \{\Sym^0 \otimes \det^s, \Sym^{p-1} \otimes \det^s\}$ for some~$0 \leq s \leq p-2$, and $f_1 = f_2$,
\item $\{\sigma_1, \sigma_2\} = \{\Sym^r \otimes \det^s, \Sym^{p-1-r} \otimes \det^{r+s}\}, f_ 1= f_2 = T$, for some $0 \leq r \leq p-1, 0 \leq s \leq p-2$.
\end{enumerate}
\end{cor}
\begin{proof}
Assume that the $\cInd_{KZ}^G \sigma_i/f_i$ have a common irreducible subquotient.
Then the same is true after extending scalars to~$\lbar k$, and so there exist roots~$\mu_i$ of~$f_i$ in~$\lbar k$ such that
$\cInd_{KZ}^G(\sigma_1)/(T-\mu_1)$ and~$\cInd_{KZ}^G(\sigma_2)/(T-\mu_2)$
have a common irreducible subquotient.

Assume first that~$\sigma_1 \ne \sigma_2$.
Then Theorem~\ref{classification}, shows that 
either $\{\sigma_1, \sigma_2\} = \{\Sym^r \otimes \det^s, \Sym^{p-1-r} \otimes \det^{r+s}\}$ and~$\mu_1 = \mu_2 = 0$, or
$\{\sigma_1, \sigma_2\} = \{\Sym^0 \otimes \det^s, \Sym^{p-1} \otimes \det^s\}$ and~$\mu_1 = \mu_2$.
Bearing in mind that~$f_1, f_2$ are monic irreducible, we conclude that~(3) holds, respectively~(2) holds.

Finally, if~$\sigma_1 = \sigma_2$ then Theorem~\ref{classification} shows that~$\mu_1 = \mu_2$, hence again~$f_1 = f_2$, and~(1) holds.
This concludes the proof of the forward implication in the corollary.
The converse implication is immediate from~\eqref{eqn:change of weight isomorphism} and~\eqref{eqn:second exception} (the latter being combined with a flat descent to pass from $\overline{k}$ to~$k$).
\end{proof}

\begin{cor}
\label{cor:cofinite length}
Let~$\sigma$ be a Serre weight.
If $\pi'$ is a non-zero subobject of $\cInd_{KZ}^G\sigma$ in~$\cA_A$,
then $(\cInd_{KZ}^G\sigma)/\pi'$ is of finite length.
\end{cor}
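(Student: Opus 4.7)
The plan is to reduce the claim to the case where $\pi' = g(T)\cInd_{KZ}^G\sigma$ for some non-zero $g(T) \in k[T]$, and then argue by induction on $\deg g$ using the factorization of $g$ into irreducible polynomials.

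First I would apply Corollary~\ref{cor:cind subobjects} to the non-zero subobject $\pi'$. In case (1), we directly have $\pi' = f(T)\cInd_{KZ}^G\sigma$ for some non-zero $f(T) \in k[T]$. In case (2), we have inclusions $(T^2-1)f(T)\cInd_{KZ}^G\sigma \subseteq \pi' \subseteq f(T)\cInd_{KZ}^G\sigma$, and $(\cInd_{KZ}^G\sigma)/\pi'$ is a quotient of $(\cInd_{KZ}^G\sigma)/(T^2-1)f(T)\cInd_{KZ}^G\sigma$. In either situation, it suffices to show that $(\cInd_{KZ}^G\sigma)/g(T)\cInd_{KZ}^G\sigma$ is of finite length for any non-zero $g(T) \in k[T]$.

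Next I would proceed by induction on $\deg g$. Write $g = h \cdot f$ with $f \in k[T]$ irreducible. Since $\cInd_{KZ}^G\sigma$ is free over $k[T]$ by Lemma~\ref{lem:Hecke algs}, multiplication by $h$ is injective, and the short exact sequence
\[
0 \to h\cdot\cInd_{KZ}^G\sigma/g\cdot\cInd_{KZ}^G\sigma \to \cInd_{KZ}^G\sigma/g\cdot\cInd_{KZ}^G\sigma \to \cInd_{KZ}^G\sigma/h\cdot\cInd_{KZ}^G\sigma \to 0
\]
has first term isomorphic to $\cInd_{KZ}^G\sigma/f\cdot\cInd_{KZ}^G\sigma$. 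The induction hypothesis handles the quotient, so we are reduced to showing that $\cInd_{KZ}^G\sigma/f\cdot\cInd_{KZ}^G\sigma$ is of finite length whenever $f$ is irreducible.

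The base case is handled by Theorem~\ref{classifyirreducibles} and Lemma~\ref{lem: quotient giving St and trivial}. If $f$ is not of the form $T \pm 1$, or if $\sigma$ is not a twist of $\Sym^0$ or $\Sym^{p-1}$, then $\cInd_{KZ}^G\sigma/f\cdot\cInd_{KZ}^G\sigma$ is irreducible by Theorem~\ref{classifyirreducibles}, hence of length one. The remaining cases, where $\sigma$ is a twist of $\Sym^0$ or $\Sym^{p-1}$ and $f = T \pm 1$, are exactly those covered by Lemma~\ref{lem: quotient giving St and trivial}, which exhibits $\cInd_{KZ}^G\sigma/(T\pm 1)\cdot\cInd_{KZ}^G\sigma$ as an extension of two irreducible representations, and so of length two. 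There is no real obstacle here; the argument is essentially bookkeeping through the classification results already established.
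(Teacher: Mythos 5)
Your proof is correct and follows the same strategy as the paper's: reduce via Corollary~\ref{cor:cind subobjects} to the case $\pi' = g(T)\cInd_{KZ}^G\sigma$, factor $g$, and appeal to Theorem~\ref{classifyirreducibles} and Lemma~\ref{lem: quotient giving St and trivial} for irreducible $g$. You have simply unpacked the paper's terse ``by factoring $f(T)$, we reduce to the case that $f(T)$ is irreducible'' into an explicit induction on $\deg g$ using $k[T]$-freeness of $\cInd_{KZ}^G\sigma$ from Lemma~\ref{lem:Hecke algs}, and spelled out how case~(2) of Corollary~\ref{cor:cind subobjects} is handled by passing to the further quotient by $(T^2-1)f(T)\cInd_{KZ}^G\sigma$. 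Nothing substantively different; the extra detail is sound bookkeeping.
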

\begin{proof}
By Lemma~\ref{quotient by polynomial}, there exists a nonzero polynomial~$f(T)$ such that $f(T) \cInd_{KZ}^G(\sigma) \subset \pi'$.
So there is a surjection
\[
\cInd_{KZ}^G\sigma/f(T)\cInd_{KZ}^G\sigma \to (\cInd_{KZ}^G\sigma)/\pi'
\]
and it suffices to prove that the left-hand side has finite length.
By factoring~$f(T)$, we reduce to
the case that~$f(T)$ is irreducible, in which case  
the corollary follows
from Theorem~\ref{classifyirreducibles}. %
\end{proof}

We will use the following result in the proof of Lemma~\ref{lem:R1Gamma on fg}.
\begin{lemma}
\label{lem:Artinian}
Let $\sigma$ be a Serre weight,
and $g$ be an irreducible element of $\cH(\sigma)$.
Then $(\cInd_{KZ}^G \sigma)[1/g]/(\cInd_{KZ}^G \sigma)$ is an essential extension of
$\frac{1}{g}\cInd_{KZ}^G \sigma/\cInd_{KZ}^G \sigma.$ 
\end{lemma}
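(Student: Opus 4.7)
I write $M := \cInd_{KZ}^G\sigma$ and $N := M[1/g]/M$, with filtration $N_n := g^{-n}M/M \subseteq N$, so that $N = \bigcup_{n\geq 0} N_n$ and $N_0 = 0$. By Lemma~\ref{lem:Hecke algs}, $M$ is free over $\cH(\sigma) = k[T]$, so multiplication by $g$ is an automorphism of $M[1/g]$ inducing a $G$-equivariant short exact sequence
\[
  0 \to N_1 \to N \xrightarrow{g} N \to 0
\]
with $N_n = \ker(g^n\colon N \to N)$. In particular, multiplication by $g^{n-1}$ provides an isomorphism $N_n/N_{n-1} \cong N_1 \cong M/gM$, and multiplication by $g$ gives $N_n/N_1 \cong N_{n-1}$. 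By Theorem~\ref{classifyirreducibles} and Lemma~\ref{lem: quotient giving St and trivial}, $N_1$ is either irreducible or---in the exceptional cases where $g = T \pm 1$ and $\sigma$ is a twist of $\Sym^0$ or $\Sym^{p-1}$---a length-two non-split extension whose socle is simple.

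For the essential-extension statement I take any nonzero submodule $P \subseteq N$ and a nonzero $v \in P$, written as $v = m/g^n + M$ with $n$ minimal; minimality forces $m \notin gM$. Since $P$ is stable under the $G$-equivariant endomorphism $g^{n-1} \in \cH(\sigma)$, it contains $g^{n-1}v = m/g + M$, which is a nonzero element of $N_1$. Thus $P \cap N_1 \neq 0$, proving $N_1$ is essential in $N$.

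For the Artinian property in the non-exceptional case, an induction on $n$ shows that the submodules of $N_n$ are precisely $N_0 \subsetneq N_1 \subsetneq \cdots \subsetneq N_n$. Indeed, given $0 \neq P \subseteq N_n$, the essentiality just proved combined with the simplicity of $N_1$ forces $P \supseteq N_1$, and then $P/N_1 \subseteq N_n/N_1 \cong N_{n-1}$ is identified by the inductive hypothesis with some $N_j/N_1$, forcing $P = N_{j+1}$. Passing to the colimit, the submodules of $N$ form the well-ordered chain $0 \subsetneq N_1 \subsetneq N_2 \subsetneq \cdots \subsetneq N$, and DCC is immediate.

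The main obstacle is the exceptional case, where $N_n$ typically has submodules outside the chain $N_0 \subsetneq \cdots \subsetneq N_n$, so the uniserial argument does not apply directly. Here I would argue by contradiction using essentiality: given a strictly descending chain $P_1 \supsetneq P_2 \supsetneq \cdots$ with intersection $P_\infty$, the descending chain $P_i \cap N_1$ in the finite-length module $N_1$ must stabilize at $P_\infty \cap N_1$; if $P_\infty = 0$ this contradicts essentiality (as the $P_i \cap N_1$ would be forced to vanish eventually), so $P_\infty \neq 0$, and I would reduce to the previous situation by passing to $N/P_\infty$, exploiting the identification $N/N_j \cong N$ given by multiplication by $g^j$ together with the specific length-two structure of $N_1$ from Lemma~\ref{lem: quotient giving St and trivial} to control the socle of the quotient.
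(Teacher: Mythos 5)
Your argument for the essential-extension half runs parallel to the paper's, but contains a gap where the paper does real work. You write ``Since $P$ is stable under the $G$-equivariant endomorphism $g^{n-1}\in\cH(\sigma)$,'' and then conclude $g^{n-1}v\in P$. But a $G$-subrepresentation of $N$ is not automatically preserved by a $G$-equivariant endomorphism of $N$; this is precisely the point that needs proof. The paper handles it by first transporting the problem through the isomorphism $\frac{1}{g^n}\Pi/\Pi\iso\Pi/g^n\Pi$ given by multiplication by $g^n$, and then invoking Corollary~\ref{cor:cind subobjects}, which shows that every $G$-subobject of $\cInd_{KZ}^G\sigma$ (and hence of $\Pi/g^n\Pi$) is automatically an $\cH(\sigma)$-submodule. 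You need this corollary (or an equivalent argument) to know that $P\cap N_n$, and hence $P$, is stable under the Hecke action; as written, the assertion is unsupported.

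For the Artinian half you take a genuinely different, more hands-on route, and it does not close. Your uniserial argument is fine in the non-exceptional case, where $N_1$ is simple. But in the exceptional case your own observation that $N_1$ has a proper nonzero submodule already shows that the chain of submodules of $N$ is strictly finer than $0\subsetneq N_1\subsetneq N_2\subsetneq\cdots$, so the reduction ``pass to $N/P_\infty$ and use $N/N_j\cong N$'' does not apply: the nonzero intersection $P_\infty$ of a descending chain need not equal any $N_j$, and without knowing uniseriality (which in the exceptional case is the content of the later Lemma~\ref{lem: uniserial}, proved by a separate localization argument) you cannot identify $N/P_\infty$ with a copy of $N$. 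The paper sidesteps all of this: once the essential extension is established, $\soc_G(N)=\soc_G(N_1)$ has finite length, $N$ is locally admissible, and Artinianness follows from Proposition~\ref{injectiveArtinian} (a consequence of Pa\v{s}k\={u}nas's theorem that the endomorphism rings of projective generators of blocks are finite over their Noetherian centres). That block-theoretic input is what makes the Artinian claim go through uniformly, including in the exceptional case; your elementary approach would require you to first prove uniseriality in the exceptional case by some other means, which is circular with respect to what the paper does later.
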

\begin{proof}%
  Write $\pi\coloneq \cInd_{KZ}^G \sigma$. To prove that $\frac{1}{g}\pi/\pi \to \pi[1/g]/\pi$ is essential it is
  enough to show that for each $n\ge 1$, \[\frac{1}{g}\pi/\pi\to
    \frac{1}{g^n}\pi/\pi\] is an essential extension. 
Taking into account the isomorphism \[\frac{1}{g^n}\pi/\pi\isoto
  \pi/g^n\pi\]given by multiplication by~$g^n$, it is equivalent to show that $g^{n-1}\pi/g^n\pi \to \pi/g^n\pi$ is an essential extension.
This means that, for every irreducible submodule $\Theta \subset \pi/g^n\pi$, we need to prove that $\Theta \subset g^{n-1}\pi/g^n \pi$.

Since~$\Theta$ is irreducible, it is the image of a morphism $\alpha: \cInd_{KZ}^G(\sigma') \to \pi/g^n\pi$ for some Serre weight~$\sigma'$.
Let~$\pi' = \cInd_{KZ}^G(\sigma')$.
Since $\Hom_{\cA_A}(\pi', \pi/g\pi) \ne 0$, and~$g$ is irreducible, Corollary~\ref{irreducibleHecke} shows that 
we can choose~$\sigma'$ in such a way that $\Hom_{\cA_A}(\pi', \pi) \ne 0$.
(In fact, except when $\sigma \in \{\Sym^0 \otimes \det^s, \Sym^{p-1} \otimes \det^s\}$ and~$g$ divides $T^2-\zeta(p)$, we can choose~$\sigma' = \sigma$.)
We now apply Lemma~\ref{lem:mapstoquotient}.
We find that 
\[
\Hom_{\cA_A}(\pi', \pi/g^n\pi) = \Hom_{\cA_A}(\pi'/g^n\pi', \pi/g^n\pi).
\]
hence~$\alpha$ factors through~$\pi'/g^n \pi'$.
Furthermore, we see that the two  actions of $k[T]$ on  $\Hom_{\cA_A}(\pi'/g^n\pi', \pi/g^n\pi)$ coincide, hence
\[
\Hom_{\cA_A}(\pi'/g\pi', \pi/g^n\pi) = \Hom_{\cA_A}(\pi'/g^n\pi', g^{n-1}\pi/g^n\pi),
\]
because both sides coincide with the $g(T)$-torsion subspace of $\Hom_{\cA_A}(\pi'/g^n\pi', \pi/g^n\pi)$.
Since we have to prove that $\Theta = \operatorname{image}(\alpha)$ is contained in $g^{n-1}\pi/g^n\pi$, 
it now suffices to prove that $\alpha : \pi' \to \Theta$ factors through $\pi'/g\pi'$. %
We will do this by proving that
\begin{equation}\label{eqn:to prove equal for essential extension}
\Hom_{\cA_A}(\pi', \Theta) = \Hom_{\cA_A}(\pi'/g \pi', \Theta).
\end{equation}
Since~$\Theta$ is an irreducible quotient of $\pi'/g^n\pi'$, it is also a quotient of $\pi'/g\pi'$.
If~$\pi' / g \pi'$ is irreducible, it is therefore isomorphic to~$\Theta$, and so~\eqref{eqn:to prove equal for essential extension}
is a consequence of Lemma~\ref{lem:mapstoquotient}~(2).
If~$\pi'/g\pi'$ is reducible, then we are in the case described in Theorem~\ref{classifyirreducibles}~(2). 
After extending scalars to a quadratic extension of~$k$, we can therefore assume that~$\Theta$ is a character, or a twist of the Steinberg representation.
In either case
the left-hand side of~\eqref{eqn:to prove equal for essential extension} has dimension~$1$, which immediately implies that~\eqref{eqn:to prove equal for essential extension}
is an equality, as desired.
\end{proof}

We will also find the following refinement of Lemma~\ref{lem:Artinian} useful.
\begin{lem}
  \label{lem: uniserial}Let $\sigma$ be a Serre weight,
and $g \in \cH(\sigma)$ be a non-zero irreducible element.
Then $(\cInd_{KZ}^G \sigma)[1/g]/(\cInd_{KZ}^G \sigma)$ is uniserial, hence Artinian.
\end{lem}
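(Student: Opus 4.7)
The plan is to reduce to showing that each finite-length quotient $\Pi/g^n\Pi \cong g^{-n}\Pi/\Pi$ is uniserial, where I abbreviate $\Pi := \cInd_{KZ}^G \sigma$, and then pass to the colimit $\Pi[1/g]/\Pi = \varinjlim_n g^{-n}\Pi/\Pi$. Given the uniseriality of each $\Pi/g^n\Pi$, the inclusion $g^{-n}\Pi/\Pi \hookrightarrow g^{-(n+1)}\Pi/\Pi$ must identify the former with the unique subobject of the appropriate length in the latter, so every subobject of $\Pi[1/g]/\Pi$ arises as the ascending union of compatible chains, showing the colimit is itself uniserial.

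To analyse subobjects of $\Pi/g^n\Pi$, I would work with $G$-subobjects of $\Pi$ containing $g^n\Pi$, using Corollary~\ref{cor:cind subobjects}. In the case when $\sigma$ is not a twist of $\Sym^0$ or $\Sym^{p-1}$, every subobject of $\Pi$ is of the form $f(T)\Pi$, so containment of $g^n\Pi$ forces $f = g^i$ for some $0 \le i \le n$, giving a chain. When $\sigma$ is such a twist but $\lambda \ne \pm 1$, the sandwich bounds of Corollary~\ref{cor:cind subobjects}(2) together with the coprimality of $g = T-\lambda$ and $T^2-1$ imply via a short $\gcd$-calculation that $(T^2-1)g^i\Pi + g^n\Pi = g^i\Pi$, so any sandwiched subobject of $\Pi$ containing $g^n\Pi$ must equal $g^i\Pi$ for some $i$.

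The crucial case is when $\sigma$ is a twist of $\Sym^0$ or $\Sym^{p-1}$ and $\lambda = \pm 1$, so that $g \mid T^2 - 1$. For a subobject $\pi$ tightly sandwiched with $f = g^i$ (that is, $\pi \subseteq g^i\Pi$ but $\pi \not\subseteq g^{i+1}\Pi$), the containments $\pi \supseteq g^n\Pi$ and $\pi \supseteq (T^2-1)g^i\Pi$ jointly force $\pi \supseteq g^{i+1}\Pi$. Hence $\pi/g^{i+1}\Pi$ is a nonzero $G$-submodule of $g^i\Pi/g^{i+1}\Pi \cong \Pi/g\Pi$, which by Lemma~\ref{lem: quotient giving St and trivial} is a length-two extension. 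Granting that this extension is non-split, so that $\Pi/g\Pi$ has a unique nonzero proper submodule, namely its simple socle, one finds exactly two subobjects at each level $0 \le i < n$, and checks directly that they assemble into a single chain of length $2n$ in $\Pi/g^n\Pi$.

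The main obstacle is the non-splitness of the extension from Lemma~\ref{lem: quotient giving St and trivial}, which the present proof is meant to establish. I would deduce this by computing $\Hom_G(S, \Pi/g\Pi)$ for the appropriate character $S$ sitting in the socle of $\Pi/g\Pi$, via the long exact sequence obtained by applying $\Hom_G(S, -)$ to $0 \to g\Pi \to \Pi \to \Pi/g\Pi \to 0$. Since Corollary~\ref{cor:cind subobjects} implies that $\Pi$ has no nonzero $G$-subobject of finite length, $\Hom_G(S, \Pi) = 0$, so
\[
\Hom_G(S, \Pi/g\Pi) \cong \ker\bigl(g \cdot : \Ext^1_G(S, \Pi) \to \Ext^1_G(S, \Pi)\bigr).
\]
A direct computation with the Hecke action on $\Ext^1_G(S, \cInd_{KZ}^G \sigma)$ should exhibit this kernel as one-dimensional, which forces the socle of $\Pi/g\Pi$ to be simple and the extension to be non-split.
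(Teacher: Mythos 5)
Your reduction to uniseriality of the finite-length quotients $\Pi/g^n\Pi$ (where $\Pi=\cInd_{KZ}^G\sigma$), and the classification of their subobjects via Corollary~\ref{cor:cind subobjects}, is sound and takes a genuinely different route from the paper. The paper handles the generic case by observing that $\Pi/g\Pi$ is irreducible (Theorem~\ref{classifyirreducibles}), after which Lemma~\ref{lem:Artinian} quickly gives the result; and it disposes of the delicate case $\sigma\in\{\Sym^0,\Sym^{p-1}\}$, $g=T\mp 1$, by localizing $\cInd_{KZ}^G\Sym^0\supset\cInd_{KZ}^G\Sym^{p-1}\supset(T^2-1)\cInd_{KZ}^G\Sym^0\supset\cdots$ at $(T-1)$ (using the maps $\alpha,\beta$ of Lemma~\ref{lem:cind homs}(3) with $\alpha\beta=\beta\alpha=T^2-1$) and reading off non-splitness of the resulting extensions from the fact that every non-unit endomorphism of $(\cInd_{KZ}^G\Sym^0)_{(T-1)}$ lies in the maximal ideal of the local ring $\F[T]_{(T-1)}$. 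Your lattice-theoretic bookkeeping is more explicit but relies on the same ultimate input, namely the non-splitness of the length-two extensions from Lemma~\ref{lem: quotient giving St and trivial}.

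The gap is in your proposed proof of that non-splitness, and it is twofold. First, the logic is insufficient: you compute $\Hom_G(S,\Pi/g\Pi)\cong\ker\bigl(g\cdot\colon\Ext^1_G(S,\Pi)\to\Ext^1_G(S,\Pi)\bigr)$ for $S$ the character in $\soc(\Pi/g\Pi)$ and conclude from its one-dimensionality that the socle is simple; but if the extension $0\to 1_G\to(\cInd_{KZ}^G\Sym^{p-1})/(T-1)\to\St_G\to 0$ were split, one would still have $\Hom_G(1_G,1_G\oplus\St_G)=k$, so a one-dimensional $\Hom$ from the socle constituent is compatible with splitness. What would actually establish non-splitness is the \emph{vanishing} of $\Hom_G$ from the other composition factor (e.g.\ $\Hom_G(\St_G,(\cInd_{KZ}^G\Sym^{p-1})/(T-1))=0$, or $\Hom_G(1_G,(\cInd_{KZ}^G\Sym^0)/(T-1))=0$), i.e.\ the \emph{injectivity} of $g$ on the relevant $\Ext^1_G(\pi,\Pi)$. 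Second, even with the target corrected, the crucial ``direct computation with the Hecke action on $\Ext^1_G(S,\cInd_{KZ}^G\sigma)$'' is not carried out; it is precisely the sort of $\Ext$-computation that the paper obtains in Section~\ref{sec:examples} only \emph{after} (and using) the present lemma, so invoking it here without an independent argument risks circularity.
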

\begin{proof}
Except in the case when $\sigma$ 
is a twist of $\Sym^0$ or $\Sym^{p-1}$ and $g$ divides $T^2-\zeta(p),$
we know by Theorem~\ref{classifyirreducibles} that $\cInd_{KZ}^G \sigma/ g(\cInd_{KZ}^G \sigma)$
is irreducible, in which case the lemma is an easy consequence of Lemma~\ref{lem:Artinian}.
If we are in one of the remaining cases, we may extend scalars to a splitting field of~$g$,
and then twist so that $\zeta(p) = 1$, $\sigma = \Sym^0$
or $\Sym^{p-1}$ and $g = T -1$,
and so we assume this from now on.

It suffices to prove that $\pi_n = \cInd_{KZ}^G \sigma/(T-1)^n\cInd_{KZ}^G(\sigma)$ is uniserial for every~$n \geq 1$, which we do by induction on~$n$.
The case~$n = 1$ is true because $\Hom_{k[KZ]}(\sigma, \pi_1)$ is one-dimensional, and the image of every nonzero element generates~$\pi_1$ as a $G$-module.
For the case~$n = 2$ we localize
each of $\cInd_{KZ}^G \Sym^0$  and $\cInd_{KZ}^G\Sym^{p-1}$
at the prime ideal $(T-1)$ of $k[T]$,
so as to obtain by Lemma~\ref{lem:cind homs} a chain
of inclusions
\begin{multline*}
(\cInd_{KZ}^G \Sym^0)_{(T-1)}\supset(\cInd_{KZ}^G \Sym^{p-1})_{(T-1)}
\\
\supset
(T-1)(\cInd_{KZ}^G \Sym^0)_{(T-1)}\supset(T-1)(\cInd_{KZ}^G \Sym^{p-1})_{(T-1)}
\end{multline*}
with successive quotients equal to $1_G$ (the trivial representation of~$G$),
$\St$, and $1_G$ again.
This together with the case~$n = 1$ for both $\sigma \in \{\Sym^0, \Sym^{p-1}\}$ implies the case~$n = 2$.
In general, we have a commutative diagram with exact rows
\[
\xymatrix{
0 \ar[r] & \pi_1 \ar[r] \ar@{=}[d] & \pi_2 \ar[r] \ar[d] & \pi_1 \ar[r]\ar[d] & 0\\
0 \ar[r] & \pi_1 \ar[r] & \pi_{n+1} \ar[r] & \pi_n \ar[r] & 0} 
\]
such that the first row is the pullback of the second by the injection $\pi_1 \to \pi_n$.
Since $\pi_n$ is uniserial by inductive assumption, it suffices to prove that $\soc_G^i(\pi_{n+1}) = \soc_G^i(\pi_1)$ for~$i = 0, 1$.
(Recall that~$\soc_G^1(\pi_{n+1})$ is the preimage of $\soc_G(\pi_{n+1}/\soc_G(\pi_{n+1}))$ in~$\pi_{n+1}$, and $\soc_G^0 = \soc_G$.)

For $i = 0, 1$, the image of $\soc^i_G(\pi_{n+1})$ in $\pi_n$ is necessarily contained in $\soc_G^1(\pi_n)$.
Since~$\pi_n$ is uniserial, $\soc_G^1(\pi_n) = \pi_1 \subset \pi_n$.
Hence $\soc_G^i(\pi_{n+1}) \subset \pi_2$, and so it coincides with~$\soc^i_G(\pi_2) = \soc^i_G(\pi_1)$, because~$\pi_2$ is uniserial.
This concludes the proof.
\end{proof}

Finally, we use the previous results to classify the $G$-submodules of compact inductions of Serre weights.

\begin{prop}
\label{cind subobjects}\leavevmode
\begin{enumerate}
\item
If  $\sigma$ is not a twist of $\Sym^0$ or~$\Sym^{p-1}$,
then any subobject of $\cInd_{KZ}^G \sigma$
is of the form $f(T)\cInd_{KZ}^G \sigma$ for   some $f(T)\in k[T]$. 
\item
If $\sigma$  is a twist  of $\Sym^0$ or~$\Sym^{p-1}$,
then any subobject $\pi$ of $\cInd_{KZ}^G\sigma$ 
satisfies inclusions
$$(T^2-\zeta(p))f(T)\cInd_{KZ}^G\sigma \subseteq  \pi\subseteq f(T)\cInd_{KZ}^G\sigma$$
for some $f(T)\in k[T].$
Furthermore, $\pi$ is a $k[T]$-submodule of $\cInd_{KZ}^G\sigma$.
\end{enumerate}
\end{prop}
\begin{proof}
Let~$\pi$ be a nonzero proper subobject of~$\cInd_{KZ}^G(\sigma)$.
By Lemma~\ref{quotient by polynomial}, there exists a monic polynomial~$f(T)$ such that $\pi$ contains $f(T)\cInd_{KZ}^G(\sigma)$.
Factoring~$f(T)$ we obtain that~$\pi$ is the preimage of a $G$-submodule~$\lbar \pi$ of
\[
\bigoplus_{i=1}^m \cInd_{KZ}^G\sigma/g_i(T)^{n_i}\cInd_{KZ}^G\sigma
\]
for pairwise different monic irreducible polynomials~$g_i$, and positive integers~$m, n_i$.
By Corollary~\ref{irreducibleHecke}, the sets of isomorphism classes of irreducible subquotients of the representations~$\cInd_{KZ}^G\sigma/g_i(T)$ are pairwise disjoint. 
So there exist subobjects~$\lbar \pi_i \subset \cInd_{KZ}^G\sigma/g_i(T)^{n_i}$ such that $\lbar \pi = \bigoplus_i \lbar \pi_i$.
Then the proposition follows from the facts that $\cInd_{KZ}^G\sigma/g_i(T)^{n_i}$ is uniserial, by Lemma~\ref{lem: uniserial}, and 
$\cInd_{KZ}^G\sigma/g_i(T)$ is irreducible, by Theorem~\ref{classifyirreducibles}, except when~$\sigma$ is a twist of~$\Sym^0$ or~$\Sym^{p-1}$ and~$g_i(T)$ divides~$T^2-\zeta(p)$, in which case it has length two.
\end{proof}

\begin{rem}
When~$\sigma$ is a twist of~$\Sym^0$ we can still classify all submodules in terms of morphisms from compact inductions, but we may have to work with another compact-mod-centre subgroup.
In fact, we have an exact sequence~\eqref{tree}
\begin{gather*}
0 \to \cInd_{N}^G(\delta) \to \cInd_{KZ}^G \Sym^0 \to 1_G \to 0
\end{gather*}
where~$N$ is the normalizer in~$G$ of the upper-triangular Iwahori subgroup~$I$, and~$\delta$ is the nontrivial character of~$N/IZ$.
However, this submodule of $\cInd_{KZ}^G \Sym^0$ is not the image of any morphism $\cInd_{KZ}^G(\sigma') \to \cInd_{KZ}^G(\Sym^0)$.
Indeed, it properly contains~$(T-1)\cInd_{KZ}^G(\Sym^0)$, and so it cannot be the image of any morphism $\cInd_{KZ}^G(\Sym^0) \to \cInd_{KZ}^G(\Sym^0)$.
On the other hand, by Lemma~\ref{lem:cind homs}, every morphism~$\gamma: \cInd_{KZ}^G(\Sym^{p-1}) \to \cInd_{KZ}^G(\Sym^0)$ is a $k[T]$-multiple of a morphism~$\beta$ whose cokernel has length two.
Hence the cokernel of~$\gamma$ cannot be isomorphic to the trivial representation of~$G$.
\end{rem}

\subsection{Categories of representations}%

We now establish some basic properties of our categories of representations; we refer to Appendix~\ref{sec: cat theory background} 
for some background material in category theory (e.g.\ the notion of a locally Noetherian category).

We continue to work with coefficients in a complete Noetherian local $\cO$-algebra~$A$, with residue field~$k$ of characteristic~$p$.
We say that a representation of $G$ on an $A$-module 
is finitely generated if it is finitely generated as an $A[G]$-module.
We note that a smooth representation of $G$ on an $A$-module, admitting a central character, is finitely 
generated if and only if it is a quotient of a compactly induced
representation $\cInd_{KZ}^G V$, for some finite length $A$-module $V$ 
endowed with a smooth action of~$KZ$.
Any object of $\cA_A$ is thus a filtered colimit of quotients of objects of the form $\cInd_{KZ}^G V$,
where $V$ is a finite length $A$-module endowed with a smooth
$KZ$-action. %

We frequently use the following result. For example, together with the paragraph above, it implies that any object of $\cA_A$ is a (not necessarily filtered) colimit of objects of the form $\cInd_{KZ}^G V$, for~$V$ of finite $A$-length.
\begin{thm}%
\label{thm:noetherian}
If~$A$ is a complete Noetherian local $\cO$-algebra,
and $\zeta: Z \to \cO^\times \subset A^\times$ is a continuous character, then
any finitely generated 
object of~$\cA_A$ is Noetherian, and every finitely generated object of~$\cC_A$ has finite length
{\em (}and so is Noetherian{\em )}.
\end{thm}
\begin{proof}
Let~$V$ be a finitely generated object of~$\cC_A$, i.e.\ a smooth $A[KZ]_\zeta$-module which is locally $\fm_A$-torsion and finitely generated over~$A[KZ]_\zeta$.
Since~$KZ$ is compact module centre, $V$ is therefore finitely generated over~$A$, and so it is annihilated by~$\fm_A^m$ for some~$m$.
Since~$A/\fm_A^m$ is Artinian, we conclude that~$V$ has finite $A$-length, and so it is a finite length object of~$\cC_A$, as desired.

Now let~$\pi$ be a finitely generated smooth $A[G]_\zeta$-module.
By definition, $\pi$ is a quotient of $\cInd_{KZ}^G(V)$ for some
smooth $A[KZ]_\zeta$-module~$V$ of finite type over~$A$.
As proved in the previous paragraph, $V$ has finite length in~$\cC_A$.
Since
quotients, subobjects and extensions of Noetherian objects are Noetherian, it
thus suffices to prove %
that~$\cInd_{KZ}^G(V)$ is Noetherian,
and it further suffices to prove it in the case that
~$V$ is an irreducible object of~$\cC_A$. %
In this case, the result follows from Proposition~\ref{cind subobjects}, which shows that every $\cA_A$-subobject of $\cInd_{KZ}^G(V)$ is finitely generated over~$A[G]_\zeta$.
This concludes the proof. %
\end{proof}

\begin{remark}\label{rem: not noetherian in general}
Theorem~\ref{thm:noetherian} does not hold in general for non-abelian $p$-adic reductive groups.  
For example, by~\cite{MR3365778, MR4280498} it fails for $\GL_2(F)$ for any finite extension~$F/\bQ_p$ different from~$\bQ_p$. 
Presumably, it fails any time we're outside 
the case of $\GL_2(\Q_p)$, or some essentially equivalent context.
\end{remark}

In contrast with Remark~\ref{rem: not noetherian in general}, the following lemma (and its proof) goes through essentially
unchanged for general $p$-adic reductive groups, and even for general locally profinite groups. 
\begin{lem}\label{lem:Grothendieckcategory}%
If~$A$ is a complete Noetherian local $\cO$-algebra, then $\cA_{A}$ and~$\cC_A$ are Grothendieck categories. 
\end{lem}
\begin{proof}
We begin by proving the lemma for~$\cA_A$.
Since the category of $A$-modules is a Grothendieck category, and
since the formation of colimits in $\cA_A$ is compatible with
the formation of colimits on the underlying~$A$-modules,
it is
enough to show that $\cA_A$ has a generator. %
To form a generator of~$\cA_A$,
choose a cofinite %
system~$K_n \subset K$ of open normal subgroups
of the compact open subgroup~$K$ of~$G$.
For any~$m \geq 1$,
the composite $Z \buildrel \zeta \over \longrightarrow A^{\times} \to
(A/\fm_A^m)^{\times}$ is a continuous
character with values in a discrete group, and hence factors through
the quotient $Z/(K_n \cap Z)$ if $n$ is sufficiently large (depending on $m$),
i.e.\ if $n \geq N(m)$ for some $N(m) \geq 1$.
Via the isomorphism $K_n Z/ K_n \iso Z/(K_n \cap Z)$, 
the preceding composite induces a continuous character
$K_nZ \to K_n Z/K_n \to (A/\fm_A^m)^{\times}$,
which (by abuse of notation) we again denote simply by~$\zeta$.

Now, for $n \geq N(m)$, consider the compact induction $\cInd_{K_nZ}^G (A/\fm_A^m)$, where~$K_nZ$ acts on~$A/\fm_A^m$ via the character~$\zeta$.
If~$V$ is an object of~$\cA_A$, then evaluation on the coset
$1 \mod \fm_A^m \in (A/\fm_A^m)\subset \cInd_{K_n Z}^G (A/\fm_A^m)$
induces a natural identification
$$V^{K_n}[\fm_A^m] = \Hom_{\cA_A}\bigl(\cInd_{K_n Z}^G (A/\fm_A^m), V\bigr)$$ 
Since~$V = \varinjlim_{m, n \geq N(m)} V^{K_n}[\fm_A^m]$,
by definition of~$\cA_A$, 
we conclude that
$$\bigoplus_{m, n\geq N(m)} \cInd_{K_nZ}^G (A/\fm_A^m)$$
is a generator of~$\cA_A$.

This concludes the proof for~$\cA_A$.
A very similar argument works for~$\cC_A$,
and shows that the modules $\cInd_{K_n Z}^{KZ}(A/\fm_A^m)$ are generators. %
\end{proof}

\begin{cor}%
\label{cor:noetherian}%

If~$A$ is a complete Noetherian local $\cO$-algebra,
the abelian categories $\cA_{A}$ and~$\cC_A$ are locally Noetherian. The Noetherian
objects of~$\cA_A$, resp.\ $\cC_A$, are precisely the finitely generated objects. %
\end{cor}
\begin{proof}
By Lemma~\ref{lem:Grothendieckcategory}, filtered colimits are exact
in~$\cA_A$; %
so in order to see that $\cA_{A}$ is locally Noetherian, we only need to exhibit a set of Noetherian generators of~$\cA_{A}$.
By Theorem~\ref{thm:noetherian}, the compact induction $\cInd_{KZ}^G(V)$ is Noetherian whenever~$V$ is a finite length object of~$\cC_A$. %
Hence the system of generators 
\[
\cInd_{KZ}^G\left ( \cInd_{K_nZ}^{KZ} (A/\fm_A^m) \right ) 
\]
constructed in the proof of Lemma~\ref{lem:Grothendieckcategory} consists of Noetherian objects,
since $\cInd_{K_nZ}^{KZ} (A/\fm_A^m)$ is a finite length object of~$\cC_A$.
This concludes the proof that~$\cA_A$ is locally Noetherian.
The proof for~$\cC_A$ is the same, using the fact that $\cInd_{K_nZ}^{KZ} (A/\fm_A^m)$ is Noetherian in~$\cC_A$ (being of finite length), and in fact shows that~$\cC_A$ 
is locally finite. %

Finally, by Theorem~\ref{thm:noetherian}, the finitely generated objects
of~$\cA_A$, resp.\ $\cC_A$ are Noetherian, and the converse is immediate from the definition of a Noetherian object.
\end{proof}

We now discuss certain full subcategories of $\cA_A$, %
obtained by imposing finiteness condition on its objects. %
Recall~\cite[Defn.\ 2.2.15]{MR2667882} that a
representation~$\pi\in \cA_A$ is {\em locally admissible}, resp.\! {\em locally finite}, if for every vector~$v \in \pi$ the $G$-representation~$\langle G \cdot v \rangle$ generated by~$v$ is admissible, resp.\! has finite length. 
On the other hand, Corollary~\ref{cor:noetherian} and Proposition~\ref{prop: properties of locally Noetherian categories} imply that an object of~$\cA_A$
is finitely generated if and only if it is Noetherian, if and only if it is a compact object of~$\cA_A$.

\begin{defn}\label{defn: A^ladm and A^fg}
We let $\cA_A^{\ladm}$, resp.\ $\cA_A^{\fg}$, resp.\ $\cA_A^{\fl}$ denote
the full subcategory of~$\cA_A$ consisting of locally admissible objects, resp.\ finitely generated objects
(equivalently, Noetherian objects, or equivalently, compact objects), 
resp.\ objects of finite length.
\end{defn}

As for~$\cA_A$, when~$A = \cO$ we will often drop the symbol~$A$ from the notation for~$\cA^{\ladm}_A$, $\cA^{\fg}_A$, and $\cA_{A}^{\fl}$.
When the residue field of~$A$ is finite, it follows from~\cite[Thm.\ 2.3.8]{MR2667882} that an object of~$\cA_A$ is locally finite if and only if it is locally admissible. 
For completeness, we provide a proof of this fact in the generality of this paper.

\begin{lemma}\label{locally admissible and locally finite}
Let~$\pi \in \cA_A$.
Then~$\pi$ is locally admissible if and only if it is locally finite.
\end{lemma}
\begin{proof}
It suffices to prove that if~$\pi$ is a finitely generated 
object of~$\cA_A$,
then~$\pi$ is admissible if and only if it has finite length.
Since irreducible objects of~$\cA_A$ are classified in Theorem~\ref{classifyirreducibles}, and are all admissible by inspection, 
there remains to prove that if~$\pi$ is admissible and finitely generated then it has finite length.
Since~$\pi$ is a quotient of~$\cInd_{KZ}^G(V)$ for some finite length 
object of~$\cC_A$,
this is a consequence of the fact that every admissible quotient of~$\cInd_{KZ}^G(\sigma)$ has finite length, which is itself a consequence of Corollary~\ref{cor:cofinite length} and the fact that~$\cInd_{KZ}^G(\sigma)$ is not itself admissible (e.g.\ because $\Hom_{KZ}(\sigma,\cInd_{KZ}^G(\sigma))=\cH(\sigma)$ is infinite-dimensional).
\end{proof}

\subsection{Generalities about $\Ext$ groups in $\cA_{A}$}\label{subsec:
  generalities on Exts}

The inclusion of $\cA_{A}$ into the category of all $A[G]_\zeta$-modules
is exact and fully faithful, and admits a right
adjoint, given by passage to the submodule consisting of smooth vectors annihilated by
some power of~$\fm_A$.  
This right adjoint preserves injectives, and so, since the
latter category admits enough injectives (being the category of modules over
a ring), so does the category~$\cA_{A}$.  Of course, since $\cA_{A}$
is a Grothendieck category, it also admits enough injectives for abstract reasons.

Recall that the functor $\cInd_{KZ}^G(-)$ is an exact functor from~$\cC_A$ to $\cA_{A}$.
It is left adjoint to the forgetful functor (i.e.\ restriction) from $\cA_{A}$ to $\cC_{A}$, and so this latter functor preserves injectives.
Thus we have natural isomorphisms
\begin{equation}
\label{eqn:ext adjunction}
\Ext^i_{\cA_{A}}(\cInd_{KZ}^G V, \pi) \iso \Ext^i_{\cC_{A}}(V,\pi)
\end{equation}
whenever $V$ is a object of $\cC_{A}$ and $\pi$ is an object
of~$\cA_{A}$. We will use~\eqref{eqn:ext adjunction} to reduce some
questions about $\Ext$ groups in $\cA_A$ to $\Ext$ groups in
$\cC_A$. We begin with the following basic result about these
$\Ext$ groups, whose statement and proof extend in an obvious way to
arbitrary compact $p$-adic analytic
    groups.

\begin{lemma}\label{lem:extKZ}%
Let~$A$ be a complete Noetherian local $\cO$-algebra and
let~$V, W$ be objects of~$\cC_A$ which are finitely generated over~$A$.
Then
\[
\Ext^i_{\cC_{A}}(V,W)
\] 
is finitely generated over~$A$.
\end{lemma}
\begin{proof}
By induction on~$\operatorname{length}_{\cC_A}(V)$, it suffices to prove the lemma when~$V$ is irreducible.
We begin by proving that $\Ext^i_{\cC_{k}}(V,W)$ is finite-dimensional over~$k$ whenever $W$ is an object of~$\cC_k$ of finite dimension over~$k$.
By induction on~$\operatorname{length}_{\cC_A}(W)$, it suffices to prove this claim when~$W$ is irreducible.
Let~$K_1$ be the first congruence subgroup of~$K$, which acts trivially on~$V$ and~$W$.
Since
\[
\Hom_{\cC_{\kbase}}(V, -) = \Hom_{\kbase[KZ/K_1]_\zeta}(V, (-)^{K_1})
\]
there is a spectral sequence
\begin{equation}\label{changecoefficientsI}
\Ext^i_{\kbase[KZ/K_1]_\zeta}(V, H^j(K_1, W)) \Rightarrow \Ext^{i+j}_{\cC_{\kbase}}(V, W).
\end{equation}
The group~$K_1$ acts trivially on~$W$, which is a finite-dimensional $\kbase$-vector space, and so $H^j(K_1, W)$ is also finite-dimensional for all~$j$: 
see for example~\cite[Cor.\ 4.2.5, Thm.\ 5.1.2]{MR1765127} for a discussion of this fact, which follows from Lazard's results on the structure of~$k[[K_1]]$.
Finally, since $k[KZ/K_1]_\zeta$ is a finite $k$-algebra, we see that the groups on the $E_2$-page are also finite-dimensional, completing the proof of the claim.

Now let~$W$ be an $A$-finite object of~$\cC_A$.
Since~$A$ is Noetherian we know that $\Ext^j_A(k, W)$ is a finite $k$-vector space for all~$j$, hence the lemma follows from the claim proved before and the spectral sequence~\eqref{changecoefficients2} to follow.
\end{proof}

\begin{lemma}%
Let~$(A, \fm,  k)$ be a complete Noetherian local $\cO$-algebra, and let $V \in \cC_k, W \in \cC_A$.
Then there is a spectral sequence 
\begin{equation}\label{changecoefficients2}
\Ext^i_{\cC_k}(V, \Ext^j_A(k, W)) \Rightarrow \Ext^{i+j}_{\cC_A}(V, W). 
\end{equation}
Similarly, if $V \in \cA_k, W \in \cA_A$,
then there is a spectral sequence 
\begin{equation}\label{changecoefficients2-A-version}
\Ext^i_{\cA_k}(V, \Ext^j_A(k, W)) \Rightarrow \Ext^{i+j}_{\cA_A}(V, W). 
\end{equation}

\end{lemma}
\begin{proof}
Note that
\[
\Hom_{\cC_A}(V, -) = \Hom_{\cC_k}(V, (-)[\fm])
\]
as functors from~$\cC_A$ to $k$-vector spaces.
If~$I$ is an injective object of~$\cC_A$, then $I[\fm]$ is injective in~$\cC_k$,
since $(\text{--})[\fm]$ is right  adjoint to  the inclusion of  $\cC_k$  into~$\cC_A$,
which is exact.
Furthermore,
$I$ is also injective as an $A$-module (as we will show below),
and so the right derived functors  
of $(\text{--})[\fm]$ on~$\cC_A$ coincide
with (the composite of the forgetful functor to $A$-modules and) the right
derived functors 
of $(\text{--})[\fm]$ on  the category  of~$A$-modules.

To see the $A$-module injectivity of~$I$,
note that if $\mathfrak{a} \to A$ is the inclusion of an ideal, and $\varphi: \mathfrak{a} \to I$ is $A$-linear, 
then~$\varphi(\mathfrak{a})$ is contained in~$I^{K_0}$ for some open normal subgroup~$K_0\subset K$, since~$I$ is finitely generated ($A$~being Noetherian). 
Furthermore, $\varphi$ factors through $\mathfrak{a}/\fm_A^n\mathfrak{a}$ for some~$n>0$, since~$I$ is smooth.
By the Artin--Rees lemma there exists~$m>0$ such that~$\fa\cap \fm_A^m \subset \fm_A^n \fa$.
Viewing~$\mathfrak{a}/\fa \cap \fm_A^m$ and~$A/\fm_A^m$ as smooth $K_0 Z$-modules by letting~$K_0 Z$ act by~$\zeta$, and using the fact that~$I$ is an injective object of~$\cC_A$, 
and that restriction to an open subgroup preserves injectives,
it follows that~$\varphi$ extends to a map $A \to I$.

Putting together the observations of the preceding paragraphs
gives a Grothendieck spectral sequence as in the statement of the lemma.
The same proof works in the case  of~$\cA_A$.
\end{proof}

\begin{lemma}
\label{lem:ext and colimits} %
Let~$A$ be a complete Noetherian local $\cO$-algebra.
If $\pi$ is a finitely generated object of $\cA_{A}$,
then each $\Ext^i_{\cA_{A}}(\pi,\text{--})$ 
commutes with filtered colimits.
The same statement is true with~$\cA_A$ replaced by~$\cC_A$.
\end{lemma}
\begin{proof}
  This follows from Corollary~\ref{cor:noetherian} and Proposition~\ref{prop: properties of locally Noetherian categories}~\eqref{item: Exts commute with colimits in the right hand
      variable}.
\end{proof}

\begin{lemma}
\label{lem:countable dim}
Let~$A$ be a complete Noetherian local $\cO$-algebra.
If $\pi$ and $\pi'$ are objects of $\cA_{A}$, with $\pi$ being finitely generated
and $\pi'$ being countably generated, then each
$\Ext^i_{\cA_{A}}(\pi,\pi')$ is a countably generated $A$-module.
\end{lemma}
\begin{proof}
Since $\pi$ is finitely generated, and thus finitely
presented by Theorem~\ref{thm:noetherian},
a standard dimension-shifting
argument (using the fact that since~$A$ is Noetherian, every submodule of a countably generated $A$-module is countably generated) reduces us to checking the claim in the case when
$\pi = \cInd_{KZ}^G V,$ for some finitely generated $A$-module $V$ endowed with
a smooth $KZ$-action.  We then consider the isomorphisms
$$\Ext^i_{\cA_{A}}(\cInd_{KZ}^G V, \pi') \iso 
\Ext^i_{\cC_{A}}(V, \pi')
\iso \varinjlim_W \Ext^i_{\cC_{A}}(V,W),$$
where $W$ runs over the finitely generated $A[KZ]$-submodules of~$\pi'$.
(The first isomorphism is~\eqref{eqn:ext adjunction}, and the last isomorphism is Lemma~\ref{lem:ext and colimits}.)
The directed set of such $W$ contains a countable cofinal subset,
since $\pi'$ is countably generated, and thus the lemma follows from the fact
that each
$\Ext^i_{\cC_{A}}(V,W)$
is finitely generated over~$A$, by Lemma~\ref{lem:extKZ}. %
\end{proof}
We will sometimes make use of the following comparison (due to Pa\v{s}k\={u}nas when~$A = \cO$) between Ext groups in
the locally admissible and smooth categories.

\begin{lem}\label{lem: smooth Ext equals admissible Ext}%
If~$A$ is a complete Noetherian local $\cO$-algebra, and ~$\pi,\pi'$
  are objects of~$\cA_A^{\ladm}$, then $\Ext^i_{\cA_A^{\ladm}}(\pi,\pi')=\Ext^i_{\cA_A}(\pi,\pi')$.  
\end{lem}
\begin{proof}
It suffices to prove that $\cA^{\ladm}_A \to \cA_A$ preserves injective objects; in the case $A=\cO$ this is 
   ~\cite[ Cor.\ 5.18]{MR3150248}.
Examining the proof of that result, we see that to prove it in general,
it suffices to treat the case that~ $A=k$ (a general field of characteristic~$p$). %
When~$k$ is a finite field, this is \cite[Prop.\ 5.16]{MR3150248}.
Examining the proof of that result next, and bearing in mind Corollary~\ref{cor:cofinite length}, we see that for general~$k$ it suffices to prove that 
$\Res^G_{KZ} : \cA^{\ladm}_k \to \cC_k$ preserve injectives.
When~$k$ is finite this is~\cite[Cor.\ 3.10]{MR2667892}, and the proof given there %
only uses the finiteness of~$k$ in the proof of~\cite[Lem.\ 3.3]{MR2667892}; 
that result remains true in our context, because the statement descends to~$\Fp$ and then follows in general by extension of scalars along~$\bF_p \to k$.
\end{proof}

A different (more elementary) comparison of Ext groups occurs if we consider $\cA^{\fg}$ inside~$\cA$.
The former category does  not have enough injectives, but (as noted
in~\ref{subsubsec:fg cat})
we can define $\Ext^i$ in
$\cA^{\fg}$ via Yoneda extensions. 
We then have the following result.

\begin{lemma}
\label{lem:Yoneda}
Let~$A$ be a complete Noetherian local $\cO$-algebra.
If $\pi$ and $\pi'$ are objects of $\cA_A^{\fg}$, 
then
$\Ext^i_{\cA_A^{\fg}}(\pi,\pi') \iso  \Ext^i_{\cA_A}(\pi,\pi')$.
\end{lemma}
\begin{proof}
  This follows from Corollary~\ref{cor:noetherian} and
Lemma~\ref{lem:f.g. Ext}.
\end{proof}

We next establish some base-change results about Ext groups.  

\begin{lemma}%
\label{lem:injective scalar extension}
If $A$ is a finite-dimensional associative algebra over a field~$\kbase$,
if $I$ is an injective $A$-module, and if $\lbase$ is any extension
of~$\kbase$, then $\lbase\otimes_{\kbase} I$ is an injective $\lbase\otimes_{\kbase} A$-module.
\end{lemma}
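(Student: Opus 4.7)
I would verify the injectivity of $\lbase \otimes_\kbase I$ over $R := \lbase \otimes_\kbase A$ via Baer's criterion, first establishing the case of a finite extension $\lbase/\kbase$ directly and then reducing the general case to it by exploiting the Noetherianity of~$R$ (which is finite-dimensional over~$\lbase$).

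For a finite extension $\lbase/\kbase$, the idea is to identify $\lbase \otimes_\kbase I$ with $\Hom_A(R, I)$ as an $R$-module. The latter coincides with $\Hom_\kbase(\lbase, I)$ and is injective since $\Hom_A(R, -)$ is right adjoint to the exact restriction functor $R\text{-Mod} \to A\text{-Mod}$. The required $R$-equivariant isomorphism $\lbase \otimes_\kbase I \cong \Hom_\kbase(\lbase, I)$ reduces to producing an $\lbase$-linear isomorphism $\lbase \cong \lbase^{\ast} := \Hom_\kbase(\lbase, \kbase)$, where $\lbase$ acts on $\lbase^{\ast}$ by $(l \cdot \varphi)(l') = \varphi(l l')$. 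Any nonzero $\kbase$-linear functional $\varphi_0 : \lbase \to \kbase$ supplies such an isomorphism via $l \mapsto (l' \mapsto \varphi_0(l l'))$: commutativity of $\lbase$ makes this map $\lbase$-linear, and it is a nonzero map between one-dimensional $\lbase$-vector spaces, hence an isomorphism. Crucially, no separability hypothesis is needed. One then checks directly that the composite $\lbase \otimes_\kbase I \to \lbase^{\ast} \otimes_\kbase I \to \Hom_\kbase(\lbase, I)$ intertwines the $R$-actions on the two sides.

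For a general extension, Baer's criterion combined with the Noetherianity of~$R$ reduces matters to extending an $R$-linear map $\phi : J \to \lbase \otimes_\kbase I$ from a finitely generated left ideal $J = R j_1 + \cdots + R j_n$ to~$R$. Choose a finite subextension $\lbase'/\kbase$ with $j_1, \ldots, j_n \in R' := \lbase' \otimes_\kbase A$, and set $J' := \sum_i R' j_i \subset R'$. Viewing things as $R'$-modules, $\lbase \otimes_\kbase I$ is a direct sum of copies of $\lbase' \otimes_\kbase I$ indexed by any $\lbase'$-basis of~$\lbase$; each summand is $R'$-injective by the finite case, and the sum remains $R'$-injective by the Noetherianity of~$R'$. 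Hence $\phi|_{J'}$ extends to an $R'$-linear map $\psi' : R' \to \lbase \otimes_\kbase I$, and setting $\psi(r) := r \cdot \psi'(1)$ produces an $R$-linear map from~$R$ that agrees with $\phi$ on each $j_i$, hence on all of~$J$. The only delicate part of the proof is the finite case, specifically checking that the non-canonical isomorphism $\lbase \cong \lbase^{\ast}$ is compatible with the $R$-module structures; once this is in place, the infinite-extension case is a straightforward Noetherian descent.
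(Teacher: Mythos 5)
Your finite-extension case is correct and in fact quite clean: identifying $\lbase\otimes_\kbase I$ with the coinduction $\Hom_A(R,I)$ via a choice of nonzero functional $\varphi_0\colon\lbase\to\kbase$, and checking $R$-equivariance by hand, is a legitimate (and separability-free) argument, different in flavour from what the paper does.

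However, the reduction of the general case to the finite case has a genuine gap. You write ``Choose a finite subextension $\lbase'/\kbase$ with $j_1,\ldots,j_n\in R':=\lbase'\otimes_\kbase A$.'' Writing each $j_i$ in terms of a $\kbase$-basis of $A$, this requires the finitely many coefficients in $\lbase$ to lie in a subfield that is \emph{finite} over $\kbase$, i.e.\ to be algebraic over $\kbase$. That fails as soon as $\lbase/\kbase$ has a transcendental element: for example $j_1 = t\otimes 1$ with $t$ transcendental lies in no finite subextension, and the best you can do is a finitely \emph{generated} subextension $\lbase'=\kbase(t)$, which is infinite over $\kbase$ and hence not covered by your base case. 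This is not a corner case for the present paper: the lemma is invoked precisely to justify passing to an uncountable extension of the (countable) residue field $\F$, which is necessarily transcendental. The paper's own proof avoids the issue entirely by reducing the \emph{module} $I$ to the finite-dimensional case rather than the \emph{extension} $\lbase$ to the finite case: for finite-dimensional $I$ it dualizes ($I$ injective iff $I^\vee$ projective over $A^{\op}$, and projectivity is clearly preserved by the flat base change $\lbase\otimes_\kbase-$); for general $I$ it uses that, over the Noetherian rings $A$ and $\lbase\otimes_\kbase A$, a filtered colimit of injectives is injective, and that every injective $A$-module is the filtered colimit of its finite-dimensional injective submodules (since simples have finite-dimensional injective envelopes for a finite-dimensional algebra). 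If you want to keep your coinduction argument, you could instead combine it with the paper's reduction on $I$; but as written the passage to a finite subextension is not available.
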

\begin{proof}
We begin with the case that~$I$ is finite-dimensional over~$\kbase$.
It follows from Baer's criterion that $I$ is injective if and only if $I^\vee \coloneq \Hom_\kbase(I, \kbase)$
is projective over $A^{\op}$.  
Similarly $\lbase\otimes_{\kbase} I$ is injective
over $\lbase\otimes_{\kbase} A$ if and only if $(\lbase\otimes_{\kbase} I)^\vee \cong \lbase\otimes_{\kbase} I^{\vee}$
is projective over $\lbase\otimes_{\kbase} A^{\op}$.   Thus it suffices to prove 
the analogue of the lemma for projective modules, and thus for free modules (using
the characterization of projective modules as direct summands of free modules).
The case of free modules is clear, and thus the lemma is proved in this case.

Since $A$ and $\lbase\otimes_{\kbase} A$ are Noetherian, the property of being
injective over these rings is preserved under the formation of filtered 
colimits.  
Since $A^{\op}$ is finite dimensional, we know that every simple $A^\op$-module has a finite-dimensional projective envelope.
Thus any simple $A$-module has a finite-dimensional injective envelope.
It follows that any injective $A$-module is a filtered colimit of finite dimensional
injective $A$-modules, since it contains an injective envelope of any finite-dimensional submodule.
Hence the lemma follows from the finite-dimensional case.\end{proof}

\begin{lemma}\label{injective via subgroups}
Let~$I$ be a smooth $k[KZ]_\zeta$-module.
Then~$I$ is injective in~$\cC_k$ if and only if $I^H$ is $k[KZ/H]_\zeta$-injective for every open normal subgroup~$H \subset K$.
\end{lemma}
\begin{proof}
The forward direction is immediate, since~$(-)^H$ preserves injectives (being right adjoint to an exact functor).
Assume now that $I^H$ is $k[KZ/H]_\zeta$-injective for all~$H$.
The usual argument for proving Baer's criterion shows that $I$ is injective if and only 
$\Ext^1_{\cC_{\kbase}}(V,I) = 0$ for each finite-dimensional object
$V$ of $\cC_{\kbase}$.  
Note that $\Ext^1_{\cC_{\kbase}}(V,I) = \varinjlim_H \Ext^1_{\cC_{\kbase}}(V, I^H)$ by Lemma~\ref{lem:ext and colimits}, %
and that $\Ext^1_{\cC_{\kbase}}(V, I^H) = \Ext^1_{k[KZ/H^p]}(V, I^H)$ whenever~$H$ is a uniform pro-$p$ group normal in~$KZ$ and acting trivially on~$V$. 
In fact, if~$E$ is an extension of~$V$ by~$I^H$ then $(h-1)^2V = 0$ for all~$h \in H$, and so $(h^p-1)V = 0$ for all~$h \in H$; and since~$H$ is uniform, 
the set of $p$-th powers of elements of~$H$ is an open subgroup of~$H$.
Since~$I^{H^p}$ is assumed to be injective as a $k[KZ/H^p]$-module with fixed central character, we deduce that the transition map 
$\Ext^1_{\cC_{\kbase}}(V, I^H) \to \Ext^1_{\cC_{\kbase}}(V, I^{H^p})$ is zero. 
Hence the colimit $\Ext^1_{\cC_{\kbase}}(V,I) = 0$.
\end{proof}

\begin{cor}%
\label{cor:compact group ext base-change}
If $k \subseteq l$ is an extension of fields,
and if $V$ is a finite-dimensional object of $\cC_{\kbase}$,
then for any object $W$ of $\cC_{\kbase}$,
the base-change map
$$\lbase\otimes_{\kbase} \Ext^i_{\cC_{\kbase}}(V,W) \to \Ext^i_{\cC_l}(\lbase\otimes_{\kbase} V,
\lbase\otimes_{\kbase} W)$$
is a natural isomorphism.
\end{cor}
\begin{proof}The case~$i = 0$ is true by~\cite[Lemma~5.1]{MR3150248}, since~$V$ is finitely generated over a Noetherian quotient of~$\kbase[KZ]$.
It then suffices to prove %
that $\lbase\otimes_{\kbase} I$ is injective if~$I$ is injective in~$\cC_{\kbase}$. 
The case~$i = 0$ implies that $(\lbase\otimes_{\kbase} I)^H = \lbase\otimes_{\kbase} I^H$.
Lemma~\ref{injective via subgroups} shows that $I^H$ is an injective $k[KZ/H]_\zeta$-module for every open normal subgroup $H \subset K$.
Lemma~\ref{lem:injective scalar
extension} then shows that $\lbase\otimes_{\kbase} I^H$ is an injective $l[KZ/H]_\zeta$-module.
Hence another application of Lemma~\ref{injective via subgroups}
implies that $\lbase\otimes_{\kbase} I$
is an injective object of~$\cC_l$, as claimed.
\end{proof}

\begin{cor}\label{cor:compact group ext base-change II}
If $\cO \subset \cO'$ is a finite unramified extension,
and if $V$ is a finite dimensional object of $\cC_{\bF}$,
then for any object $W$ of $\cC_{\bF}$,
the base-change map
$$\F'\otimes_{\F} \Ext^i_{\cC_{\cO}}(V,W) \to \Ext^i_{\cC_{\cO'}}(\F'\otimes_{\F} V,
\F'\otimes_{\F} W)$$
is a natural isomorphism.
\end{cor}
\begin{proof}
For any $i$  and~$j$,
there  is  a base-change isomorphism 
\begin{multline*}
\F'\otimes_{\F}\Ext^i_{\cC_{\F}}\bigl(V,\Ext^j_{\cO}(\F,W)\bigr)
\iso
\Ext^i_{\cC_{\F'}}\bigl(\F'\otimes_{\F}  V,  \F'\otimes_{\cO}\Ext^j_{\cO}(\F,W)\bigr)
\\
=
\Ext^i_{\cC_{\F'}}\bigl(\F'\otimes_{\F}  V,  \Ext^j_{\cO'}(\F',\F'\otimes_{\F} W)\bigr),
\end{multline*}
the first isomorphism following from Corollary~\ref{cor:compact group ext base-change},
and the equality holding because $\cO'$ is unramified over~$\cO$,
so that the  derived functors $\RHom_{\cO'}(\F', \cO'\otimes_{\cO} \text{--})$
and $\cO'\otimes_{\cO}\RHom_{\cO}(\F,\text{--})$ coincide on the category of  $\cO$-modules;
both are computed by the complex
$$ \cO'\otimes_{\cO} (\text{--})
\buildrel \varpi \cdot  \over \longrightarrow \cO'\otimes_{\cO}(\text{--})
$$
(where $\varpi$ 
is a uniformizer of~$\cO$).

If we now consider
the spectral sequence~\eqref{changecoefficients2}
for each of~$\cO$ and~$\cO'$, 
we find that these base-change isomorphisms abut to the base-change
morphism in the statement of the present corollary, showing that
this morphism is also an isomorphism, as claimed. 
\end{proof}

\begin{prop}
\label{prop:ext and base-change}

If $\lbase$ is an extension of $\kbase$,
and
if $\pi$ and $\pi'$ are objects of $\cA_{\kbase}$, with $\pi$ being finitely generated,
then there is a natural isomorphism
$$\lbase\otimes_{\kbase} \Ext^i_{\cA_{\kbase}}(\pi,\pi') \iso
\Ext^i_{\cA_{l}}(\lbase\otimes_{\kbase}\pi, \lbase\otimes_{\kbase} \pi'),$$
for each $i \geq 0$.

Similarly, if~$\pi$ and~$\pi'$ are objects of~$\cA_\bF$ with~$\pi$ finitely generated, and $\cO \subset \cO'$ is a finite unramified extension, then there is a natural isomorphism
\[
\F'\otimes_{\F} \Ext^i_{\cA_{\cO}}(\pi,\pi') \iso
\Ext^i_{\cA_{\cO'}}(\F'\otimes_{\F}\pi, \F'\otimes_{\F} \pi')
\]
\end{prop}

\begin{remark}
If we were working in the category of all
$\kbase[[G]]_\zeta$-modules 
then this result would be straightforward, since we would be
able to compute the Ext's using a resolution of $\pi$ by
finite rank free $\kbase[[G]]_\zeta$-modules.
It is the fact 
that we are working in the category $\cA_{\kbase}$,
i.e.\ that we have imposed smoothness, which makes the result
less obvious.

We also remark that in the case when~$k$ is finite and $\pi$ is assumed
to be of finite length, the result has been proved by Pa{\v s}k{\= u}nas
\cite[Proposition~5.33]{MR3150248}.
\end{remark}

\begin{proof}[Proof of Proposition~\ref{prop:ext and base-change}]
Using Theorem~\ref{thm:noetherian} and dimension shifting,
one easily reduces to the case when $\pi = \cInd_{KZ}^G V$
for some finitely generated smooth $KZ$-representation~$V$.
Since compact induction is compatible with extension of scalars,
the proposition follows from Corollaries~\ref{cor:compact group ext
  base-change} and~\ref{cor:compact group ext
  base-change II}, together with~\eqref{eqn:ext adjunction}.
\end{proof}

\begin{remark}
Our proof of Proposition~\ref{prop:ext and base-change}
uses as input the fact that finitely generated objects of $\cA_{\kbase}$ and~$\cA_\cO$
are Noetherian (i.e.\ Theorem~\ref{thm:noetherian}). Bearing in mind
Remark~\ref{rem: not noetherian in general}, we see that the argument, and perhaps also the result,
won't extend as stated to more general $p$-adic Lie groups. 
It seems plausible that it should at least hold in general with
``finitely generated'' replaced by ``finitely presented'', and it's likely
that Shotton's results~\cite{MR4106887} about smooth finitely
presented 
$\GL_2(F)$-representations can be used to show this for~$\GL_2(F)$,
for arbitrary $p$-adic fields~$F$. 
(Note that for~$\GL_2(\Qp)$ all finitely generated representations are
finitely presented by Theorem~\ref{thm:noetherian}.)
\end{remark}%

\subsection{Blocks of~$\cA^{\ladm}_A$}\label{subsec:blocks}
By Lemma~\ref{locally admissible and locally finite}, the category $\cA_A^{\ladm}$ is locally finite, unlike~$\cA_A$, and so admits a decomposition into blocks (see  Lemma~\ref{lem: block decomposition}).
When~$A = \cO$, these blocks are studied intensively in~\cite{MR3150248}. %
Recall that,
by definition, a block of~$\cA_A^{\ladm}$ 
is an equivalence class of (isomorphism classes of) irreducible objects
 under the equivalence relation generated by
\begin{equation}\label{definition of blocks}
\pi_1 \sim \pi_2 \text{ if } \Ext^1_{\cA_A^{\ladm}}(\pi_1, \pi_2) \ne 0 \text{ or } \Ext^1_{\cA_A^{\ladm}}(\pi_2, \pi_1) \ne 0.
\end{equation}
We note that it follows from the results of \cite{BarthelLivneDuke} and~\cite{BreuilGL2I} when~$A = \cO$, and 
Theorem~\ref{classifyirreducibles} in general, that the irreducible objects of $\cA_A$ are automatically admissible, 
and hence lie in $\cA_A^{\ladm}.$  Thus we can equally well regard this as an equivalence relation on the irreducible objects of~$\cA_A$.
Similarly, by Lemma~\ref{lem: smooth Ext equals admissible Ext}, we can replace $\Ext^1_{\cA_A^{\ladm}}$ with~$\Ext^1_{\cA_A}$ in~\eqref{definition of blocks} without changing the relation~$\sim$.
The next lemma shows that we can also replace~$\cA_A^{\ladm}$ by~$\cA_k^{\ladm}$
(and hence then also by~$\cA_k$).

\begin{lemma}\label{same blocks}%
Let~$A$ be a complete Noetherian local $\cO$-algebra. The blocks of $\cA^{\ladm}_A$ coincide with the blocks of~$\cA^{\ladm}_k$.
\end{lemma}
\begin{proof}
Let $\pi_1, \pi_2 \in \cA^{\ladm}_k$ be irreducible. 
Since $\Ext^1_{\cA_k}(\pi_1, \pi_2) \to \Ext^1_{\cA_A}(\pi_1, \pi_2)$ is injective, it suffices to prove that if $\Ext^1_{\cA_A}(\pi_1, \pi_2) \ne 0$ then
$\Ext^1_{\cA_k}(\pi_1, \pi_2) \ne 0$ or $\pi_1 \cong \pi_2$.
But if $\pi_1\neq \pi_2$, then evidently any extension of $\pi_1$ by $\pi_2$ is annihilated
by~$\mathfrak m_A$ (since each $\pi_i$, and since they are irreducible by assumption).
Thus such an extension arises from an element of~$\Ext^1_{\cA_k}(\pi_1,\pi_2)$,
as claimed.
\end{proof}

\begin{rem}\label{rem:how to prove that something is a union of blocks}
If~$S$ is a set of isomorphism classes of irreducible objects of~$\cA_k^{\ladm}$, then~$S$ is a union of blocks if and only if, for all
irreducible $\pi \in S, \tau \not \in S$, we have $\Ext^1_{\cA_k^{\ladm}}(\pi, \tau) = 0$ and
$\Ext^1_{\cA_k^{\ladm}}(\tau, \pi) = 0$.
This is an immediate consequence of the fact that the relation ``being in the same block'' is the transitive closure of the relation~$\sim$
defined in~\eqref{definition of blocks}.
\end{rem}

Our goal in the rest of this section is to extend the results of~\cite{MR3150248} to a classification of blocks of~$\cA^{\ladm}_A$. %
We do this in Proposition~\ref{prop: blocks over perfect fields}.
By Lemma~\ref{same blocks}, it suffices to classify the blocks of~$\cA_k^{\ladm}$ when~$A = k$ is a %
field extension of~$\bF$.
To do so, we will make use of Theorem~\ref{classifyirreducibles}, which classifies the irreducible objects of~$\cA_k^{\ladm}$, and Proposition~\ref{prop:ext and base-change}, 
which asserts that~$\Ext^i_{\cA_k}$ commutes with extensions of~$k$ for finitely generated representations.
We begin with the case of an algebraically closed field.

\begin{prop}\label{ordinary parts}
Let~$A = k$ be an algebraically closed field of characteristic~$p$. 
Then the blocks of~$\cA_k^{\ladm}$ are as follows: 
\begin{enumerate}
\item $\fB = \{\pi\}$ for an irreducible supersingular representation~$\pi$,
\item $\fB = \{\Ind_B^G(\chi_1 \otimes \omega^{-1}\chi_2), \Ind_B^G(\chi_2 \otimes \omega^{-1} \chi_1)\}$ for smooth characters~$\chi_1, \chi_2 : \bQ_p^\times \to k^\times$ such that $\chi_1\chi_2^{-1} \ne 1, \omega^{\pm 1}$
and $\chi_1\chi_2\omega^{-1} = \zeta$, %
\item $\fB = \{\Ind_B^G(\chi \otimes \omega^{-1} \chi)\}$ for a smooth character $\chi: \bQ_p^\times \to k^\times$ such that $\chi^2\omega^{-1} = \zeta$, and
\item $\fB = \{\chi \circ \det, \chi \otimes \St, \Ind_B^G(\omega\chi \otimes \omega^{-1} \chi)\}$ for a smooth character $\chi : \bQ_p^\times \to k^\times$ such that $\chi^2 = \zeta$.
\end{enumerate}

\end{prop}
\begin{proof}
Without loss of generality, we can twist and assume that $\zeta(p) = 1$.
By Theorem~\ref{classification}, the sets in the statement of the proposition form a partition of the set of irreducible objects of~$\cA_k^{\ladm}$.
We first prove that the sets~$\fB$ are unions of blocks.
By Remark~\ref{rem:how to prove that something is a union of blocks}, it
suffices to prove that, for all $\pi \in \fB$ and irreducible $\tau \not \in \fB$,
we have $\Ext^1_{\cA_k}(\tau, \pi) = 0$ and $\Ext^1_{\cA_k}(\pi, \tau) = 0$.

Assume first that~$\fB$ has type~(2).
That $\Ext^1_{\cA_k}(\tau, \pi) = 0$ follows from 
Corollary~4.3.1 and Table~1 of Corollary~5.3.4 of \cite{Heyerderived}, %
while the statement that $\Ext^1_{\cA_k}(\pi, \tau) = 0$ follows from ~\cite[Theorem~4.2.12 and~(3.7.6)]{MR2667883} (or rather their proofs; \cite{MR2667883} assumes that the coefficient field is finite, but the same methods prove this result).\footnote{Alternatively,
one can apply the results of~\cite{hoff2025rightderivedfunctorsordinary},
which incorporate the results of~\cite{MR2667883}, but with no restrictions on
the coefficient field.  In fact, Thm.~C of the former reference
shows that the computations giving \cite[Cor.~5.3.4, Table~1]{Heyerderived}
and the computations of derived ordinary parts in~\cite{MR2667883} are
at base the same computation.}
These results also imply that there is a nontrivial extension between the elements of~$\fB$, hence $\fB$ is a block of~$\cA_k^{\ladm}$.

We now drop the assumption that~$\fB$ has type~(2).
Then, the result we have just proved shows that, if~$\tau$ is contained in a block of type~(2), then 
$\Ext^1_{\cA_k}(\tau, \pi) = 0$ and $\Ext^1_{\cA_k}(\pi, \tau) = 0$, as desired.
If~$\tau$ is not contained in a block of type~(2), then $\pi$ and~$\tau$ both descend to a finite subfield of~$k$,
and so the vanishing $\Ext^1_{\cA_k}(\tau, \pi) = 0$ and $\Ext^1_{\cA_k}(\pi, \tau) = 0$ follows from the classification of blocks in~\cite{MR3150248}.

At this point, we know that the sets~$\fB$ in the statement of the proposition are unions of blocks, and they are blocks if~$\fB$ has type~(2).
This is also true if~$\fB$ has type~(1) or~(3), since in these cases~$\fB$ is a singleton.
So there remains to prove that
\[
\{1_G, \St, \Ind_B^G(\omega \otimes \omega^{-1})\} 
\]
is a block, which is immediate from
the fact that it is a block of $\cA_{\bF_p}^{\ladm}$.
\end{proof}

We now consider the case of a 
general field of coefficients of characteristic~$p$.
The following definition will be useful in stating our results.

\begin{defn}\label{defn:companion weights}\leavevmode
\begin{enumerate}
  \item If $\sigma = \Sym^r \otimes \det^s$ is a Serre weight, and
  $0\le r\le p-3$, we define
  $\sigmacomp\coloneq \Sym^{p-3-r} \otimes \det^{a+b+1}$. 
  If~$\sigma = \Sym^{p-2} \otimes \det^a$ we define $\sigmacomp\coloneq  \sigma$. 
  If~$\sigma = \Sym^{p-1} \otimes \det^a$, we do not define~$\sigmacomp$.
  \item If~$f \ne T \in k[T]$ is an irreducible monic polynomial, then we define
  $f^*(T) \coloneq  f(0)^{-1}T^{\deg f}f(\zeta(p)/T)$.
  \item Let~$\sigma = \Sym^r \otimes \det^s$ be a Serre weight, 
assume that~$0 \leq r \leq p-2$,
and let~$f \ne T \in \cH(\sigma)$ be an irreducible polynomial.
Then we define~$\fB(\sigma, f)$ to be the set of irreducible $\cA_k$-subquotients of
\[
\cInd_{KZ}^G(\sigma)/f(T) \oplus \cInd_{KZ}^G(\sigmacomp)/f^*(T).
\]
\end{enumerate}
\end{defn}

Note that if~$f \ne T \in k[T]$ 
is irreducible and monic, then
$f^*$ is also irreducible and monic, and $\lambda \in \lbar k^\times$ is a root of~$f^*$ if and only if $\zeta(p)\lambda^{-1}$ is a root of~$f$.
The map
\begin{equation}\label{eqn:block involution}
(\sigma, f) \mapsto (\sigmacomp, f^*)
\end{equation}
is an involution on the set of pairs consisting of a Serre weight~$\sigma = \Sym^r \otimes \det^s$ with~$0 \leq r \leq p-2$, and an irreducible monic polynomial $f \ne T \in k[T]$.
Furthermore, $\fB(\sigma, f) = \fB(\sigmacomp, f^*)$.

\begin{prop}\label{type5blocks}
Let~$A = k$ be a %
field of characteristic~$p$.
Let~$\sigma = \Sym^r \otimes \det^s$ be a Serre weight, 
assume that~$0 \leq r \leq p-2$,
and let~$f \ne T \in \cH(\sigma)$ be an irreducible polynomial. %
Then~$\fB \coloneq  \fB(\sigma, f)$ is a block of $\cA_k^{\ladm}$.
\end{prop}
\begin{proof} %
We first prove that~$\fB$ is a union of blocks. We will do so by applying Remark~\ref{rem:how to prove that something is a union of blocks},
and so we let~$\pi, \tau$ be irreducible objects of~$\cA_k^{\ladm}$, and we assume that~$\pi \in \fB$, $\tau \not \in \fB$.
We need to prove that $\Ext^1_{\cA_k}(\pi, \tau) = 0$ and $\Ext^1_{\cA_k}(\tau, \pi) = 0$.
By Proposition~\ref{prop:ext and base-change}, we can verify this after extending scalars to
an algebraic closure~$\lbar k/k$.
It thus suffices to prove that if~$\pi', \tau'$ are irreducible subquotients of~$\pi \otimes_k \lbar k, \tau \otimes_k \lbar k$
then $\Ext^1_{\cA_{\lbar k}}(\pi', \tau') = 0$ and $\Ext^1_{\cA_{\lbar k}}(\tau', \pi') = 0$.

Assume this is not the case.
Choose a Serre weight~$\sigma'$ and an irreducible polynomial~$g(T) \in k[T]$ such that~$\tau$ is an irreducible subquotient of $\cInd_{KZ}^G(\sigma)/g(T)$.
Since we are assuming that $\tau \not \in \fB$, we know that $(\sigma', g) \not \in \{(\sigma, f), (\sigmacomp, f^*)\}$
Note that there exist $\lambda \in \lbar k^\times, \mu \in \lbar k$ such that $f(\lambda) = 0$ and $\pi'$ is an irreducible subquotient of $\cInd_{KZ}^G(\sigma)/(T-\lambda)$, resp.\
$g(\mu) = 0$ and $\tau'$ is an irreducible subquotient of $\cInd_{KZ}^G(\sigma')/(T-\mu)$.
Proposition~\ref{ordinary parts} then implies that $(\sigma', \mu) = (\sigma, \lambda)$ or $(\sigmacomp, \zeta(p)\lambda^{-1})$,
bearing in mind the isomorphism~\eqref{eqn:Hecke eigenvalue of parabolic induction}.
The first case implies that~$g = f$, since $g$ and~$f$ are irreducible monic polynomials in~$k[T]$ with a common root in~$\lbar k$.
The second case implies that $g = f^*$, for the same reason.
This contradiction concludes the proof that~$\fB$ is a union of blocks.

There remains to prove that~$\fB$ is a block.
Since $\pi\coloneq  \cInd_{KZ}^G(\sigma)/f(T)$ and $\pi'\coloneq  \cInd_{KZ}^G(\sigmacomp)/f^*(T)$ are always indecomposable of length~$\leq 2$, 
it suffices to prove that $\Ext^1_{\cA_k}(\pi, \pi') \ne 0$ and~$\Ext^1_{\cA_k}(\pi', \pi) \ne 0$.
By Proposition~\ref{prop:ext and base-change}, we can verify this after extending scalars to an algebraic closure $\lbar k/k$.
It then suffices to prove that
\[
\Ext^1_{\cA_{\lbar k}}(\cInd_{KZ}^G(\sigma)/(T-\lambda)^i, \cInd_{KZ}^G(\sigmacomp)/(T-\zeta(p)\lambda^{-1})^j) \ne 0
\]
for all~$i, j > 0$.
This is true when~$i = j = 1$ by Proposition~\ref{ordinary parts}, and it then follows in general 
from the long exact sequences in~$\Ext$ associated to
{\small 
\[
0 \to \cInd_{KZ}^G(\sigmacomp)/(T-\zeta(p)\lambda^{-1})^{j-1} \to \cInd_{KZ}^G(\sigmacomp)/(T-\zeta(p)\lambda^{-1})^j \to \cInd_{KZ}^G(\sigmacomp)/(T-\zeta(p)\lambda^{-1}) \to 0
\]}
and 
{\small
\[
0 \to \cInd_{KZ}^G(\sigma)/(T-\lambda) \to \cInd_{KZ}^G(\sigma)/(T-\lambda)^i \to \cInd_{KZ}^G(\sigma)/(T-\lambda)^{i-1} \to 0. \qedhere
\]
}%
\end{proof}

\begin{prop}\label{prop: blocks over perfect fields}
Let~$A$ be a complete Noetherian local $\cO$-algebra with %
residue field~$k$. %
Let~$\fB$ be a block of irreducible objects of~$\cA_A^{\ladm}$.
Then exactly one of the following is true:
\begin{itemize}
\item there exists a unique irreducible supersingular representation~$\pi$ with central character~$\zeta$ such that $\fB = \{\pi\}$; or
\item there exists a unique orbit~$\{(\sigma, f), (\sigmacomp, f^*)\}$ of the involution~\eqref{eqn:block involution} such that $\fB = \fB(\sigma, f) = \fB(\sigmacomp, f^*)$. 
\end{itemize}
\end{prop}
\begin{proof}
By Proposition~\ref{type5blocks}, $\fB(\sigma, f)$ is a block of~$\cA^{\ladm}$.
By Theorem~\ref{classifyirreducibles}, every non-supersingular irreducible object of~$\cA^{\ladm}$ is contained in a block of the form~$\fB(\sigma, f)$.
By Corollary~\ref{irreducibleHecke}, if the intersection
$\fB(\sigma, f) \cap \fB(\sigma', f')$ is not empty, then $(\sigma', f') \in \{(\sigma, f), (\sigmacomp, f^*)\}$.
To conclude the proof, it thus suffices to prove that if~$\pi$ is irreducible and supersingular, then~$\{\pi\}$ is a block of~$\cA^{\ladm}$.
By Remark~\ref{rem:how to prove that something is a union of blocks}, 
it is enough to prove that if $\tau \ne \pi$ is irreducible, then $\Ext^i_{\cA^{\ladm}}(\pi, \tau) = 0$ and $\Ext^i_{\cA^{\ladm}}(\tau, \pi) = 0$.
Passing to an algebraic closure of~$k$, this is a consequence of Proposition~\ref{ordinary parts}.
\end{proof}

\begin{rem}\label{rem:explicit description of blocks}
Applying~\eqref{eqn:Hecke eigenvalue of parabolic induction}, one sees that Proposition~\ref{prop: blocks over perfect fields} generalizes Proposition~\ref{ordinary parts}.
In more detail, if~$\fB$ is a block of~$\cA^{\ladm}_k$, then we have the following cases: %
\begin{enumerate}
\item $\fB = \{\pi\}$ for irreducible supersingular~$\pi$;
\item $\fB = \fB(\Sym^r \otimes \det^s, f)$, and $f = T-\lambda$ for some~$\lambda \in k^\times$, such that~$\lambda^2 \ne \zeta(p)$ if~$r = 0, p-2$. Then
\[
\fB = \{\Ind_B^G(\chi_1 \otimes \chi_2\omega^{-1}), \Ind_B^G(\chi_2 \otimes \chi_1 \omega^{-1})\},
\]
with $\chi_1 = \nr_{\lambda^{-1}\zeta(p)}\omega^s, \chi_2 = \nr_\lambda \omega^{r+s+1}$.
\item $\fB = \fB(\Sym^{p-2} \otimes \det^s, f)$ and~$f = T-\lambda$ for some~$\lambda \in k^\times$ such that~$\lambda^2 = \zeta(p)$. Then
\[
\fB = \{\Ind_B^G (\chi \otimes \chi \omega^{-1})\},
\]
with~$\chi = \nr_\lambda\omega^s$.
\item $\fB = \fB(\Sym^{0} \otimes \det^s, f)$ and~$f = T-\lambda$ for some~$\lambda \in k^\times$ such that~$\lambda^2 = \zeta(p)$. Then
\[
\fB = \{\chi \circ \det, \chi \otimes \St, \Ind_B^G(\omega \chi \otimes \omega^{-1} \chi)\},
\]
with~$\chi = \nr_\lambda\omega^s$.
\item $\fB = \fB(\Sym^r \otimes \det^s, f)$ and $\deg(f) > 1$. Then~$\fB$ is the set of irreducible subquotients of
\[
\Ind_B^G(\nr_{f^*(T)}\omega^s \otimes \nr_{f(T)}\omega^{r+s}) \oplus \Ind_B^G(\nr_{f(T)}\omega^{r+s+1} \otimes \nr_{f^*(T)}\omega^{s-1}),
\]
where~$\nr_{f(T)}$ denotes the unramified $\bQ_p^\times$-module~$k[T]/f(T)$ on which~$p$ acts as multiplication by~$T$.
\end{enumerate}
\end{rem}

\begin{rem}\label{rem:blocks after base extension}
If~$\fB$ is a block of type~(5), and $\fB \otimes_k \lbar k \coloneq  \{\pi \otimes_k \lbar k : \pi \in \fB \}$, then~$\fB \otimes_k \lbar k$ is almost always a union of blocks of type~(2).
The only exceptions occur when~$\zeta(p)$ is not a square in~$k$, in which case 
\[
\{\cInd_{KZ}^G\Sym^{p-2} \otimes \det\nolimits^a/(T^2-\zeta(p))\} \otimes_k \lbar k
\]
is a union of two blocks of type~(3), and
\[
\{\cInd_{KZ}^G\Sym^{0} \otimes \det\nolimits^a/(T^2-\zeta(p)), \cInd_{KZ}^G \Sym^{p-3} \otimes \det\nolimits^{a+1}/(T^2-\zeta(p))\} \otimes_k \lbar k
\]
is a union of two blocks of type~(4).
\end{rem}

When~$A = \cO$, the endomorphisms rings of projective generators of blocks of~$\cA^{\ladm}$ are also computed in~\cite{MR3150248}, and we point out the following consequence of this computation.

\begin{prop}\label{injectiveArtinian}%
Let~$\pi$ be an object of~$\cA^{\ladm}$. 
Then~$\pi$ is Artinian if and only if~$\soc_G(\pi)$ has finite length.
\end{prop}
\begin{proof}
One direction is immediate, so we assume that~$\soc_{\cA^{\ladm}}(\pi)$ has finite length and we prove that~$\pi$ is Artinian.
If~$\pi$ is not Artinian, then $\pi \otimes_\bF l$ is not Artinian for every finite extension~$l/\bF$.
On the other hand, Theorem~\ref{classifyirreducibles} and~\cite[Lemma~5.1]{MR3150248} imply that
\[
\soc_{\cA^{\ladm}_l}(\pi \otimes_\bF l) = \soc_{\cA^{\ladm}}(\pi) \otimes_\bF l
\]
has finite $\cA^{\ladm}_l$-length.
Replacing~$\bF$ with~$l$, we can therefore assume without loss of generality that $\soc_{\cA^{\ladm}}(\pi)$ is a direct sum of finitely many absolutely irreducible objects of~$\cA^{\ladm}$.

Since~$\pi$ injects into the $\cA^{\ladm}$-injective envelope of $\soc_{\cA^{\ladm}}(\pi)$, it suffices to prove that, if~$J$ is an injective object of $\cA^{\ladm}$ with absolutely irreducible
socle, then~$J$ is Artinian.
Let~$\fB$ be the block containing~$\soc(J)$, and let $J_\fB$ be the direct sum of injective envelopes of the objects of~$\fB$.
Since $J$ injects in~$J_\fB$, it suffices to prove that~$J_\fB$ is Artinian, or equivalently, that $J_\fB$ is Noetherian as an object of the opposite category to~$\cA^{\ladm}$.
By~\cite[Section~IV]{Gabrielthesis} (see also the discussion immediately preceding the statement of \cite[Thm.\ 1.5]{MR3150248}), 
it thus suffices to prove that $E_\fB \coloneq  \End_{\cA^{\ladm}}(J_\fB)$ is a Noetherian ring.
By~\cite[Cor.\ 6.5, Cor.\ 8.11, Cor.\ 9.33, Lem.\ 10.90]{MR3150248}, the ring~$E_\fB$ is finitely generated over its centre, which is a Noetherian local ring.
Hence~$E_\fB$ is indeed Noetherian, as desired. 
\end{proof}

\subsection{A chain of projective lines.}
\label{subsec:chain}
We now let $X$ denote a chain of projective lines over $\F$ with ordinary
double points, of length $(p \pm 1)/2$, where the sign is positive if
and only if~$\zeta$ is odd. %
We choose coordinates on each irreducible component of~$X$ in such a way that each singular point corresponds to~$0$ on one intersecting component and~$\infty$ on the other. We will refer to the points~$0$ and~$\infty$ as \emph{marked points}.
We are going to
label the components of~$X$ by cuspidal types~$\Theta(\chi)$ with central
character~$\zeta=\chi^{p+1}$; %
note that since~$\Theta(\chi)\cong\Theta(\chi')$ if and only if~$\chi'=\chi$ or~$\chi'=\chi^p$, %
there are as many
cuspidal types with central character~$\zeta$ as irreducible
components of~$X$.  In order to construct the labelling, it will be
useful to have the following definition.

\begin{defn}\label{adjacenttypes}
We will say that two cuspidal types~$\tau_1, \tau_2$ are \emph{adjacent} if there exist $\sigma_i \in \JH(\overline{\tau}_i)$ such that~$\{\sigma_1, \sigma_2\}$ is the set of Jordan--H\"older factors of a principal series representation of~$\F[\GL_2(\bF_p)]$.
\end{defn}

Recall from~\cite[Corollary~5.6]{BreuilPaskunas} that the extensions
amongst irreducible $\F[\GL_2(\bF_p)]$-representations are classified
by the following proposition.%

\begin{prop}\label{finiteextensions}
Let~$\sigma , \sigma'$ be Serre weights. Then $\dim_\bF\Ext_{\bF[\GL_2(\bF_p)]}^1(\sigma, \sigma') \leq 1$, and $\dim_\bF\Ext_{\bF[\GL_2(\bF_q)]}^1(\sigma, \sigma') = \dim_\bF\Ext_{\bF[\GL_2(\bF_p)]}^1(\sigma', \sigma)$. Furthermore, writing $\sigma = \Sym^b\bF^2 \otimes \operatorname{det}^a$, one of the following is true:
\begin{enumerate}
\item there exist exactly two Serre weights~$\sigma_1, \sigma_2$ such that $\Ext_{\bF[\GL_2(\bF_p)]}^1(\sigma, \sigma_i)$ is nonzero. One of these extensions is the mod~$p$ reduction of a lattice in a cuspidal type, and the other is the mod~$p$ reduction of a lattice in a principal series type.
\item $b = p-2$, there exists a cuspidal type~$\tau$ with~$\JH(\overline{\tau}) = \{\sigma\}$, and there exists a unique Serre weight~$\sigma_1$ such that $\Ext_{\bF[\GL_2(\bF_p)]}^1(\sigma, \sigma_1) \ne 0$. This nonsplit extension is the mod~$p$ reduction of a lattice in a principal series type.
\item $b = 0$, and there exists a unique Serre weight~$\sigma_1$ such that $\Ext^1_{\bF[\GL_2(\bF_p)]}(\sigma, \sigma_1) \ne 0$. This nonsplit extension is the mod~$p$ reduction of a lattice in a cuspidal type.
\item $b = p-1$, and~$\sigma$ is a projective $\F[\GL_2(\bF_p)]$-module.
\end{enumerate}
\end{prop}

\begin{cor}\label{adjacents}
Let~$\tau$ be a cuspidal type. Then one of the following is true:
\begin{enumerate}
\item there exist exactly two cuspidal types adjacent to~$\tau$, or
\item $\overline{\tau}$ contains a twist of~$\Sym^0$ or~$\Sym^{p-2}$, and there exists exactly one cuspidal type adjacent to~$\tau$.
\end{enumerate}
\end{cor}
\begin{proof}
  This is immediate from Proposition~\ref{finiteextensions}.
\end{proof}

\begin{prop}\label{labelcomponents}
There exist exactly two bijections~$\tau \mapsto X(\tau)$, from the set of cuspidal types with central character~$\zeta$ to the set of irreducible components of~$X$, with the following property: $X(\tau_1) \cap X(\tau_2)$ is not empty if and only if~$\tau_1$ and~$\tau_2$ are adjacent.
\end{prop}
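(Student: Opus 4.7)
The plan is to reformulate the statement as a problem about graph isomorphisms. Let $\Gamma_\zeta$ denote the \emph{adjacency graph} whose vertices are the cuspidal types with central character $\zeta$ and whose edges record the relation of Definition~\ref{adjacenttypes}. The dual graph of $X$ (vertices = irreducible components, edges = intersection points) is by construction the path $P_N$ on $N := (p \pm 1)/2$ vertices. The proposition then asserts precisely that there are exactly two graph isomorphisms $\Gamma_\zeta \xrightarrow{\sim} P_N$. Since a path on $N \geq 2$ vertices admits exactly two automorphisms (the identity and the reversal), it suffices to show $\Gamma_\zeta \cong P_N$. The hypothesis $p \geq 5$ guarantees $N \geq 2$, so the conclusion is nonvacuous.

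To prove $\Gamma_\zeta \cong P_N$, I would verify three properties of $\Gamma_\zeta$: it has $N$ vertices, every vertex has degree at most $2$ with exactly two vertices of degree $1$, and it is connected. For the vertex count, I parametrize cuspidal types via characters $\bar\chi(x) = x^{i+(p+1)j}$ with $1 \leq i \leq p$ modulo the equivalence $\chi \sim \chi^p$, as recalled at the end of Section~\ref{subsec: notation and conventions}. The central character constraint pins down $i + 2j \pmod{p-1}$, and a direct count of Frobenius orbits satisfying this constraint yields $N = (p-1)/2$ or $(p+1)/2$ according to the sign of $\zeta(-1)$, matching the number of components of $X$.

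For the degree bound and the identification of endpoints, Corollary~\ref{adjacents} gives degree $\leq 2$, with degree $1$ corresponding precisely to types $\Theta(\chi)$ whose reduction contains a twist of $\Sym^0$ or $\Sym^{p-2}$. Using the explicit $\JH(\bar\Theta(\chi))$ from Section~\ref{subsec: notation and conventions}, the factor $\Sym^{i-2}\otimes\det^{1+j}$ is a twist of $\Sym^0$ iff $i = 2$ and of $\Sym^{p-2}$ iff $i = p$ (which is the degenerate case where $\bar\Theta(\chi)$ has a single JH factor $\Sym^{p-2}\otimes\det^{1+j}$); similarly $\Sym^{p-1-i}\otimes\det^{i+j}$ is a twist of $\Sym^0$ iff $i = p-1$ and of $\Sym^{p-2}$ iff $i = 1$. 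Combined with the central character constraint and the Frobenius identification, these two exceptional loci contribute exactly two equivalence classes of types. For connectedness, I would show explicitly that adjacency $\Theta(\chi) \sim \Theta(\chi')$ corresponds to incrementing the parameter $i$ by a fixed amount (respecting the central character relation), so that the vertices of $\Gamma_\zeta$ form a single chain.

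Any connected graph on $N$ vertices with vertex degrees in $\{1,2\}$ and exactly two degree-$1$ vertices is isomorphic to $P_N$ (a standard graph-theoretic fact), which completes the proof. The main obstacle is the explicit combinatorial bookkeeping in steps two and three: parametrizing Frobenius orbits under the central character constraint, tracking which $(i,j)$ give endpoints, and verifying that the adjacency relation genuinely produces a single chain (rather than, say, a disjoint union of chains or a cycle). The cyclic possibility is ruled out by the existence of degree-$1$ vertices, and disconnectedness by the explicit monotonic traversal via the parameter $i$.
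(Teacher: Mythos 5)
Your argument is correct but takes a genuinely different, more explicit route than the paper's. The paper's proof twists $\zeta$ to a normalized form, exhibits a single distinguished cuspidal type (with $\overline{\tau}\cong\Sym^{p-2}$ when $\zeta$ is odd, or with $\overline{\tau}\supset 1$ when $\zeta$ is even), notes that this type must map to one of the two endpoint components, and then invokes Corollary~\ref{adjacents} to assert that the bijection is forced. Implicit in that assertion is precisely the structural fact you set out to verify, namely that the adjacency graph of cuspidal types is a path $P_N$ (rather than, say, a shorter path together with a cycle): the paper takes this for granted, relying on the equinumerosity observation that precedes the proposition together with the reader's familiarity with the extension graph of Serre weights. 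Your reformulation as a graph isomorphism problem makes all of this explicit by checking the vertex count, the degree sequence, and connectedness, and then appealing to the uniqueness of graph isomorphisms of paths. This fills in what the paper elides and is arguably the more rigorous of the two.

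The one place to be careful if you flesh this out is the connectedness step. Adjacency of $\Theta(\chi)$ and $\Theta(\chi')$ is not literally ``increment $i$ by a fixed amount'' in the raw parametrization, because of the Frobenius identification $\Theta(\chi)\cong\Theta(\chi^p)$, which acts on the exponent by $i\mapsto p+1-i$ (mod $p+1$) and changes $j$. Working out the condition that a JH factor of $\overline{\Theta}(\chi)$ and a JH factor of $\overline{\Theta}(\chi')$ together form the reduction of a principal series type, one finds two solutions related by Frobenius, and after fixing a fundamental domain for the Frobenius action the adjacency relation does become a monotone shift of the parameter. This is the intended verification, and it does establish that $\Gamma_\zeta$ is a single chain; but as written your sketch would need to address the Frobenius normalization to be airtight. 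Similarly, your claim that the degree-one loci contribute exactly two equivalence classes under the central character constraint is asserted rather than proved, though it is correct and routine.
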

\begin{proof}
Assume first that~$\zeta$ is odd.
Twisting by the determinant, we may assume
that~$\zeta|_{\mu_{p-1}(\bQ_p)} = \omega^{-1}$. 
Then there exists a cuspidal type~$\tau$ with central character~$\zeta$ such that~$\overline{\tau} \cong \Sym^{p-2}$, which therefore needs to be sent to one of the components with only one singular point.
By Corollary~\ref{adjacents}, the bijection is determined by choosing which one.

Similarly, if~$\zeta$ is even we can twist and assume that $\zeta|_{\mu_{p-1}(\bQ_p)} = 1$.
In this case, the bijection is determined by choosing the image of the cuspidal type whose reduction contains the trivial $\F[\GL_2(\bF_p)]$-module.
\end{proof}

In what follows we will make an arbitrary choice amongst the two
bijections constructed in Proposition~\ref{labelcomponents}.
Next, we record an additional property of the map $\tau \mapsto X(\tau)$.

\begin{lemma}
If~$\tau_1, \tau_2$ are adjacent cuspidal types, the pair~$(\sigma_1, \sigma_2)$ in Definition~{\em \ref{adjacenttypes}} is unique.
\end{lemma}
\begin{proof}
Let~$\sigma_i'$ be the other Jordan--H\"older factor of~$\overline{\tau}_i$, if any exist.
Let~$x = \dim \sigma_1$. 
Then $\dim \sigma_2 = p+1-x$, $\dim \sigma_1' = p-1-x$,  and $\dim \sigma_2' = x-2$.
Since $\sigma_2$ is not isomorphic to~$\sigma_2'$, and there is a principal series extension between~$\sigma_1$ and~$\sigma_2$, there is no principal series extension between~$\sigma_1$ and~$\sigma_2'$.
Similarly, there is no principal series extension between~$\sigma_1'$ and~$\sigma_2$.
Finally, $\dim \sigma_1' + \dim \sigma_2' = p-3$, so there is no principal series extension between~$\sigma_1'$ and~$\sigma_2'$.
\end{proof}
The following corollary now is immediate (recalling that by definition, a principal series type is irreducible).
\begin{cor}\label{labelintersections}
The map $\tau \mapsto X(\tau)$ induces a bijection from singular points of~$X$ to isomorphism classes of principal series types.
\end{cor}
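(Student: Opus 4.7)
The plan is to define a candidate map $\Phi$ from singular points of $X$ to principal series types (implicitly, with central character $\zeta$), and then prove bijectivity by combining surjectivity with a cardinality count.

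First, I would define $\Phi$ as follows. A singular point $x \in X$ lies on exactly two irreducible components $X(\tau_1)$ and $X(\tau_2)$, and by Proposition~\ref{labelcomponents} the types $\tau_1, \tau_2$ are adjacent. The preceding lemma then provides a unique unordered pair $\{\sigma_1, \sigma_2\}$ of Serre weights with $\sigma_i \in \JH(\bar\tau_i)$ that arises as the Jordan--H\"older set of a principal series type. That principal series type is itself uniquely determined by $\{\sigma_1, \sigma_2\}$: from the explicit description recalled in Section~\ref{subsec: notation and conventions}, the unordered pair $\{\bar\chi_1, \bar\chi_2\}$ of characters of $\F_p^\times$ can be reconstructed from $\{\sigma_1,\sigma_2\}$, and then $\chi_i$ is the Teichm\"uller lift of $\bar\chi_i$. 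Thus $\Phi(x)$ is well-defined.

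For surjectivity, let $\tau$ be a principal series type with central character $\zeta$ and write $\JH(\bar\tau) = \{\sigma_1, \sigma_2\}$. By Proposition~\ref{finiteextensions}, each $\sigma_i$ occurs as a Jordan--H\"older factor of the reduction of some cuspidal type $\tau_i$; since $\sigma_i$ has central character $\bar\zeta|_{\F_p^\times}$, so does $\bar\tau_i$, and lifting to characteristic zero gives $\tau_i$ of central character $\zeta$. A dimension count shows $\tau_1 \neq \tau_2$ --- the Jordan--H\"older factors of a principal series type have dimensions summing to $p+1$, while those of any single cuspidal type sum to at most $p-1$ --- so by Definition~\ref{adjacenttypes} the types $\tau_1, \tau_2$ are adjacent. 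Proposition~\ref{labelcomponents} then produces a singular point $x \in X(\tau_1) \cap X(\tau_2)$, and the uniqueness clause of the preceding lemma forces $\Phi(x) = \tau$.

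To conclude, a cardinality count closes the argument: $X$ has $(p \pm 1)/2$ components and hence $(p \pm 1)/2 - 1$ singular points, while the set of principal series types with central character $\zeta$ corresponds to unordered pairs of distinct characters of $\F_p^\times$ with product $\bar\zeta|_{\F_p^\times}$, of which there are $(p-1)/2$ when $\bar\zeta|_{\F_p^\times}$ is a non-square (i.e.\ $\zeta$ odd) and $(p-3)/2$ when it is a square, matching the singular point count in each case. A surjection between finite sets of equal cardinality is a bijection. The main obstacle I anticipate is the central character bookkeeping --- making sure the cuspidal types produced in the surjectivity argument genuinely carry central character $\zeta$ rather than merely some compatible character, and that the two parity cases of the cardinality count line up cleanly with the length of the chain.
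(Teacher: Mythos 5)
Your argument is essentially correct, but it takes a noticeably longer route than the paper, which simply declares the corollary immediate from the preceding lemma (with the hint ``a principal series type is irreducible''). The paper's intended argument is direct: a singular point corresponds, via Proposition~\ref{labelcomponents}, to an unordered pair of adjacent cuspidal types $\{\tau_1,\tau_2\}$; the preceding lemma then attaches to this a unique pair $\{\sigma_1,\sigma_2\}$ and hence a unique principal series type; conversely, an irreducible principal series type $\tau'$ has two non-Steinberg Jordan--H\"older factors in its reduction, each lying in a unique cuspidal type, recovering the adjacent pair. These two assignments are inverse by the uniqueness clause of the preceding lemma, so it is a bijection with no counting needed. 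Your surjectivity argument already constructs this inverse; the subsequent cardinality count (which is correct --- a chain of $(p\pm 1)/2$ lines has $(p-1)/2$ or $(p-3)/2$ nodes, matching the number of unordered pairs $\{\chi_1,\chi_2\}$ of distinct characters of $\F_p^\times$ with fixed product according to whether $\zeta$ is odd or even) is therefore not wrong but is redundant work.

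There is one small unaddressed point. In the surjectivity step you invoke Proposition~\ref{finiteextensions} to assert that each $\sigma_i\in\JH(\overline{\tau})$ lies in the reduction of some cuspidal type, but case~(4) of that proposition (a twist of $\Sym^{p-1}$) provides no cuspidal type at all. You need to observe that $\sigma_i$ cannot be a twist of $\Sym^{p-1}$ because $\tau$ is an \emph{irreducible} principal series: writing $\chibar_i(x)=x^{n_i}$, the reduction has factors $\Sym^{[n_1-n_2]}\otimes\det^{n_2}$ and $\Sym^{[n_2-n_1]}\otimes\det^{n_1}$ with $[n_i-n_j]\in\{1,\dots,p-2\}$ strictly between $0$ and $p-1$. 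This is precisely the content of the paper's parenthetical, and filling it in makes the argument complete.
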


\begin{rem}\label{rem:sigmasigmacomp}
Although we have chosen to label components of~$X$ by cuspidal types, another parametrization is possible, in terms of Definition~\ref{defn:companion weights}.
If we partition the set of Serre weights~$\Sym^r \otimes \det^s$ such that~$r \ne p-1$, according to orbits of the involution $\sigma \mapsto \sigmacomp$,
the semisimplified mod~$p$ reduction map $\tau \mapsto \JH(\lbar \tau)$ induces a bijection from isomorphism classes of cuspidal types,
to (unordered) pairs $\{\sigma, \sigmacomp\}$.
We can then define $X(\sigmasigmacomp) \coloneq  X(\tau)$, where~$\tau$ is such that $\JH(\lbar \tau) = \{\sigma, \sigmacomp\}$.
This is the parametrization we will use in~\cite{DEGcategoricalLanglands}. 
\end{rem}

\subsection{Coordinates on~$X$.}%
\label{coordinates}
Choose a Serre weight $\sigma$ (as usual with central
character compatible with~$\zeta$, as usual). %
In Section~\ref{subsec:chain} we have chosen coordinates on each irreducible component of~$X$ in such a way that each singular point corresponds to~$0$ on one intersecting component and~$\infty$ on the other.
We will use the isomorphism~\eqref{eqn: Hecke algebra of Serre weight} and the map $\tau \mapsto X(\tau)$ to
construct morphisms from the spectra of various Hecke algebras~$\cH(\sigma)$ to~$X$.
Thus we will be able to regard $\cInd_{KZ}^G \sigma$ as ``lying over'' a 
copy of $\A^1 = \Spec \F[T] = \Spec \cH(\sigma)$, which will form the basis of our localization theory for~$\cA_A$.

\begin{defn}\label{f_sigma}
Let~$\sigma$ be a Serre weight, and assume $\sigma$ is not a twist of~$\Sym^{p-1}$. Then~$\sigma$ is contained in a unique cuspidal type~$\tau$.
Define a morphism $f_\sigma: \Spec \cH(\sigma) \to X$ in the following way.
\begin{enumerate}
\item If~$\sigma$ is not a twist of~$\Sym^0, \Sym^{p-2}$, then it is contained in a unique principal series type~$\tau'$.
Then $f_\sigma: \Spec \cH(\sigma) \cong \bA^1 \to X(\tau)$ is either the map $x \mapsto x$ or the map $x \mapsto x^{-1}\zeta(p)$, where the sign is chosen so that $f_\sigma(0)$ is
the point of~$X(\tau)$ corresponding to~$\tau'$ under
Corollary~\ref{labelintersections}. 
\item If $\sigma \cong \Sym^0 \otimes \det^s$, then~$f_\sigma: \Spec \cH(\sigma) \cong \bA^1 \to X(\tau)$, 
is either the map $x \mapsto x$ or the map $x \mapsto x^{-1}\zeta(p)$,
where the sign is chosen so that $f_\sigma(0)$ is the nonsingular marked point of~$X(\tau)$.
\item If~$\sigma \cong \Sym^{p-2} \otimes \det^s$, then~$f_\sigma : \bA^1 \to X(\tau)$ 
is either the map
$x \mapsto x + x^{-1}\zeta(p)$ or the map $x \mapsto (x+x^{-1}\zeta(p))^{-1}$, 
where the sign is chosen so that $f_\sigma(0)$ is the singular point of~$X(\tau)$.
\end{enumerate}
We extend this definition to the case $\sigma \cong \Sym^{p-1} \otimes \det^{a}$ by letting~$f_{\sigma}$ be the same map as in Case~(2) for 
$\sigma \cong \Sym^0 \otimes \det^s$. Then~$f_\sigma$ is an open immersion whenever~$\sigma$ is not a twist of~$\Sym^{p-2}$.
\end{defn}

\begin{rem}\label{Steinbergdefinition}
The definition in the case of twists of~$\Sym^{p-2}$ is motivated by the existence of extensions between non-isomorphic irreducible quotients 
of $\cInd_{KZ}^G(\Sym^{p-2} \otimes \det^s)$, which does not occur for any other weights.
The definition for twists of~$\Sym^{p-1}$ and~$\Sym^0$ is motivated by 
the fact that, for every polynomial~$f \in k[T]$, 
the representations
\[
\cInd_{KZ}^G(\Sym^{0} \otimes \det\nolimits^s)/f(T), \cInd_{KZ}^G(\Sym^{p-1} \otimes \det\nolimits^s)/f(T)
\]
have the same semisimplification, by
Lemma~\ref{lem:cind homs} and Lemma~\ref{lem: quotient giving St and trivial}.
\end{rem}

The key property of the maps $f_\sigma$ is given in the following proposition. 

\begin{prop}\label{bijection with blocks}
Let~$A$ be a complete Noetherian local $\cO$-algebra,
with %
residue field~$k$.
Let~$x$ be a closed point of~$X \otimes_\bF k$.
Define
\[
\fB_x =
\bigcup_\sigma\bigcup_{\substack{y \,\, \mathrm{s.t.}\\ f_\sigma(y) = x}} \JH\left ( \cInd_{KZ}^G(\sigma)
\otimes_{\cH(\sigma)} y \right ). 
\]

Then~$\fB_x$ is a block of~$\cA^{\ladm}_A$, and the map $x \mapsto \fB_x$ is a bijection from the set of closed points of~$X \otimes_\bF k$ 
to the set of blocks of~$\cA^{\ladm}_{A}$.
\end{prop}
\begin{proof}
The maps~$f_\sigma$ have been defined so as to set up a bijection between the regular closed points of~$X$, and the set of orbits of the involution~\eqref{eqn:block involution}; and 
a bijection between the singular closed points of~$X$, and the Serre weights of irreducible supersingular representations. 
The proposition then follows from Proposition~\ref{prop: blocks over perfect fields}.

In more detail, assume first that~$\zeta$ is even and~$x$ is not a marked point.
There are two possibilities for the set of weights~$\sigma$ such that~$f_\sigma(\bA^1)$ contains~$x$: it either has the form
\[
\{\Sym^r \otimes \det\nolimits^s, \Sym^{p-3-r} \otimes \operatorname{det}^{r+s+1}\}
\]
for some even~$r \ne 0, p-3$ (which are the irreducible subquotients of a cuspidal type), or
\[
\{\Sym^0 \otimes \det\nolimits^s, \Sym^{p-1} \otimes \det\nolimits^s, \Sym^{p-3} \otimes \operatorname{det}^{s+1}\}
\]
(which are the irreducible subquotients of a cuspidal type, together with a Steinberg weight).
Bearing in mind Remark~\ref{Steinbergdefinition}, it follows that in either case there exist~$0 \leq r \leq p-3$ and 
an irreducible monic polynomial $f(T) \ne T \in k[T]$
such that %
\[
\fB_x = \JH \left ( \cInd_{KZ}^G(\Sym^r \otimes \det\nolimits^s)/f(T) \right ) \cup \JH \left (\cInd_{KZ}^G(\Sym^{p-3-r}\otimes\operatorname{det}^{r+s+1})/f^*(T) \right ),
\]
which in the terminology of Remark~\ref{rem:explicit description of blocks} is a block of type~(4) 
if~$r \in \{0, p-3\}$ and $f$ is a linear factor of $T^2-\zeta(p)$,
and a block of type~(2) or type~(5) otherwise (depending on whether~$f$ is linear or not).

If~$\zeta$ is even and~$x$ is a marked point,
the set of~$\sigma$ such that~$f_\sigma(\bA^1)$ contains~$x$ has the form
\[
\{\Sym^r \otimes \det\nolimits^s, \Sym^{p-1-r}\otimes \operatorname{det}^{r+s}\}
\]
which are the irreducible subquotients of a principal series type.
Thus
\[
\fB_x = \JH \left (\cInd_{KZ}^G(\Sym^r \otimes \det\nolimits^s)/T \right ) \cup \JH \left (\cInd_{KZ}^G(\Sym^{p-1-r}\otimes\operatorname{det}^{r+s})/T \right ),
\]
and these are two isomorphic supersingular irreducible representations, so $\fB_x$ is a block of type~(1).

The above discussion, together with Proposition~\ref{prop: blocks over perfect fields}, shows that $x \mapsto \fB_x$ is a bijection in the case that~$\zeta$ is even.
The analysis in the case that~$\zeta$ is odd is similar.
The only difference occurs when~$x$ is a point of a component indexed by a cuspidal type whose reduction is isomorphic to $\sigma = \Sym^{p-2} \otimes \det^s$.

In this case, if~$x$ is a singular point of~$X$, then~$f_\sigma^{-1}(x) = \{0\}$, so~$\fB_x$ is again a block of type~(1).
On the other hand, if~$x$ is a regular point of~$X$, 
then there exists an irreducible monic~$f \ne T \in k[T]$
such that $f_\sigma^{-1}(x)$ is the union of the vanishing sets of~$f$ and~$f^*$.
If~$f \ne f^*$, then
$\fB_x$ is a block of type~(2) (if~$f$ is linear) or a block of type~(5) (if~$f$ is not linear). %
If~$f = f^*$, then $\fB_x$ is a block of type~(3) (if~$f$ is linear) or a block of type~(5) (if~$f$ is not linear).
An application of Proposition~\ref{prop: blocks over perfect fields} then concludes the proof.
\end{proof}

\begin{rem}\label{rem: ambiguity of choice}
  In light of Proposition~\ref{bijection with blocks}, the existence of 
  two bijections with the property described in Proposition~\ref{labelcomponents} 
  is related to the fact that the
  group~$\bZ/2 \times \bZ/2$ acts on the category~$\cA$ by
  autoequivalences arising from twisting by the quadratic characters
  $\bQ_p^\times \to \F^\times$.
For while twisting by the unramified quadratic character gives rise to an isomorphism
\[
(\cInd_{KZ}^G \Sym^r \otimes \operatorname{det}^s)/(T+\lambda) \cong (\nr_{-1} \circ \operatorname{det})\otimes (\cInd_{KZ}^G \Sym^r \otimes \operatorname{det}^s)/(T-\lambda),
\]
twisting by ramified characters changes the $K$-socle of an irreducible object of~$\cA$.
\end{rem}

\subsection{Bernstein centres of blocks.}\label{subsec: Bernstein
  centres of blocks}
The results in this section might be of independent interest, but they are quite technical in nature and will only be applied in Section~\ref{subsec: BL gluing}.
We let~$A = \cO$ and~$k = \bF$.
Let~$x$ denote a block of absolutely irreducible objects
of~$\cA^{\ladm}$, and write $\cA_x$ for the full subcategory of~$\cA$ such that~$\pi \in \cA_x$ if and only if every irreducible subquotient of~$\pi$ is contained in~$x$.
Since every proper quotient of~$\cInd_{KZ}^G(\sigma)$ has finite length, by Corollary~\ref{cor:cofinite length}, we see that every object of~$\cA_x$ is locally admissible.
(This is a special case of the statement, and proof, of Lemma~\ref{lem:finite sets} below.)

The paper~\cite{MR3150248} describes an equivalence of~$\cA_x$ with a category of modules over a ring~$\tld E_x$ defined as follows. 
Let $\pi_x = \bigoplus_{\pi \in x} \pi$ and choose an injective envelope $\pi_x \to J_x$, so that~$P_x = J_x^\vee$ is a projective envelope of~$\pi_x^\vee$.
Let~$\tld E_x$ be the endomorphism ring of~$J_x$, which is naturally a topological ring.
Then the functor $\Hom_{\cA_x}(-, J_x)$ is an equivalence of~$\cA_x$ with the category of compact left $\tld E_x$-modules, and so it defines an isomorphism between the Bernstein centre of~$\cA_x$ and the centre~$Z_x$ of~$\tld E_x$.

\begin{remark}\label{Pontrjagin}
If~$\tau \in \cA_x$ and~$J$ is an injective object in~$\cA_x$, and~$E = \End_G(J)$, then~$E$ is a compact ring and~$\Hom_{\cA_x}(\tau, J)$ is a compact $E$-module: this follows from~\cite[Section~IV.4]{Gabrielthesis}.
Hence its Pontrjagin dual~$\Hom_{\cA_x}(\tau, J)^\vee$ is a discrete $E$-module.

On the other hand, $\Hom_{\cA}(\tau, J)$ still makes sense for general~$\tau \in \cA$.
In particular, it follows from~\cite[Lemma~2.1]{PaskunasBM} that if~$\lambda$ is  a finite length $KZ$-representation, then the action of $E$ on $\Hom_{\cA}(\cInd_{KZ}^{G}\lambda, J)$ is continuous for the discrete topology on this module.
Hence $\Hom_{\cA}(\cInd_{KZ}^{G}\lambda, J)^\vee$ is a compact $E$-module.
\end{remark}

If~$\pi \in x$ is an irreducible object we will write~$\pi \to J_\pi$ for an injective envelope of~$\pi$ in~$\cA_x$. 
It is often (but not always) the case that~$Z_x$ is isomorphic to~$\End(J_\pi)$ for an irreducible object~$\pi \in x$.
More precisely, we have the following result of Pa{\v s}k{\= u}nas.

\begin{thm}\label{centrethm}
Let~$x$ be a block of absolutely irreducible objects of~$\cA$, and choose~$\pi \in x$.
Then the natural map~$Z_x \to \End_G(J_\pi)$ is an isomorphism if~$x$ is a block of type~(1), (2) or~(4).
\end{thm}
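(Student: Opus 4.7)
The plan is to leverage Pa\v{s}k\={u}nas's explicit computation of the endomorphism rings of injective envelopes (from~\cite{MR3150248}) and then identify the center of the full ring $\tld E_x=\End_G(J_x)$, noting that $J_x=\bigoplus_{\pi\in x}J_\pi$ and hence
\[
\tld E_x \;\cong\; \bigl(\Hom_G(J_{\pi_j},J_{\pi_i})\bigr)_{i,j},
\]
where the $\pi_i$ range over the elements of the block. The Bernstein centre $Z_x$ is by definition the centre of this ``matrix ring'', and the claimed natural map $Z_x\to\End_G(J_\pi)$ is simply the projection to a diagonal entry. Thus the whole statement reduces to showing (i) that this projection is injective, and (ii) that every element of $\End_G(J_\pi)$ lifts uniquely to a central element of $\tld E_x$.

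First I would treat the supersingular case (type~(1)), where the block is a singleton $\{\pi\}$ so that $J_x=J_\pi$ and $\tld E_x=\End_G(J_\pi)$. Here Pa\v{s}k\={u}nas proves that $\End_G(J_\pi)$ is commutative (it is a deformation ring of the corresponding Galois representation, abstractly isomorphic to a power series ring), hence trivially equal to its own centre, and the theorem is immediate. The injectivity statement (i) is automatic.

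Next I would address the principal series case (type~(2)), where $x=\{\pi_1,\pi_2\}$. By Pa\v{s}k\={u}nas's structure theorem, the diagonal entries $\End_G(J_{\pi_1})$ and $\End_G(J_{\pi_2})$ are both commutative and canonically isomorphic, while each off-diagonal Hom space $\Hom_G(J_{\pi_j},J_{\pi_i})$ is an invertible bimodule over the two diagonal rings. Centrality of a diagonal element $(f_1,f_2)$ is then equivalent to the compatibility $f_1\varphi=\varphi f_2$ for $\varphi$ running over a generator of $\Hom_G(J_{\pi_2},J_{\pi_1})$ (and similarly in the other direction). Since this off-diagonal bimodule is rank one on each side, this compatibility uniquely determines $f_2$ from $f_1$ (and vice versa), which yields both the injectivity and surjectivity of the projection $Z_x\to\End_G(J_{\pi_i})$. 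A key point is that no central element has a nonzero off-diagonal component, which I would verify by multiplying an arbitrary element by the two diagonal idempotents coming from the decomposition $J_x=J_{\pi_1}\oplus J_{\pi_2}$.

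Finally, for type~(4), where $x=\{\chi,\chi\otimes\St_G,\Ind_B^G(\omega\chi\otimes\omega^{-1}\chi)\}$, I would run essentially the same argument with a $3\times 3$ decomposition. The additional input needed from Pa\v{s}k\={u}nas is that all three diagonal entries $\End_G(J_{\pi_i})$ are commutative, mutually isomorphic, and that each off-diagonal Hom space is an invertible bimodule of rank one over the appropriate diagonal rings (with suitable composition relations). Once this is granted, the centre of $\tld E_x$ again consists of ``diagonal'' triples $(f_1,f_2,f_3)$ subject to compatibility under the off-diagonal transitions, and the three compatibilities force $f_2$ and $f_3$ to be determined by $f_1$. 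The hard part will be carrying out this bookkeeping cleanly and invoking precisely the right statements from~\cite{MR3150248}; in particular, one must avoid the temptation to give a purely formal matrix-ring argument, since type~(3) — excluded from the statement — is precisely the case where $\End_G(J_\pi)$ itself fails to be commutative, so some input that distinguishes the three listed types from type~(3) is essential, and the commutativity of the diagonal entries is exactly that input.
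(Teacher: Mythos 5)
The paper's proof is a one-line citation: \cite[Prop.~6.3]{MR3150248} for type~(1), \cite[Cor.~8.7]{MR3150248} for type~(2), and \cite[Cor.~10.78, Thm.~10.87]{MR3150248} for type~(4) --- these results of Pa\v{s}k\={u}nas state the theorem essentially verbatim. Your route is genuinely different in form: rather than invoking those statements directly, you would cite the underlying structural description of $\tld E_x$ as a generalized matrix ring and extract its centre by a ``diagonal bookkeeping'' argument. This is more explanatory in spirit, and you correctly isolate the input that separates types~(1),~(2),~(4) from the excluded type~(3), namely commutativity of the diagonal entries $\End_G(J_{\pi_i})$. But it is not really more self-contained, since the bimodule facts you rely on (off-diagonal $\Hom$ spaces being invertible of rank one over commutative diagonal entries) are themselves hard results from \cite{MR3150248} and are of the same order of difficulty as the corollaries the paper cites.

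Where I would push back more concretely is on type~(4). You assert that ``all three diagonal entries $\End_G(J_{\pi_i})$ are commutative, mutually isomorphic, and that each off-diagonal Hom space is an invertible bimodule of rank one,'' and from this you want a clean $3\times3$ argument. The actual structure in \cite[Ch.~10]{MR3150248} is more delicate than this: the trivial character in the block is a genuine obstruction, and indeed the paper's own proof of Proposition~\ref{centreelementsI} is forced to pass to a quotient category of $\cA_x$ that kills representations with trivial $G$-action, precisely to avoid working naively with all three summands of $J_x$. A direct $3\times3$ matrix-ring argument with the three idempotents will not go through as sketched. In addition, even granting the rank-one bimodule picture, your argument as written only shows that each $f_1$ determines compatible $f_2$, $f_3$ via the $\Hom$'s out of the first summand; you still have to verify the remaining compatibility $f_2\varphi_{23}=\varphi_{23}f_3$, which requires that the various transport isomorphisms $\End_G(J_{\pi_i})\iso\End_G(J_{\pi_j})$ commute around the triangle (this uses that the composites $\varphi_{ij}\varphi_{ji}$ are central, which in turn uses commutativity again). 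You don't address this consistency step, and it is not automatic. Given all this, simply citing the three Pa\v{s}k\={u}nas results, as the paper does, is the cleaner option.
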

\begin{proof}
This is~\cite[Proposition~6.3]{MR3150248} for type~(1), \cite[Corollary~8.7]{MR3150248} for type~(2), and~\cite[Corollary~10.78, Theorem~10.87]{MR3150248} for type~(4).
\end{proof}

We will often apply this result together with the following proposition.

\begin{prop}\label{cyclicity}
Let~$\pi$ be an absolutely irreducible object of~$\cA$.
Let~$x$ be the block of~$\cA$ containing~$\pi$, and choose an injective envelope $\pi \to J_\pi$ in~$\cA_x$.
If~$\sigma$ is a Serre weight, then
\[
\Hom_\cA(\cInd_{KZ}^G\sigma, J_\pi)^\vee
\]
is a cyclic module over the endomorphism ring~$E_\pi = \End_{\cA_x}(J_\pi)$.
\end{prop}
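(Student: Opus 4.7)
The plan is to prove cyclicity of $M := \Hom_\cA(\cInd_{KZ}^G\sigma, J_\pi)^\vee$ over $E_\pi$ by combining topological Nakayama with Pontrjagin duality, reducing to a multiplicity-one statement for Serre weights in the $K$-socle of $\pi$.

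First I would record that $E_\pi$ is a local ring with residue field $k = \End_G(\pi)$: since $J_\pi$ is the injective envelope of the absolutely irreducible object $\pi$ in $\cA_x$, it is indecomposable, so $E_\pi$ is local, and restriction to the socle identifies $E_\pi/\fm_{E_\pi}$ with $\End_G(\pi) = k$. By topological Nakayama for compact modules over the topological local ring $E_\pi$, the module $M$ is cyclic provided $\dim_k(M/\fm_{E_\pi} M) \le 1$; Pontrjagin-dualizing, this dimension equals the $k$-dimension of the $\fm_{E_\pi}$-torsion submodule of $M^\vee = \Hom_\cA(\cInd_{KZ}^G\sigma, J_\pi)$, on which $E_\pi$ acts by post-composition.

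The next step is to identify this torsion. A morphism $\alpha \colon \cInd_{KZ}^G\sigma \to J_\pi$ is killed by $\fm_{E_\pi}$ precisely when its image lies in the $G$-subrepresentation
\[
J_\pi[\fm_{E_\pi}] = \{v \in J_\pi : \phi(v) = 0 \text{ for all } \phi \in \fm_{E_\pi}\},
\]
so the torsion is $\Hom_\cA(\cInd_{KZ}^G\sigma, J_\pi[\fm_{E_\pi}])$. The key claim is that $J_\pi[\fm_{E_\pi}]$ coincides with the $G$-socle $\pi$ of $J_\pi$. The inclusion $\pi \subset J_\pi[\fm_{E_\pi}]$ is immediate, because $\fm_{E_\pi}$ is the kernel of the restriction map $E_\pi \to \End_G(\pi) = k$. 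The reverse inclusion follows by Pontrjagin-dualizing the statement $P_\pi / P_\pi \fm_{E_\pi} = \pi^\vee$, which holds because $P_\pi = J_\pi^\vee$ is a projective envelope of $\pi^\vee$ in the category of compact $\tld E_x$-modules, whose top as a right $E_\pi$-module is therefore $\pi^\vee$.

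Combining the above with the adjunction \eqref{eqn:ext adjunction} identifies the $\fm_{E_\pi}$-torsion of $M^\vee$ with $\Hom_{KZ}(\sigma, \pi)$, i.e.\ the multiplicity of $\sigma$ in the $K$-socle of $\pi$. The proof concludes by invoking the standard multiplicity-one result for $\GL_2(\Qp)$: for every absolutely irreducible $\pi$ in $\cA$ and every Serre weight $\sigma$, the weight $\sigma$ occurs in $\soc_K\pi$ with multiplicity at most one. This can be read off case-by-case from the classification in Theorem~\ref{classification} --- trivially for characters and Steinberg twists, from the Jordan--H\"older factors of principal series types for the principal series case, and from the defining surjection $\cInd_{KZ}^G\sigma' \twoheadrightarrow \pi$ (together with Breuil's identification of supersingulars) in the supersingular case. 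The only genuinely delicate step is the identification $J_\pi[\fm_{E_\pi}] = \pi$, for which one must carefully track the left $E_\pi$-action on $J_\pi$ against the right $E_\pi$-action on $P_\pi$ through Pontrjagin duality.
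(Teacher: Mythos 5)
Your overall strategy --- topological Nakayama to reduce to showing $\dim_k \Hom_\cA(\cInd_{KZ}^G\sigma, J_\pi[\fm_{E_\pi}]) \le 1$ --- is exactly the paper's. But the pivotal claim $J_\pi[\fm_{E_\pi}] = \pi$ is false whenever the block $x$ contains more than one irreducible, which happens for blocks of types (2) and (4). Here is a direct computation: $\fm_{E_\pi} = \{\phi \in E_\pi : \phi(\pi) = 0\}$ is naturally identified with $\Hom_\cA(J_\pi/\pi, J_\pi)$, which by injectivity restricts isomorphically onto $\Hom_\cA(\soc(J_\pi/\pi), \pi)$. Now $\soc(J_\pi/\pi) \cong \bigoplus_{\pi'} (\pi')^{\oplus \dim \Ext^1(\pi',\pi)}$, and maps to $\pi$ only see the $\pi$-isotypic summand, so
\[
\soc\bigl(J_\pi[\fm_{E_\pi}]/\pi\bigr) \;=\; \bigcap_{\phi \in \fm_{E_\pi}} \ker\bigl(\phi|_{\soc(J_\pi/\pi)}\bigr) \;=\; \bigoplus_{\pi' \ne \pi}(\pi')^{\oplus \dim \Ext^1(\pi',\pi)}.
\]
This is nonzero as soon as $\pi$ has a nonsplit extension by some other irreducible in its block, i.e.\ precisely when the block is not a singleton. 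The source of the error is the slide from ``$P_\pi$ has top $\pi^\vee$ in the category of compact $\tld E_x$-modules'' (true: $P_\pi/\operatorname{rad}(P_\pi) = \pi^\vee$) to ``$P_\pi/P_\pi\fm_{E_\pi} = \pi^\vee$''. In general $P_\pi\fm_{E_\pi} \subsetneq \operatorname{rad}(P_\pi)$; the two coincide only in special situations (e.g.\ $P_\pi$ a generator, or the block having a unique simple). The toy example of the upper-triangular $2\times 2$ algebra over a field, where the indecomposable projective of length two has trivial endomorphism ring, already exhibits the failure.

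Consequently the reduction to the $K$-socle multiplicity-one statement for $\pi$ alone does not go through: $\Hom_\cA(\cInd_{KZ}^G\sigma, J_\pi[\fm_{E_\pi}])$ can receive contributions from maps whose image is a nontrivial extension of some $\pi' \ne \pi$ by $\pi$ (e.g.\ $\cInd_{KZ}^G\Sym^0/(T-1)$, the nonsplit extension of the trivial character by $\St_G$, sits inside $J_{\St_G}[\fm_{\St_G}]$). The paper's proof avoids the false identification: it only records that $\pi$ occurs in $J_\pi[\fm_{E_\pi}]$ with Jordan--H\"older multiplicity one, and then closes the argument by showing that $\cInd_{KZ}^G\sigma$ has at most one locally admissible quotient $X$ with $G$-socle $\pi$ in which $\pi$ occurs with multiplicity one --- an analysis of quotients of $\cInd_{KZ}^G\sigma/(T-\lambda)^n$ that handles the $\Sym^0$/$\Sym^{p-1}$ exceptional cases via the $G$-socle. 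To repair your argument you would need to replace the identification $J_\pi[\fm_{E_\pi}] = \pi$ with this weaker multiplicity statement and supply the quotient-uniqueness step.
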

\begin{proof}
This is proved in most cases in~\cite[Proposition~2.9]{MR3429471}.
We give a different, uniform proof as follows.
Let~$\fm_\pi$ be the Jacobson radical of~$E_\pi$.
By Nakayama's lemma for compact modules, it suffices to prove that $\Hom_\cA(\cInd_{KZ}^G\sigma, J_\pi)^\vee/\fm_\pi$ is at most one-dimensional.
We know that
\[
\Hom_\cA(\cInd_{KZ}^G\sigma, J_\pi)^\vee/\fm_\pi \cong \Hom_\cA(\cInd_{KZ}^G\sigma, J_\pi[\fm_\pi])^\vee
\]
and the representation~$J_\pi[\fm_\pi]$ contains~$\pi$ as a subquotient with multiplicity one.
It therefore suffices to show that $\cInd_{KZ}^G\sigma$ has at most one locally admissible quotient~$\Pi$ with socle~$\pi$ appearing with multiplicity one in~$\Pi$.

To see this, observe that  by Lemma~\ref{quotient by polynomial}, $\Pi$ is necessarily a quotient of~$\cInd_{KZ}^G\sigma/f(T)$ for some Hecke operator~$f(T)$.
Since the socle of~$\Pi$ is absolutely irreducible we may assume that~$f(T) = (T-\lambda)^n$ for some~$n$.
Since~$\Pi$ contains~$\pi$ as a subquotient with multiplicity one, we may furthermore assume that~$n = 1$.
Hence~$\Pi$ is irreducible except possibly in the case that~$\sigma$ is a twist of~$\Sym^0$ or~$\Sym^{p-1}$ and~$\lambda = \pm \zeta(p)^{1/2}$.
However, in this case there are only two possibilities for~$\Pi$, and they are distinguished by their $G$-socle (indeed they non-isomorphic absolutely irreducible $G$-representations).
\end{proof}

Now let~$\sigma$ be a Serre weight and choose~$\lambda \in \bF$.
In Section~\ref{subsec: BL gluing} we will be working with the inverse system~$\{\tau_n\}$ of finite length $G$-representations defined by
\[
\tau_n = \cInd_{KZ}^G\sigma/(T-\lambda)^n.
\]
These are all contained in the same block, which we will denote by~$x$.
There is an action of the Hecke algebra~$\cH(\sigma)$ on~$\tau_n$ for all~$n$, and the transition maps are equivariant for this action.
In fact, the action extends to the completion of~$\cH(\sigma)$ at the maximal ideal generated by~$(T-\lambda)$, which is isomorphic to a power series ring in one variable.
We will show that in most cases this action is induced by the Bernstein centre of the block~$\cA_x$.

\begin{prop}\label{centreelementsI}
In the notation of the previous paragraph, if~$x$ has type~(1), (2) or~(4) then there exists an element of the Bernstein centre~$Z_x$ of~$\cA_x$ inducing the Hecke operator~$T$ on~$\tau_n$ for all~$n$.
\end{prop}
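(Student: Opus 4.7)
The plan is to construct $z$ as an element of $\End_G(J_\pi) \cong Z_x$ for a suitable absolutely irreducible $\pi \in x$, chosen so that postcomposition by $z$ agrees with precomposition by $T$ on the module $\Hom_G(\cInd_{KZ}^G \sigma, J_\pi)$; a naturality argument will then show that this same $z$ induces $T$ on every $\tau_n$.

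First I would set up the choice of $\pi$. By Lemma~\ref{lem: uniserial} the module $(\cInd_{KZ}^G \sigma)[1/(T-\lambda)]/\cInd_{KZ}^G \sigma$ is uniserial, and multiplication by $(T-\lambda)^n$ identifies $\tau_n$ with the submodule $(T-\lambda)^{-n}\cInd_{KZ}^G \sigma / \cInd_{KZ}^G \sigma$. Hence every $\tau_n$ is uniserial with the same simple socle $\pi$, namely the socle of the ambient uniserial module. Since $x$ is of type (1), (2), or (4), the object $\pi$ is absolutely irreducible, and Theorem~\ref{centrethm} supplies a canonical isomorphism $Z_x \isoto E_\pi := \End_G(J_\pi)$; in particular $E_\pi$ is commutative.

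Next I would construct $z$. Consider $M := \Hom_G(\cInd_{KZ}^G \sigma, J_\pi)$, equipped with commuting actions of $\cH(\sigma)$ by precomposition and of $E_\pi$ by postcomposition. By Remark~\ref{Pontrjagin}, $M$ is a compact $E_\pi$-module, and by Proposition~\ref{cyclicity} the Pontryagin dual $M^\vee$ is a cyclic $E_\pi$-module. Since $E_\pi$ is commutative, any $E_\pi$-linear endomorphism of a cyclic module $E_\pi/I$ is multiplication by an element of $E_\pi$. Applying this to the induced action of $T$ on $M^\vee$, which is $E_\pi$-linear because the two actions on $M$ commute, I obtain an element $z \in E_\pi = Z_x$ such that precomposition by $T$ on $M$ equals postcomposition by $z$.

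Finally I would verify that $z \in Z_x$ induces $T$ on each $\tau_n$. Fix an embedding $\iota_n \colon \tau_n \hookrightarrow J_\pi$ extending the inclusion of the socle, which exists because $J_\pi$ is the injective envelope of $\pi = \soc(\tau_n)$. Writing $z_{\tau_n}$ for the endomorphism of $\tau_n$ induced by the central element $z$, naturality gives $\iota_n \circ z_{\tau_n} = z \circ \iota_n$. On the other hand, regarding $\iota_n$ as an element of $\Hom_G(\tau_n, J_\pi) \hookrightarrow M$ via precomposition with the surjection $\cInd_{KZ}^G \sigma \twoheadrightarrow \tau_n$, the identity established above together with the $T$-equivariance of this surjection yields $\iota_n \circ T = z \circ \iota_n$. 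Injectivity of $\iota_n$ forces $T = z_{\tau_n}$ in $\End_G(\tau_n)$, and since this holds uniformly in $n$ the proposition follows.

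I expect the main technical point to be the cyclicity-plus-commutativity step in the second paragraph: extracting $z$ as an element of $E_\pi$ itself rather than of the quotient $E_\pi/\operatorname{Ann}_{E_\pi}(M^\vee)$ requires a clean lifting, and this step crucially uses that the block is of type (1), (2), or (4) in order that $E_\pi$ coincide with $Z_x$ and hence be commutative. The remaining verifications are formal consequences of the definition of the Bernstein centre and the injectivity of $\iota_n$.
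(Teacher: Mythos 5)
Your proof is correct, and it takes a genuinely cleaner route than the paper's. The paper splits into two cases: when $\tau_1$ is irreducible it uses full faithfulness of $M_\pi(-)=\Hom_G(-,J_\pi)^\vee$ on the subcategory generated by the $\tau_n$ (which holds because $\Hom(\tau_i,J_{\pi'})=0$ for $\pi'\ne\pi$) to descend the identity $T=T_\pi$ from $M_\pi(\cInd_{KZ}^G\sigma)$ to each $\tau_i$; when $\tau_1$ is not irreducible (the twist of $\Sym^0/\Sym^{p-1}$, $\lambda=\pm1$ case) this full faithfulness fails, and the paper has to pass to the quotient category $\fQ(\kbase)=\cA_x/\fT(\kbase)$, obtain the identity only up to maps with trivial $G$-action, and then use uniseriality together with compatibility with the projections $\tau_{i+1}\to\tau_i$ to remove the ambiguity. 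Your approach extracts the same element $z\in E_\pi\cong Z_x$ from cyclicity of $M^\vee$, but replaces the descent-by-full-faithfulness with the direct observation that there is an injective map $\iota_n\colon\tau_n\hookrightarrow J_\pi$ extending the socle inclusion (using uniseriality of $\tau_n$ and hence the fact that $\pi=\soc(\tau_n)$ is independent of $n$), and then $\iota_n\circ T_{\tau_n}=z\circ\iota_n=\iota_n\circ z_{\tau_n}$ forces $T_{\tau_n}=z_{\tau_n}$ by left-cancellation. This handles both cases uniformly and eliminates the quotient-category detour entirely, which is a real simplification. One small inaccuracy: Remark~\ref{Pontrjagin} gives that $M^\vee$ is compact (equivalently $M$ is discrete), not that $M$ itself is compact; this has no bearing on your argument, which only uses the cyclicity of $M^\vee$ from Proposition~\ref{cyclicity} together with commutativity of $E_\pi$.
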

\begin{proof}
Assume first that~$\pi = \tau_1$ is irreducible, so that every composition factor of~$\tau_i$ is isomorphic to~$\pi$.
If~$\pi'$ is another irreducible object of~$\cA_x$ and $\pi' \to J_{\pi'}$ is an injective envelope in~$\cA_{x}$ then the space $\Hom_{\cA}(\tau_i, J_{\pi'})$ vanishes for all~$i$, since $\operatorname{soc}_G(J_{\pi'}) \cong \tau_i$ is not a composition factor of~$\tau_i$.
Hence the functor
\[
M_\pi(-) = \Hom_{\cA}(-, J_\pi)^\vee
\]
is fully faithful on the full subcategory of~$\cA_x$ generated by the representations~$\{\tau_n\}$.
By Proposition~\ref{cyclicity} the $E_\pi$-module~$M_\pi(\cInd_{KZ}^G\sigma)$ is cyclic,
while by Theorem~\ref{centrethm} the natural map~$Z_x \to E_\pi$ is an isomorphism.
Thus there exists~$T_\pi \in Z_x$ such that~$T_\pi$ induces the action of $T$ as an endomorphism of~$M_\pi(\cInd_{KZ}^G\sigma)$,
which is to say (given the full faithfulness of $M_\pi(-)$) that the element $T_\pi$ acts on $\cInd_{KZ}^G\sigma$ itself via~$T$.
Since~$\cInd_{KZ}^G\sigma$ surjects onto~$\tau_i$ equivariantly for~$T$ (by definition)
and~$T_\pi$ (since it is an element of the Bernstein centre),
the actions of $T$ and $T_{\pi}$ coincide on each~$\tau_i$.
The proposition thus follows in the case that~$\tau_1$ is irreducible.

After twisting (if necessary), there remains to prove the proposition in the case that~$\sigma = \Sym^0, \Sym^{p-1}$ and~$\lambda = 1$. 
Hence~$\tau_1$ is a nonsplit extension of~$1$ by~$\St$ (or vice versa) and the block containing~$\tau_1$ is $x = \{1, \St, \pi\}$, where~$\pi = \Ind_B^G(\omega \otimes \omega^{-1})$.
We will follow~\cite[Section~10.4]{MR3150248} and work in a quotient category of~$\cA_x$.
By~\cite[Lemma~10.84]{MR3150248} the kernel of the functor
\[
\Hom_{\cA_x}(-, J_{\pi} \oplus J_{\St})^\vee
\] 
is the category~$\mathfrak{T}(k)$ of modules with trivial $G$-action. 
By~\cite[IV.4, Th\'eor\`eme~4]{Gabrielthesis} this functor defines an equivalence of the quotient category $\fQ(\kbase) = \cA_x/\fT(\kbase)$ with the category of left discrete $\End_G( J_{\pi} \oplus J_{\St})$-modules. (Compare the proof of~\cite[Corollary~10.85]{MR3150248}, and note that our categories are opposite to those denoted in~\cite{MR3150248} by the same symbols.)
More precisely, while it is the functor
\[
\Hom_{\fQ(\kbase)}(-, J_{\pi} \oplus J_{\St})
\]
that gives an equivalence, by the equivalence of~(b) and~(c) in~\cite[III.2, Lemme~1]{Gabrielthesis}, we see that the natural map
\[
\Hom_{\cA_x}(X, J_{\pi} \oplus J_{\St}) \to \Hom_{\fQ(\kbase)}(X, J_{\pi} \oplus J_{\St})
\]
is an isomorphism for every object~$X$ of~$\cA_x$.

Since $\Hom_{\cA}(\tau_i, J_\pi) = 0$ for all~$i$, the functor $M_\St(-) = \Hom_{\cA}(-, J_\St)^\vee$ induces a fully faithful functor on the full subcategory of~$\fQ(\kbase)$ generated by the~$\tau_i$.
By Proposition~\ref{cyclicity} we know that $M_\St(\cInd_{KZ}^G\sigma)$ is cyclic over the endomorphism ring~$E_\St$ of~$J_\St$.
Arguing as before, it follows from Theorem~\ref{centrethm} that there exists~$T_x \in E_\St \cong Z_x$ such that $T = T_x$ as endomorphisms of~$\Hom_{\cA_x}(\tau_i, J_\St)$.
The faithfulness of~$M_\St(-)$ implies that $T = T_x$ as elements of~$\End_{\fQ(k)}(\tau_i)$.
This implies that the image in~$\cA_x$ of
\[
(T-T_x) : \tau_i \to \tau_i
\]
has trivial $G$-action.
Considering this statement for~$\tau_{i+1}$, and using that (as an immediate consequence of Lemma~\ref{lem: uniserial})  the only submodule of~$\tau_{i+1}$ with trivial $G$-action has length at most one, together with the fact that~$T-T_x$ commutes with the projection $\tau_{i+1} \to \tau_i$, it follows that~$T = T_x$ as elements of~$\End_{\cA_x}(\tau_i)$, as required.
\end{proof}

Blocks of type~(3) are an exception to Theorem~\ref{centrethm} and Proposition~\ref{centreelementsI} because the rings~$\End_G(J_\pi)$ are no longer commutative.
The following weaker result will suffice for our purposes; by a more detailed analysis of the endomorphism rings~$E_x$, taking into account their relationship with Galois pseudodeformation rings, it should actually be possible to compute explicit integral dependence equations for Hecke operators over the centre of the block. 

\begin{prop}\label{centreelementsII}
Let~$\sigma$ be a twist of~$\Sym^{p-2}$, let~$\lambda = \pm 1$, and let
\[
\tau_i = \cInd_{KZ}^G\sigma/(T-\lambda)^i.
\]
Let~$x$ be the block of type~(3) containing the irreducible representation~$\pi = \tau_1$.
Then there exist a positive integer~$n$ and central elements~$z_0, z_1, \ldots, z_n \in Z_x$ contained in the unique maximal ideal~$\fm_x \subset Z_x$ such that
\[
(T-\lambda)^{n+1}+z_n(T-\lambda)^n + \cdots + z_1(T-\lambda) + z_0 = 0
\]
holds in $\End_G(\tau_i)$ for all~$i$.
\end{prop}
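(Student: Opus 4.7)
I plan to use the equivalence between $\cA_x$ and the category of compact left $\tld E_x$-modules (where $\tld E_x = E_\pi = \End_G(J_\pi)$), combined with the fact that $\tld E_x$ is a finite module over its centre $Z_x$, as used in the proof of Proposition~\ref{injectiveArtinian}. Write $M(-) := \Hom_G(-, J_\pi)^\vee$ for the equivalence. Since $\pi$ is absolutely irreducible and $J_\pi$ is its injective envelope, $\tld E_x$ is local with maximal ideal $\fm$ and residue field $\kbase$, and $M(\pi) \cong \kbase$ is the unique simple left $\tld E_x$-module.

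The first step is to apply Cayley--Hamilton. By Proposition~\ref{cyclicity}, $M(\cInd_{KZ}^G\sigma)$ is a cyclic $\tld E_x$-module, and since $\tld E_x$ is finitely generated over $Z_x$, this module is also finitely generated over $Z_x$. The Hecke operator $T$ induces a $Z_x$-linear endomorphism $T_M$ of $M(\cInd_{KZ}^G\sigma)$, and Cayley--Hamilton applied to $T_M - \lambda$ produces a monic polynomial
\[
f(y) = y^{n+1} + z_n y^n + \cdots + z_1 y + z_0 \in Z_x[y]
\]
with $f(T_M - \lambda) = 0$.

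The second step is to verify that each $z_i$ lies in $\fm_x$. The surjection $\cInd_{KZ}^G\sigma \twoheadrightarrow \tau_1 = \pi$ induces, under $M$, a surjection $M(\cInd_{KZ}^G\sigma) \twoheadrightarrow M(\pi) \cong \kbase$. Because $M(\cInd_{KZ}^G\sigma)$ is cyclic as an $\tld E_x$-module and $M(\pi)$ is its unique simple quotient, the kernel of this surjection is $\fm \cdot M(\cInd_{KZ}^G\sigma)$. Since $T-\lambda$ acts as zero on $\pi$, we conclude $(T_M - \lambda)(M(\cInd_{KZ}^G\sigma)) \subseteq \fm \cdot M(\cInd_{KZ}^G\sigma)$. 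Reducing $f$ modulo $\fm_x$ gives the characteristic polynomial of $\overline{T_M - \lambda}$ acting on $M(\cInd_{KZ}^G\sigma)/\fm_x M(\cInd_{KZ}^G\sigma)$, a finite-dimensional module over the finite-dimensional local $\kbase$-algebra $\tld E_x/\fm_x \tld E_x$, whose maximal ideal is nilpotent. Iterating the inclusion forces $\overline{T_M - \lambda}$ to be nilpotent, so its characteristic polynomial equals $y^{n+1}$, i.e.\ each $z_i$ reduces to $0$ modulo $\fm_x$.

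Finally, $f(T_M - \lambda) = 0$ translates under the equivalence $M$ to $f(T - \lambda) = 0$ as an endomorphism of $\cInd_{KZ}^G\sigma$ (with each $z_i$ acting through the Bernstein centre action of $Z_x$), and this identity descends to every quotient $\tau_i$. The main obstacle I anticipate is the non-commutativity of $\tld E_x$ for type~(3) blocks -- which is precisely why Theorem~\ref{centrethm} fails and the argument of Proposition~\ref{centreelementsI} does not carry over. The approach above sidesteps this by running Cayley--Hamilton entirely in the commutative setting of $M(\cInd_{KZ}^G\sigma)$ as a $Z_x$-module, without ever needing to lift $T_M$ to a specific element of $\tld E_x$.
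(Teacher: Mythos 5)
Your overall approach mirrors the paper's: both use the functor $M(-)=\Hom_G(-,J_\pi)^\vee$, the cyclicity from Proposition~\ref{cyclicity}, and the fact (from~\cite[Corollary~9.25]{MR3150248}) that $E_\pi$ is finite over its (local) centre $Z_x$, reducing everything to a Cayley--Hamilton argument for a finite $Z_x$-module. The intermediate step differs slightly: you obtain $(T_M-\lambda)(M)\subseteq \fm M$ from the surjection $M\twoheadrightarrow M(\pi)$ together with the locality of $E_\pi$, whereas the paper notes that $T-\lambda$ is \emph{locally nilpotent} on $M^\vee$ (every $G$-map $\cInd_{KZ}^G\sigma \to J_\pi$ factors through some $\tau_i$) and hence nilpotent on the finite-dimensional space $M^\vee[\fm_x]\cong (M/\fm_x M)^\vee$. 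Both routes reach the same place, and your route is fine.

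However, there is a genuine gap in the step ``Reducing $f$ modulo $\fm_x$ gives the characteristic polynomial of $\overline{T_M-\lambda}$ acting on $M/\fm_x M$.'' The Cayley--Hamilton polynomial $f$ is $\det(yI-A)$ for \emph{some} matrix $A$ over $Z_x$ expressing $T_M-\lambda$ with respect to a chosen set of $Z_x$-module generators $m_1,\dots,m_N$ of~$M$. Its reduction $\bar f=\det(yI-\bar A)$ is the characteristic polynomial of $\bar A$ acting on the free module $k^N$, which merely \emph{surjects} onto $M/\fm_x M$; if the $m_i$ are $Z_x$-linearly dependent modulo $\fm_x$, the two characteristic polynomials differ, and there is no reason for $\bar A$ to be nilpotent even when $\overline{T_M-\lambda}$ is. So the conclusion $\bar f = y^{n+1}$ does not follow as stated. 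The fix is one sentence: since $Z_x$ is local and $M$ is finite over it, choose the $m_i$ to be a \emph{minimal} generating set (i.e.\ a lift of a $k$-basis of $M/\fm_x M$), so that $\bar A$ is literally the matrix of $\overline{T_M-\lambda}$ and the argument goes through. Alternatively (and this is what the paper does), first deduce $(T_M-\lambda)^n(M)\subseteq \fm_x M$ from $\fm^n\subseteq\fm_x E_\pi$ (nilpotence of the maximal ideal of $E_\pi/\fm_x E_\pi$), and then apply the determinant trick to $(T_M-\lambda)^n$, writing $(T_M-\lambda)^n m_i=\sum_j b_{ij}m_j$ with $b_{ij}\in\fm_x$; this directly produces a monic polynomial in $(T_M-\lambda)^n$ with all lower coefficients in~$\fm_x$.
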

\begin{proof}
Fix an injective envelope~$\pi \to J_\pi$ in~$\cA_x$ and let~$E_\pi = \End_G(J_\pi)$.
The functor 
\[
M_\pi(-) = \Hom_{\cO[G]}(-, J_\pi)^\vee 
\]
is defined on~$\cA$, and 
it restricts to an equivalence of~$\cA_x$ with the category of discrete left $E_\pi$-modules.
Hence the natural map~$Z_x \to E_\pi$ induces an isomorphism~$Z_x \to Z(E_\pi)$.
By~\cite[Corollary~9.25]{MR3150248}, the ring~$E_\pi$ is a free module of rank~4 over its centre, which is a local ring.
By Proposition~\ref{cyclicity}, the module
\[
M = M_\pi(\cInd_{KZ}^G\sigma)
\]
is cyclic over~$E_\pi$.
It follows that~$M$ is finite over~$Z_x$.
(See the proof of~\cite[Lemma~2.6]{MR3556448} for more details.)

The Hecke algebra~$\cH(\sigma)$ acts on~$M$ by $Z_x$-linear endomorphisms.
The endomorphism induced by~$(T-\lambda)$ on~$M^\vee$ is locally nilpotent, because every $G$-linear map $\cInd_{KZ}^G\sigma \to J_\pi$ factors through a representation of finite length, in fact through one of the~$\tau_i$.
The quotient~$M/\fm_x M$ is a finite-dimensional vector space over the residue field of~$Z_x$,  and it is dual to~$M^\vee[\fm_x]$, hence~$(T-\lambda)$ induces a nilpotent endomorphism of~$M/\fm_x M$. 
It follows that there exists~$m>0$ such that~$(T-\lambda)^m(M) \subset \fm_x M$.
This implies that there exist $n > 0$ and $z_0, z_1 , \ldots, z_n \in \fm_x$ such that %
\[
(T-\lambda)^{n+1}+z_n(T-\lambda)^n + \cdots + z_1(T-\lambda) + z_0 = 0
\]
as endomorphisms of~$M$.
Hence the same relation holds as endomorphisms of~$M_\pi(\tau_i)$ for all~$i$, since they are quotients of~$M$.
Since the functor~$M_\pi(-)$ is fully faithful on~$\cA_x$, this implies the proposition.
\end{proof}

\section{Localization of smooth
  \texorpdfstring{$\GL_2(\Q_p)$}{GL2(Qp)}-representations}\label{sec: localization of G reps}

\subsection{Localization}\label{subsec: localization}
In this subsection we begin to explain how we localize $\cA$ 
over~$X$. 
The fundamental input is that the closed points of $X$ are in bijection with
the blocks of $\cA^{\ladm}$, which was established in Proposition~\ref{bijection with blocks}. We work in the same generality as that result, and accordingly %
we let~$A$ denote a complete Noetherian local $\cO$-algebra, with %
residue field~$k$, 
and write~$X_A \coloneq  X \otimes_\bF k$.

\begin{df}\label{localizingsubcategory}
If $Y$ %
is a closed subset
of~$X_A$,
then we let $\cA_{A, Y}$, resp.\ $\cA_{A, Y}^{\ladm}$, denote the subcategory of %
$\cA_A$, resp.\ $\cA_A^{\ladm}$, 
consisting of those representations each of whose irreducible
subquotients lies in the union of the blocks corresponding under Proposition~\ref{bijection with blocks} to the closed points of~$Y$.
We write ~$\cA_Y$ for~$\cA_{\cO,Y}$.
We write~$i_{Y, *}: \cA_{A, Y} \to \cA_A$ for the inclusion functor.
\end{df}

We now establish that~~$\cA_{A, Y}$ is localizing; we refer to Appendix~\ref{app: localizing cats} for the basic
definitions and properties of localizing categories.

\begin{lemma}\label{lem: A_Y is localizing}%
The subcategories~$\cA_{A, Y} \to \cA_A$ and $\cA_{A, Y}^{\ladm} \to \cA_A^{\ladm}$ are localizing.
\end{lemma}
\begin{proof}
We give the proof in the smooth case; the same argument works in the locally admissible case.
To see that~$\cA_{A, Y}$ is a Serre subcategory, it suffices to check that given an exact sequence
\[
0 \to \pi_1 \to \pi \to \pi_2 \to 0
\]
in~$\cA$ the set~$\JH(\pi)$ of isomorphism classes of irreducible subquotients of~$\pi$ coincides with~$\JH(\pi_1) \cup \JH(\pi_2)$.
Since it is immediate that $\JH(\pi_1) \cup \JH(\pi_2) \subseteq \JH(\pi)$, let~$V$ be an irreducible subquotient of~$\pi$.
Then there are subspaces $W_1 \subset W_2 \subset \pi$ such that~$V \cong W_2/W_1$, hence~$V$ is a subquotient of~$\pi_2$ unless $W_1 + \pi_1 = W_2 + \pi_1$.
Similarly, $V$ is a subquotient of~$\pi_1$ unless~$W_1 \cap \pi_1 = W_2 \cap \pi_1$.
Since~$W_1 \ne W_2$, these two equalities cannot hold at the same time.
This concludes the proof that~$\cA_{A, Y}$ is a Serre subcategory.

To see that~$\cA_{A, Y}$ is localizing it thus suffices to check that it is
closed under direct sums. %
Assume that~$\pi_i \in \cA_{A, Y}$ and~$V$ is an irreducible submodule of a
quotient of $\bigoplus_{i \in I}\pi_i$.
Since~$V$ is cyclic over~$A[G]$, there exists a finite subset~$J \subset I$ such that~$V$ is a submodule of a quotient of~$\bigoplus_{i \in J} \pi_i$.
By the previous discussion, $V \in \bigcup_{i \in J} \JH(\pi_i) \subset \cA_Y$, implying the claim.
\end{proof}

\begin{rem}\label{rem: referee asked to put this in}
If $l/k$ is an extension of fields of characteristic~$p$, $Y \subset X_k$ is closed, and $\pi \in \cA_{k, Y}$, then
$\pi \otimes_k l \in \cA_{l, Y_l}$, where~$Y_l$ is the preimage of~$Y$ under the natural map $X_l \to X_k$.
In fact, by Lemma~\ref{lem: A_Y is localizing}, it suffices to prove that $\pi \otimes_k l$ is a filtered colimit of objects of $\cA_{l, Y_l}$;
hence, writing~$\pi$ as a filtered colimit of finitely generated subobjects, 
it suffices to prove that $\pi \otimes_k l \in \cA_{l, Y_l}$ when $\pi \in \cA_{k, Y}\cap \cA^{\fg}$.
If~$\pi$ is furthermore irreducible, this is immediate from the definition of the bijection in~\ref{bijection with blocks}.
If now $\pi \in \cA^{\fg}$, then it follows from Proposition~\ref{cind subobjects}
and a d\'evissage on~$\pi$
that
\[
\JH(\pi \otimes_k l) \subset \bigcup_{\tau \in \JH(\pi)}\JH(\tau \otimes_k l),
\]
which reduces the claim to the case where~$\pi = \tau$ is irreducible, as desired.
We will often make implicit use of this remark.
\end{rem}

The following lemma gives the precise sense in which~$\cA_A^{\ladm}$ decomposes as a product of blocks; it goes back to~\cite[Section~IV]{Gabrielthesis},
but we give a proof for convenience, and to establish notation.
If~$x \in X_A$ is closed,
the inclusion $\cA_{A, \{x\}}^{\ladm} \to \cA^{\ladm}$ has a right adjoint, which we will denote by $\pi \mapsto \pi_x$.
More precisely, $\pi_x$ is the maximal subobject of~$\pi$ that is contained in $\cA_{A, \{x\}}^{\ladm}$.
Since the inclusion preserves compact objects, the functor~$\pi \mapsto \pi_x$ commutes with filtered colimits.

\begin{lemma}\label{lem: block decomposition}\leavevmode
\begin{enumerate}
\item The sum of counits $\bigoplus_{x \in X_A} (-)_x \to \id_{\cA_A^{\ladm}}$ is a natural isomorphism.
\item The functor
\[
\prod_{x \in X_A} \cA_{A, \{x\}}^{\ladm} \to \cA^{\ladm}_A, (\pi_x)_{x \in X_A} \mapsto \bigoplus_{x \in X_A} \pi_x
\]
is an equivalence.
\end{enumerate}
\end{lemma}
\begin{proof}
Since both sides of $\bigoplus_{x \in X_A} (-)_x \to \id_{\cA_A^{\ladm}}$ preserve filtered colimits, and~$\cA_A^{\ladm}$ is locally finite,
it suffices to prove that this natural transformation is an isomorphism when evaluated at an object~$\pi$ of finite length.
This follows by induction on $\operatorname{length}_{\cA_A^{\ladm}}\pi$, using the fact that $\Ext^i_{\cA^{\ladm}_A}(\pi_x, \pi_{x'}) = 0$ if~$x \ne x'$.

For the second statement, the full faithfulness is immediate from the fact that
$\Hom_{\cA^{\ladm}_A}(\pi_x, \pi_{x'}) = 0$ if~$x \ne x'$, and the essential surjectivity is immediate from part~(1).
\end{proof}

\begin{lemma}
\label{lem:finite sets}
If $Y = \{y_1,\ldots,y_n\}$ is a finite closed subset of~$X_A$,
then the functor
\[
\prod_{i=1}^n \cA_{A, \{y_i\}}^\ladm \to \cA_{A, Y}, (\pi_{y_i})_{i=1}^n \mapsto \bigoplus_{i=1}^n \pi_{y_i}
\]
is an equivalence.
In particular, $\cA_{A, Y}=\cA_{A, Y}^{\ladm}$, and 
its subcategory~$\cA_{A, Y}^{\fg}$ of finitely generated objects is an Artinian category {\em (}in fact, all its objects have finite length{\em )}. 
\end{lemma}
\begin{proof}
By definition, the objects of $\cA_{A, Y}$ are the objects of $\cA_{A}$ whose irreducible subquotients  lie in the blocks corresponding
to the points~$y_i$.  
If  $\cInd_{KZ}^G \sigma \to \pi$ is a morphism where $\pi$ is  an object of $\cA_{A, Y}$,
and $\sigma$ is a Serre weight,
then since $\cInd_{KZ}^G \sigma$ has subquotients lying
in infinitely many different blocks, this morphism must factor through a non-trivial quotient of~$\cInd_{KZ}^G(\sigma)$. 
Such a quotient has finite length, by Corollary~\ref{cor:cofinite length}. Thus any finitely generated
subrepresentation of~$\pi$ is of finite length, and so admissible,
and so $\cA_{A, Y} \subseteq \cA_{A, Y}^{\ladm}$. 
The lemma then follows from 
Lemma~\ref{lem: block decomposition}.
\end{proof}

Note that if $A = \cO$ and~$x$ is a block of~$\cA^{\ladm}$, the category~$\cA_{\{x\}}$ coincides with the category~$\cA_x$ of Section~\ref{subsec: Bernstein
centres of blocks}.
We will typically write~$\cA_x$ for this category (which coincides with $\cA_x^{\ladm}$, by Lemma~\ref{lem:finite sets}, and as was already noted in Section~\ref{subsec: Bernstein centres of blocks}).

\begin{df}\label{defn: A_U}
If $U$ is an open subset of $X_A$, then we write $\cA_{A, U} \coloneq  \cA_A/\cA_{A, Y},$
where $Y \coloneq  X_A \setminus U$. %
\end{df}

Since~$\cA_{A, Y}$ is localizing, there is a fully faithful right adjoint $(j_U)_*: \cA_{A, U} \to \cA_A$
to the quotient functor $(j_U)^*:\cA_{A} \to
\cA_{A, U}$. 
Where~$U$ is understood, we write~$j_*$ for~$(j_U)_*$,
and~$j^*$ for~$(j_U)^*$. We often refer to~$j^{*}$ as the localization functor.

\begin{lem}
\label{lem:adjoint and colimits}\leavevmode
Let~$Y \subset X_A$ be a closed subset with complement~$U$.
\begin{enumerate}
\item
The right adjoint $j_{U*}$ commutes with filtered colimits.
\item The categories~$\cA_{A, Y}$ and~$\cA_{A, U}$ are locally Noetherian.
\item
An object of~$\cA_{A, Y}$ is Noetherian if and only if it is Noetherian in~$\cA_A$, if and only if it is finitely generated.
Furthermore, if we let $\cA_{A, U}^{\fg}$ denote the essential image of $\cA_A^{\fg}$ under
the localization functor~$j_U^{*}$,
then $\cA_{A, U}^{\fg}$ is precisely the subcategory of Noetherian objects in~$\cA_{A, U}$.  
\item If~$\pi_1, \pi_2 \in \cA_{A, Y}^{\fg}$, resp.\ $\pi_1, \pi_2 \in \cA_{A, U}^{\fg}$, then
$\Ext^i_{\cA^{\fg}_{A, Y}}(\pi_1, \pi_2) \isoto \Ext^i_{\cA_{A, Y}}(\pi_1, \pi_2)$, resp.\
$\Ext^i_{\cA^{\fg}_{A, U}}(\pi_1, \pi_2) \isoto \Ext^i_{\cA_{A, U}}(\pi_1, \pi_2)$.
\item If~$\pi \in \cA_{A, Y}^{\fg}$, resp.\ $\pi \in \cA_{A, U}^{\fg}$, then $\Ext^i_{\cA_{A, Y}}(\pi, -)$ commutes with filtered colimits, 
resp.\ $\Ext^i_{\cA_{A, U}}(\pi, -)$ commutes with filtered colimits.
\end{enumerate}
\end{lem}
\begin{proof}
The first three statements are immediate from Lemma~\ref{lem:adjoint and colimits abstract version} (using
Corollary~\ref{cor:noetherian} to identify the Noetherian objects of~$\cA_A$ with the finitely generated objects).
The fourth, resp.\ fifth statement
follows from these and Lemma~\ref{lem:f.g. Ext},
resp.\ Proposition~\ref{prop: properties of locally Noetherian categories}~\eqref{item: Exts commute with colimits in the right hand variable}.
\end{proof}

Our next goal is to compute the localizations of objects in~$\cA_A^{\ladm}$, 
and of generators of~$\cA_A$, 
i.e.\ their images under~$j_*j^*$.
This is done in
Lemma~\ref{localization of locally admissible objects}
and
Proposition~\ref{prop:localizing compact inductions} respectively.

We begin by recording a further property of the bijection in Proposition~\ref{bijection with blocks}.

\begin{lemma}\label{f_sigma preimages}
Let~$\sigma$ be a Serre weight, and let~$Y \subset X_A$ be a closed subset.
Let~$g \in \cH(\sigma)$ be a polynomial such that the vanishing set~$V(g)$ of~$g$ in $\Spec\cH(\sigma)$ equals $f_\sigma^{-1}(Y)$.
Then:
\begin{enumerate}
\item If~$h \in \cH(\sigma)$ is coprime to~$g$, then $(\cInd_{KZ}^G\sigma)/h(\cInd_{KZ}^G \sigma)$ has no irreducible subquotients in~$\cA_{A, Y}$.
\item If~$g$ is square-free, then $(\cInd_{KZ}^G\sigma)/g(\cInd_{KZ}^G \sigma)$ is the maximal multiplicity-free quotient of~$\cInd_{KZ}^G\sigma$ contained in~$\cA_{A, Y}$.
\end{enumerate}
\end{lemma}
\begin{proof}
  Since~$h$ is coprime to~$g$, and~$V(g) = f_\sigma^{-1}(Y)$, the image $f_\sigma(V(h))$ is disjoint from~$Y$.
The first part then follows from Proposition~\ref{bijection with blocks}. %

For the second part, note that  $(\cInd_{KZ}^G\sigma)/g(\cInd_{KZ}^G \sigma)$ %
is an element of~$\cA_{A, Y}$, by Proposition~\ref{bijection with blocks}.
It is multiplicity-free, by Corollary~\ref{irreducibleHecke}.
Finally, the maximality claim follows from the fact that, by part~(1), every quotient of~$\cInd_{KZ}^G \sigma$ contained in~$\cA_{A, Y}$ must be a quotient of $(\cInd_{KZ}^G \sigma)/g^n(\cInd_{KZ}^G \sigma)$ for some~$n$.\qedhere
\end{proof}

\begin{defn}\label{f_Y}
Let~$Y \subset X_A$ be a closed subset, and let~$\sigma$ be a Serre weight. 
If~$f_\sigma^{-1}(Y)$ is finite, 
we define~$f_Y \in \cH(\sigma)$ to be the monic squarefree generator of the ideal of $f_\sigma^{-1}(Y)$.
  Otherwise ~$f_\sigma^{-1}(Y)=\Spec \cH(\sigma)$, and 
we set~$f_Y \coloneq  0$.
\end{defn}

By Lemma~\ref{f_sigma preimages}, if~$f_Y \ne 0$, then 
$(\cInd_{KZ}^G \sigma)/f_Y(\cInd_{KZ}^G \sigma)$ is the maximal multiplicity-free quotient of~$\cInd_{KZ}^G\sigma$ contained in~$\cA_{A, Y}$.
The next two results, namely Lemma~\ref{lem:ext vanishing} and Lemma~\ref{lem:ext vanishing II}, %
are the main input in computing the localization of generators.

\begin{lemma}
\label{lem:ext vanishing}
Let~$\sigma$ be a Serre weight, let~$Y$ be a closed subset of $X_A$, and choose $g \in \cH(\sigma)$ such that $f_{\sigma}^{-1}(Y) = V(g)$
{\em (}a closed subset of $\Spec \cH(\sigma)${\em )}.
Then for
any object $\pi$ of $\cA_{A, Y}$,
we have $\Ext^i_{\cA_A}\bigl(\pi,(\cInd_{KZ}^G \sigma)[1/g]\bigr) =
0$ %
for all~$i$.
\end{lemma}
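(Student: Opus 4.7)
The strategy combines Dixmier's technique with the natural $\cH(\sigma) \cong \F[T]$-module structure on the Ext groups. First I would reduce to $\pi$ finitely generated: since $\cA$ is locally Noetherian (Corollary~\ref{cor:noetherian}) and $\cA_Y$ is closed under subobjects, $\pi$ is a filtered colimit $\varinjlim \pi_\alpha$ of its finitely generated subobjects in $\cA_Y$, and one transfers Ext-vanishing by a Roos-type spectral sequence (or a two-term resolution by direct sums of the $\pi_\alpha$). Proposition~\ref{prop:ext and base-change} then lets us enlarge the coefficient field to assume $\F$ is uncountable and algebraically closed.

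For finitely generated $\pi$, Lemma~\ref{lem:ext and colimits} gives $\Ext^i_\cA(\pi, (\cInd_{KZ}^G\sigma)[1/g]) \cong \Ext^i_\cA(\pi, \cInd_{KZ}^G\sigma)[1/g]$. Let $M_i := \Ext^i_\cA(\pi, \cInd_{KZ}^G\sigma)$. This is an $\F[T]$-module via the Hecke action on $\cInd_{KZ}^G\sigma$, and has countable $\F$-dimension by Lemma~\ref{lem:countable dim}. Dixmier's lemma then implies every element of $M_i$ is annihilated by some nonzero polynomial, so $M_i$ is the union of its $(T-\lambda)$-primary submodules. Hence $M_i[1/g] = 0$ if and only if $(T-\lambda)$ acts injectively on $M_i$ for every $\lambda \in \F$ with $\lambda \notin V(g)$.

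We argue by induction on $i$. The base case $i = 0$ uses Corollary~\ref{cor:cind subobjects}: any nonzero $G$-map from $\pi$ into $\cInd_{KZ}^G\sigma[1/g]$ would produce, after clearing denominators, a nonzero subobject of $\cInd_{KZ}^G\sigma$ of the form $h \cdot \cInd_{KZ}^G\sigma$, whose Jordan--H\"older factors are parametrized by the cofinite complement of $V(h)$ in $\Spec \cH(\sigma)$, and hence via $f_\sigma$ by a dense subset of the component $X(\tau_\sigma)$. For $\pi \in \cA_Y$ all these factors would have to lie in $Y$, which is impossible whenever $Y \cap X(\tau_\sigma)$ is a proper closed subset (the only case in which the statement is non-vacuous). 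For the inductive step, the short exact sequence $0 \to \cInd_{KZ}^G\sigma \xrightarrow{T-\lambda} \cInd_{KZ}^G\sigma \to \cInd_{KZ}^G\sigma/(T-\lambda) \to 0$ yields a long exact sequence identifying $M_i[T-\lambda]$ with the image of the boundary map from $\Ext^{i-1}(\pi, \cInd_{KZ}^G\sigma/(T-\lambda))$; the inductive hypothesis forces $M_{i-1}$ to be $g$-power torsion, which implies that $(T-\lambda)$ acts bijectively on $M_{i-1}$ when $\lambda \notin V(g)$, so the cokernel term in the long exact sequence vanishes and $\Ext^{i-1}(\pi, \cInd_{KZ}^G\sigma/(T-\lambda))$ surjects onto $M_i[T-\lambda]$.

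The main obstacle is then to establish $\Ext^{i-1}(\pi, \cInd_{KZ}^G\sigma/(T-\lambda)) = 0$ for $\pi \in \cA_Y$ and the finite-length target lying in the block at $f_\sigma(\lambda) \notin Y$. If $\pi$ were itself locally admissible (as happens when $Y$ is finite, by Lemma~\ref{lem:finite sets}), this would follow immediately from Pa\v{s}k\={u}nas's block decomposition of $\cA^{\ladm}$ together with Lemma~\ref{lem: smooth Ext equals admissible Ext}. For general finitely generated $\pi \in \cA_Y$, which need not be locally admissible when $Y$ is an infinite closed subset, this requires a separate argument; the natural route is to set up a joint induction that also handles vanishing of $\Ext^\bullet(\pi, \rho)$ for finite-length $\rho$ in a disjoint block, either via a companion dévissage exploiting the Hecke action on a resolution of $\rho$ by compact inductions of Serre weights, or by identifying $\cInd_{KZ}^G\sigma/(T-\lambda)$ as a subquotient of another localization $\cInd_{KZ}^G\sigma'[1/g']$ already covered by the inductive hypothesis. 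Carrying out this interplay between the localized $\F[T]$-module picture and the block-by-block locally admissible theory is the core difficulty.
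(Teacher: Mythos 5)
There are two genuine problems with the proposal.

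The first is the Dixmier step. You assert that countable $\F$-dimension of $M_i$ forces every element to be $\F[T]$-torsion; that is false ($M=\F[T]$ itself is a countable-dimensional $\F[T]$-module with no nonzero torsion elements), and consequently the claimed equivalence ``$M_i[1/g]=0$ iff $T-\lambda$ acts injectively for all $\lambda\notin V(g)$'' also fails in general (again $M=\F[T]$ is a counterexample). The correct form of the Dixmier argument, which is what the paper uses, runs the other way: one first shows that the Ext module in question is a module over the \emph{full} fraction field $\F(T)$ (by establishing that \emph{every} nonzero polynomial acts invertibly), and then invokes the fact that a nonzero $\F(T)$-vector space has uncountable $\F$-dimension. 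So your reformulation into an injectivity condition for each $T-\lambda$ separately is not available, and the whole $\lambda$-by-$\lambda$ inductive scheme built on it cannot be set up as stated.

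The second is the gap you yourself flag at the end: when $Y$ is infinite, your inductive step needs $\Ext^{i-1}_{\cA}\bigl(\pi,\,\cInd_{KZ}^G\sigma/(T-\lambda)\bigr)=0$ for a finitely generated $\pi\in\cA_Y$ that is not locally admissible, and you have no argument for this. The paper sidesteps both problems by making the Hecke algebra act on the \emph{first} variable. After first proving the case of finite $Y$ (there $\cA_Y$ is a product of blocks and the argument using $\cH(\sigma)$ and Lemma~\ref{lem: smooth Ext equals admissible Ext} goes through directly), the general case is handled as follows: d\'evissage reduces to $\pi=\cInd_{KZ}^G\tau$ lying in $\cA_Y$, and then for \emph{every} nonzero $q\in\cH(\tau)$ the quotient $(\cInd_{KZ}^G\tau)/q$ is of finite length and lies in $\cA_{Y_0}$ for some \emph{finite} $Y_0\subseteq Y$. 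The already-proved finite case gives $\Ext^i\bigl((\cInd_{KZ}^G\tau)/q,\,(\cInd_{KZ}^G\sigma)[1/g]\bigr)=0$, so the long exact sequence shows $q$ acts invertibly on $\Ext^i\bigl(\cInd_{KZ}^G\tau,\,(\cInd_{KZ}^G\sigma)[1/g]\bigr)$. Since this holds for all nonzero $q$, that Ext group is a vector space over $\mathrm{Frac}\,\cH(\tau)\cong\F(T)$, and countability (Lemma~\ref{lem:countable dim}) forces it to vanish. In short, the missing idea is to run the Dixmier argument with $\cH(\tau)$ acting through the source rather than $\cH(\sigma)$ acting through the target; this both closes your acknowledged gap and avoids the incorrect torsion claim.
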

\begin{proof}Applying Lemma~\ref{lem:checking on generators},
we see that it suffices to prove the claimed vanishing in the case when
$\pi$ is finitely generated, which we assume from now on. 
By d\'evissage we can furthermore assume that the maximal ideal of~$A$ acts trivially on~$\pi$. 
Using the spectral sequence~\eqref{changecoefficients2-A-version} 
it therefore suffices to prove that $\Ext^i_{\cA_{k}}\bigl(\pi,(\cInd_{KZ}^G \sigma)[1/g]\bigr) = 0$ for all~$i$.
(In more detail, arguing with a minimal free resolution of~$k$ over~$A$, we see that the terms $\Ext^j_A(k, \cInd_{KZ}^G(\sigma)[1/g])$ that appear in $E_2^{ij}$ 
are isomorphic to finite direct sums of copies of~$\cInd_{KZ}^G(\sigma)[1/g]$.)
The formation of this $\Ext^i_{\cA_k}$-module is compatible
with extension of scalars, by Proposition~\ref{prop:ext and base-change},
so we can replace $k$ by some uncountable algebraically closed extension.
We thus assume that $k$ is uncountable and algebraically closed for the remainder of the argument.
Also, if $g = 0 $ then 
$(\cInd_{KZ}^G \sigma)[1/g] = 0$,
and the claimed vanishing follows immediately.  Thus we assume
for the remainder of the proof that $g \neq 0$.

We first suppose that $Y$ is a finite union of closed points. %
It will be useful in this case to prove a slightly stronger statement,
namely that
$\Ext^i_{\cA_k}\bigl(\pi,(\cInd_{KZ}^G \sigma)[1/h]\bigr) = 0$
for any non-zero multiple $h$ of~$g$.
We first note that $\pi \in \cA^{\ladm}_{k}$ by Lemma~\ref{lem:finite sets}. 
On the other hand, %
if $f \in \cH(\sigma)$ is coprime to $h$ (and so in particular
coprime to~$g$), then Lemma~\ref{f_sigma preimages}(1) shows that the finite length module
$(\cInd_{KZ}^G \sigma)/f(\cInd_{KZ}^G \sigma)$ has no
irreducible subquotient lying in~$\cA_{k, Y}$. 
Lemma~\ref{lem: block decomposition} thus implies that
\[
\Ext^i_{\cA^{\ladm}_k}\bigl(\pi, (\cInd_{KZ}^G \sigma)/f(\cInd_{KZ}^G \sigma)\bigr) = 0
\]
for all~$i$, and by Lemma~\ref{lem: smooth Ext equals admissible Ext} we can replace~$\cA_k^{\ladm}$ with~$\cA_k$.
A consideration of the long exact sequence of $\Ext^i_{\cA_k}$ arising from the
short exact sequence
$$0 \to (\cInd_{KZ}^G \sigma)[1/h] \buildrel f\cdot \over \longrightarrow
(\cInd_{KZ}^G \sigma)[1/h] \to (\cInd_{KZ}^G \sigma)/f (\cInd_{KZ}^G
\sigma)\to 0$$
now shows that $f$ acts invertibly on 
$\Ext^i_{\cA_k}\bigl(\pi,(\cInd_{KZ}^G \sigma)[1/h]\bigr).$  Certainly $h$ also
acts invertibly,
and so this $\Ext$ module is a vector space over the fraction field
of~$\cH(\sigma)$, which is isomorphic to $k(T)$, a field of uncountable
dimension over $k$.  On the other hand, Lemma~\ref{lem:countable dim}
shows that this $\Ext$ module is of countable dimension over~$k$.
Thus it must vanish, as claimed.

We now consider the general case.
Since we are assuming that~$\pi$ is finitely generated, it is a quotient of a representation
of the form $\cInd_{KZ}^G V$, for some finite dimensional $KZ$-representation~$V$.
A d\'evissage using the long exact Ext sequence reduces to the 
case when $\pi$ is a quotient of $\cInd_{KZ}^G\tau$, for some Serre weight~$\tau$.
If $\pi$ is a proper quotient of $\cInd_{KZ}^G \tau$, then $\pi$ is of finite length, and so lies in $\cA_{k, Y_0}$
for some finite closed subset $Y_0$ of $Y$.
The claimed vanishing %
then follows from the special case already proved,
so we  assume that $\pi$ is equal to $\cInd_{KZ}^G \tau$.

Then the assumption that $\cInd_{KZ}^G \tau$ is an object of $\cA_{k, Y}$
implies that for each non-zero element $q \in \cH(\tau)$,
the (finite length) quotient $(\cInd_{KZ}^G \tau)/q (\cInd_{KZ}^G \tau)$
lies in $\cA_{k, Y_0}$ for some finite closed subset~$Y_0$ of $Y$ (depending on~$q$).  
The result we already proved then shows that
$\Ext^i_{\cA_k}\bigl((\cInd_{KZ}^G \tau)/q(\cInd_{KZ}^G \tau) ,
(\cInd_{KZ}^G \sigma)[1/g]\bigr) = 0$
for each $i$,
and a consideration of the long exact Ext sequence
arising from the short exact sequence
$$0 \to \cInd_{KZ}^G \tau \buildrel q \cdot \over \longrightarrow
\cInd_{KZ}^G \tau \to (\cInd_{KZ}^G \tau)/q(\cInd_{KZ}^G\tau) \to 0$$
shows that $q$ %
acts invertibly on $\Ext^i_{\cA_k}\bigl(\cInd_{KZ}^G \tau, (\cInd_{KZ}^G \sigma)
[1/g] \bigr).$
As above, this Ext module is thus a vector space over the fraction
field of $\cH(\tau)$, while also being a countable dimensional $k$-vector space, and is thus equal to zero, as required.
\end{proof}

\begin{lemma}\label{lem:ext vanishing II}
Let~$Y$ be a closed subset of $X_A$, and let~$\tau$ be a finite length object of~$\cA_A$ with no irreducible subquotients in~$\cA_{A, Y}$.
Then for
any object $\pi$ of $\cA_{A, Y}$, and any~$i \geq 0$,
we have $\Ext^i_{\cA_A}\bigl(\pi,\tau\bigr) =
0$.
\end{lemma}
\begin{proof}
When~$Y$ is finite, this is an immediate consequence of Lemma~\ref{lem:finite sets}. %
In general, by d\'evissage we can assume that~$\tau$ is irreducible. 
Theorem~\ref{classifyirreducibles} shows that there is an exact sequence
\[
0 \to \cInd_{KZ}^G(\sigma) \xrightarrow{h} \cInd_{KZ}^G(\sigma) \to \tau' \to 0,
\]
for some Serre weight~$\sigma$ and some irreducible $h \in \cH(\sigma)$, such that~$\tau$ is an irreducible subquotient of~$\tau'$.

Assume first that~$\tau' = \tau$ is irreducible.
Writing~$f_\sigma^{-1}(Y) = V(g)$, we see that $h$ is coprime to~$g$, since otherwise $\tau$ would be a subquotient of $\cInd_{KZ}^G(\sigma)/g(T)$, and
so would be an object of~$\cA_{A, Y}$. 
Hence Lemma~\ref{lem:ext vanishing} implies the requisite vanishing, because of the exact sequence
\[
0 \to \cInd_{KZ}^G(\sigma)[1/g] \to \cInd_{KZ}^G(\sigma)[1/g] \to \tau \to 0.
\]
If~$\tau'$ is not irreducible, then Lemma~\ref{lem: quotient giving St and trivial}
produces an exact sequence
\[
0 \to \cInd_{KZ}^G(\sigma')[1/g] \to \cInd_{KZ}^G(\sigma)[1/g] \to \tau \to 0
\]
for a pair of Serre weights~$\sigma, \sigma'$, so we can conclude in the same way.
\end{proof}

We now compute the localization of locally admissible objects of~$\cA_A$.
If~$x \in X_A$ is closed, recall from Lemma~\ref{lem: block decomposition} that
we write~$(-)_x$ for the right adjoint to the inclusion $\cA_{A, \{x\}} = \cA_{A, \{x\}}^{\ladm} \to \cA_A^{\ladm}$, and that $\bigoplus \pi_x \iso \pi.$

\begin{lemma}\label{localization of locally admissible objects}
Let~$\pi \in \cA_A^{\ladm}$ and let~$U \subset X_A$ be open.
Then
\begin{equation*} %
j_{U*}j_U^*\pi \cong \bigoplus_{x \in U} \pi_x.
\end{equation*}
\end{lemma}
\begin{proof}%

Write $Y \coloneq  X_A \setminus U$, $j_* \coloneq  j_{U*}$ and~$j^*\coloneq  j_U^*$. 
Let $\pi_U \coloneq  \bigoplus_{x \in U} \pi_x$.
Then the kernel of the surjection $\pi \to \pi_U$, which equals $\bigoplus_{x \in Y} \pi_x$,
is an object of $\cA_{A, Y}^{\ladm}$, and so $\pi \to \pi_U$ becomes an 
isomorphism after applying $j_{*}j^*$.
Hence it suffices to prove that the unit of the $(j^*, j_{*})$-adjunction, evaluated at $\pi_U$, is an isomorphism.

Each of the functors $(-)_U, j^*$ and~$j_*$ commutes with filtered colimits (applying Lemma~\ref{lem:adjoint and colimits} (1) for~$j_*$).
Hence it suffices to prove that $\pi_U \isoto j_* j^* \pi_U$ when~$\pi$ has finite length.
Now the kernel~$\cK$ and cokernel~$\cC$ of this map are objects of~$\cA_{A, Y}$.
Lemma~\ref{lem:ext vanishing II}, applied to~$\tau \coloneq  \pi_U$, implies that $\cK = 0$.
The same lemma, applied to $\tau \coloneq  \operatorname{im}(\pi_U \to j_* j^* \pi_U)$,
shows that~$\cC$ is a direct summand of~$j_*j^* \pi_U$, and since
\[
\Hom_{\cA_A}(\cC, j_*j^* \pi_U) = \Hom_{\cA_{A, U}}(j^*\cC, j^* \pi_U) = 0
\]
we conclude that~$\cC = 0$, as desired.
\end{proof}

Lemma~\ref{localization of locally admissible objects} shows that our localization theory generalizes the block decomposition of~$\cA_A^{\ladm}$, as intended.
We now compute the localization of compact inductions of Serre weights.

\begin{prop}
\label{prop:localizing compact inductions} 
Let $Y$ be a closed subset of $X_A$, and write 
$U \coloneq  X_A\setminus Y$.  

\begin{enumerate}
\item If $\sigma$ is a Serre weight, 
and if $f_{\sigma}^{-1}(Y) = V(g)$
{\em (}a closed subset of $\Spec \cH(\sigma)${\em )}
for some $g \in \cH(\sigma)$,
then the natural map
$$\cInd_{KZ}^G \sigma \rightarrow
(\cInd_{KZ}^G \sigma)[1/g]$$
can be identified with the unit morphism
$$\cInd_{KZ}^G \sigma \to
j_{U*}j_U^* \cInd_{KZ}^G \sigma.$$
\item The functor $j_{U*}$ is exact.
\end{enumerate}
\end{prop}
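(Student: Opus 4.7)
For part (1), I would verify that the natural map $\iota \colon \cInd_{KZ}^G \sigma \to (\cInd_{KZ}^G\sigma)[1/g]$ satisfies the three properties that characterize the unit of the Gabriel localization adjunction $j_U^* \dashv j_{U*}$ recalled in Appendix~\ref{sec: cat theory background}: namely, $\ker \iota \in \cA_Y$, $\coker \iota \in \cA_Y$, and $(\cInd_{KZ}^G\sigma)[1/g]$ is $\cA_Y$-closed. The first is automatic, since $\cInd_{KZ}^G\sigma$ is free over $\cH(\sigma) \cong \F[T]$ by Lemma~\ref{lem:Hecke algs}, hence torsion-free (the case $g = 0$ being vacuous, as $V(g) = \Spec\cH(\sigma)$ forces $\cInd_{KZ}^G\sigma \in \cA_Y$ and both sides of the putative identification are zero). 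For the second, I would write the cokernel as $\varinjlim_n \cInd_{KZ}^G\sigma/g^n \cInd_{KZ}^G\sigma$; each term is of finite length with Jordan--H\"older factors in blocks parameterized by closed points of $V(g) = f_\sigma^{-1}(Y) \subseteq Y$, per the block classification in Section~\ref{coordinates}. The third property is precisely the $i = 0, 1$ case of Lemma~\ref{lem:ext vanishing}.

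For part (2), exactness of $j_{U*}$ amounts to the vanishing of $R^i j_{U*}$ for $i \geq 1$. For any $M \in \cA_U$ one already knows that $R^i j_{U*} M$ lies in $\cA_Y$ for $i \geq 1$, since $j_U^*$ is exact with $j_U^* \circ j_{U*} \cong \id$, and hence $j_U^* \circ R j_{U*} \cong \id$. The crucial input is then the \emph{full} strength of Lemma~\ref{lem:ext vanishing}: the vanishing of $\Ext^i_\cA(\tau, (\cInd_{KZ}^G\sigma)[1/g])$ for \emph{all} $i \geq 0$ and $\tau \in \cA_Y$, not merely $i = 0, 1$. Combined with part (1), this says that when $M = j_U^*\cInd_{KZ}^G\sigma$, the object $j_{U*} M$ is acyclic for $R\Hom_\cA(\cA_Y, -)$. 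Together with the derived adjunction $R\Hom_\cA(\tau, Rj_{U*} M) \cong R\Hom_{\cA_U}(j_U^*\tau, M) = 0$ for $\tau \in \cA_Y$, a truncation argument applied to the distinguished triangle $j_{U*} M \to R j_{U*} M \to \tau_{\geq 1} R j_{U*} M$ (testing against $\tau = R^i j_{U*} M$ itself) forces $R^i j_{U*} M = 0$ for $i \geq 1$, by induction on~$i$.

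To extend this vanishing to an arbitrary $M \in \cA_U$, I would use the local noetherianness of $\cA_U$ (Lemma~\ref{lem:adjoint and colimits}) together with the compatibility of each $R^i j_{U*}$ with filtered colimits to reduce to the case of a Noetherian $M$. Any such $M$ is $j_U^*$ of a Noetherian object of $\cA$, which is a quotient of some $\cInd_{KZ}^G V$ for $V$ of finite length. A d\'evissage using Corollary~\ref{cor:cind subobjects} to understand subobjects of $\cInd_{KZ}^G\sigma$ (so that such subobjects are again isomorphic to $\cInd_{KZ}^G\sigma$ up to a controlled quotient), combined with the long exact sequences in $R^* j_{U*}$, reduces further to the base case handled above. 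The main obstacle will be orchestrating this d\'evissage so as to preserve the higher-$\Ext$ acyclicity at each step — in particular, ensuring that the full Lemma~\ref{lem:ext vanishing}-type vanishing, rather than merely $\Ext^0$ and $\Ext^1$ vanishing, propagates through the short exact sequences used to reduce from general finitely generated objects to the basic $\cInd_{KZ}^G\sigma$.
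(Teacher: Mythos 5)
For part~(1), your argument is essentially the paper's. Both approaches amount to: the map has kernel and cokernel in~$\cA_Y$ (the paper packages this as saying the colimit presentation $\cInd_{KZ}^G\sigma\hookrightarrow\tfrac{1}{g^n}\cInd_{KZ}^G\sigma$ becomes an isomorphism after $j_*j^*$), and the target is in the essential image of~$j_*$, which by Lemma~\ref{lem:adjoint image} is the $i=0,1$ case of Lemma~\ref{lem:ext vanishing}. Your restatement via the characterization of the unit is correct.

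For part~(2), your truncation argument on the triangle $j_{U*}M\to Rj_{U*}M\to\tau_{\geq 1}Rj_{U*}M$ is a valid re-derivation of facts the paper has already abstracted into Lemma~\ref{lem: vanishing Ext vanish Ri} and Corollary~\ref{cor:exactness criterion}; it is cleaner to cite these (and Lemma~\ref{lem:checking on generators} for the reduction in the $\cA_Y$-variable) than to re-prove them, but the content is the same. Where your proposal diverges from the paper is in the d\'evissage step. The paper d\'evisses in the $KZ$-representation $V$, not in $M$: by Corollary~\ref{cor:exactness criterion} together with commutation of $j_*$ and of $\Ext^i(\pi,-)$ with filtered colimits (Lemmas~\ref{lem:adjoint and colimits} and~\ref{lem:ext and colimits}), the task is reduced to showing $\Ext^2_\cA(\pi,j_*j^*\cInd_{KZ}^G V)=0$ for all finite length $V$; a d\'evissage on the $KZ$-length of~$V$ then reduces to $V=\sigma$, which is Lemma~\ref{lem:ext vanishing}. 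Your flagged concern about propagating higher-$\Ext$ vanishing through this d\'evissage is correct and is exactly what Lemma~\ref{lem:ext vanishing} (its \emph{full} strength, all $i$) is needed for: one must first establish $R^1j_*j^*\cInd_{KZ}^G V'=0$ (which via Lemma~\ref{lem: vanishing Ext vanish Ri} requires the $\Ext^2$ vanishing for~$V'$) before the sequence $0\to j_*j^*\cInd_{KZ}^G V'\to j_*j^*\cInd_{KZ}^G V\to j_*j^*\cInd_{KZ}^G\sigma\to 0$ is even short exact and the inductive step can close. Your alternative plan to d\'evisse on $M$ directly via Corollary~\ref{cor:cind subobjects} is harder to carry out, since it leads to needing control of $R^ij_{U*}$ on kernels of surjections from $j_U^*\cInd_{KZ}^G V$, which is not directly supplied by Lemma~\ref{lem:ext vanishing}; the paper's d\'evissage-in-$V$, paired with Corollary~\ref{cor:exactness criterion}'s reduction to a generating family, sidesteps this.
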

\begin{proof}As usual, we write~$j$ for~$j_{U}$. 
We first put ourselves in the situation of~(1).
If $g = 0$ then $\cInd_{KZ}^G \sigma$ is an object of $\cA_{A, Y}$, 
because
Proposition~\ref{cind subobjects} implies that every irreducible
subquotient of~$\cInd_{KZ}^G \sigma$ is contained in a block in~$Y$. 
Hence both $(\cInd_{KZ}^G \sigma)[1/g]$
and
$j_*j^* (\cInd_{KZ}^G \sigma) $
vanish, and~(1) follows immediately. 
If $g \neq 0$,
then the embedding
\begin{equation}
\label{eqn:inclusion}
\cInd_{KZ}^G \sigma \hookrightarrow (\cInd_{KZ}^G \sigma)[1/g]
\end{equation}
is the filtered colimit of the embeddings
$$\cInd_{KZ}^G \sigma \to \dfrac{1}{g^n}(\cInd_{KZ}^G \sigma).$$
The cokernel of this latter morphism is isomorphic to
$\bigl(\cInd_{KZ}^G \sigma\bigr)/g^n \bigl(\cInd_{KZ}^G \sigma\bigr)$,
which is of finite length, and lies in $\cA_{A, Y}$.  
Thus each of these inclusions
induces an isomorphism after applying $j_*j^*,$ 
and hence the colimiting inclusion~\eqref{eqn:inclusion} also induces an isomorphism
after applying this functor (which preserves filtered colimits, by Lemma~\ref{lem:adjoint and colimits}). 
Thus it suffices to show that
$(\cInd_{KZ}^G \sigma)[1/g]$ is in the image of $j_*$,
or equivalently, by Lemma~\ref{lem:adjoint image}, that 
$\Ext^i_{\cA_A}\bigl(\pi, (\cInd_{KZ}^G \sigma)[1/g]\bigr) = 0$
for $i = 0,1$,
whenever $\pi$ is an object of $\cA_{A, Y}$.
This follows from Lemma~\ref{lem:ext vanishing}.

To prove~(2) it suffices to show that $R^ij_*(j^*\pi) = 0$ for all~$i \geq 1$ and all~$\pi \in \cA_A$.
By Lemma~\ref{Rj_* commutes with filtered colimits}, we know that~$R^ij_*$ preserves filtered colimits for all~$i \geq 0$.
Since~$\pi$ is a filtered colimit of finitely generated objects, and~$j^*$ preserves all colimits, it now suffices to prove that~$R^ij_*(j^*\pi) = 0$ for all $\pi \in \cA_A^{\fg}$ and all~$i \geq 1$.
Choosing a surjection $\cInd_{KZ}^G(V) \to \pi$ for some finite length smooth $A[KZ]_\zeta$-representation~$V$, we obtain a finite filtration on~$\pi$ whose graded pieces~$\{\tau_k\}_{k=1}^n$
are either irreducible, or of the form $\tau_k = \cInd_{KZ}^G(\sigma_k)$ for some Serre weight~$\sigma_k$.
Since~$j^*$ is exact, by d\'evissage it suffices to prove that $R^ij_*(j^*\tau) = 0$ for all~$i \geq 1$ whenever~$\tau$ is irreducible, 
or~$\tau \cong \cInd_{KZ}^G(\sigma)$ for a Serre weight~$\sigma$.
By Lemma~\ref{lem: vanishing Ext vanish Ri}, we equivalently need to prove that
\[
\Ext^i_{\cA_A}(\Pi, j_*j^*\tau) = 0
\]
for all~$\Pi \in \cA_{A, Y}$ and all~$i$.

When~$\tau = \cInd_{KZ}^G(\sigma)$, this is a consequence of part~(1) and Lemma~\ref{lem:ext vanishing}.
When~$\tau$ is irreducible, this is a consequence of Lemma~\ref{localization of locally admissible objects} and Lemma~\ref{lem:ext vanishing II}.
\end{proof}

\begin{cor}
\label{cor:ext vanishing}
Let $Y$ be a closed subset of $X_A$, and write $U \coloneq  X_A \setminus Y$.
If $\pi$ is an object of $\cA_{A, Y}$, and $\pi'$ is any object of $\cA_A$,
then $\Ext^i_{\cA_A}(\pi, j_{U*}j_U^*\pi') = 0$ for all~$i$.
\end{cor}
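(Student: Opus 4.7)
The plan is to exploit the derived version of the adjunction $(j_U^*, j_{U*})$ together with the vanishing $j_U^*\pi = 0$ valid for any $\pi \in \cA_Y$.

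First I would observe that the quotient functor $j_U^*\colon \cA \to \cA_U$ is exact, so its right adjoint $j_{U*}$ automatically preserves injective objects. Combining this with the exactness of $j_{U*}$ that was just established in Proposition~\ref{prop:localizing compact inductions}(2), it follows that applying $j_{U*}$ to any injective resolution in $\cA_U$ produces a complex in $\cA$ whose cohomology is concentrated in degree zero; in other words the natural comparison map $Rj_{U*} \to j_{U*}$ is an isomorphism on $D^+(\cA_U)$. Since both $\cA$ and $\cA_U$ are Grothendieck categories and hence have enough injectives, the adjunction extends to a derived adjunction
\[
\RHom_{\cA}(\pi, j_{U*}N) \cong \RHom_{\cA_U}(j_U^*\pi, N)
\]
for all $\pi \in \cA$ and $N \in \cA_U$.

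Next I would apply this isomorphism with $N = j_U^*\pi'$ and invoke the defining property of $\cA_Y$: every object of $\cA_Y$ is annihilated by $j_U^*$. The right-hand side therefore vanishes, and passage to $H^i$ gives $\Ext^i_\cA(\pi, j_{U*}j_U^*\pi') = 0$ for all $i$, as required.

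The only subtle point is the upgrade of the adjunction to the derived level, but this rests entirely on the exactness of $j_{U*}$, which is already in hand from Proposition~\ref{prop:localizing compact inductions}(2). If one prefers to avoid derived-categorical language, essentially the same argument can be phrased concretely: choose an injective resolution $0 \to j_U^*\pi' \to I^\bullet$ in $\cA_U$; since $j_{U*}$ is exact and sends injectives to injectives, the sequence $0 \to j_{U*}j_U^*\pi' \to j_{U*}I^\bullet$ is an injective resolution in $\cA$; and the ordinary adjunction gives $\Hom_\cA(\pi, j_{U*}I^n) = \Hom_{\cA_U}(j_U^*\pi, I^n) = 0$ for every $n$, so the complex computing $\Ext^i_\cA(\pi, j_{U*}j_U^*\pi')$ is identically zero.
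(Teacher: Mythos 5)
Your proof is correct and rests on exactly the same ingredients as the paper's: the exactness of $j_{U*}$ established in Proposition~\ref{prop:localizing compact inductions}(2), the fact that $j_{U*}$ preserves injectives as the right adjoint of the exact functor $j_U^*$, the vanishing $j_U^*\pi = 0$ for $\pi \in \cA_Y$, and the ordinary $(j_U^*, j_{U*})$ adjunction. The paper reaches the conclusion by citing the packaged statement Corollary~\ref{cor:exactness criterion}, whose own proof via the spectral sequence in Lemma~\ref{lem: vanishing Ext vanish Ri} contains precisely your injective-resolution argument; you simply unpack that citation into a direct and somewhat more self-contained computation.
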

\begin{proof}
This follows directly from
Proposition~\ref{prop:localizing compact inductions}~(2),
together with Corollary~\ref{cor:exactness criterion}.
In fact, it was essentially already proved in the course of proving
Proposition~\ref{prop:localizing compact inductions}~(2).
\end{proof}

\begin{lemma}
\label{lem:ext control}
If $\sigma$ is a Serre weight,
and if $\cInd_{KZ}^G \sigma \hookrightarrow \pi$ is an essential embedding in~$\cA_A$, whose cokernel 
is of finite length, then there exists a non-zero $g \in \cH(\sigma)$
such that $\pi$ {\em (}thought of as an overmodule of $\cInd_{KZ}^G \sigma${\em )}
is contained in $\frac{1}{g} \cdot \cInd_{KZ}^G \sigma$.
\end{lemma}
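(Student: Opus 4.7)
The plan is to realise $\pi$ as a submodule of the localised representation $(\cInd_{KZ}^G\sigma)[1/g]$, via the localisation formalism of Proposition~\ref{prop:localizing compact inductions}, and then use the finite-length hypothesis on the cokernel to bound the relevant power of $g$.

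First I would choose $g$. Since $\pi/\cInd_{KZ}^G\sigma$ is of finite length, its Jordan--H\"older factors lie in finitely many blocks of $\cA^{\ladm}$, so the cokernel is an object of $\cA_{Y_0}$ for some finite closed subset $Y_0\subseteq X$. The preimage $f_\sigma^{-1}(Y_0)$ is a finite set of closed points of $\Spec\cH(\sigma)=\A^1_\F$, and so is the vanishing locus of some non-zero element $g\in\cH(\sigma)$. Setting $U:=X\setminus Y_0$, Proposition~\ref{prop:localizing compact inductions}(1) identifies $j_{U*}j_U^*\cInd_{KZ}^G\sigma$ with $(\cInd_{KZ}^G\sigma)[1/g]$. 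Since $\pi/\cInd_{KZ}^G\sigma\in\cA_{Y_0}=\ker j_U^*$, the localisation $j_U^*$ carries the inclusion $\cInd_{KZ}^G\sigma\hookrightarrow\pi$ to an isomorphism, and naturality of the unit morphism gives a $G$-equivariant map
\[
\pi\longrightarrow j_{U*}j_U^*\pi\;\cong\;(\cInd_{KZ}^G\sigma)[1/g]
\]
that restricts to the canonical inclusion on $\cInd_{KZ}^G\sigma$.

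Next I would prove that this map is injective. Its kernel $L$ lies in $\cA_{Y_0}$. If $L\ne 0$, then essentiality of the embedding would force $L\cap\cInd_{KZ}^G\sigma\ne 0$. However, by Corollary~\ref{cor:cind subobjects}, every non-zero subobject of $\cInd_{KZ}^G\sigma$ contains a subobject of the form $h(T)\cInd_{KZ}^G\sigma$ for some non-zero $h(T)\in\cH(\sigma)$ (in case (2), containing moreover $(T^2-1)f(T)\cInd_{KZ}^G\sigma$), and since $\cInd_{KZ}^G\sigma$ is free over $\cH(\sigma)$ by Lemma~\ref{lem:Hecke algs}, multiplication by $h(T)$ is injective, so this subobject is isomorphic to $\cInd_{KZ}^G\sigma$. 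This is impossible, because $\cInd_{KZ}^G\sigma$ has Jordan--H\"older factors in infinitely many blocks and hence does not lie in $\cA_{Y_0}$. Thus $L=0$, and $\pi$ embeds into $(\cInd_{KZ}^G\sigma)[1/g]$ as an overmodule of $\cInd_{KZ}^G\sigma$.

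Finally, since $(\cInd_{KZ}^G\sigma)[1/g]=\bigcup_n g^{-n}\cInd_{KZ}^G\sigma$, the quotient $\pi/\cInd_{KZ}^G\sigma$, being of finite length (hence finitely generated), maps into some finite stage $g^{-n}\cInd_{KZ}^G\sigma/\cInd_{KZ}^G\sigma$. Therefore $\pi\subseteq g^{-n}\cInd_{KZ}^G\sigma$, and replacing $g$ by $g^n$ concludes the proof. The main obstacle is the injectivity step, which crucially combines the essentiality hypothesis with the structural description of subobjects of $\cInd_{KZ}^G\sigma$ given by Corollary~\ref{cor:cind subobjects}; the rest of the argument is a fairly direct application of the adjunction formalism set up in Proposition~\ref{prop:localizing compact inductions}.
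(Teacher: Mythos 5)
Your proof is correct and follows essentially the same route as the paper's: localize over the complement of the (finite) support of the cokernel, identify $j_{U*}j_U^*\cInd_{KZ}^G\sigma$ with $(\cInd_{KZ}^G\sigma)[1/g]$, deduce injectivity of $\pi \to j_{U*}j_U^*\pi$ from essentiality, and bound the denominator by a finiteness argument. The one place where you take a slightly longer path is the injectivity step: you invoke Corollary~\ref{cor:cind subobjects} to show that any non-zero subobject of $\cInd_{KZ}^G\sigma$ contains a copy of $\cInd_{KZ}^G\sigma$ and hence cannot lie in $\cA_{Y_0}$, whereas the paper observes directly that the commuting square forces $L\cap\cInd_{KZ}^G\sigma$ into the kernel of the already-injective map $\cInd_{KZ}^G\sigma\to(\cInd_{KZ}^G\sigma)[1/g]$, so it vanishes and essentiality gives $L=0$. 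Both arguments are valid; the paper's avoids the extra appeal to the structure of subobjects.
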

\begin{proof}
By hypothesis, we have a short exact sequence $0 \to \cInd_{KZ}^G \sigma \to \pi \to \pi' \to 0$
where $\pi'$ is of finite length. It follows from  Lemma~\ref{localization of locally admissible objects}  that
there is a finite closed subset $Y \subset X_A$ such that, writing $U \coloneq  X_A \setminus Y$, we have $j_{U*}j^*_U \pi' = 0$, so that
 $j_{U*}j_U^*\cInd_{KZ}^G \sigma \iso j_{U*}j_U^*\pi.$  

Proposition~\ref{prop:localizing compact inductions}~(1) shows
that $j_{U*}j_U^*\cInd_{KZ}^G \sigma = (\cInd_{KZ}^G \sigma )[1/g]$ for some non-zero
element $g$ of $\cH(\sigma)$.  
Consider now the commutative square
$$\xymatrix{ \cInd_{KZ}^G \sigma \ar[r]\ar[d] & \pi \ar[d] \\
(\cInd_{KZ}^G \sigma)[1/g] \ar@{=}[r] & j_{U*}j_U^*\pi}
$$
Since the left-hand vertical arrow is injective, and since the upper horizontal
arrow is an essential embedding, we find that the right-hand vertical arrow is
injective.  Thus we find that $\pi$ embeds as a submodule of $(\cInd_{KZ}^G \sigma)[1/g]$
containing $\cInd_{KZ}^G \sigma$.   Since $\pi$ is finitely generated (being
an extension of a finite length module by a finitely generated module), we see that
if we replace $g$ by a sufficiently large power, then in fact
$\pi \subseteq \frac{1}{g} \cdot \cInd_{KZ}^G \sigma$,
as claimed.
\end{proof}

\begin{remark}\label{actuallyp-torsion}%
An immediate consequence of Lemma~\ref{lem:ext control} is that if $\cInd_{KZ}^G\sigma \to \pi$ is an essential embedding with cokernel of finite length then the maximal ideal~$\fm_A$ of~$A$ acts trivially on~$\pi$.
This can also be seen directly: if~$t \in \fm_A$, it is zero on~$\cInd_{KZ}^G(\sigma)$, and so the image of~$t: \pi \to \pi$ has finite length,
and therefore intersects $\cInd_{KZ}^G\sigma$ trivially.
Since the embedding $\cInd_{KZ}^G\sigma \to \pi$ is essential, this implies that~$t: \pi \to \pi$ is the zero map. 
\end{remark}

\begin{remark}
\label{rem:getting inductions}
Note that if $\sigma$ is not isomorphic to a twist of $\Sym^0$ or $\Sym^{p-1}$, and $h \in \cH(\sigma)$ is irreducible, then $(\cInd_{KZ}^G \sigma)/h (\cInd_{KZ}^G \sigma)$ is irreducible.
In this case, by choosing
$g$ in Lemma~\ref{lem:ext control} appropriately,
we may even assume that $\pi = \frac{1}{g}\cdot \cInd_{KZ}^G \sigma$.
In the exceptional cases, 
we get a counterexample from the exact sequence~\eqref{tree} below.
\end{remark}

\begin{remark}
If~$A = \cO$, another way to prove Lemma~\ref{lem:ext control} is to use the isomorphism
$$\Hom_\cA(\pi',(\cInd_{KZ}^G\sigma)[1/g]/\cInd_{KZ}^G\sigma) \iso \Ext^1_\cA(\pi',\cInd_{KZ}^G\sigma)$$
proved in~\eqref{eqn:simplified spectral sequence} below.
\end{remark}

\subsection{\v{C}ech resolutions}

If $\{U_0,\ldots,U_n\}$ is any finite open cover of $X_A$,
then for any object $\pi$ of $\cA_A$,
we obtain a functorial \v{C}ech resolution  %
\begin{equation}
\label{eqn:Cech resolution again}
0 \to \pi \to \prod_i (j_i)_*(j_i)^* \pi \to \cdots \to
(j_{0,\ldots,n})_*(j_{0,\ldots,n})^* \pi \to 0
\end{equation}
where as usual we have written $U_{i_1, \ldots, i_k}$
for~$U_{i_1}\cap\dots\cap U_{i_k}$, $j_{i_1, \ldots, i_k}$
for $j_{U_{i_1, \ldots, i_k}}$, and the differentials are given by
the usual formulas (see e.g.\
\cite[\href{https://stacks.math.columbia.edu/tag/01FG}{Tag
  01FG}]{stacks-project}). 
More precisely, in the terminology of the Stacks Project, we are working with the \emph{ordered \v{C}ech complex}.
\begin{rem}\label{defining the maps}
In order to define the maps in the \v{C}ech complex we need to specify a natural transformation
\[
j_{U*}j_U^* \to j_{V*}j_V^*
\]
of functors on~$\cA_A$, whenever~$V \subset U$ are open subsets.
Let~$Z_V \supset Z_U$ be the closed complements of~$U$ and~$V$ in~$X_A$.
The universal property of localization induces a functor $j^*_{UV}: \cA_{A, U} \to \cA_{A, V}$ such that $j_{UV}^*j_U^* = j_V^*$,
and $j_{UV}^*$ is the Serre quotient functor with kernel equal to the essential image of $\cA_{A, Z_V}$ in~$\cA_{A, U}$.
Since~$\cA_{A, Z_Y}$ is closed under colimits in~$\cA_A$, and~$j_U^*$ preserves colimits, this essential image is a localizing subcategory of $\cA_{A, U}$.
Hence $j_{UV}^*$ has a right adjoint~$j_{UV*}$, and there is an isomorphism $j_{V*} \cong j_{U*}j_{UV*}$.
The unit of adjunction $\id \to j_{UV*}j_{UV}^*$ then induces a map
\[j_{U*}j_U^* \to j_{U*}j_{UV*}j_{UV}^*j_U^* \cong j_{V*}j_V^*\]
which is the natural transformation we are looking for. 
Note that the composition $\id \to j_{U*}j_U^* \to j_{V*}j_V^*$, where the first arrow is the unit of the adjunction $(j_U^*, j_{U*})$, is the unit of the adjunction $(j_{V}^*, j_{V*})$. 
\end{rem}

We are going to prove that the complex~{\em \eqref{eqn:Cech resolution again}} is always acyclic.
To do so we will use another resolution of smooth~$\GL_2(\bQ_p)$-representations, arising from the Bruhat--Tits tree of~$\PGL_2(\bQ_p)$.
Let~$N$ be the normalizer of the Iwahori subgroup~$\Iw$.
Write $\delta: N \to \cO^\times$ for the orientation character, which is trivial on~$\Iw Z$ and takes value~$-1$ at~$\fourmatrix 0 1 p 0$. 
We identify the representation space of~$\delta$ with~$\cO$.
Then we have an exact sequence of $\cO[G]_\zeta$-representations
\begin{equation}\label{tree}
0 \to \cInd_N^G(\delta) \xrightarrow{\partial} \cInd_{KZ}^G(\triv) \xrightarrow{\mathrm{sum}} \triv \to 0,
\end{equation}
see for example~\cite[Section~2.4.19]{inertialp-adic}.

\begin{prop}
\label{prop:Cech acyclicity}
For any object $\pi$ of~$\cA_A$, and any finite open cover $\{U_i\}$ of~$X_A$,
the complex~{\em \eqref{eqn:Cech resolution again}} is acyclic.
\end{prop}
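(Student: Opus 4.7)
My plan is to reduce the question, via a sequence of standard \emph{d\'evissages}, to the case where $\pi = \cInd_{KZ}^G\sigma$ for a Serre weight $\sigma$, at which point the \v{C}ech complex becomes the \v{C}ech complex of a quasi-coherent sheaf on an affine scheme with respect to an affine cover.

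Write $n := |I| - 1$ for the index set $I$ of the cover, and let $\tilde C^\bullet(\pi)$ denote the augmented \v{C}ech complex of the proposition, viewed as a complex concentrated in degrees $-1,0,\ldots,n$. Acyclicity is the statement that $H^k(\tilde C^\bullet(\pi))=0$ for all $k$. Since each of the finitely many functors $(j_S)_*(j_S)^*$ is exact on $\cA$ (Proposition~\ref{prop:localizing compact inductions}(2)), the assignment $\pi \mapsto \tilde C^\bullet(\pi)$ preserves short exact sequences, so the $H^k(\tilde C^\bullet(-))$ form a cohomological $\delta$-functor from $\cA$ to $\cA$, which vanishes trivially for $k < -1$ and $k > n$. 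Moreover, by Lemma~\ref{lem:adjoint and colimits}(1) each term of $\tilde C^\bullet$ commutes with filtered colimits, so the cohomology does too; writing a general $\pi$ as the filtered colimit of its finitely generated subobjects, we reduce to the case where $\pi$ is finitely generated.

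By Theorem~\ref{thm:noetherian} any finitely generated $\pi$ fits in a short exact sequence $0 \to K_0 \to \cInd_{KZ}^G V_0 \to \pi \to 0$ with $V_0$ a finite length smooth $KZ$-representation, and with $K_0$ itself finitely generated. Iterating produces a resolution $\cdots \to \cInd_{KZ}^G V_1 \to \cInd_{KZ}^G V_0 \to \pi \to 0$ with each $V_i$ finite length. Given the vanishing of $H^k(\tilde C^\bullet(\cInd_{KZ}^G V))$ for every finite length $V$ (established below), the long exact sequence applied to $0 \to K_i \to \cInd_{KZ}^G V_i \to K_{i-1} \to 0$ (with $K_{-1}:=\pi$) yields isomorphisms
\[
H^k(\tilde C^\bullet(\pi)) \cong H^{k+1}(\tilde C^\bullet(K_0)) \cong \cdots \cong H^{k+N}(\tilde C^\bullet(K_{N-1}))
\]
for every $N \geq 1$; choosing $N > n-k$ forces the right-hand side to vanish, hence $H^k(\tilde C^\bullet(\pi)) = 0$ as desired. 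To handle the case $\pi = \cInd_{KZ}^G V$ with $V$ finite length, we filter $V$ by Serre weights; since $\cInd_{KZ}^G$ is exact, the long exact sequence reduces the problem to the case $\pi = \cInd_{KZ}^G \sigma$ for a Serre weight $\sigma$.

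It remains to verify acyclicity for $\pi = \cInd_{KZ}^G \sigma$. For each nonempty $S\subset I$ set $g_S \in \cH(\sigma)\cong \F[T]$ to be a generator of the principal ideal cutting out the closed set $f_\sigma^{-1}(X \setminus U_S) = \bigcup_{i\in S} f_\sigma^{-1}(X\setminus U_i)$ of $\Spec \cH(\sigma) = \A^1_\F$ (all closed subsets of $\A^1$ are principal, so this is automatic, and one can take $g_S = \prod_{i\in S} g_i$). Applying Proposition~\ref{prop:localizing compact inductions}(1) in each degree, the \v{C}ech complex for $\cInd_{KZ}^G\sigma$ is identified with the ordered \v{C}ech complex of the $\F[T]$-module $M := \cInd_{KZ}^G\sigma$ with respect to the cover of $\A^1_\F$ by the open affines $\{D(g_i)\}_{i\in I}$; since the $U_i$ cover $X$ we have $\bigcap_i V(g_i) = \emptyset$, so the $g_i$ generate the unit ideal and $\{D(g_i)\}$ is indeed an open cover. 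But this is precisely the \v{C}ech complex of a quasi-coherent sheaf on an affine scheme with respect to an affine open cover, which is well known to be exact.

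The only genuine subtlety lies in the opening d\'evissage from arbitrary finitely generated $\pi$ to compact inductions: without a bound on the homological dimension of $\cA^{\fg}$, one needs the fact that $\tilde C^\bullet$ is concentrated in finitely many degrees in order to cut off the induction on the resolution length. Everything else amounts to combining the exactness statement of Proposition~\ref{prop:localizing compact inductions}(2), the identification in Proposition~\ref{prop:localizing compact inductions}(1), and standard facts about affine \v{C}ech cohomology.
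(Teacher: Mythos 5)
Your proof is correct, and it takes a genuinely different route through the d\'evissage from the paper's own argument. Both proofs ultimately reduce to $\pi = \cInd_{KZ}^G\sigma$ for a Serre weight $\sigma$, and both then invoke Proposition~\ref{prop:localizing compact inductions}(1) to identify the \v{C}ech complex of $\cInd_{KZ}^G\sigma$ with the affine ordered \v{C}ech complex of the $\cH(\sigma)$-module $\cInd_{KZ}^G\sigma$ with respect to a cover of $\Spec\cH(\sigma) \cong \A^1_\F$ by distinguished opens, whose exactness is standard. The difference is how one gets from a general $\pi$ down to a compact induction. The paper uses the Bruhat--Tits tree: Frobenius reciprocity produces the two-term resolution $0 \to \cInd_N^G\delta\pi \to \cInd_{KZ}^G\pi \to \pi \to 0$, and the assumption $p>2$ makes $\cInd_N^G\delta\pi$ a direct summand of a compact induction from $KZ$, so a single application of the long exact sequence handles all $\pi$ at once without ever passing through $\cA^{\fg}$. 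You instead first reduce to finitely generated $\pi$ via filtered colimits, then resolve by compact inductions $\cdots\to\cInd_{KZ}^G V_1 \to \cInd_{KZ}^G V_0 \to \pi \to 0$ (using Theorem~\ref{thm:noetherian} to know the kernels stay finitely generated) and exploit the fact that the augmented \v{C}ech complex is supported in degrees $-1$ through $n$, so dimension-shifting $n+2$ steps kills everything. Your route is more elementary in that it bypasses the tree and never uses $p>2$, but it invokes the Noetherian property of $\cA^{\fg}$, whereas the paper's tree argument works uniformly for all smooth $\pi$ without first passing to finitely generated objects. Both are perfectly valid; the paper's is arguably shorter once the tree sequence~\eqref{tree} is in hand, while yours is the one a reader unfamiliar with the tree would naturally find, and correctly identifies the boundedness of the \v{C}ech complex as the key point that makes the infinite resolution harmless.
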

\begin{proof}
Tensoring~\eqref{tree} with~$\pi$ we obtain a short exact sequence
$$0 \to \cInd_{N}^G \delta\pi \to \cInd_{KZ}^G \pi \to \pi \to 0$$
where~$\delta$ is the nontrivial quadratic character of~$N/\Iw Z$.

Since $p > 2$, %
$\cInd_{N}^G\delta\pi$ is a direct summand of 
$\cInd_{\Iw Z}^G \pi = \cInd_{KZ}^G (\cInd_{\Iw Z}^{KZ} \pi).$ %
Since the formation of each of the terms in~\eqref{eqn:Cech resolution again}
is exact, we reduce to verifying the claim of the proposition
in the case when $\pi$ is of the form $\cInd_{KZ}^G V$ for some smooth $A[KZ]_\zeta$-representation~$V$.  Since 
both compact induction,
and the formation of the terms in~\eqref{eqn:Cech resolution again},
are compatible with passing to filtered colimits (use  Lemma~\ref{lem:adjoint and colimits}~(1)), %
we then reduce to the case when $V$ is finitely generated.  Finally,
since compact induction is exact, we reduce to the case when $\pi$ is of the
form $\cInd_{KZ}^G \sigma$ for some Serre weight~$\sigma$.

The open cover $\{U_i\}$ pulls back via $f_{\sigma}$ 
to an open cover $D(g_i)$ of $\Spec \cH(\sigma)$.
Proposition~\ref{prop:localizing compact inductions}
then implies that~\eqref{eqn:Cech resolution again}
may be identified with the tensor product
\begin{equation}
\label{eqn:Koszul}
(\cInd_{KZ}^G \sigma )\otimes_{\cH(\sigma)} K^{\bullet}(g_0,\ldots,g_n),
\end{equation}
where $K^{\bullet}(g_0,\ldots,g_n)$ denotes 
the usual \v{C}ech complex for a finite open cover 
of $\Spec$ of a ring by distinguished opens
associated to the sequence
$\{g_0,\ldots,g_n\}$.  
Since $\{D(g_i)\}$ is an open cover of the affine scheme $\Spec \cH(\sigma)$,
the complex $K^{\bullet}(g_0,\ldots,g_n)$ is acyclic,
and remains acyclic after tensoring with any $\cH(\sigma)$-module.
We consequently find that~\eqref{eqn:Koszul} is acyclic, and thus
that~\eqref{eqn:Cech resolution again} is acyclic, as claimed.
\end{proof}

\begin{cor}
\label{cor:localizing from inside}
If $Y \subset U$ is an inclusion of a closed subset of $X_A$ in an open subset of~$X_A$,
and if $\pi$ is an object of $\cA_{A, Y}$, then the unit morphism
$\pi \to (j_U)_*(j_U)^*\pi$ is an isomorphism; equivalently, $\pi$ lies in the essential
image of $(j_U)_*$.
\end{cor}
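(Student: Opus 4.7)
The plan is to deduce this from the acyclicity of the Čech resolution established in Proposition~\ref{prop:Cech acyclicity}, by choosing a particularly convenient open cover of~$X$ that isolates the support of~$\pi$.

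Concretely, I would set $V := X \setminus Y$, which is open since $Y$ is closed, and consider the finite open cover $\{U, V\}$ of $X$ (noting that $U \cup V \supseteq Y \cup (X \setminus Y) = X$, since $Y \subseteq U$). Applying Proposition~\ref{prop:Cech acyclicity} to $\pi$ with this cover yields the short exact sequence
\[
0 \to \pi \to (j_U)_*(j_U)^*\pi \,\oplus\, (j_V)_*(j_V)^*\pi \to (j_{U \cap V})_*(j_{U \cap V})^*\pi \to 0.
\]
The key point is then that the second and third terms on the right vanish: since $\pi \in \cA_Y$ and $V \cap Y = \emptyset$, we have $\pi \in \cA_{X \setminus V}$, so $(j_V)^*\pi = 0$, and similarly $(j_{U \cap V})^*\pi = 0$ because $U \cap V \subseteq V$ is disjoint from $Y$. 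Substituting these vanishings into the sequence immediately gives that the unit morphism $\pi \to (j_U)_*(j_U)^*\pi$ is an isomorphism, which is exactly what we want.

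I do not anticipate any real obstacle here: once Proposition~\ref{prop:Cech acyclicity} is available, this corollary is essentially a formal consequence, obtained by exhibiting an open cover that trivializes all but one of the terms in the Čech complex. The only mild verification to keep in mind is that $\{U, V\}$ genuinely covers $X$ (using $Y \subseteq U$), and that membership in $\cA_Y$ is preserved under the localization functors~$(j_W)^*$ for open sets~$W$ disjoint from~$Y$, both of which are immediate from Definition~\ref{localizingsubcategory}.
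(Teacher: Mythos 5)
Your proof is correct and follows exactly the paper's own argument: apply Proposition~\ref{prop:Cech acyclicity} to the cover $\{U, V := X \setminus Y\}$ and observe that $(j_V)^*\pi$ and $(j_{U\cap V})^*\pi$ both vanish since $\pi \in \cA_Y$. You have merely spelled out a couple of the immediate verifications that the paper leaves implicit.
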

\begin{proof}
This follows from the acyclicity of the \v{C}ech resolution of $\pi$
with respect to the open cover
$\{U, V \coloneq  X_A\setminus Y\}$ of $X_A$, together with the fact that, by definition,
$(j_V)^*\pi = (j_{U \cap V}) ^*\pi=  0$. 
\end{proof}

\begin{cor}%
  \label{cor:disjoint closed localization lemma}Let $Y$ and $W$ be disjoint closed
  subsets of $X_A$.
If $\pi$ is an object of $\cA_{A, Y}$, and $\pi'$ is an object of $\cA_{A, W}$,
then $\Ext^i_{\cA_A}(\pi,\pi') = 0$ for all~$i$.
\end{cor}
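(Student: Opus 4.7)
The plan is to deduce this directly from the two immediately preceding corollaries. The key observation is that since $Y$ and $W$ are disjoint closed subsets of $X$, we have the inclusion $W \subset U := X \setminus Y$ of the closed subset $W$ into the open subset $U$ of $X$.

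First I would apply Corollary~\ref{cor:localizing from inside} to the inclusion $W \subset U$ and to the object $\pi' \in \cA_W$, which gives an isomorphism $\pi' \isoto (j_U)_*(j_U)^* \pi'$; in other words, $\pi'$ lies in the essential image of $(j_U)_*$. Then I would invoke Corollary~\ref{cor:ext vanishing} for the closed subset $Y$ (with its open complement $U$), which says precisely that $\Ext^i_{\cA}(\pi, (j_U)_*(j_U)^* \pi') = 0$ for all $i$ whenever $\pi \in \cA_Y$. Combining the two yields $\Ext^i_{\cA}(\pi, \pi') = 0$ for all $i$, as desired.

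There is essentially no obstacle here: the corollary is an almost formal consequence of the framework built up in the section. All of the real work has already been done in proving Lemma~\ref{lem:ext vanishing} (the countable-dimension argument exploiting the Hecke action) and in Proposition~\ref{prop:localizing compact inductions} (identifying $j_*$ on compact inductions with localization), which together power Corollaries~\ref{cor:ext vanishing} and~\ref{cor:localizing from inside}. The only conceptual point to check is that $W$ is indeed a closed subset of the open subset $U$, which is immediate since $W$ is closed in $X$ and contained in $U$.
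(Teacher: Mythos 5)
Your proof is correct and is exactly the paper's argument: apply Corollary~\ref{cor:localizing from inside} to the inclusion $W \subseteq U = X \setminus Y$ to place $\pi'$ in the essential image of $(j_U)_*$, then conclude by Corollary~\ref{cor:ext vanishing}.
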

\begin{proof} This follows immediately from Corollaries ~\ref{cor:ext vanishing}
  and ~\ref{cor:localizing from inside} (the latter being applied to the inclusion
$W \subseteq X \setminus Y$).
When~$Y, W$ are finite, this is also a consequence of Lemma~\ref{lem:finite sets}. %
\end{proof}

We end this subsection with the following counterpart to Lemma~\ref{lem:ext vanishing}.

\begin{lem}
\label{lem:another ext vanishing}
Let $Y$ be a closed subset of~$X_A$, let $\sigma$ be a Serre weight,
and write $f_{\sigma}^{-1}(Y) = V(g)$ {\em (}a closed subset of 
$\Spec \cH(\sigma)${\em )} %
for some $g \in \cH(\sigma)$.
Then for any object $\pi$ of $\cA_{A, Y}$,
each element of
$\Ext^i_{\cA_A}(\cInd_{KZ}^G \sigma, \pi)$
is annihilated by some power of $g$.
\end{lem}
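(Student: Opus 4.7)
The plan is to deduce the result from the disjoint-support vanishing of Corollary~\ref{cor:disjoint closed localization lemma}, combined with a Dixmier-type countable-dimensionality argument in the spirit of the proof of Lemma~\ref{lem:ext vanishing}. The case $g=0$ is vacuous, since then multiplication by $g$ is the zero endomorphism, so I assume $g \neq 0$ throughout.

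First I would carry out several standard reductions. Because $\cInd_{KZ}^G\sigma$ is Noetherian by Corollary~\ref{cor:noetherian}, Lemma~\ref{lem:ext and colimits} implies that $\Ext^i_{\cA}(\cInd_{KZ}^G\sigma,-)$ commutes with filtered colimits; since any finitely generated subobject of $\pi$ still lies in $\cA_Y$, I may assume $\pi$ is finitely generated. A finitely generated object of $\cA_\cO$ is killed by some power of $\fm_\cO$, and a d\'evissage via the Grothendieck spectral sequence~\eqref{changecoefficients2} (whose $E_2$-entries involve $\Ext^j_\cO(\F,\pi)$, still objects of $\cA_Y$) reduces to the case $\pi\in\cA_\F$. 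By Proposition~\ref{prop:ext and base-change} the $\Ext$-group in question commutes with arbitrary scalar extension on $\F$, so I may further assume $\F$ is uncountable.

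Next, I would show that multiplication by any irreducible $h\in\cH(\sigma)$ coprime to $g$ is an isomorphism on $\Ext^i_{\cA}(\cInd_{KZ}^G\sigma,\pi)$. For such $h$, the relation $\gcd(h,g)=1$ forces $V(h)\cap V(g)=\emptyset$, so $f_\sigma(V(h))$ is a finite closed subset of $X$ disjoint from $Y$; by Corollary~\ref{cor:cofinite length} the representation $\cInd_{KZ}^G\sigma/h\cInd_{KZ}^G\sigma$ has finite length, with Jordan--H\"older constituents lying in blocks indexed by $f_\sigma(V(h))$, so it lies in some $\cA_W$ with $W\cap Y=\emptyset$. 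Corollary~\ref{cor:disjoint closed localization lemma} then gives $\Ext^j_{\cA}(\cInd_{KZ}^G\sigma/h\cInd_{KZ}^G\sigma,\pi)=0$ for all $j$, and feeding this vanishing into the long exact $\Ext^*_{\cA}(-,\pi)$-sequence associated to the short exact sequence
\[
0\to \cInd_{KZ}^G\sigma \xrightarrow{h} \cInd_{KZ}^G\sigma \to \cInd_{KZ}^G\sigma/h\cInd_{KZ}^G\sigma\to 0
\]
shows that $h$ acts invertibly on $\Ext^i_{\cA}(\cInd_{KZ}^G\sigma,\pi)$, as claimed.

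Finally I would conclude by a Dixmier-style dimension count. Suppose for contradiction that some $\xi\in\Ext^i_{\cA}(\cInd_{KZ}^G\sigma,\pi)$ is not annihilated by any power of $g$. For every $\alpha\in\F$ which is not a root of $g$ the polynomial $T-\alpha$ is coprime to $g$ and thus acts invertibly, so $(T-\alpha)^{-1}\xi$ is well defined; I claim the uncountable family $\{(T-\alpha)^{-1}\xi\}$ is $\F$-linearly independent. Indeed, any non-trivial $\F$-linear relation, after clearing denominators, yields an identity $p(T)\xi=0$ with $0\neq p\in \F[T]$; factoring $p=p_0 q$ with $p_0$ a product of irreducible factors of $g$ and $\gcd(q,g)=1$ and invoking the invertibility of $q$ gives $p_0\xi=0$, and since $p_0\mid g^m$ for some $m\geq 0$ this contradicts the hypothesis on $\xi$. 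Since $g$ has only finitely many roots in $\F$ and $\F$ is uncountable, this produces uncountably many $\F$-linearly independent elements inside $\Ext^i_{\cA}(\cInd_{KZ}^G\sigma,\pi)$, contradicting the countable $\F$-dimensionality provided by Lemma~\ref{lem:countable dim}. The main difficulty is arranging the reductions so that Lemma~\ref{lem:countable dim} and the disjoint-support vanishing can be deployed simultaneously; once that is done, the Dixmier argument is essentially a linear algebra computation.
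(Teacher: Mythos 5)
Your proof is correct and follows essentially the same route as the paper's: reduce to $\pi$ finitely generated over $\F$ uncountable, show that elements of $\cH(\sigma)$ coprime to $g$ act invertibly by exploiting vanishing of $\Ext$ against finite-length quotients supported away from $Y$, then conclude by a Dixmier-type countability contradiction. The only differences are cosmetic: you invoke the packaged Corollary~\ref{cor:disjoint closed localization lemma} where the paper unwinds it via Corollaries~\ref{cor:localizing from inside} and~\ref{cor:ext vanishing}, and you phrase the Dixmier step as linear independence of the family $(T-\alpha)^{-1}\xi$, where the paper says more tersely that the $\Ext$-module cannot contain a nonzero $\cH(\sigma)_S$-torsion-free submodule.
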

\begin{proof}
By Lemma~\ref{lem:ext and colimits}, we can assume without loss of generality that~$\pi$ is finitely generated.
By a similar argument as in the proof of Lemma~\ref{lem:ext vanishing}, we then may (and do) assume that~$A=k$, and that~$k$ is uncountable and algebraically closed.

Let $h \in \cH(\sigma)$ be coprime to $g$.
By Lemma~\ref{f_sigma preimages}(1)
we know that $(\cInd_{KZ}^G\sigma)/h(\cInd_{KZ}^G \sigma)$
is of finite length, and lies in $\cA_{W}$
for some finite closed subset $W$ of $X_A \setminus Y$. %
If we set $U \coloneq  X_A \setminus W$,
then $\pi = (j_U)_*(j_U)^*\pi$,
by Corollary~\ref{cor:localizing from inside} 
(note that $Y \subseteq U$).
Thus
$\Ext^i_{\cA_k}\bigl((\cInd_{KZ}^G\sigma)/h(\cInd_{KZ}^G \sigma), \pi\bigr)$
vanishes for every~$i$, 
by Corollary~\ref{cor:ext vanishing},
and so a consideration of the long exact sequence of $\Ext$'s
arising from
the short exact sequence
$$0 \to \cInd_{KZ}^G \sigma \buildrel h \cdot \over\longrightarrow
\cInd_{KZ}^G \sigma \to (\cInd_{KZ}^G \sigma)/h(\cInd_{KZ}^G \sigma) \to 0$$
shows that $h$ acts invertibly on 
$\Ext^i_{\cA_k}(\cInd_{KZ}^G \sigma, \pi)$.
Thus the $\cH(\sigma)$-module structure on
$\Ext^i_{\cA_k}(\cInd_{KZ}^G \sigma, \pi)$
extends to a $\cH(\sigma)_S$-module structure, where $S$ denotes the multiplicative
subset of elements coprime to $g$.  
Since $k$ is uncountable, the ring $\cH(\sigma)_S$ has uncountable $k$-dimension,
while Lemma~\ref{lem:countable dim} shows that
$\Ext^i_{\cA_k}(\cInd_{KZ}^G \sigma, \pi)$ has countable $k$-dimension.
Thus $\Ext^i_{\cA_k}(\cInd_{KZ}^G \sigma, \pi)$ cannot contain any $\cH(\sigma)_S$
torsion-free submodule.  The lemma follows.
\end{proof}

\begin{remark}
\label{rem:alternate}
We explain an alternate proof of Lemma~\ref{lem:another ext vanishing}
in the case when~$Y$ is finite, assuming for simplicity that~$A = \cO$. 
In this case,
Lemma~\ref{lem:finite sets} shows that $\cA_{Y}$ is a product
of blocks of~$\cA^{\ladm}$,
and it follows from~\cite[ Cor.\ 5.18]{MR3150248}
that
any injective resolution of $\pi$ in $\cA_Y$
also provides an injective resolution in $\cA$. 
But if $I^{\bullet}$ is such a resolution in $\cA_Y$, 
then Lemma~\ref{f_sigma preimages}(2) implies that any element $\varphi \in \Hom_{\cA}(\cInd_{KZ}^G \sigma, I^{\bullet})$
factors through $(\cInd_{KZ}^G\sigma)/g^n (\cInd_{KZ}^G \sigma)$,
for some~$n$.
\end{remark}

\subsection{A stack of abelian categories}\label{subsec: stack of
  abelian categories}For each open subset~$U$ of~$X_A$ we have the
localized category $\cA_{A, U}$, and for each open subset $V\subset U$ we
have the natural localization functor $j_{UV}^*: \cA_{A, U}\to\cA_{A, V}$, compare Remark~\ref{defining the maps}.

\begin{thm}
\label{thm: stack of abelian categories}
The collection $\{\cA_{A, U}\}$ together with the localization functors~$j_{UV}^*: \cA_{A, U} \to \cA_{A, V}$ for~$V\subset U$ forms a stack {\em (}of abelian
categories{\em )} over the Zariski site of~$X_A$. 
\end{thm}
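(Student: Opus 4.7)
The plan is to verify, for any open $U \subseteq X$ and any open cover $\{U_i\}_{i \in I}$ of $U$, that the natural functor from $\cA_U$ to the category of descent data for $\{U_i\}$ is an equivalence. Since $X$ is Noetherian we may assume $I$ is finite. I will check separately that $\Hom$-sets satisfy the sheaf axiom (so the prestack is a sheaf of categories on morphisms) and that descent data are effective; the coherences required to package this as a stack of abelian categories (associativity of restriction, the cocycle identifications) are formal once these two ingredients are in hand.

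For the sheaf property of $\Hom$, fix $\pi,\pi' \in \cA_U$. The argument of Proposition~\ref{prop:Cech acyclicity} carries over verbatim to $\cA_U$ in place of $\cA$: the category $\cA_U$ is generated by the images $j_{XU}^* \cInd_{KZ}^G \sigma$ of compact inductions, and by Proposition~\ref{prop:localizing compact inductions}(1) each such image is a localization of $\cInd_{KZ}^G\sigma$ at an element of $\cH(\sigma)$; the given cover of $U$ pulls back under $f_\sigma$ to an open cover of a distinguished open of $\Spec\cH(\sigma)$, which reduces \v{C}ech acyclicity for this generator to the evident acyclicity of the associated Koszul complex. Applying the left-exact functor $\Hom_{\cA_U}(\pi,-)$ to the first three terms of the resulting \v{C}ech resolution of $\pi'$, and using the adjunction $\Hom_{\cA_U}(\pi,(j_{UV})_*\tau) \cong \Hom_{\cA_V}(j_{UV}^*\pi,\tau)$, yields the sheaf equalizer
\[
0 \to \Hom_{\cA_U}(\pi,\pi') \to \prod_i \Hom_{\cA_{U_i}}(\pi|_{U_i},\pi'|_{U_i}) \rightrightarrows \prod_{i<j} \Hom_{\cA_{U_{ij}}}(\pi|_{U_{ij}},\pi'|_{U_{ij}}).
\]

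For effectivity, suppose $(\pi_i,\phi_{ij})$ is a descent datum, with $\pi_i \in \cA_{U_i}$ and $\phi_{ij}\colon j_{U_iU_{ij}}^*\pi_i \isoto j_{U_jU_{ij}}^*\pi_j$ satisfying the cocycle condition. I will define
\[
\pi := \ker\!\Bigl( \prod_i (j_{UU_i})_* \pi_i \xrightarrow{d^0} \prod_{i<j} (j_{UU_{ij}})_*\, j_{U_iU_{ij}}^* \pi_i \Bigr),
\]
where $d^0$ sends $(x_i)$ to the family with $(i,j)$-component $\phi_{ij}(x_i|_{U_{ij}}) - x_j|_{U_{ij}}$ (viewed in the common object $j_{U_jU_{ij}}^*\pi_j$ via $\phi_{ij}$). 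To verify $j_{UU_k}^*\pi \cong \pi_k$, apply the exact functor $j_{UU_k}^*$ to the defining sequence (exactness of $(j_{UV})_*$ comes from extending Proposition~\ref{prop:localizing compact inductions}(2) to open subsets of $U$ by the same argument) and invoke a base-change isomorphism $j_{UU_k}^*(j_{UU_i})_* \cong (j_{U_kU_{ik}})_*\, j_{U_iU_{ik}}^*$ to rewrite the resulting terms; the outcome agrees with the first two terms of the \v{C}ech resolution of $\pi_k$ for the induced cover $\{U_{ik}\}_i$ of $U_k$, whose acyclicity (again by the proof of Proposition~\ref{prop:Cech acyclicity} in $\cA_{U_k}$) identifies $j_{UU_k}^*\pi$ with $\pi_k$. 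The same machinery applied in the other direction shows that restriction is quasi-inverse to this construction.

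The main technical obstacle is the base-change isomorphism $j_{UU_k}^*(j_{UU_i})_* \cong (j_{U_kU_{ik}})_*\, j_{U_iU_{ik}}^*$. This is formal from the uniqueness of adjoints: both sides are right adjoint to $j_{U_iU_{ik}}^*\, j_{U_kU_{ik}}^* \cong j_{UU_i}^*\, j_{UU_k}^*$ (an identity of quotient functors among Grothendieck quotients), provided one knows that $(j_{UV})_*$ is exact for arbitrary Zariski inclusions $V \subseteq U$ of opens of $X$. This exactness is proved by the same \v{C}ech/Ext-vanishing strategy used for $U \subseteq X$ in Section~\ref{subsec: localization}, since all the input lemmas (Lemma~\ref{lem:ext vanishing}, Lemma~\ref{lem:another ext vanishing}, Corollary~\ref{cor:disjoint closed localization lemma}) are statements about $\cA$ that transfer to $\cA_U$ by localization. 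Once this base change is established, the rest is bookkeeping.
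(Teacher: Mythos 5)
Your overall strategy coincides with the paper's: reduce to finite covers, establish \v{C}ech acyclicity (extended to $\cA_U$), deduce the Hom-sheaf condition by applying a left-exact Hom functor to the \v{C}ech resolution, build the glued object $\pi$ as the kernel of a \v{C}ech-style differential out of $\prod_i j_{i*}\pi_i$, and verify $j_k^*\pi\cong\pi_k$ by applying the exact functor $j_k^*$ to the defining sequence, invoking a base-change identification, and reducing to the ``special case'' where one of the opens equals the base. The paper phrases the final base-change step as ``the formation of \eqref{Cechtruncated} is compatible with pullback,'' while you name the underlying base-change isomorphism $j_{UU_k}^*(j_{UU_i})_*\cong(j_{U_kU_{ik}})_*\,j_{U_iU_{ik}}^*$ explicitly; this is a reasonable and clarifying move.

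However, the justification you give for the base-change isomorphism is not correct. You claim both sides are ``right adjoint to $j_{U_iU_{ik}}^*\,j_{U_kU_{ik}}^*$.'' That expression does not typecheck: $j_{U_kU_{ik}}^*$ lands in $\cA_{U_{ik}}$ while $j_{U_iU_{ik}}^*$ has source $\cA_{U_i}$, so they cannot be composed. More fundamentally, $j_{UU_k}^*(j_{UU_i})_*$ is a (left-adjoint) localization functor composed with a right adjoint, so it is not \emph{a priori} a right adjoint of anything, and even granting exactness of all the $j_*$'s, the quotient functors $j^*$ need not acquire left adjoints (e.g.\ in the model case $\Spec\F[T]$ with $U=D(T)$, restriction of scalars $j_*$ is exact but localization $j^*$ has no left adjoint). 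So the ``formal from uniqueness of adjoints'' step fails. The effective way to establish the base change, in line with how Proposition~\ref{prop:Cech acyclicity} itself is proved, is to note that both sides of the isomorphism are exact, commute with filtered colimits (Lemma~\ref{lem:adjoint and colimits}), and therefore can be compared on the generators $j^*\cInd_{KZ}^G\sigma$ of $\cA_{U_i}$, where Proposition~\ref{prop:localizing compact inductions}(1) identifies both sides with the corresponding (elementary) localization of the $\cH(\sigma)$-module $\cInd_{KZ}^G\sigma$ along the pullback of the opens. The rest of your argument, including the \v{C}ech-acyclicity transfer to $\cA_U$, is sound.
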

\begin{rem}\label{rem: what is a stack of abelian categories}
By a stack of abelian categories over the Zariski site of~$X_A$ we simply mean a stack whose fiber categories are abelian. 
There are further properties one could ask of such an object, such as
exactness of all the pullback functors, and indeed the stack
determined by the~$\{\cA_{A, U}\}$ %
has a lot more structure, such as pushforward functors and acyclic {\v C}ech resolutions.
This is very similar to the formalism underlying cohomological
descent~\cite[\href{https://stacks.math.columbia.edu/tag/0D8D}{Tag~0D8D}]{stacks-project},
as might be expected taking into account the connection
between the categories~$\cA_{A, U}$ %
and sheaves over a stack of $(\varphi,\Gamma)$-modules explained in the introduction.
\end{rem}
\begin{proof}[Proof of Theorem~{\em \ref{thm: stack of abelian categories}}]
Throughout this proof we will drop the symbol~$A$ from the notation.
Since the localization functors $j_{UV}^*: \cA_U \to \cA_V$ are the identity on objects, it is straightforward to check that~$U \mapsto \cA_U$ defines a pseudo-functor in the sense of~\cite[Definition~3.10]{Vistoli}. 
More precisely, the universal property of localization specifies a natural isomorphism
$j_{UW}^* \cong j_{VW}^*j_{UV}^*$
associated to any composite inclusion $W \subset V \subset U$ of open subsets, such that~$j_{UU}^*$ is the identity and Properties~(a) and~(b) in~\cite[Definition~3.10]{Vistoli} are satisfied.
By the procedure explained in~\cite[Section~3.1.3]{Vistoli} we obtain a fibered category~$\cA_\bullet \to X$.

Following~\cite[Definition~4.6]{Vistoli} we need to prove that for any covering~$\{U_i \to U\}_{i\in I}$ the functor from~$\cA_U$ to the category $\cA_\bullet(\{U_i \to U\})$ of objects with descent data is an equivalence.
Since~$X$ is quasicompact, we can assume without loss of generality that the indexing set is equal to~$\{0, 1\ldots, n\}$.
We will deduce the result from Proposition~\ref{prop:Cech
    acyclicity}.
In what follows we write as usual $U_{ij} = U_i \cap U_j$. 

Choose objects~$\pi_i \in \cA_{U_i}$ and isomorphisms
\begin{equation}\label{descentdatum}
j_{U_iU_{ij}}^*\pi_i \isom j_{U_jU_{ij}}^*\pi_j
\end{equation}
in~$\cA_{U_{ij}}$ for all~$i, j$, satisfying the cocycle condition after pullback to~$U_{ijk}$.
Fix indices~$i<j$ and define~$\pi_{ij} = j_{U_iU_{ij}}^*\pi_i$.
We have the unit of adjunction $\pi_i \to j_{U_iU_{ij}*}j_{U_iU_{ij}}^*\pi_i$, and applying~$j_{UU_i*}$ we find a map
\[
u_{ij}: j_{UU_i*}\pi_i \to j_{UU_{ij}*}\pi_{ij}.
\]
Similarly, the unit $\pi_j \to j_{U_jU_{ij}*}j_{U_jU_{ij}}^*\pi_j$ yields a map
\[
u_{ij}^+: j_{UU_j*}\pi_j \to j_{UU_{ij}*}(j_{U_jU_{ij}}^*\pi_j) \isom j_{UU_{ij}*}\pi_{ij}.
\]
where the isomorphism is induced by~\eqref{descentdatum}.
Putting these together we obtain a map
\begin{equation}\label{Cechtruncated}
u_{ij}^+-u_{ij} : \prod_{i} j_{UU_i*}\pi_i \to \prod_{i<j} j_{UU_{ij}*}\pi_{ij}. 
\end{equation}
If we are in the special case that $U_{i_0} = U$ for some index~$i_0$, the cocycle condition implies that the maps 
$j_{U_{i_0}U_{i_0 i}}^*\pi_{i_0} \to \pi_i = j_{U_iU_{i_0 i}}^* \pi_i$ define an isomorphism in~$\cA_\bullet(\{U_i \to U\})$.
Since the formation of~\eqref{Cechtruncated} defines a functor on~$\cA_\bullet(\{U_i \to U\})$, it follows that the complex~\eqref{Cechtruncated} is isomorphic to the first truncation of the {\v C}ech resolution of~$\pi_{i_0}$.
Furthermore, the formation of~\eqref{Cechtruncated} is compatible with pullback for any open inclusion~$V \subset U$.
It follows from this discussion together with Proposition~\ref{prop:Cech acyclicity} and the exactness of pullback functors that if we define~$\pi$ by the exact sequence
\[
0 \to \pi \to \prod_i j_{UU_i*}\pi_i \xrightarrow{\eqref{Cechtruncated}} \prod_{i<j} j_{UU_{ij}*}\pi_{ij}
\]
then~$j_{UU_i}^*(\pi) \cong \pi_i$, proving essential surjectivity of~$\cA_U \to \cA_\bullet(\{U_i \to U\})$.

Now let~$\alpha: \pi_1 \to \pi_2$ be a morphism in~$\cA_U$.
It follows immediately from Proposition~\ref{prop:Cech acyclicity} and functoriality of the {\v C}ech resolutions that if~$j_{UU_i}^*(\alpha) = 0$ for all~$i$ then~$\alpha = 0$, which proves that $\cA_U \to \cA_\bullet(\{U_i \to U\})$ is faithful.
To prove that it is full we begin with representations~$\pi, \tau \in \cA_U$ together with morphisms of descent data~$\alpha_i: j_{UU_i}^*\pi \to j_{UU_i}^*\tau$.
The argument for essential surjectivity implies that the~$\alpha_i$ induce a morphism on the first-truncated {\v C}ech resolutions of~$\pi$ and~$\tau$, and so a morphism~$\alpha: \pi \to \tau$.
Since the formation of the {\v C}ech resolution commutes with~$j_{UU_i}^*$ we deduce that that~$j_{UU_i}^*(\alpha) = \alpha_i$, which concludes the proof.
\end{proof}

\subsection{Further results about \texorpdfstring{$j_*$}{j*}}
From now on until the end of Section~\ref{sec: localization of G reps} we restrict to the case $A=\cO$.

As we can see from part~(1) of Proposition~\ref{prop:localizing compact inductions},
it is not typically the case that $j_*j^*\pi$ is finitely generated over~$\cO[G]_\zeta$,
even if $\pi$ is.  
Indeed we have the following result.

\begin{lemma}
\label{lem:f.g. adjoint}
If $U$ is an open subset of~$X$, and if $j_{U*}j_U^*\pi$ is finitely generated
for some object $\pi$ of~$\cA$, then the unit of adjunction
$\pi \to j_{U*}j_U^*\pi$ is surjective.
\end{lemma}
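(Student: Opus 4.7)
Set $\pi' := j_{U*}j_U^*\pi$ and let $Y := X \setminus U$. Denote by $u : \pi \to \pi'$ the unit map and by $C := \coker(u)$ its cokernel. By the standard localization formalism (Appendix~\ref{sec: cat theory background}), $j_U^* u$ is an isomorphism, so $C$ lies in the Serre subcategory $\cA_Y$. Since $\pi'$ is finitely generated by hypothesis, it is Noetherian by Corollary~\ref{cor:noetherian}, and hence so is its quotient $C$. The plan is to show $C = 0$.

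Assume for contradiction that $C \neq 0$. Since $C$ is finitely generated and nonzero, the Noetherian property furnishes a maximal proper subobject and hence an irreducible quotient $\bar c$ of $C$. Then $\bar c$ is irreducible in $\cA_Y$, lying in the block attached to some closed point $y \in Y$. Composing $\pi' \twoheadrightarrow C \twoheadrightarrow \bar c$ produces a surjection $q : \pi' \twoheadrightarrow \bar c$. I will then contradict the existence of such a $q$ by a Hecke-operator argument. Pick a Serre weight $\sigma \subset \soc_{KZ}(\bar c)$ and let $g \in \cH(\sigma)$ be an element with $V(g) = f_\sigma^{-1}(Y)$; let $h \in \cH(\sigma)$ cut out the single closed point $\{y\}$, so $h \mid g^N$ for some $N$. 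By Proposition~\ref{prop:localizing compact inductions}(1) we have $j_{U*}j_U^*\cInd_{KZ}^G\sigma = \cInd_{KZ}^G\sigma[1/g]$, and naturality of the adjunction unit forces every $G$-map $\alpha : \cInd_{KZ}^G\sigma \to \pi' = j_{U*}j_U^*\pi$ to factor through $\cInd_{KZ}^G\sigma[1/g]$, on which $g$ acts invertibly; since $h$ divides a power of $g$ and is zero on $\bar c$, the composite $q \circ \alpha$ must vanish for every such $\alpha$.

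To extract the contradiction I need to exhibit some $\alpha: \cInd_{KZ}^G\sigma \to \pi'$ with $q \circ \alpha \neq 0$. Equivalently, using Frobenius reciprocity, I must lift the nonzero embedding $\sigma \hookrightarrow \bar c$ along $q$ to a $K$-equivariant map $\sigma \to \pi'$. Using that $\pi'$ is finitely generated, I write $\pi'$ as a quotient of some $\cInd_{KZ}^G V$ for $V$ a finite length smooth $KZ$-representation, and I expect that by choosing $\sigma$ carefully within $\soc_{KZ}(\bar c)$ one can arrange the lift to exist by tracking which Serre weights of $V$ survive in the $KZ$-socle filtration of $\pi'$ covering $\soc_{KZ}(\bar c)$.

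The main obstacle I anticipate is exactly this $K$-equivariant lifting step: since Serre weights are generally not projective in $\cC$, a lift of $\sigma \hookrightarrow \bar c$ along the $KZ$-equivariant surjection $q$ need not exist for an arbitrary choice of $\sigma$. Pushing the argument through should require exploiting the finite generation of $\pi'$ together with the explicit structure of $KZ$-socles of irreducible representations in the block of $y$ and of quotients of $\cInd_{KZ}^G V$, perhaps iteratively replacing $\bar c$ by a different irreducible quotient of $C$ if a given $\sigma$ fails to admit a lift.
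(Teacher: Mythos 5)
Your overall strategy — reduce to showing that the cokernel $C$ of $u\colon\pi\to\pi'=j_{U*}j_U^*\pi$ has no irreducible quotient $\bar c$ in $\cA_Y$ — is a legitimate alternative to the route the paper takes, and the Hecke-operator computation you sketch is essentially sound (modulo some care: the point is that the kernel $K$ of a would-be surjection $\cInd_{KZ}^G\sigma[1/g]\to\bar c$ is $\cH(\sigma)_g$-stable by Corollary~\ref{cor:cind subobjects}, and comparing $\cInd_{KZ}^G\sigma[1/g]/K$ with $\cInd_{KZ}^G\sigma[1/g]/g^{-1}K$ via multiplication by $g$ shows $g^{-1}K=K$, so $g$ descends to an automorphism of $\bar c$; on the other hand $\bar c$ is a quotient of $\cInd_{KZ}^G\sigma/h$ with $h\mid g$, so $g$ also acts by zero, forcing $\bar c=0$). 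However, the gap you flag is real and not repairable in the form you propose: there is no reason a Serre weight $\sigma\subset\soc_{KZ}(\bar c)$ should lift along $q$ to a $KZ$-equivariant map $\sigma\to\pi'$, because Serre weights are not projective in $\cC$, and tracking socle filtrations does not rescue the situation — an extension $0\to\sigma''\to W\to\sigma\to 0$ inside $\pi'$ with $\sigma''\notin\soc_{KZ}(\bar c)$ gives a genuine obstruction.

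The fix is to avoid lifting entirely. Choose a surjection $\psi\colon\cInd_{KZ}^G V\twoheadrightarrow\pi'$ with $V$ of finite $KZ$-length. Because $\pi'$ lies in the essential image of $j_{U*}$, the map $\psi$ factors through the unit $\cInd_{KZ}^G V\to j_{U*}j_U^*(\cInd_{KZ}^G V)$, so $q\psi$ factors through $j_{U*}j_U^*(\cInd_{KZ}^G V)$. Since $j_{U*}$ is exact (Proposition~\ref{prop:localizing compact inductions}(2)), this object has a finite filtration with graded pieces $(\cInd_{KZ}^G\sigma_i)[1/g_i]$, each of which your Hecke argument shows has vanishing $\Hom(-,\bar c)$. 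An induction along the filtration gives $\Hom\bigl(j_{U*}j_U^*(\cInd_{KZ}^G V),\bar c\bigr)=0$, hence $q\psi=0$, hence $q=0$, contradiction. (Alternatively, one can note the $\Ext^0$ content of Lemma~\ref{lem:another ext vanishing} is exactly the statement that $\Hom(\cInd_{KZ}^G\sigma,\bar c)$ is $g$-power-torsion, which combined with the $g$-invertibility obtained from the factorization gives the vanishing without redoing the Hecke analysis.)

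For comparison, the paper's proof (after the same reduction to $\pi$ finitely generated via Lemma~\ref{lem:adjoint and colimits}) does a d\'evissage directly on $\pi$: it writes $\pi$ as a quotient of $\cInd_{KZ}^G V$, inducts on the length of $V$, and for the base case $\pi=\cInd_{KZ}^G\sigma$ simply observes via Proposition~\ref{prop:localizing compact inductions}(1) that if $(\cInd_{KZ}^G\sigma)[1/g]$ is finitely generated then the unit is already an isomorphism. This avoids any analysis of the cokernel and in particular never needs the Hecke-module vanishing; it is shorter, but both arguments ultimately rest on Proposition~\ref{prop:localizing compact inductions} and the exactness of $j_{U*}$.
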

\begin{proof}
Write~$j$ for~$j_U$. We may write $\pi$ as the filtered colimit of its finitely generated submodules~$\pi_i$,
and then Lemma~\ref{lem:adjoint and colimits} shows that $j_*j^*\pi$ is the filtered
colimit of the~$j_*j^*(\pi_i)$.  Since $j_*j^*\pi$ is finitely generated by assumption, and so 
Noetherian, we find that $j_*j^*(\pi_i) = j_*j^*\pi$ for some
value of~$i$.  Replacing $\pi$ by $\pi_i$, we may thus assume that $\pi$ is finitely
generated.

Choose a surjection $\cInd_{KZ}^G V \to \pi,$
for some finite length $\cO[KZ]_\zeta$-representation~$V$.  We prove that $\pi \to j_*j^*\pi$ is surjective, by induction on
the length of~$V$ (the case~$V = 0$ being trivial).  Let
$\sigma$ be an irreducible subrepresentation of %
$V$, and let $\pi'$ denote the image of $\cInd_{KZ}^G \sigma$
in~$\pi$. If~$\pi'=0$ then we can replace $V$ by $V/\sigma$, and we are done
by induction.
Otherwise, since $j_*$ is exact, we see that $j_*j^*(\pi/\pi')$ is a quotient of
$j_*j^* \pi$, and so is finitely generated. Thus our inductive hypothesis
shows that $(\pi/\pi') \to j_*j^*(\pi/\pi')$ is surjective.  
Also, since $j_*j^* \pi'$ is a subobject of $j_*j^*\pi$, and since finitely generated
$G$-representations are Noetherian, we see that $j_*j^* \pi'$ is finitely
generated.

Now either $\cInd_{KZ}^G \sigma \iso \pi'$, or else $\pi'$ is a proper quotient
of~$\cInd_{KZ}^G\sigma$, in which case it is of finite length.
In the latter case, it follows from Lemma~\ref{localization of locally admissible objects} that $\pi' \to j_*j^*\pi'$ is surjective.
In the remaining case we have~$\cInd_{KZ}^G \sigma \iso \pi'$, and
a consideration of the formula of Proposition~\ref{prop:localizing compact
inductions}~(1) shows that since $j_*j^* \pi' = j_*j^*(\cInd_{KZ}^G \sigma)$ is finitely
generated, the natural morphism $\pi' \to j_*j^*\pi'$ is either a map with zero target, or an isomorphism, depending on whether~$g$ is zero or a unit.
In particular, it is surjective.
Again using the fact that $j_*$ is exact, we see that we have a morphism
of short exact sequences
$$\xymatrix{0 \ar[r] & \pi' \ar[r]\ar[d] & \pi \ar[r]\ar[d] & (\pi/\pi') \ar[r]\ar[d] & 0 \\
 0 \ar[r] & j_*j^*\pi' \ar[r] & j_*j^*\pi \ar[r] & j_*j^*(\pi/\pi') \ar[r] & 0 }
$$
in which the outer two vertical arrows are surjections.
The middle vertical arrow is thus a surjection as well.
\end{proof}

\begin{cor}
\label{cor:adjoint iso}
Let~$U \subset X$ be open.
If $\pi$ is finitely generated and lies in the essential image of~$j_*$,
then the same is true for any subquotient of $\pi$.
\end{cor}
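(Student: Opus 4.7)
The plan is to work with a short exact sequence $0 \to \pi' \to \pi \to \pi'' \to 0$ and prove that both $\pi'$ and $\pi''$ lie in the essential image of $j_{U*}$ assuming $\pi$ does. Once this is done, any subquotient of $\pi$ is covered: the subquotient is of the form $\pi'/\pi''$ with $\pi'' \subseteq \pi' \subseteq \pi$, and applying the two-step result (first to $\pi' \hookrightarrow \pi$, then to $\pi'' \hookrightarrow \pi'$ and the resulting quotient) handles it. The finiteness hypothesis propagates automatically: since $\pi$ is finitely generated and $\cA$ is locally Noetherian (Corollary~\ref{cor:noetherian}, via Theorem~\ref{thm:noetherian}), every subobject and quotient of $\pi$ is again finitely generated.

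The key computation is to apply the exact functor $j_{U*}j_U^*$ (exactness of $j_{U*}$ is Proposition~\ref{prop:localizing compact inductions}(2), and $j_U^*$ is exact as a localization) to the exact sequence, and to use the units of the adjunction to form the commutative diagram
\[
\xymatrix{0 \ar[r] & \pi' \ar[r]\ar[d] & \pi \ar[r]\ar[d]^{\cong} & \pi'' \ar[r]\ar[d] & 0 \\
0 \ar[r] & j_{U*}j_U^*\pi' \ar[r] & j_{U*}j_U^*\pi \ar[r] & j_{U*}j_U^*\pi'' \ar[r] & 0}
\]
with exact rows. Since the middle vertical arrow is an isomorphism, the left vertical arrow identifies $j_{U*}j_U^*\pi'$ with a subobject of $\pi$, which is therefore finitely generated (by Noetherianity of $\pi$). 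Lemma~\ref{lem:f.g. adjoint} then forces the unit map $\pi' \to j_{U*}j_U^*\pi'$ to be surjective; commutativity of the left-hand square (together with injectivity of the middle arrow) forces it to be injective as well, so it is an isomorphism. The right-hand square then identifies $j_{U*}j_U^*\pi''$ with $\pi/\pi' = \pi''$, finishing the argument.

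The only place that needs real care is the injectivity of $\pi' \to j_{U*}j_U^*\pi'$; this is not a consequence of Lemma~\ref{lem:f.g. adjoint} alone, but follows from tracking that the composite $\pi' \to j_{U*}j_U^*\pi' \to j_{U*}j_U^*\pi$ agrees, via the isomorphism $\pi \isom j_{U*}j_U^*\pi$, with the original inclusion $\pi' \hookrightarrow \pi$. No substantial obstacle is expected beyond this diagram-chase, since the exactness of $j_{U*}$ and the finite-generation criterion of Lemma~\ref{lem:f.g. adjoint} do all the heavy lifting.
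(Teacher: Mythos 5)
Your proposal is correct and takes essentially the same approach as the paper's proof: reduce to subobjects, observe that $j_*j^*\pi'$ embeds in $j_*j^*\pi\cong\pi$ and is therefore finitely generated, invoke Lemma~\ref{lem:f.g. adjoint} for surjectivity of the unit, and use left-exactness of $j_*$ together with the commuting square to get injectivity; the quotient case is then handled by exactness of $j_*$. The only difference is cosmetic --- you package the argument as a single diagram chase, whereas the paper phrases the reduction (``subobjects, then quotients, then subquotients'') more abstractly.
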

\begin{proof}
By~\cite[III.2, Corollaire]{Gabrielthesis}, $\pi$ is finitely generated and contained in the essential image of~$j_*$ if and only if
$\pi \iso j_*j^*\pi$, and $j_*j^*\pi$ is finitely generated. 
Bearing this in mind, since $j_*$ is exact, it suffices to prove the statement of the lemma
for subobjects of $\pi$, since it then follows for quotients, and so also
for subquotients.
 If $\pi'$ is any subobject
of $\pi$, we see that $j_*j^* \pi'$ is a subobject of $j_*j^*\pi$,
and so finitely generated; thus Lemma~\ref{lem:f.g. adjoint}
implies that the natural morphism
\begin{equation}
\label{eqn:adjoint map}
\pi'\to j_*j^*\pi'
\end{equation}
is surjective.  
The fact that the corresponding map for $\pi$ is an isomorphism
implies that~\eqref{eqn:adjoint map} is also injective.  Thus in fact~\eqref{eqn:adjoint
map} is an isomorphism, and so $\pi'$ also lies in the essential image
of~$j_*$.
\end{proof}

\begin{remark}
The preceding corollary is not true in general without the assumption
of finite generation.  Indeed, if $\pi$ is finitely generated while $j_*j^*\pi$
is not finitely generated (Proposition~\ref{prop:localizing compact inductions}
provides plenty of examples of such~$\pi$), then if we let $\pi'$ denote the image of $\pi$
in $j_*j^*\pi$, we have that $\pi' \subsetneq j_*j^*\pi$,
while $j_*j^*\pi' = j_*j^*\pi$.   Thus $j_*j^*\pi$ is an object in
the essential image of $j_*$, while its subobject $\pi'$ is {\em not} in this
essential image.
\end{remark}

\begin{cor}
\label{cor:another ext vanishing}
Let $Y$ be a closed subset of~$X$, and write $U \coloneq  X \setminus Y$.
If $\pi$ is a finitely generated object lying in the essential image
of~$j_{U*}$,
and if $\pi'$ is an object of~$\cA_Y$,
then $\Ext^i_{\cA}(\pi,\pi') = 0$ for all~$i$.
\end{cor}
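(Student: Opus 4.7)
The plan is to prove the vanishing by dévissage on $\pi'$, reducing to two base cases that are then handled using the techniques already developed in this section.

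First I would reduce to the case where $\pi'$ is finitely generated: since $\pi$ is Noetherian by Theorem~\ref{thm:noetherian}, Lemma~\ref{lem:ext and colimits} lets one commute $\Ext^i_\cA(\pi, -)$ past the filtered colimit expressing $\pi'$ as a union of its finitely generated subobjects (all of which lie in $\cA_Y$ by Serre-ness). Given $\pi'$ finitely generated in $\cA_Y$, I would pick a surjection $\cInd_{KZ}^G V \twoheadrightarrow \pi'$ with $V$ a finite length smooth $\cO[KZ]$-module, filter $V$ by Serre weights $\sigma_j$, and take images to get a filtration of $\pi'$ whose successive quotients are quotients of $\cInd_{KZ}^G \sigma_j$, each itself in $\cA_Y$. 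A long exact sequence then reduces the statement to showing $\Ext^i_\cA(\pi, Q)=0$ for $Q$ a quotient of $\cInd_{KZ}^G\sigma$ lying in $\cA_Y$.

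By Corollary~\ref{cor:cind subobjects}, such a $Q$ is either of finite length or equal to $\cInd_{KZ}^G\sigma$ with $f_\sigma(\A^1) \subseteq Y$. In the first case $Q$ lies in $\cA_{Y_0}$ for a finite $Y_0 \subseteq Y$, and by Lemma~\ref{lem:finite sets} together with the argument of Remark~\ref{rem:alternate} (which appeals to~\cite[Cor.\ 5.18]{MR3150248}), an injective resolution of $Q$ in $\cA_{Y_0}$ is also an injective resolution in $\cA$. For any $J \in \cA_{Y_0}$, a morphism $\pi \to J$ has image both in $\cA_{Y_0}$ (as a subobject of $J$) and in the essential image of $j_{X \setminus Y_0,*}$ (as a quotient of $\pi$, via Corollary~\ref{cor:adjoint iso} and the factorization of localizations, which shows the essential image of $j_{U,*}$ sits inside that of $j_{X \setminus Y_0,*}$ because $U \subseteq X \setminus Y_0$); these two conditions force the image to vanish, so $\Hom_\cA(\pi, J)=0$ at every level of the resolution and hence $\Ext^i_\cA(\pi, Q)=0$.

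The remaining case $Q = \cInd_{KZ}^G\sigma$ with $f_\sigma(\A^1) \subseteq Y$ is the main obstacle, and is handled in the spirit of Lemma~\ref{lem:another ext vanishing}. For any nonzero $h \in \cH(\sigma)$, the quotient $\cInd_{KZ}^G\sigma / h\cInd_{KZ}^G\sigma$ has finite length and remains in $\cA_Y$, so by the previous case $\Ext^j_\cA(\pi, \cInd_{KZ}^G\sigma / h\cInd_{KZ}^G\sigma)=0$ for all $j$. The long exact sequence attached to multiplication by $h$ then forces $h$ to act invertibly on $\Ext^i_\cA(\pi, \cInd_{KZ}^G\sigma)$, making this Ext a module over the fraction field $\F(T)$ of $\cH(\sigma) \cong \F[T]$. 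Via Proposition~\ref{prop:ext and base-change}, I would pass to a complete unramified extension $\cO \subseteq \cO'$ with uncountable residue field $\F'$; Lemma~\ref{lem:countable dim} then exhibits the base-changed Ext as a countable-dimensional $\F'$-module, while it is simultaneously an $\F'(T)$-module and $\F'(T)$ has uncountable dimension over $\F'$, forcing its vanishing. Faithful flatness of $\F \to \F'$ then yields the vanishing over $\F$.
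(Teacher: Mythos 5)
Your proof takes a genuinely different route from the paper's. The paper devisses the \emph{first} argument $\pi$: using Corollary~\ref{cor:adjoint iso} it reduces to $\pi$ a quotient of some $\cInd_{KZ}^G\sigma$, then handles the finite-length case via Corollary~\ref{cor:ext vanishing} combined with Corollary~\ref{cor:localizing from inside}, and the case $\pi = \cInd_{KZ}^G\sigma$ by citing Lemma~\ref{lem:another ext vanishing} directly (noting that $\pi$ lying in the essential image of $j_{U*}$ forces the element $g$ appearing there to be a unit). You devisse the \emph{second} argument $\pi'$ instead. Since Lemma~\ref{lem:another ext vanishing} is not symmetric --- there the $\cA_Y$-object is the target, whereas in your reduction $\cInd_{KZ}^G\sigma$ is the target --- you cannot cite it and must rerun the Dixmier-type uncountability argument by hand, using the finite-length case as your ``base'' (in place of Lemma~\ref{lem: smooth Ext equals admissible Ext} and the block structure). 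Your treatment of that finite-length case is also different and self-contained: rather than using Corollaries~\ref{cor:localizing from inside} and~\ref{cor:ext vanishing}, you observe that any quotient of $\pi$ landing inside $\cA_{Y_0}$ is both in $\cA_{Y_0}$ and in the essential image of $j_{X\setminus Y_0,*}$, hence zero, and then use the fact that injective resolutions in $\cA_{Y_0}$ remain injective in $\cA$. Both routes work; yours is somewhat longer but does not depend on Corollary~\ref{cor:localizing from inside}.

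There is one small omission in your final case: the unramified base-change clause of Proposition~\ref{prop:ext and base-change} requires \emph{both} arguments to lie in $\cA_\F$. While $\cInd_{KZ}^G\sigma$ is automatically killed by $\fm_\cO$, your $\pi$ need not be. Before applying base change you should filter $\pi$ by $\varpi$-torsion --- Corollary~\ref{cor:adjoint iso} guarantees the graded pieces remain finitely generated and in the essential image of $j_{U*}$ --- and invoke the spectral sequence~\eqref{changecoefficients2} to reduce $\Ext^i_{\cA}$ to $\Ext^i_{\cA_\F}$. This is precisely the d\'evissage performed at the start of the proof of Lemma~\ref{lem:another ext vanishing}, so it is a minor but genuinely needed step.
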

\begin{proof}
Arguing by d\'evissage, and taking into account Corollary~\ref{cor:adjoint iso},
we may assume that $\pi$ is a quotient of~$\cInd_{KZ}^G\sigma$,
for some Serre weight~$\sigma$.  If it is a proper quotient, then
it is of finite length, and so lies in $\cA_{W}$, for some 
finite closed subset $W$ of~$U$.
Hence in this case the corollary follows from Corollary~\ref{cor:disjoint closed localization lemma}.
Otherwise, we may assume that $\pi = \cInd_{KZ}^G\sigma,$ in which case the claimed
vanishing follows from Lemma~\ref{lem:another ext vanishing} (note that $g \in \cH(\sigma)^\times$ in this case, e.g.\
by Proposition~\ref{prop:localizing compact inductions}~(1)).
\end{proof}

We may use the preceding result to strengthen Lemma~\ref{lem:f.g. adjoint}.

\begin{cor}
\label{cor:f.g. adjoint}
Let $Y$ be a closed subset of~$X$, and write $U \coloneq  X \setminus Y$.
If $j_{U*}j_U^*\pi$ is finitely generated,
then the unit of adjunction $\pi \to j_{U*}j_U^* \pi$ is a split surjection.
\end{cor}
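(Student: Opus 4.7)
The plan is to upgrade the surjectivity supplied by Lemma~\ref{lem:f.g. adjoint} to a splitting by showing that the kernel of $\pi \to j_{U*}j_U^*\pi$ lies in $\cA_Y$, and then invoking the $\Ext^1$-vanishing result of Corollary~\ref{cor:another ext vanishing}.

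Concretely, first I would apply Lemma~\ref{lem:f.g. adjoint} to conclude that the unit morphism $\pi \to \pi' := j_{U*}j_U^*\pi$ is surjective, so we may form the short exact sequence
\[
0 \to K \to \pi \to \pi' \to 0.
\]
Next I would apply the exact functor $j_U^*$: since $j_U^*j_{U*}$ is canonically isomorphic to the identity on $\cA_U$, the induced map $j_U^*\pi \to j_U^*\pi'$ is an isomorphism, and hence $j_U^*K = 0$, i.e.\ $K$ is an object of $\cA_Y$.

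To produce a splitting it suffices to show that this extension is trivial, i.e.\ that $\Ext^1_{\cA}(\pi', K) = 0$. But by hypothesis $\pi' = j_{U*}j_U^*\pi$ is finitely generated, and by construction $\pi'$ lies in the essential image of $j_{U*}$ (it is even of the form $j_{U*}(\text{--})$). Since $K \in \cA_Y$, Corollary~\ref{cor:another ext vanishing} yields $\Ext^i_{\cA}(\pi', K) = 0$ for all $i$; in particular the class of the above extension vanishes and we obtain a $G$-equivariant section $\pi' \to \pi$, which is the desired splitting.

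No step here is truly the main obstacle, since all the heavy lifting (the $\Ext$-vanishing across $U$ and $Y$) has already been carried out in Corollary~\ref{cor:another ext vanishing}; the only mild point to verify carefully is that $K$ genuinely lies in $\cA_Y$, which follows immediately from the exactness of $j_U^*$ together with the fact that $j_U^*$ kills $\cA_Y$ and that the counit $j_U^*j_{U*} \to \id$ is an isomorphism.
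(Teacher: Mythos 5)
Your proof is correct and follows essentially the same route as the paper: surjectivity from Lemma~\ref{lem:f.g. adjoint}, identify the kernel as an object of $\cA_Y$, then kill the extension class with Corollary~\ref{cor:another ext vanishing}. The only difference is that you spell out why the kernel lies in $\cA_Y$ (via exactness of $j_U^*$ and the counit isomorphism), which the paper leaves implicit.
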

\begin{proof}
Lemma~\ref{lem:f.g. adjoint} ensures that the morphism $\pi\to j_*j^*\pi$
is surjective.   If we denote its kernel by~$\pi'$, then $\pi'$ lies in~$\cA_Y$.
The short exact sequence
$$0 \to \pi' \to \pi \to j_*j^*\pi \to 0$$
represents an element of~$\Ext^1_{\cA}(j_*j^*\pi,\pi')$,
and Corollary~\ref{cor:another ext vanishing} shows that this Ext module vanishes.
Thus the short exact sequence splits, as claimed.
\end{proof}

\subsection{Completion}\label{subsec:completion}%
Recall that $\cA^{\fg}$ %
denotes the full subcategory of $\cA$ consisting
of finitely generated objects.  Similarly, for any closed subset $Y$ of~$X$,
we write $\cA_Y^{\fg}$ to denote the full subcategory of $\cA_Y$ consisting
of finitely generated objects.
By Lemma~\ref{lem:adjoint and colimits abstract version}(3) ~$\cA_Y^{\fg}$ is a Serre subcategory of~$\cA_Y$, and since~$\cA_Y$ is a Serre subcategory of~$\cA$, we conclude that~$\cA^{\fg}$ and~$\cA_Y^{\fg}$ are Serre subcategories of~$\cA$.
If $Y$ is a finite closed subset of~$X$, then Lemma~\ref{lem:finite sets}
shows that $\cA_Y^{\fg}$ may equally well
be described as the subcategory of $\cA_Y$ consisting of finite length objects.
Applying the discussion of Section~\ref{subsubsec:completion}
we see that the inclusion $i_{Y, *}: \Pro(\cA^{\fg}_Y) \hookrightarrow \Pro(\cA^{\fg})$ 
admits a left adjoint.
\begin{defn}\label{def: completion}
Let~$Y \subset X$ be a closed subspace.
The functor 
\[
\widehat{(-)}_Y : \Pro(\cA^{\fg}) \to \Pro(\cA^{\fg}_Y) 
\]
is defined to be the left adjoint to the inclusion~$i_{Y, *}$.
\end{defn}

Being a left adjoint, the functor~$\widehat{(-)}_Y$ preserves colimits when they exist. 
In particular, it is right exact.
We sometimes write simply $\widehat{(-)}$ if $Y$ is understood from the context.
The following lemma describes $\widehat{\pi}_Y$ explicitly when~$\pi \in \cA^{\fg}$.

\begin{lemma}
\label{lem:completion description}
If~$\pi$ is an object of~$\cA^{\fg}$, there is a natural isomorphism
$$
\widehat{\pi}_Y \iso \quoteslim{} \pi',
$$
where $\pi'$ runs over the cofiltered directed set of quotients of~$\pi$ 
lying in~$\cA_Y^{\fg}$.
\end{lemma}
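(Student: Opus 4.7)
The plan is to verify the universal property of the left adjoint directly on the candidate pro-object. Write $\widetilde{\pi} := \quoteslim{} \pi'$, where $\pi'$ ranges over the quotients of $\pi$ lying in $\cA_Y$. First I would check that this indexing set is cofiltered: given two such quotients $\pi \twoheadrightarrow \pi'_1$ and $\pi \twoheadrightarrow \pi'_2$, consider the image of $\pi$ under the diagonal map into $\pi'_1 \times \pi'_2$. This image is a quotient of $\pi$ and is a subobject of $\pi'_1\times\pi'_2 \in \cA_Y$; since $\cA_Y$ is a Serre subcategory (hence closed under subobjects), this common quotient lies in our indexing set.

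Next I would show that for any single object $Q$ of $\cA_Y$, the natural map
$$\colim_{\pi'} \Hom_{\cA}(\pi',Q) \to \Hom_{\cA}(\pi,Q)$$
is a bijection. The key input is that any morphism $\varphi: \pi\to Q$ factors uniquely as $\pi \twoheadrightarrow \mathrm{im}\,\varphi \hookrightarrow Q$; since $\cA_Y$ is closed under subobjects, $\mathrm{im}\,\varphi$ lies in $\cA_Y$ and so appears as one of the $\pi'$. Injectivity and surjectivity of the displayed map are then immediate. By the definition of morphisms in the pro-category this gives
$$\Hom_{\Pro(\cA_Y)}(\widetilde\pi,Q) = \colim_{\pi'}\Hom_{\cA_Y}(\pi',Q) = \Hom_{\cA}(\pi,Q) = \Hom_{\Pro(\cA)}(\pi,Q).$$

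Finally I would promote this to general $Q \in \Pro(\cA_Y)$ by writing $Q = \quoteslim{j} Q_j$ with $Q_j$ in $\cA_Y$, and observing that the Hom-sets on both sides of the desired isomorphism commute with the limit over $j$:
$$\Hom_{\Pro(\cA_Y)}(\widetilde\pi,Q) = \lim_j\Hom_{\Pro(\cA_Y)}(\widetilde\pi,Q_j) \iso \lim_j \Hom_{\cA}(\pi,Q_j) = \Hom_{\Pro(\cA)}(\pi,Q).$$
By the uniqueness of the left adjoint, this produces a canonical isomorphism $\widehat\pi_Y \iso \widetilde\pi$, and the verification that it is natural in $\pi$ follows from the construction (any morphism $\pi_1\to\pi_2$ pushes quotients of $\pi_1$ forward to quotients of $\pi_2$). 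The only delicate point is the cofilteredness of the index set, which is why the Serre property of $\cA_Y$ is used twice; beyond that the argument is purely formal, so I do not expect substantive obstacles.
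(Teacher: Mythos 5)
Your proof is correct, and the argument is essentially the one the paper gives (by reduction to Lemma~\ref{lem:abstract completion description} in the appendix): both hinge on the Serre property of $\cA_Y$ --- used to factor any morphism $\pi \to Q$ with $Q \in \cA_Y$ through its image --- and on the $\varinjlim$/$\varprojlim$ description of $\Hom$-sets in $\Pro$-categories. The only cosmetic difference is that the paper builds the comparison morphism $\widehat{\pi}_Y \to \quoteslim{}\pi'$ via the adjunction and checks it is an isomorphism of $\Hom$-functors on $\cA_Y$, whereas you verify the universal property of $\quoteslim{}\pi'$ directly and conclude via uniqueness of left adjoints (a route that also disposes of the naturality check automatically; incidentally your closing parenthetical has the direction reversed, since a map $\pi_1 \to \pi_2$ turns $\cA_Y$-quotients of $\pi_2$ into $\cA_Y$-quotients of $\pi_1$ by taking images).
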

\begin{proof}
This is a particular instance of Lemma~\ref{lem:abstract completion description}.
\end{proof}

\begin{cor}\label{completion description for Serre weights}
Let~$\sigma$ be a Serre weight and $\pi = \cInd_{KZ}^G(\sigma)$.
Let~$Y \subset X$ be a closed subset, and let~$f_Y$ be as in Definition~{\em \ref{f_Y}}.
Then
\[\widehat \pi_Y \isom \quoteslim{n} \pi/f_Y^n \pi.\]
\end{cor}
\begin{proof}
By Lemma~\ref{f_sigma preimages}, %
the quotients~$\pi/f^n_Y\pi$ are cofinal in the directed set in Lemma~\ref{lem:completion description}.
\end{proof}

Our next goal is to prove Corollary~\ref{cor:completion is exact}, which shows that the completion functor $\pi \mapsto \widehat \pi_Y$ is exact. 

\begin{prop}%
\label{prop:extensions as pullbacks}
Let $Y$ be a closed subset of $X$,
and suppose that 
$$0 \to \pi' \to \pi \to \pi'' \to 0$$ is a short 
exact sequence in~$\cA^{\fg}$, with $\pi'$ being an object
of $\cA_Y^{\fg}$. %
Then we may find a commutative diagram with exact rows
$$\xymatrix{0 \ar[r] & \pi' \ar[r]\ar@{=}[d] & \pi \ar[r] \ar[d] &
\pi'' \ar[r]\ar[d] & 0 \\
0 \ar[r] & \pi' \ar[r] & \widetilde{\pi} \ar[r] & \widetilde{\pi}'' \ar[r] & 0}$$
in which the right-hand {\em (}and hence also the middle{\em )} vertical arrow
is surjective,
and in which $\widetilde{\pi}''$ {\em (}and hence also $\widetilde{\pi}${\em )}
are objects of $\cA_Y^{\fg}$.
\end{prop}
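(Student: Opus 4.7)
The plan is to construct $\widetilde{\pi}$ as a specific quotient of $\pi$ using Zorn's lemma. I would consider the poset $\cS$ of subobjects $K \subseteq \pi$ satisfying $K \cap \pi' = 0$, ordered by inclusion. This poset is nonempty (it contains $0$), and any chain in $\cS$ has an upper bound given by its directed union, which still intersects $\pi'$ trivially. Thus by Zorn's lemma $\cS$ admits a maximal element $K$. Setting $\widetilde{\pi} := \pi/K$, the condition $K \cap \pi' = 0$ ensures $\pi' \hookrightarrow \widetilde{\pi}$, and I define $\widetilde{\pi}'' := \widetilde{\pi}/\pi'$, a quotient of $\pi''$. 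The desired diagram then commutes with identity on the left and with surjective middle and right vertical arrows.

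Next I would verify that $\widetilde{\pi}$ is an essential extension of $\pi'$, which follows directly from the maximality of $K$: any nonzero subobject $\overline{N} \subseteq \widetilde{\pi}$ has preimage $N \subsetneq \pi$ strictly containing $K$, and maximality forces $N \cap \pi' \neq 0$, whence $\overline{N} \cap \pi' \neq 0$ in $\widetilde{\pi}$.

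The remaining point is to verify $\widetilde{\pi}'' \in \cA_Y^{\fg}$, which reduces (since $\widetilde{\pi}$ is automatically finitely generated as a quotient of $\pi$, and $\pi' \in \cA_Y^{\fg}$) to showing $\widetilde{\pi} \in \cA_Y$. Let $\widetilde{\pi}_{(Y)} \supseteq \pi'$ denote the maximal $\cA_Y$-subobject of $\widetilde{\pi}$, which is finitely generated by Theorem~\ref{thm:noetherian}. One then aims to show that the quotient $Q := \widetilde{\pi}/\widetilde{\pi}_{(Y)}$, which by construction has no nonzero $\cA_Y$-subobject, is actually zero. Supposing $Q \neq 0$, the goal would be to produce an irreducible subobject $\tau \subseteq Q$ lying in some block $\{x\}$ with $x \notin Y$. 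The preimage of $\tau$ in $\widetilde{\pi}$ would then be an extension $0 \to \widetilde{\pi}_{(Y)} \to M \to \tau \to 0$ which splits by Corollary~\ref{cor:disjoint closed localization lemma} (applied to the disjoint closed subsets $Y$ and $\{x\}$), yielding a nonzero subobject of $\widetilde{\pi}$ disjoint from $\pi'$ and contradicting essentialness.

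The hardest step will be producing this irreducible subobject $\tau \subseteq Q$ in a non-$Y$ block, since finitely generated objects of $\cA$ need not have irreducible subobjects at all (witness $\cInd_{KZ}^G\sigma$, which by Corollary~\ref{cor:cind subobjects} has no nonzero socle). The natural route is to choose a nonzero cyclic subobject $N \subseteq Q$, realise it as a quotient $\cInd_{KZ}^G\sigma/(h(T))$ via Corollary~\ref{cor:cind subobjects}, and then use the constraint that $N$ inherits from $Q$ the absence of any nonzero $\cA_Y$-subobject, together with the Hecke-algebra structure and Theorem~\ref{classifyirreducibles}, to extract from the socle of $N$ an irreducible constituent corresponding to a point of $\Spec\cH(\sigma)$ that maps under $f_\sigma$ outside of $Y$. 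Lifting this back to $\widetilde{\pi}$ and invoking the Ext-vanishing above would complete the argument.
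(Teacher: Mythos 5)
Your Zorn's lemma reduction is a reasonable alternative framing: rather than constructing the pullback explicitly, you try to show that the minimal quotient $\widetilde{\pi}=\pi/K$ with $\pi'\hookrightarrow\widetilde{\pi}$ essential already lands in $\cA_Y$. And the Ext-vanishing argument you give \emph{does} work once you have an irreducible $\tau\subseteq Q$ lying in a block $\{x\}$ with $x\notin Y$: by Corollary~\ref{cor:disjoint closed localization lemma} the extension $0\to\widetilde{\pi}_{(Y)}\to M\to\tau\to 0$ splits, producing a nonzero subobject disjoint from $\pi'$.

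The gap is exactly where you suspect, and your sketch for closing it does not succeed. Take $N\subseteq Q$ nonzero cyclic with $\Gamma_Y(N)=0$. You propose to realize $N$ as a quotient $\cInd_{KZ}^G\sigma/(h(T))$ and then pass to ``the socle of $N$''. But if $N$ is not of finite length --- for instance $N\cong\cInd_{KZ}^G\sigma$, which is perfectly possible for a subobject of $Q$ --- then $N$ has \emph{no} socle (every nonzero subobject of $\cInd_{KZ}^G\sigma$ is again infinite-dimensional, by Corollary~\ref{cor:cind subobjects} and Corollary~\ref{cor:cofinite length}), and there is nothing irreducible to extract. This is not a fringe case: it is exactly the case carrying the content of the proposition, and it is where the paper invokes Lemma~\ref{lem:another ext vanishing}. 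One cannot avoid that lemma (or its equivalent) in your approach either; in the case $N_1\hookrightarrow\cInd_{KZ}^G\sigma$, the preimage $M_1$ gives a class $e\in\Ext^1_{\cA}(N_1,\widetilde{\pi}_{(Y)})$, one uses Lemma~\ref{lem:another ext vanishing} to see $g^ne=0$, and then the freeness of $\cInd_{KZ}^G\sigma$ over $\cH(\sigma)$ turns the pullback splitting along $g^n\colon N_1\to N_1$ into an injection $N_1\hookrightarrow\widetilde{\pi}$ disjoint from $\widetilde{\pi}_{(Y)}$. After supplying this, your argument is essentially the paper's d\'evissage wrapped in a Zorn's lemma shell, rather than a shortcut around it.

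One further caution: it is tempting to deduce ``essential extensions of objects of $\cA_Y$ remain in $\cA_Y$'' from the fact that $\cA_Y\hookrightarrow\cA$ preserves injective hulls (Corollary~\ref{cor:ext comparison}), which would give your claim ``$Q=0$'' immediately. But Corollary~\ref{cor:ext comparison} is proved \emph{using} Proposition~\ref{prop:extensions as pullbacks} (via Lemma~\ref{lem:ext comparison} and Corollary~\ref{cor:completion is exact}), so this route is circular. You must prove the stability of $\cA_Y$ under essential extensions directly from the Ext-vanishing statements that precede the proposition in the paper's logical order, namely Corollary~\ref{cor:disjoint closed localization lemma} and Lemma~\ref{lem:another ext vanishing}.
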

\begin{proof}
By assumption we may find a surjection $\cInd_{KZ}^G \tau \to \pi''$,
for some finite length representation $\tau$ of~$\cO[KZ]_\zeta$,
and we argue by induction on the $KZ$-length of~$\tau$.
Thus, to begin with, we assume that $\tau = \sigma$ is a Serre weight.
If the surjection $\cInd_{KZ}^G \sigma \to \pi''$ is not an isomorphism,
then $\pi''$ is of finite length, 
and so it is an object of $\cA_W$,
for some finite closed subset $W$ of~$X$.  Write $W' = W \cap Y$
and $W'' = W\setminus Y$, so that $W$ is the disjoint union of its
closed subsets $W'$ and~$W''$. Correspondingly, %
we may write $\pi'' = \pi''_1 \oplus \pi''_2,$
where $\pi''_1$ is an object of $\cA_{W'} \subseteq \cA_Y$,
and $\pi''_2$ is an object of $\cA_{W''}$. Since $Y$ and $W''$ are
disjoint, it follows from Corollary~\ref{cor:disjoint closed
  localization lemma} that
$\Ext^1_{\cA}(\pi''_2,\pi') = 0$.  Thus the natural map
$$\Ext^1_{\cA}(\pi''_1,\pi') \to \Ext^1_{\cA}(\pi'',\pi')$$
(given by pullback along the surjection~$\pi''~\to~\pi''_1$)
is an isomorphism, and so $\pi$ is pulled back from an extension
$\pi_1$ of $\pi''_1$ by~$\pi'$.
This concludes the proof in this case.

Suppose, then, that $\cInd_{KZ}^G \sigma \iso \pi''$.
Write $f_{\sigma}^{-1}(Y) = V(g)$, for some $g \in \cH(\sigma)$.
If $g = 0$, then %
$\pi''$ itself is an object
of~$\cA_Y^{\fg}$, and there is nothing to prove.
Suppose instead that $g$ is non-zero.
The extension $\pi$ represents a class
of $\Ext^1_{\cA}(\cInd_{KZ}^G \sigma,\pi')$,  
and Lemma~\ref{lem:another ext vanishing} shows
that some power $g^n$ of $g$ annihilates this class.
A consideration of the long exact Ext sequence associated
to the short exact sequence $$0 \to \cInd_{KZ}^G \sigma
\buildrel g^n \cdot \over \longrightarrow
\cInd_{KZ}^G \sigma \to
(\cInd_{KZ}^G \sigma)/g^n(\cInd_{KZ}^G\sigma)
\to 0 $$
then shows that the class of $\pi$ arises by pullback along 
the surjection 
$\cInd_{KZ}^G \sigma \to (\cInd_{KZ}^G \sigma)/g^n(\cInd_{KZ}^G)$,
whose target is an object of $\cA_Y^{\fg}$.

We now turn to the general case, when $\tau$ is not necessarily irreducible.
We need to construct a subrepresentation~$\pi_1$ of~$\pi$ such that $\pi_1 \cap \pi' = 0$, and $\pi/\pi_1$ is an object of~$\cA_Y^{\fg}$.
Our inductive hypothesis allows us to
choose a subrepresentation $\pi''_0$ of $\pi''$ such that the statement of the
proposition holds for each of $\pi''_0$ and $\pi''/\pi''_0$ in place
of $\pi''.$  Pulling back the extension given by $\pi$ along the inclusion
$\pi''_0\subseteq \pi'',$ we obtain a short exact sequence
$$0 \to \pi' \to \pi_0 \to \pi''_0 \to 0,$$
and by assumption we may find a subrepresentation $\pi_2 \subseteq \pi_0$
such that $\pi' \cap \pi_2 = 0$, and such that $\pi_0/\pi_2$ is an object
of $\cA_Y^{\fg}.$
Next consider the short exact sequence
$$0 \to \pi_0/\pi_2 \to \pi/\pi_2 \to \pi/\pi_0 (\iso \pi''/\pi''_0 ) \to 0.$$
Again by the inductive assumption, we may find a subrepresentation $\pi_1$ of $\pi$
containing $\pi_2$ such that $\pi_0 \cap \pi_1 = \pi_2$,
and such that $\pi/\pi_1$ ($= (\pi/\pi_2)/(\pi_1/\pi_2)$)
is an object of $\cA_Y^{\fg}$.
Then $\pi' \cap \pi_1 = (\pi' \cap \pi_0) \cap \pi_1 = \pi'\cap \pi_2 = 0,$
and so~$\pi_1$ is the subrepresentation of~$\pi$ that we are looking for.
\end{proof}

\begin{cor}
\label{cor:completion is exact}
The completion functor $\cA^{\fg} \to \Pro(\cA_Y^{\fg}), \pi \mapsto \widehat{\pi}_Y$ is exact.
\end{cor}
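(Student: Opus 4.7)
The plan is to exploit that $\pi\mapsto\widehat{\pi}_Y$ is a left adjoint (to the inclusion $\Pro(\cA_Y)\hookrightarrow\Pro(\cA)$), and so automatically right exact. Thus given a short exact sequence $0\to\pi'\to\pi\to\pi''\to 0$ in $\cA^{\fg}$ the induced sequence $\widehat{\pi'}_Y\to\widehat{\pi}_Y\to\widehat{\pi''}_Y\to 0$ is exact, and it suffices to verify that the map $\widehat{\pi'}_Y\to\widehat{\pi}_Y$ is a monomorphism in $\Pro(\cA_Y^{\fg})$ (using Corollary~\ref{cor:fg completion} for the target category). My strategy is to present all three completions over a common cofiltered indexing category so that on each index they form a short exact sequence in $\cA_Y^{\fg}$; exactness of the sequence of pro-objects then follows from the general fact that a levelwise short exact sequence of cofiltered diagrams in an abelian category becomes a short exact sequence in the pro-category.

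Concretely, let $I$ denote the cofiltered set of quotients $\pi\twoheadrightarrow\beta$ lying in $\cA_Y^{\fg}$, so that by Lemma~\ref{lem:completion description} we have $\widehat{\pi}_Y=\quoteslim{\beta\in I}\beta$. For each $\beta\in I$, let $\alpha_\beta$ denote the image of $\pi'$ in $\beta$, and let $\gamma_\beta=\beta/\alpha_\beta$ (which is naturally a quotient of $\pi''$). Both $\alpha_\beta$ and $\gamma_\beta$ lie in $\cA_Y^{\fg}$ (as subobjects and quotients of $\beta$), and we have a natural short exact sequence
\[
0\to\alpha_\beta\to\beta\to\gamma_\beta\to 0
\]
in $\cA_Y^{\fg}$. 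The crucial point is to show that $\widehat{\pi'}_Y=\quoteslim{\beta\in I}\alpha_\beta$ and $\widehat{\pi''}_Y=\quoteslim{\beta\in I}\gamma_\beta$ as pro-objects, i.e.\ that the systems $\{\alpha_\beta\}$ and $\{\gamma_\beta\}$ are cofinal in the systems of all $\cA_Y^{\fg}$-quotients of $\pi'$ and $\pi''$ respectively.

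Cofinality for $\pi''$ is immediate: a quotient $\pi''\twoheadrightarrow\gamma$ in $\cA_Y^{\fg}$ pulls back to a quotient $\pi\twoheadrightarrow\gamma$ killing $\pi'$, realizing $\gamma$ as $\gamma_\beta$ with $\beta=\gamma$ and $\alpha_\beta=0$. Cofinality for $\pi'$ is exactly where Proposition~\ref{prop:extensions as pullbacks} intervenes, and will be the main step of the argument. Given any quotient $\pi'\twoheadrightarrow\alpha$ in $\cA_Y^{\fg}$, let $K$ denote its kernel; then we have a short exact sequence
\[
0\to\alpha\to\pi/K\to\pi''\to 0
\]
in $\cA^{\fg}$ whose left term lies in $\cA_Y^{\fg}$, so Proposition~\ref{prop:extensions as pullbacks} produces a further quotient $\pi/K\twoheadrightarrow\widetilde{\pi}$ with $\widetilde{\pi}\in\cA_Y^{\fg}$ and $\alpha\hookrightarrow\widetilde{\pi}$. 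Composing with $\pi\twoheadrightarrow\pi/K$ exhibits $\widetilde{\pi}$ as an object $\beta\in I$ whose associated $\alpha_\beta$ equals $\alpha$; this establishes the cofinality, and hence the desired levelwise short exact sequence presentation of the three completions, from which the corollary follows. The main obstacle is precisely this cofinality statement, which is essentially the reason Proposition~\ref{prop:extensions as pullbacks} was proved in the form it was: it is designed to ensure that quotients of $\pi'$ inside $\cA_Y^{\fg}$ do not ``escape'' the system obtained by restricting $\cA_Y^{\fg}$-quotients of $\pi$.
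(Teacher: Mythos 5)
Your proof is correct, and it rests on exactly the same key input as the paper's proof: applying Proposition~\ref{prop:extensions as pullbacks} to the short exact sequence $0\to\alpha\to\pi/K\to\pi''\to0$. But the packaging differs. The paper reduces to showing $\widehat{\pi'}_Y\to\widehat{\pi}_Y$ is a monomorphism, and verifies this directly via Lemma~\ref{lem:monomorphism criterion}: given any $\pi'\to\overline{\pi}'$ with $\overline{\pi}'\in\cA_Y$, one pushes out the short exact sequence along $\pi'\twoheadrightarrow\widetilde{\pi}'=\mathrm{im}(\pi'\to\overline{\pi}')$ and invokes Proposition~\ref{prop:extensions as pullbacks} to produce the required monic square. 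You instead build a common cofiltered indexing category $I$ (the $\cA_Y^{\fg}$-quotients of $\pi$) and show that all three completions admit compatible presentations over $I$ as a levelwise short exact sequence, then conclude using that levelwise exactness passes to the pro-category. The cofinality-for-$\pi'$ step is precisely the paper's application of Proposition~\ref{prop:extensions as pullbacks} (your $\pi/K$ is the paper's pushout), so the two proofs are mathematically equivalent, and your version has the modest advantage of exhibiting the exact sequence of completions as a pro-object of short exact sequences, which is slightly finer than the bare exactness statement. One small point worth making explicit: the cofinality check must confirm that $\beta\mapsto\alpha_\beta$ actually defines a functor $I\to J$ between cofiltered posets (that surjections $\beta'\twoheadrightarrow\beta$ in $I$ induce surjections $\alpha_{\beta'}\twoheadrightarrow\alpha_\beta$), and that $\alpha_\beta$ and $\gamma_\beta$ lie in $\cA_Y^{\fg}$ — both follow from $\cA_Y$ being a Serre subcategory and the Noetherian property (Theorem~\ref{thm:noetherian}), but your write-up implicitly uses these facts without flagging them.
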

\begin{proof}
If $\pi' \subseteq \pi$ is an inclusion of objects of~$\cA^{\fg}$,
we must prove that the induced morphism $(\pi')^\wedge_Y \to (\pi)^\wedge_Y$
is a monomorphism. 
By Lemma~\ref{lem:monomorphism criterion} and the definition of
the functor~$\widehat{(\text{--})}_Y$ as an adjoint,
it suffices to show that any morphism
$\pi' \to \overline{\pi}'$ whose target
lies in~$\cA^{\fg}_{Y}$ 
can be placed in a commutative diagram 
$$\xymatrix{ \pi' \ar[r]\ar[d] & \pi \ar[d] \\
\overline{\pi}' \ar[r] & \overline{\pi},}
$$
where $\overline{\pi}$ also lies in $\cA^{\fg}_Y$ and in which the bottom arrow is a monomorphism.
In fact, it suffices to do this with
$\overline{\pi}'$
replaced by the image $\widetilde{\pi}'$ of $\pi'$ in~$\overline{\pi}'.$
(Indeed, if we then find a monomorphism $\widetilde{\pi}' \hookrightarrow \widetilde{\pi}$ with the required properties,
we can then take $\overline{\pi}$ 
to be the coproduct of $\widetilde{\pi}$ and $\overline{\pi}'$ over  $\widetilde{\pi}'$.)
Now, if we push out the short exact sequence
$$0 \to \pi' \to \pi \to \pi/\pi' \to 0 $$
along the surjection $\pi' \to \widetilde{\pi}'$,
and then apply Proposition~\ref{prop:extensions as pullbacks},
we obtain the required quotient $\widetilde{\pi}$ of~$\pi$.
\end{proof}

If $\pi$ is an object of~$\cA^{\fg}$, and $\pi'$ is an object of $\cA^{\fg}_Y$, there is an isomorphism
\begin{equation}\label{eqn:adjunction-ProAfgY}\Hom_{\Pro(\cA^{\fg}_Y)}(\widehat \pi_Y, \pi') \iso \Hom_{\cA^{\fg}}(\pi, \pi')\end{equation}
obtained via the adjunction property of completion and the fact that $\cA^{\fg} \to \Pro(\cA^{\fg})$ is fully faithful.
This isomorphism can also be described as follows.
Lemma~\ref{lem:completion description} shows that $ \widehat \pi_Y \cong \varprojlim_{i \in I} \pi_i$, where~$I$ is the cofiltered set of quotients of~$\pi$ contained in~$\cA^{\fg}_Y$.
If~$i \in I$, the composition of the map $\widehat \pi_Y \to \pi_i$ with the unit of adjunction $\pi \to \widehat \pi_Y$ is the quotient map $\pi \to \pi_i$. 
Then the isomorphism \eqref{eqn:adjunction-ProAfgY} is the composition
$$
\Hom_{\Pro(\cA^{\fg}_Y)}(\widehat{\pi}_Y,\pi')
\inverseiso\varinjlim_I \Hom_{\cA^{\fg}_Y}(\pi_i,\pi') \iso \Hom_{\cA^{\fg}}(\pi,\pi'),
$$
where the first isomorphism is an instance of~\eqref{eqn:first hom formula}.
The following lemma extends this to higher $\Ext$ groups.

\begin{lemma}
\label{lem:ext comparison}
Let $\pi$ be an object of $\cA^{\fg}$,
and write 
\[
\displaystyle \widehat{\pi}_Y = \quoteslim{I} \pi_i,
\]
as in Lemma~{\em \ref{lem:completion description}}.
If $\pi'$ is an object of $\cA^{\fg}_Y$,
then, for any value of~$n$, the natural morphism
\[\Ext^n_{\Pro(\cA^{\fg}_Y)}(\widehat{\pi}_Y,\pi') 
\inverseiso
\varinjlim_{I} \Ext^n_{\cA^{\fg}_Y}(\pi_i,\pi')
\to \Ext^n_{\cA^{\fg}}(\pi,\pi')\] 
is an isomorphism
whose inverse is the completion functor
\[
\Ext^n_{\cA^{\fg}}(\pi, \pi') \xrightarrow{\widehat{(-)}} \Ext^n_{\Pro (\cA_Y^{\fg})}(\widehat \pi_Y, \pi').
\]
\end{lemma}
\begin{proof}
Bearing in mind that completion is exact (by Corollary~\ref{cor:completion is exact}) 
and that the counit $\widehat{\pi}'_Y \to \pi'$ is an isomorphism (by Lemma~\ref{lem:completion description}),
this is a special case of Lemma~\ref{lem:abstract ext comparison}.
\end{proof}

We also have the following corollary,
which in the case of finite $Y$ also follows directly from Lemma~\ref{lem:finite
sets} together with~\cite[ Cor.\ 5.18]{MR3150248}
(as was already noted in Remark~\ref{rem:alternate}).

\begin{cor}
\label{cor:ext comparison}
The inclusion $\cA_Y \hookrightarrow \cA$ preserves injectives.
Consequently, if $\pi$ and $\pi'$ are objects of $\cA_Y$,
then the natural morphism
$$\Ext^n_{\cA_Y}(\pi,\pi') \to \Ext^n_{\cA}(\pi,\pi')$$
is an isomorphism, for every value of~$n$.
\end{cor}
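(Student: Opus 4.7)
The plan is to prove the injective-preservation claim first, and then derive the $\Ext$ comparison by resolving with injectives.

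For the first claim, fix $I$ injective in $\cA_Y$. I would first establish that $\Ext^n_{\cA}(\pi, I) = 0$ for all $n \geq 1$ and all $\pi \in \cA^{\fg}$. Indeed, Lemma~\ref{lem:ext comparison} identifies this group with $\varinjlim_i \Ext^n_{\cA_Y}(\pi_i, I)$, where $\widehat{\pi}_Y = \quoteslim{} \pi_i$ and each $\pi_i$ lies in $\cA_Y^{\fg}$ by Corollary~\ref{cor:fg completion}. Each term of the colimit vanishes since $I$ is injective in $\cA_Y$.

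To extend this vanishing to arbitrary $\pi \in \cA$, I would invoke that $\cA$ is a locally Noetherian Grothendieck category (Corollary~\ref{cor:noetherian}) generated by the Noetherian objects $\cInd_{KZ}^G V$, where $V$ runs over finite length smooth $\cO[KZ]$-modules. In this setting, Baer's criterion reduces injectivity of $I$ to the vanishing of $\Ext^1_{\cA}(\cInd_{KZ}^G V / H, I)$ for every subobject $H \subseteq \cInd_{KZ}^G V$; since $\cInd_{KZ}^G V/ H$ is finitely generated, this is covered by the previous paragraph. Hence $I$ is injective in $\cA$.

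For the Ext comparison, given $\pi, \pi' \in \cA_Y$, choose an injective resolution $\pi' \to J^{\bullet}$ in $\cA_Y$. By the first claim, each $J^n$ is also injective in $\cA$, so $J^{\bullet}$ remains an injective resolution of $\pi'$ in $\cA$. Since the inclusion $\cA_Y \hookrightarrow \cA$ is fully faithful, $\Hom_{\cA_Y}(\pi, J^{\bullet}) = \Hom_{\cA}(\pi, J^{\bullet})$ as complexes, and taking cohomology in degree $n$ yields the asserted isomorphism
$$\Ext^n_{\cA_Y}(\pi, \pi') \iso \Ext^n_{\cA}(\pi, \pi').$$

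The main point requiring some care — rather than a genuine obstacle — is the extension from finitely generated $\pi$ to arbitrary $\pi$ in the injectivity argument, since $\Ext^1_{\cA}(-, I)$ does not commute with filtered colimits in the first variable; this is what forces the appeal to the locally Noetherian structure of $\cA$ via Baer's criterion rather than a direct colimit argument.
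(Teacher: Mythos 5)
Your proof is correct and takes essentially the same approach as the paper: both reduce to showing $\Ext^n_{\cA}(\pi, I) = 0$ for finitely generated $\pi$ and all $n>0$, which follows directly from Lemma~\ref{lem:ext comparison} since $I$ is $\cA_Y$-injective. The only difference is that you spell out the Baer-criterion justification for why finitely generated test objects suffice, a step the paper simply asserts.
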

\begin{proof}

Assume that~$\pi'$ is an injective object of~$\cA_Y = \Ind(\cA_Y^{\fg})$, and let $\pi' \to J$ be an injective envelope in $\cA$.
If~$\pi' \ne J$, then we can find a nonzero finitely generated subobject~$\Pi \subset J/\pi'$.
Letting~$\Theta$ be the preimage of~$\Pi$ in~$J$, we find an extension
\[
0 \to \pi' \to \Theta \to \Pi \to 0,
\]
which is not split, because $\pi' \to \Theta$ is essential.
It thus suffices to prove that, for every $\Pi \in \cA^{\fg}$, we have $\Ext^1_{\cA}(\Pi, \pi') = 0$.

Write~$\pi' = \varinjlim_j \pi'_j$ with~$\pi_j' \in \cA_Y^{\fg}$.
If~$\Pi \in \cA^{\fg}$, 
write~$\widehat \Pi_Y = \quoteslim{k}\Pi_k$ with~$\Pi_k \in \cA_Y^{\fg}$ as in Lemma~\ref{lem:completion description}.
Then, for all $i > 0$, we have isomorphisms
\begin{multline*}
\Ext^i_\cA(\Pi, \pi') = \varinjlim_j \Ext^i_{\cA}(\Pi, \pi'_j) %
= \varinjlim_j \Ext^i_{\cA^{\fg}}(\Pi, \pi'_j) \\
= \varinjlim_{j}\varinjlim_k \Ext^i_{\cA_Y^{\fg}}(\Pi_k, \pi'_j) = \varinjlim_k \Ext^i_{\cA_Y}(\Pi_k, \pi') = 0;
\end{multline*}
these are instances respectively of
Lemma~\ref{lem:ext and colimits},
Lemma~\ref{lem:Yoneda},
Lemma~\ref{lem:ext comparison},
and Lemma~\ref{lem:adjoint and colimits},
and the vanishing is due to the fact that~$\pi'$ is injective in~$\cA_Y$.
This concludes the proof.
\end{proof}

\subsubsection{Ind-completion}
\label{subsubsec:ind-completion}
We now consider the $\Ind$-extension of the adjoint pair of exact functors 
$\bigl(\widehat{(-)}_Y, i_{Y, *}\bigr) : \Pro\cA^{\fg} \to \Pro\cA_Y^{\fg}$, which we denote by the same symbols.
By~\cite[Cor.\ 8.6.8]{MR2182076} is an adjoint pair of exact functors
\[
\bigl(\widehat{(-)}_Y, i_{Y, *}\bigr) : \Ind\Pro\cA^{\fg} \to \Ind\Pro\cA_Y^{\fg}.
\]

\begin{rem}\label{rem: completion is Ind extended}
Since $\cA = \Ind \cA^{\fg}$, we can restrict~$\widehat{(-)}_Y$ to~$\cA$. 
This defines a notion of completion for objects of~$\cA$.
Another possibility would be to consider the left adjoint of the inclusion $\Pro\cA_Y \to \Pro\cA$, which may be regarded as a functor
$\Pro \Ind(\cA^{\fg}) \to \Pro \Ind(\cA_Y^{\fg})$; we will not do this in this paper.
\end{rem}

The following is a version of Lemma~\ref{lem:ext comparison} for this Ind Pro version of completion.

\begin{lemma}\label{completion induces an isomorphism on Ext, Ind Pro}
Let~$Y \subset X$ be a closed subset, and let~$\pi \in \cA, \pi' \in \cA_Y$.
Then the $\Ind$-extended completion functor induces an isomorphism
\begin{equation}\label{eqn: ext comparison for Ind Pro}
\Ext^i_{\cA}(\pi, \pi') \isom \Ext^i_{\Ind \Pro \cA_Y^{\fg}}(\widehat \pi_Y, \pi').\end{equation}
\end{lemma}
\begin{proof}
We can write~$\pi$ as a filtered colimit of finitely generated subobjects with injective transition maps.
Since $\widehat{(-)}_Y$ is exact and preserves filtered colimits, an application of Lemma~\ref{Rlim spectral sequence} shows that it suffices to prove
that~\eqref{eqn: ext comparison for Ind Pro} is an isomorphism when~$\pi$ is finitely generated.
In this case, $\widehat \pi_Y$ is an object of $\Pro(\cA_Y^{\fg})$.
By 
Lemma~\ref{lem:limitExt}~(1),
applied to~$\cA$ and to~$\Ind \Pro \cA_Y^{\fg}$, we may furthermore reduce to the case that~$\pi'$ is finitely generated.
(We refer to~\cite[Section~A.5]{DEGcategoricalLanglands} for a discussion of the set-theoretic foundations of $\Ind \Pro$-completions, which guarantee the applicability of Lemma~\ref{lem:limitExt}~(1)
to $\Ind \Pro \cA_Y^{\fg}$.)

By Lemma~\ref{proExtgroups} %
we have
\[
\Ext^i_{\Ind \Pro \cA_Y^{\fg}}(\widehat \pi_Y, \pi') = \Ext^i_{\Pro \cA_Y^{\fg}}(\widehat \pi_Y, \pi'), 
\]
  and  by Lemma~\ref{lem:Yoneda}, we have
\[\Ext^i_\cA(\pi, \pi') = \Ext^i_{\cA^{\fg}}(\pi, \pi'),\]
 so the claim is a consequence of Lemma~\ref{lem:ext comparison}.
\end{proof}

\subsubsection{Further results for finite~\texorpdfstring{$Y$}{Y}.}
We present some additional results 
that hold in the case when $Y$ is finite.

\begin{lemma}\label{lem:finite Y limitExt}
Assume that $Y$ is finite.
If $\pi$ is an object of~$\cA_Y^{\fg}$,
and if $\quoteslim{m}\pi_m$ is a countably indexed object of $\Pro(\cA_Y^{\fg}),$
then the natural map 
$$\Ext^n_{\Pro(\cA_Y^{\fg})}(\pi, \quoteslim{m}\pi_m ) \to
\varprojlim_m \Ext^n_{\cA_Y^{\fg}}(\pi, \pi_m)$$ %
is an isomorphism for every~$n$.
\end{lemma}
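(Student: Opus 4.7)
The plan is to reduce the assertion to a standard computation with Ext groups over a complete Noetherian local ring, using Pa\v{s}k\={u}nas' description of the blocks of $\cA^{\ladm}$, and to derive the isomorphism from a Milnor short exact sequence for countably indexed Pro-systems.

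First I would invoke Lemma~\ref{lem:finite sets} and Corollary~\ref{cor:ext comparison}. Since $Y$ is finite, $\cA_Y \subseteq \cA^{\ladm}$ is a finite product of blocks, $\cA_Y^{\fg}$ consists exactly of the finite length objects, and all the relevant Ext groups agree whether computed in $\cA_Y^{\fg}$, $\cA_Y$, $\cA^{\ladm}$, or $\cA$. By Pa\v{s}k\={u}nas' results (in particular \cite[Cor.\ 5.18]{MR3150248}, Theorem~\ref{centrethm} and Proposition~\ref{injectiveArtinian}), each block is equivalent, via the functor $\Hom_G(-,J)^\vee$, to the category of compact left modules over a pseudocompact topological ring $\tld E_x$ which is finite over its Noetherian centre $Z_x$, a complete local $\cO$-algebra. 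Pontryagin duality then identifies $\Pro(\cA_Y^{\fg})$ with the category of pseudocompact $\tld E_Y := \prod_{x \in Y} \tld E_x$-modules, and $\quoteslim{m}\pi_m$ corresponds to the inverse limit $\varprojlim_m \pi_m^\vee$ in this category.

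After this translation, the statement becomes the assertion that for a finite length compact $\tld E_Y$-module $\pi^\vee$ and a countable inverse system of finite length compact modules $\{\pi_m^\vee\}$, the natural map
$$\Ext^n(\pi^\vee, \varprojlim_m \pi_m^\vee) \to \varprojlim_m \Ext^n(\pi^\vee, \pi_m^\vee)$$
is an isomorphism. Taking a resolution $P^\bullet \to \pi^\vee$ by finitely generated projective compact $\tld E_Y$-modules (these exist since the compact module category has enough projectives), one has the termwise identification $\Hom(P^i, \varprojlim_m \pi_m^\vee) = \varprojlim_m \Hom(P^i, \pi_m^\vee)$, and the standard Milnor short exact sequence for countable inverse limits of cochain complexes yields
$$0 \to \varprojlim{}^1 \Ext^{n-1}(\pi^\vee, \pi_m^\vee) \to \Ext^n(\pi^\vee, \varprojlim_m \pi_m^\vee) \to \varprojlim_m \Ext^n(\pi^\vee, \pi_m^\vee) \to 0.$$
The proof is completed by showing $\varprojlim^1 \Ext^{n-1}(\pi^\vee, \pi_m^\vee) = 0$, which follows from the Mittag--Leffler condition.

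The main obstacle is verifying this Mittag--Leffler property. The key point is that each $\Ext^n(\pi, \pi_m)$ is a finitely generated module over the Noetherian centre $Z_Y = \prod_{x\in Y} Z_x$, since $\pi$ has finite length and the $P^i$ may be chosen so that each $\Hom(P^i, \pi_m^\vee)$ is finitely generated over $Z_Y$. The descending chain of images in $\Ext^n(\pi, \pi_m)$ arising from deeper terms of the Pro-system is then a decreasing chain of finitely generated $Z_Y$-submodules of a finitely generated $Z_Y$-module; its stabilization is the substantive content to be proved, and would be argued using the $\mathfrak{m}_{Z_Y}$-adic topology on these Ext groups together with the compactness (pseudocompactness) of the objects in play, ultimately reducing to the fact that $\pi_m^\vee$ is killed by a power of the maximal ideal of $\tld E_Y$ that depends only on $m$ but grows in a controlled way compatible with the inverse system.
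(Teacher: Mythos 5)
Your proof takes a genuinely different route from the paper. The paper proves the statement by applying an abstract lemma about Pro-categories (Lemma~\ref{lem:limitExt 2}): working directly in $\Pro(\cA_Y^{\fg})$, it first uses the Artinianness of $\cA_Y^{\fg}$ (from Lemma~\ref{lem:finite sets}) to replace the Pro-system by one with epimorphic transition maps, then takes a projective resolution in $\Pro(\cA_Y^{\fg})$ (which has enough projectives since $\cA_Y^{\fg}$ is abelian), obtains a spectral sequence $R^p\varprojlim \Ext^q \Rightarrow \Ext^{p+q}$, and kills the $R^1\varprojlim$ term via the Mittag--Leffler condition, using that the $\Ext^q$ are of finite $\cO$-length. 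Your approach instead passes through Pa\v{s}k\={u}nas' anti-equivalence of each block with compact modules over a pseudocompact ring $\tld E_x$, reformulating the statement on the module side and running a Milnor exact sequence. This buys you concreteness (explicit finitely generated projective resolutions over $\tld E_Y$), but at the cost of substantially heavier input: it invokes Theorem~\ref{centrethm}, the structure theory of $\tld E_x$ over $Z_x$, and an identification of $\Pro(\cA_Y^{\fg})$ with pseudocompact modules that needs some care (note also a slip: your $\pi_m^\vee$ should really be $M(\pi_m) = \Hom_G(\pi_m, J)^\vee$). The paper's argument is self-contained within the category-theoretic appendix plus a single finiteness statement.

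There is also a gap in your Mittag--Leffler verification, which you yourself flag as ``the main obstacle.'' You argue that each $\Ext^n(\pi, \pi_m)$ is a finitely generated module over the Noetherian centre $Z_Y$ and then assert that the descending chain of images must stabilize. But a descending chain of finitely generated submodules of a finitely generated module over a Noetherian ring need \emph{not} stabilize --- Noetherianity controls \emph{ascending} chains, not descending ones (e.g.\ $\mathfrak{m} \supset \mathfrak{m}^2 \supset \cdots$ in a complete local ring). Your subsequent appeal to the $\mathfrak{m}_{Z_Y}$-adic topology and pseudocompactness of $\tld E_Y$-modules gestures at the right idea but is not made precise, and it is not clear that the indicated argument closes the gap. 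What is actually needed --- and what the paper uses --- is that the $\Ext^n(\pi,\pi_m)$ are of \emph{finite $\cO$-length}, i.e.\ Artinian $\cO$-modules, so that any descending chain of subgroups stabilizes. This is a consequence of Pa\v{s}k\={u}nas' results and is available to you (since both $\pi$ and $\pi_m$ are of finite length), but your proposal substitutes a Noetherian finiteness condition where an Artinian one is required, and the reasoning as written does not establish the Mittag--Leffler property.
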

\begin{proof}
The natural map of Yoneda $\Ext$-groups 
\[
\Ext^n_{\cA_Y^{\fg}}(\pi, \pi_m) \to \Ext^n_{\Pro(\cA_Y^{\fg})}(\pi, \pi_m)
\]
is an isomorphism for all~$n, m$, by Lemma~\ref{proExtgroups}.
By Lemma~\ref{lem:limitExt 2}, 
it therefore suffices to show that the $\Ext^n_{\cA_Y^{\fg}}(\pi, \pi_m)$ are of finite $\cO$-length.
Since~$Y$ is finite, the objects of $\cA_Y^{\fg}$ have finite $\cA$-length, by Lemma~\ref{lem:finite sets},
and so it suffices to prove that $\Ext^n_{\cA_Y^{\fg}}(\pi, \pi')$ has finite $\cO$-length for irreducible objects~$\pi, \pi'$ of
$\cA_Y^{\fg}$.
This can be verified after a finite extension of coefficients, by Proposition~\ref{prop:ext and base-change}.
Then the finiteness of~$\Ext^n_{\cA}(\pi, \pi')$ was proved in~\cite{MR3150248}.
We can now apply 
Lemma~\ref{lem: smooth Ext equals admissible Ext} to replace $\Ext^n_{\cA}$ with $\Ext^n_{\cA^{\ladm}}$,
Lemma~\ref{lem: block decomposition} to replace $\Ext^n_{\cA^{\ladm}}$ with $\Ext^n_{\cA^{\ladm}_Y}$, and
Lemma~\ref{lem:f.g. Ext} to replace $\Ext^n_{\cA^{\ladm}_Y}$ with $\Ext^n_{\cA^{\fg}_Y}$.
\end{proof}

Our next result has some formal similarity to Lemma~\ref{lem:ext comparison}, but seems to lie deeper: its proof relies on Propositions~\ref{centreelementsI}
and~\ref{centreelementsII} above, 
and these results require Pa\v{s}k\={u}nas' work~\cite{MR3150248}
on the $p$-adic local Langlands correspondence for their proof.

\begin{prop}
\label{prop:Gamma Y and completion}
Let~$Y$ be a finite subset of~$X$.
If $\pi$ is an  object of $\cA^{\fg}_Y$ and $\pi'$ is an object
of~$\cA^{\fg}$,
then passing  to  the completion along $Y$ induces isomorphisms
\[
\Ext^i_{\cA^{\fg}}(\pi, \pi') \isom \Ext^i_{\Pro(\cA_Y^{\fg})}(\pi, \widehat{\pi}'_Y).
\]
\end{prop}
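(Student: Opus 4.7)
The plan is to reduce the proposition to the case $\pi' = \cInd_{KZ}^G\sigma$ for a Serre weight $\sigma$ and then to carry out an explicit $g$-adic computation, with the Bernstein centre results of Section~\ref{subsec: Bernstein centres of blocks} providing the decisive finiteness input.

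First I would split $Y$ into blocks: by Lemma~\ref{lem:finite sets}, $\cA_Y = \prod_j \cA_{x_j}$, and both $\pi$, $\widehat{\pi'}_Y$, and the $\Ext$ groups on either side of the proposed isomorphism decompose accordingly, reducing the problem to the case $Y = \{x\}$. Next I would d\'evissage on $\pi'$: both sides are $\delta$-functors in $\pi'$ --- the left by Lemma~\ref{lem:Yoneda} and the long exact Ext sequences in $\cA$, the right by the exactness of completion from Corollary~\ref{cor:completion is exact} together with the long exact sequences in the abelian category $\Pro(\cA_x^{\fg})$ --- and the comparison map intertwines them, so a five-lemma argument reduces the proposition to $\pi' = \cInd_{KZ}^G\sigma$. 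For such $\pi'$, either $x$ is not in the image of $f_\sigma$, in which case $\widehat{\pi'}_x = 0$ and both sides vanish, or $f_\sigma^{-1}(x) = V(g)$ for some $g \in \cH(\sigma)$, in which case the quotients $\pi'/g^n\pi'$ form a countable cofinal family of finite-length quotients of $\pi'$ in $\cA_x^{\fg}$. Combining Lemma~\ref{lem:finite Y limitExt}, Corollary~\ref{cor:ext comparison}, and Lemma~\ref{lem:Yoneda} then identifies the right-hand side with $\varprojlim_n \Ext^i_\cA(\pi, \pi'/g^n\pi')$.

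The heart of the argument will be to prove
\[
\Ext^i_\cA(\pi, \pi') \iso \varprojlim_n \Ext^i_\cA(\pi, \pi'/g^n\pi')
\]
via the maps induced by the natural projections. The long exact sequences attached to $0 \to \pi' \xrightarrow{g^n} \pi' \to \pi'/g^n\pi' \to 0$ yield short exact sequences
\[
0 \to \Ext^i_\cA(\pi, \pi')/g^n \to \Ext^i_\cA(\pi, \pi'/g^n\pi') \to \Ext^{i+1}_\cA(\pi, \pi')[g^n] \to 0,
\]
and the key technical point is to show that $g$ acts with uniform nilpotence on each $\Ext^i_\cA(\pi, \pi')$; once this is in hand, both $\Ext^i/g^n$ and $\Ext^{i+1}[g^n]$ stabilise for large $n$, the relevant Mittag--Leffler conditions hold automatically, and passage to the inverse limit gives the claim. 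The hard part will be establishing this uniform nilpotence: since $\pi \in \cA_x^{\fg}$ has finite length, some power $\fm_x^N$ of the maximal ideal of the Bernstein centre $Z_x$ annihilates $\pi$ and hence every $\Ext^i_\cA(\pi, -)$. For blocks of types~(1),~(2),~(4) I would invoke Proposition~\ref{centreelementsI} to identify the Hecke action of $g$ on $\Ext^i_\cA(\pi, \pi'/g^n\pi')$ with the action of an element $z - \lambda \in \fm_x \subset Z_x$ coming through the first variable, and propagate this identification to $\Ext^i_\cA(\pi, \pi')$ itself. For type~(3) blocks I would instead use the integral dependence relation of Proposition~\ref{centreelementsII}, iterated to force sufficiently high powers of $g$ into $\fm_x^N$. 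Matching the Hecke action on $\pi'$ with the Bernstein-centre action on $\pi$ across the $\Ext$ groups, especially in the type~(3) case, is the delicate step; everything else should be routine bookkeeping.
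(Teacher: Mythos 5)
Your reduction steps (splitting over blocks, exactness of completion, d\'evissage to $\pi' = \cInd_{KZ}^G\sigma$, and the identification of $\Ext^i_{\Pro(\cA_Y^{\fg})}(\pi,\widehat{\pi'}_Y)$ with $\varprojlim_n\Ext^i_\cA(\pi,\pi'/g^n\pi')$) match the paper's, but the technical heart diverges, and there is a concrete gap in your ``propagation'' step.

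The issue is that $\cInd_{KZ}^G\sigma$ is not a locally admissible representation and does not lie in any block, so the Bernstein centre $Z_x$ simply does not act on it, and Propositions~\ref{centreelementsI} and~\ref{centreelementsII} --- which identify the Hecke operator $T$ with a central element as endomorphisms of the \emph{finite-length} quotients $\tau_n = \cInd_{KZ}^G\sigma/(T-\lambda)^n$ --- have no direct content for $\Ext^i_\cA(\pi,\cInd_{KZ}^G\sigma)$. You can push the identification into a partial statement: the short exact sequence gives an injection $\Ext^i_\cA(\pi,\pi')/g^n \hookrightarrow \Ext^i_\cA(\pi,\pi'/g^n\pi')$, and the right-hand side is killed by $g$ (or by a fixed power of $g$ in the type-(3) case), so one obtains $g\cdot E \subseteq g^n\cdot E$ for all $n$, where $E = \Ext^i_\cA(\pi,\pi')$. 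But for a finite-dimensional $\F[T]$-module this only forces $g$ to kill the $g$-torsion summand of $E$; it does not rule out a summand on which $g$ acts invertibly, which is exactly what would obstruct your Mittag--Leffler argument. To close the gap you would need, in addition, that every $h\in\cH(\sigma)$ coprime to $g$ acts invertibly on $E$ (which is elementary from Corollary~\ref{cor:disjoint closed localization lemma} and the long exact sequence), together with finiteness of $E$ over $\cO$ (Lemma~\ref{lem: Dospinescu asked about this}); those two facts already force $E$ to be $g$-power-torsion and essentially make the Bernstein-centre detour unnecessary for the nilpotence, though one should be careful that Lemma~\ref{lem: Dospinescu asked about this} is stated later in the paper and depends on results about the Artinian structure of blocks.

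The paper's actual route avoids the propagation problem entirely: rather than trying to act by $Z_x$ on $\cInd_{KZ}^G\sigma$, it first uses the short exact sequence $0\to\cInd_{KZ}^G\sigma\to\cInd_{KZ}^G\sigma[1/f_x]\to\varinjlim\pi'_n\to 0$ together with Corollary~\ref{cor:ext vanishing} to replace the statement about $\cInd_{KZ}^G\sigma$ with one purely about the $\tau_n$, namely that the connecting maps induce an isomorphism $\varinjlim\Ext^i_\cA(\pi,\tau_m)\isom\varprojlim\Ext^{i+1}_\cA(\pi,\tau_n)$. It then applies Pa\v{s}k\={u}nas' equivalence $M_x(-)$ to pass to compact $E_x$-modules, where the limit $M=\varprojlim M_x(\tau_i)$ is an honest $E_x$-module on which the Bernstein centre acts, and concludes by showing $\Ext^i_{E_x}(M_x(\pi),M[1/(T-\lambda)])=0$ because an element of $Z_x$ is simultaneously invertible (through the second factor) and zero (through the first). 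Your approach, fixed as indicated above, could work and would be somewhat more elementary; but as written the delicate step you flag is genuinely unproven, and the fix requires ingredients you did not name.
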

\begin{proof}
Although the category~$\cA^{\fg}$ does not have enough injectives or projectives, the map in the statement of the proposition 
is well-defined by Corollary~\ref{cor:completion is exact}, viewing the $\Ext^i$-groups as groups of Yoneda extensions.

Since~$Y$ is finite, $\cA_Y^{\fg}$ is a finite length category, and so it decomposes as a direct product of blocks.
Writing~$\pi$ as a direct sum according to this decomposition we see that it suffices to prove the proposition when~$Y = \{x\}$ consists of a single closed point of~$X$, possibly not defined over~$\bF$.

On the other hand, every object of~$\cA^{\fg}$ 
has a finite filtration whose graded pieces 
are quotients of compact inductions of Serre weights.
Applying Corollary~\ref{cor:cofinite length},
and passing to a refinement of the filtration, we thus see that the graded pieces
are either irreducible, 
or isomorphic to $\cInd_{KZ}^G(\sigma)$ for a Serre weight~$\sigma$.
Since~$\pi' \mapsto \widehat{\pi}'_Y$ is exact, it follows by
d\'evissage that it suffices to prove the claim when  $\pi$ is
irreducible, and either $\pi' = \cInd_{KZ}^G(\sigma)$ or~$\pi'$ is irreducible.
When~$\pi'$ is irreducible and not contained in~$\cA_Y^{\fg}$, the result is immediate because both sides vanish. 
If~$\pi'$ is an irreducible object of~$\cA_Y^{\fg}$, the result is a consequence of Lemma~\ref{lem:finite Y limitExt} (using Lemma~\ref{lem: smooth Ext equals admissible Ext} and Lemma~\ref{lem:Yoneda}
to identify $\Ext^i_{\cA^{\fg}}(\pi, \pi')$ with $\Ext^i_{\cA_Y}(\pi, \pi')$, and Lemma~\ref{lem:adjoint and colimits}
to identify $\Ext^i_{\cA_Y}(\pi, \pi')$ with $\Ext^i_{\cA^{\fg}_Y}(\pi, \pi')$).

Assume now that $\pi' = \cInd_{KZ}^G(\sigma)$. 
Definition~\ref{f_Y} for~$Y = \{x\}$ introduces a polynomial $f_x(T) \in \cH(\sigma) \cong \bF[T]$, and by Lemma~\ref{f_sigma preimages}(1), the inverse system
\[
\left \{\pi'_n = \left ( \cInd_{KZ}^G \sigma \right )/f_x^n \right \}
\]
is cofinal in the system defining~$\widehat{\pi}'_Y$.
By Lemma~\ref{f_sigma preimages}(2),~$\pi'_1$ is the largest multiplicity-free quotient of~$\cInd_{KZ}^G \sigma$ contained in~$\cA_Y^{\fg}$.
It now follows from Lemma~\ref{lem:finite Y limitExt} that it suffices to prove that the map
\[
\Ext^i_{\cA^{\fg}}(\pi, \cInd_{KZ}^G(\sigma)) \to \varprojlim_n\Ext^i_{\cA_{x}^{\fg}}(\pi, \pi'_n)
\]
induced by the projections $\cInd_{KZ}^G(\sigma) \to \pi'_n$ is an isomorphism.
By Lemma~\ref{lem:Yoneda} and Lemma~\ref{lem:adjoint and colimits} it suffices to prove this with $\cA^{\fg}$ replaced by~$\cA$ and~$\cA_x^{\fg}$ replaced by~$\cA_x$, 
and by Lemma~\ref{lem: smooth Ext equals admissible Ext}, 
we can further replace~$\cA_x$ with~$\cA$.

Since~$f_x$ is a nonzerodivisor on the compact induction, we have short exact sequences
\[
0 \to \pi'_m \to \pi'_{n+m} \to \pi'_n \to 0
\]
for all positive integers~$m, n$.
The connecting homomorphism of the exact sequence
\begin{equation}\label{inductiondirectlimit}
0 \to \cInd_{KZ}^G(\sigma) \to \cInd_{KZ}^G(\sigma)[1/f_x] \to \varinjlim_n \pi'_n \to 0
\end{equation}
induces a map
\[
\varinjlim_n \Ext^{i}_{\cA}(\pi, \pi'_n) \isom \Ext^{i+1}_{\cA}(\pi, \cInd_{KZ}^G(\sigma)),
\]
which is an isomorphism by Proposition~\ref{prop:localizing compact inductions}, Corollary~\ref{cor:ext vanishing},
and the fact (proved in Lemma~\ref{lem:ext and colimits}) that~$\Ext^i_{\cA}$ commutes with filtered colimits.

Pulling back~(\ref{inductiondirectlimit}) along $\pi'_n \to \varinjlim \pi'_m$, and then pushing forward along~$\cInd_{KZ}^G(\sigma) \to \pi'_m$, we rephrase our goal as proving that the map
\[
\varinjlim_n \Ext^i_{\cA}(\pi, \pi'_n) \to \varprojlim_m \Ext_{\cA}^{i+1}(\pi, \pi'_m)
\]
induced by the connecting homomorphisms of the exact sequences
\[
0 \to \pi'_m \to \pi'_{n+m} \to \pi'_n \to 0
\]
is an isomorphism.

Making a finite unramified extension~$\cO'/\cO$ and applying Proposition~\ref{prop:ext and base-change}, and noting that all terms of the inverse limit have finite~$\cO$-length, we can furthermore assume that~$Y = \{x\}$ is a block of absolutely irreducible representations.
Hence the polynomial~$f_x$ can be taken to be linear in almost all cases: the only exception is for blocks of type~(2) consisting of quotients of~$\cInd_{KZ}^G(\sigma)$ when~$\sigma$ is a twist of~$\Sym^{p-2}$.
Indeed, when $\sigma$ is a twist of~$\Sym^{p-2}$ and $\lambda \ne \pm 1$, 
the representation $\cInd_{KZ}^G(\sigma)/(T-\lambda)(T-\lambda^{-1})$ is multiplicity-free and contained in a single block, 
and so we need to take~$f_x = (T-\lambda)(T-\lambda^{-1})$, which is quadratic in~$T$.
However, the representation~$\pi'_m$ is a direct sum
\[
\pi'_m = \cInd_{KZ}^G(\sigma)/(T-\lambda)^m \oplus \cInd_{KZ}^G(\sigma)/(T-\lambda^{-1})^m,
\]
and the exact sequence $0 \to \pi'_m \to \pi'_{n+m} \to \pi'_n \to 0$ is also a direct sum of exact sequences.
Hence the proposition follows from Proposition~\ref{prop:connecting-isomorphisms} below.
\end{proof} %

\begin{prop}\label{prop:connecting-isomorphisms}
Let~$\sigma$ be a Serre weight and~$\lambda \in \bF$.
Define
\[
\tau_n = \cInd_{KZ}^G(\sigma)/(T-\lambda)^n
\]
and let~$\pi$ be an irreducible object of the block~$x$ containing~$\tau_n$.
Then the connecting homomorphisms of the exact sequence
\[
0 \to \tau_m \xrightarrow{(T-\lambda)^n} \tau_{n+m} \to \tau_n \to 0
\]
induce  isomorphisms
\[
\varinjlim_n \Ext_{\cA}^i(\pi, \tau_n) \to \varprojlim_m \Ext^{i+1}_{\cA}(\pi, \tau_m)
\]
for all~$i \geq 0$.
\end{prop}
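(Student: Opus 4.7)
Set $\Pi := \cInd_{KZ}^G \sigma$ and $E^j := \Ext^j_\cA(\pi, \Pi)$. The plan is to identify both sides of the claimed isomorphism with $E^{i+1}$, using the four-term exact sequences
\[
0 \to E^i/(T-\lambda)^n E^i \to \Ext^i(\pi, \tau_n) \to E^{i+1}[(T-\lambda)^n] \to 0
\]
obtained by applying $\Ext^*(\pi, -)$ to the short exact sequence $0 \to \Pi \xrightarrow{(T-\lambda)^n} \Pi \to \tau_n \to 0$. First I would note, via Lemma~\ref{lem:another ext vanishing}, that every element of $E^j$ is killed by a power of the element $g \in \cH(\sigma)$ with $V(g) = f_\sigma^{-1}(\{x\})$. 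In the generic case $g = T-\lambda$ directly; in the exceptional case of $\sigma$ a twist of $\Sym^{p-2}$ with $\lambda \neq \pm 1$, the Chinese-remainder-type decomposition exploited at the end of the previous proof isolates the $(T-\lambda)$-part. Either way, $E^j$ is $(T-\lambda)$-power torsion.

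For the direct limit, I would realise the transition $\tau_m \xrightarrow{T-\lambda} \tau_{m+1}$ as a morphism of the defining short exact sequence with $(\alpha,\beta) = (\mathrm{id}, T-\lambda)$ on the two copies of $\Pi$; the induced maps on the 4-term sequence are multiplication by $T-\lambda$ on $E^i/(T-\lambda)^n E^i$ and natural inclusion on $E^{i+1}[(T-\lambda)^n]$. Because $E^j$ is $(T-\lambda)$-power torsion, $\varinjlim_n E^i/(T-\lambda)^n E^i = 0$ (each representative is eventually pushed into $(T-\lambda)^{\text{ord}} E^i = 0$) and $\varinjlim_n E^{i+1}[(T-\lambda)^n] = E^{i+1}$. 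Exactness of $\varinjlim$ then yields $\varinjlim_n \Ext^i(\pi, \tau_n) \cong E^{i+1}$.

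For the inverse limit, the analogous realisation of $\tau_{n+1} \twoheadrightarrow \tau_n$ with $(\alpha,\beta) = (T-\lambda, \mathrm{id})$ shows that the transition maps on the 4-term sequence at level $i+1$ are the natural projections on $E^{i+1}/(T-\lambda)^n E^{i+1}$ and multiplication by $T-\lambda$ on $E^{i+2}[(T-\lambda)^n]$. This is where Propositions~\ref{centreelementsI} and~\ref{centreelementsII} enter: in types~(1), (2), (4) the operator $T-\lambda$ coincides with the central element $T_x - \lambda \in \fm_x \subseteq Z_x$, and in type (3) it satisfies an integral dependence relation over $Z_x$. Combined with Theorem~\ref{centrethm} and Proposition~\ref{cyclicity}, this equips $E^j$ with the structure of a module over the Noetherian local ring $Z_x$ whose $(T-\lambda)$-adic filtration is controlled by $\fm_x$. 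The cyclicity of $\Hom(\cInd \sigma, J_\pi)^\vee$ over $Z_x$ gives the finiteness needed to apply Krull's intersection theorem: $\bigcap_n (T-\lambda)^n E^{i+1} = 0$, so $E^{i+1}$ is $(T-\lambda)$-adically separated and equal to its $(T-\lambda)$-adic completion, while $\varprojlim_n E^{i+2}[(T-\lambda)^n] = 0$ since any compatible sequence is infinitely divisible in $E^{i+2}$. The Mittag--Leffler condition on the $(T-\lambda)$-adic filtration kills the relevant $\varprojlim^1$-terms, giving $\varprojlim_n \Ext^{i+1}(\pi, \tau_n) \cong E^{i+1}$.

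Finally, one checks that under these identifications the connecting map $\delta_{n,m}$ factors as $\iota_m \circ \delta^\Pi_n$, where $\delta^\Pi_n \colon \Ext^i(\pi, \tau_n) \to E^{i+1}$ is the connecting map of $0 \to \Pi \xrightarrow{(T-\lambda)^n} \Pi \to \tau_n \to 0$ and $\iota_m \colon E^{i+1}/(T-\lambda)^m E^{i+1} \hookrightarrow \Ext^{i+1}(\pi, \tau_m)$ is the natural inclusion; the factorisation is verified by constructing the obvious morphism from the latter short exact sequence to the former one. In the limit this composite is the identity map $E^{i+1} \to E^{i+1}$, proving the proposition. The main obstacle will be establishing the structural finiteness of $E^j$ as a $Z_x$-module that underlies the Artin--Rees / Krull intersection step, which rests essentially on Pa\v{s}k\={u}nas's description of the projective generators of each block together with Proposition~\ref{cyclicity}.
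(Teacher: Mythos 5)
Your approach — working directly with $E^j := \Ext^j_\cA(\pi, \cInd_{KZ}^G\sigma)$ and the four-term sequences, rather than passing to $E_x$-modules through the Colmez functor as the paper does — is a genuinely different route and is in principle viable. However, there are two real gaps. First, the torsion of $E^j$ does not follow from Lemma~\ref{lem:another ext vanishing}, which is about $\Ext^i(\cInd_{KZ}^G\sigma, \pi)$ with the arguments in the opposite order; the statement you actually need is Lemma~\ref{lem:ext vanishing}, namely $\Ext^i_\cA(\pi, (\cInd_{KZ}^G\sigma)[1/g]) = 0$, combined with Lemma~\ref{lem:ext and colimits} (so that $\Ext^i(\pi, -)$ commutes with the colimit $(\cInd_{KZ}^G\sigma)[1/g]/\cInd_{KZ}^G\sigma = \varinjlim_n g^{-n}\cInd_{KZ}^G\sigma/\cInd_{KZ}^G\sigma$, each term of which is $g^n$-torsion).

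Second, and more seriously, the inverse limit step needs the finite $\cO$-length of $E^j$, which is Lemma~\ref{lem: Dospinescu asked about this}; you do not invoke this, and what you put in its place does not work. The purported $Z_x$-module structure on $E^j$ is of no use: $\cInd_{KZ}^G\sigma$ is not an object of $\cA_x$, so the only $Z_x$-action available is via the first argument $\pi$, and since $\pi$ is irreducible this factors through the residue field, i.e.\ $\fm_x$ acts by zero. Krull's intersection theorem gives $(T-\lambda)$-adic separation, which is not the same as the completeness needed to identify $E^{i+1}$ with $\varprojlim_n E^{i+1}/(T-\lambda)^nE^{i+1}$; and the claim that a compatible sequence $(e_n)$ in $\varprojlim_n E^{i+2}[(T-\lambda)^n]$ vanishes because it is ``infinitely divisible'' is false without finiteness (a Pr\"ufer-type module $k[T]_{(T)}/k[T]$ gives a counterexample, with the transition maps $T\cdot$ surjective and the limit nonzero). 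Once Lemma~\ref{lem: Dospinescu asked about this} is in hand the entire argument becomes elementary: $E^j$ is finite-length and killed by a power of $g$, so every chain stabilizes, both systems compute $E^{i+1}[(T-\lambda)^\infty]$, and no Bernstein-centre input is needed at this stage at all. (The role that Propositions~\ref{centreelementsI} and~\ref{centreelementsII} play in the paper's proof is to establish the analogue of the key vanishing $\Ext^i_{E_x}(M_x(\pi), M[1/(T-\lambda)]) = 0$; in your framework the analogous fact, $\Ext^i(\pi, \cInd_{KZ}^G\sigma[1/g]) = 0$, is already supplied by Lemma~\ref{lem:ext vanishing} via a dimension-counting argument, so reintroducing the Bernstein centre is both incorrect as written and unnecessary.)
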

\begin{proof}
Let $J_x = \bigoplus_{\pi \in x} J_\pi$ be the direct sum of injective envelopes of all the irreducible objects of~$\cA_x$.
Then the functor 
\[
M_x(-) = \Hom_{\cA}(-, J_x)^\vee
\]
is an equivalence of~$\cA_x$ with the category of discrete right modules over~$E_x = \End_{\cA}(J_x)$.
Hence it suffices to prove the proposition after applying~$M_x(-)$.

Define~$M = \varprojlim_n M_x(\tau_n)$, which is a module over~$E_x$ (in fact, since the~$M_x(\tau_n)$ have finite length, this is actually a compact $E_x$-module).
The Hecke operator~$(T-\lambda)$ induces compatible endomorphisms of all the~$M_x(\tau_n)$, hence also of~$M$.
There is an exact sequence
\[
0 \to M \to M[1/(T-\lambda)] \to \varinjlim_n M_x(\tau_n) \to 0.
\]
Since the groups
\[
\Ext^{i+1}_{E_x}(M_x(\pi), M_x(\tau_m)) \cong \Ext^{i+1}_{\cA_x}(\pi, \tau_m)
\]
have finite dimension over~$\bF$, the same argument as in Lemma~\ref{lem:limitExt 2} implies that the natural map
\[
\Ext^{i+1}_{E_x}(M_x(\pi), M) \isom \varprojlim_m \Ext^{i+1}_{E_x}(M_x(\pi), M_x(\tau_m))
\]
is an isomorphism.
So we obtain a connecting homomorphism
\[
\varinjlim_n \Ext^i_{E_x}(M_x(\pi), M_x(\tau_n)) \to \varprojlim_m \Ext^{i+1}_{E_x}(M_x(\pi), M_x(\tau_m)) 
\]
which can be checked to coincide with the map induced by the connecting homomorphisms of the short exact sequences
\[
0 \to M_x(\tau_m) \to M_x(\tau_{n+m}) \to M_x(\tau_n) \to 0
\]

So it suffices to prove that
\begin{equation}\label{Exttovanish}
E^i = \Ext^i_{E_x}(M_x(\pi), M[1/(T-\lambda)]) = 0
\end{equation}
for all~$i$.
We are going to deduce this from Propositions~\ref{centreelementsI} and~\ref{centreelementsII}, using the fact that this $\Ext^i$-group is a module over the centre~$Z_x$ of~$E_x$ in a unique way under the actions on either factor.

Assume first that~$x$ is a block of type~(1), (2) or~(4).
Then it follows from Proposition~\ref{centreelementsI} that there exists an element~$T_x-\lambda \in Z_x$ inducing the operator~$T-\lambda$ on~$M_x(\tau_m)$ for all~$m$, and so on~$M$ too.
Since~$\pi$ is an absolutely irreducible object of~$\cA_x$ and~$T_x-\lambda = 0$ on~$\tau_1$, we also have~$T_x-\lambda = 0$ on~$\pi$.
Hence the vanishing of~\eqref{Exttovanish} follows from the fact that~$T_x-\lambda$ is simultaneously invertible and zero on~$E^i$.

Assume now that~$x$ has type~(3).
Then~$E^i$ has an invertible endomorphism~$T-\lambda$ arising from the second factor.
Furthermore, by Proposition~\ref{centreelementsII} there are central elements~$z_0, \ldots, z_n \in Z_x$ such that
\[
(T-\lambda)^{n+1}+z_n(T-\lambda)^n + \cdots + z_1(T-\lambda) + z_0 = 0
\]
as endomorphisms of~$M_x(\tau_m)$, hence as endomorphisms of~$M$ and~$E^i$.
In addition, the~$z_i$ are contained in the maximal ideal~$\fm_x$, and so they are zero on~$M_x(\pi)$, which is the only irreducible object of the block.
So they are zero on~$E^i$.
It follows that~$(T-\lambda)^{n+1}$ is simultaneously invertible and zero on~$E^i$, and so~$E^i = 0$.
\end{proof}

We next extend our results to $\Ind$-completions.

\begin{prop}\label{prop:Gamma Y and completion II}
Let~$Y$ be a finite subset of~$X$.
If $\pi$ is an object of $\cA_Y$ and $\pi'$ is an object
of~$\cA$,
then passing to the completion along $Y$ induces isomorphisms
\[
\Ext^n_{\cA}(\pi, \pi') \isom \Ext^n_{\Ind\Pro(\cA_Y^{\fg})}(\pi, \widehat{\pi}').
\]
\end{prop}
\begin{proof}
Since $\widehat{(-)}$ is exact, and $\pi \isoto \widehat\pi$, we obtain a natural transformation
\begin{equation}\label{Gamma Y and completion to prove isomorphism}
\Ext^n_{\cA}(\pi, -) \to \Ext^n_{\Ind\Pro(\cA_Y^{\fg})}(\pi, \widehat{(-)})
\end{equation}
which we need to prove is an isomorphism.
Writing~$\pi = \varinjlim_i \pi_i$ as a filtered colimit of finitely generated subobjects with injective transition maps, and using the spectral sequence 
of Lemma~\ref{Rlim spectral sequence}, we can assume without loss of generality that~$\pi \in \cA_Y^{\fg}$.
We now claim that both sides of~$\eqref{Gamma Y and completion to prove isomorphism}$ commute with filtered colimits.
The claim reduces the proposition to proving that
\[
\Ext^n_{\cA^{\fg}}(\pi, \pi') \to \Ext^n_{\Ind\Pro(\cA_Y^{\fg})}(\pi, \widehat{\pi}')
\]
is an isomorphism when~$\pi \in \cA_Y^{\fg}, \pi' \in \cA^{\fg}$.
The dual of Lemma~\ref{proExtgroups} implies that we can replace $\Ind\Pro(\cA_Y^{\fg})$ with~$\Pro(\cA_Y^{\fg})$.
We then conclude the proof with an application of Proposition~\ref{prop:Gamma Y and completion}.

We now prove the claim.
The fact that $\Ext^n_{\cA}(\pi, -)$ preserves filtered colimits is Lemma~\ref{lem:ext and colimits}, since~$\pi$ is finitely generated.
On the other hand, the dual of Lemma~\ref{lem:staysinjective} shows that we can compute $\Ext^n_{\Ind\Pro(\cA_Y^{\fg})}(\pi, \widehat{(-)})$ 
by a projective resolution of~$\pi$ in~$\Pro(\cA_Y^{\fg})$.
This implies the claim because objects of~$\Pro(\cA_Y^{\fg})$ are compact in~$\Ind \Pro (\cA_Y^{\fg})$, and $\widehat{(-)}$ commutes with filtered colimits.
\end{proof}

\subsubsection{Some analogues for \texorpdfstring{$\cA_V$}{AV} (with \texorpdfstring{$V$}{V} open in \texorpdfstring{$X$}{X}).}%

Suppose that %
$Y \subseteq V$ 
for some open subset $V$ of~$X$. 
Write $Z \coloneq  X \setminus V.$  If $\pi$ and $\pi'$ are objects
of $\cA^{\fg}_Z$ and $\cA^{\fg}_Y$ respectively, then 
$$\Hom_{\Pro(\cA^{\fg}_Y)}(\widehat{\pi}_Y, \pi') = \Hom_{\cA^{\fg}}(\pi,\pi') = 0$$ by
Corollary~\ref{cor:disjoint closed localization lemma}. 
Thus $\widehat{(-)}_Y$ vanishes on $\cA^{\fg}_Z$,   
and so, by Lemma~\ref{lem:adjoint and colimits abstract version}~(3),
the functor $\widehat{\pi}_Y$ factors through $\cA^{\fg}_V$, %
and then extends canonically to a
functor $\Pro(\cA^{\fg}_V) \to \Pro(\cA^{\fg}_Y)$ preserving cofiltered limits.
We use the same notation $\widehat{\text{(--)}}_Y$ for this induced functor, so
that by definition we have 
\begin{equation}
\label{eqn:completing localizations}
{(j_V^*\pi)}^\wedge_Y = \widehat{\pi}_Y,
\end{equation}
for objects $\pi$ of $\cA^{\fg}.$
This equality remains true if $\pi \in \cA$, since~$j_V^*$ and~$(-)^\wedge_Y$ preserve filtered colimits.

\begin{lemma}
\label{lem:completing localizations}
The composite functor 
\[
\cA_V^{\fg} \to \Ind\Pro(\cA_Y^{\fg}), \pi_V \mapsto (j_{V*}\pi_V)^\wedge_Y
\]
takes values in $\Pro(\cA_Y^{\fg})$, and is naturally isomorphic to $\pi_V \mapsto (\pi_V)^\wedge_Y$.
\end{lemma}
\begin{proof}
Precomposing the isomorphism~\eqref{eqn:completing localizations} with~$j_{V*}$, we obtain a natural isomorphism 
$(j_V^*j_{V*}\pi_V)^\wedge_Y \isoto (j_{V*}\pi_V)^\wedge_Y$.
Precomposing this with the inverse of the counit $j_V^* j_{V*}\pi_V \isoto \pi_V$ then concludes the proof of the lemma.
\end{proof}

Recall from Corollary~\ref{cor:localizing from inside}
that the unit of adjunction $\id \to j_{V*} j_V^*$ restricts to an isomorphism 
on~$\cA_Y$, so that $j_V^*$ embeds $\cA_Y$ fully faithfully into $\cA_V$.
Thus, if $\pi_V$ is an object of $\cA^{\fg}_V$ and $\pi'$ is an object
of $\cA^{\fg}_Y$, we find that
\begin{multline*}
\Hom_{\Pro(\cA^{\fg}_Y)}({(\pi_V)}^\wedge_Y,\pi') 
\iso \Hom_{\Pro(\cA^{\fg}_Y)}({(j_{V*} \pi_V)}^\wedge_Y,\pi')
\\
= \Hom_{\Ind\Pro(\cA^{\fg})}(j_{V*} \pi_V, \pi') = \Hom_{\cA}(j_{V*}\pi_V, \pi') \iso \Hom_{\cA}(j_{V*}\pi_V, j_{V*}j_V^* \pi')
\\
= \Hom_{\cA_V}(j_V^*j_{V*}\pi_V, j_V^* \pi') = \Hom_{\cA^{\fg}_V}(\pi_V, j_V^*\pi') = \Hom_{\Pro (\cA^{\fg}_V)}(\pi_V, j_V^*\pi').
\end{multline*}
(The first isomorphism is given by Lemma~\ref{lem:completing localizations},
while, as already noted, the second isomorphism follows from Corollary~\ref{cor:localizing from inside}.
The equalities are instances of the various adjunctions that we've established.)
Since
\[
\widehat{\text{(--)}}_Y : \Pro(\cA_V^{\fg}) \to \Pro(\cA_Y^{\fg}) 
\]
preserves cofiltered limits,
we thus see that it is left adjoint to the fully faithful embedding provided by~$j_V^*$.

We now show that
$\widehat{\text{(--)}}_Y$ on~$\cA^{\fg}_V$ 
satisfies analogues of the results
we proved
for $\widehat{\text{(--)}}_Y$ on~$\cA^{\fg}$. 

\begin{lemma}
The functor
$\widehat{\text{{\em (--)}}}_Y$ %
is exact.
\end{lemma}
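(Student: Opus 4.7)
The plan is to reduce the claim to the already-established exactness of $\widehat{(\text{--})}_Y$ on $\cA^{\fg}$ (Corollary~\ref{cor:completion is exact}) by lifting short exact sequences along the localization functor $j_U^*$, and then invoking the compatibility~\eqref{eqn:completing localizations} that $\widehat{j_U^* \pi}_Y = \widehat{\pi}_Y$.

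In more detail, suppose $0 \to \pi_1^U \to \pi_2^U \to \pi_3^U \to 0$ is a short exact sequence in $\cA_U^{\fg}$. First I would lift this sequence to an exact sequence in $\cA^{\fg}$: by Lemma~\ref{lem:adjoint and colimits}(3), $\cA_U^{\fg}$ is precisely the essential image of $\cA^{\fg}$ under $j_U^*$, so I may choose $\pi_2 \in \cA^{\fg}$ with $j_U^* \pi_2 \cong \pi_2^U$. The quotient map $\pi_2^U \to \pi_3^U$ corresponds by adjunction to a morphism $\pi_2 \to j_{U*}\pi_3^U$; I let $\pi_3$ denote its image, which is a quotient of $\pi_2$ and hence a finitely generated object of $\cA$. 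Since $j_U^*$ is exact (it is a localization functor) and the counit $j_U^* j_{U*}\pi_3^U \iso \pi_3^U$ is an isomorphism, the induced inclusion $j_U^*\pi_3 \hookrightarrow \pi_3^U$ is also a surjection, hence an isomorphism. Setting $\pi_1 := \ker(\pi_2 \to \pi_3)$ produces a finitely generated subobject of $\pi_2$ (by Theorem~\ref{thm:noetherian}) with $j_U^*\pi_1 \cong \pi_1^U$, and yields a short exact sequence $0 \to \pi_1 \to \pi_2 \to \pi_3 \to 0$ in $\cA^{\fg}$.

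Next, applying Corollary~\ref{cor:completion is exact} to this lifted sequence gives a short exact sequence $0 \to \widehat{\pi_1}_Y \to \widehat{\pi_2}_Y \to \widehat{\pi_3}_Y \to 0$ in $\Pro(\cA_Y^{\fg})$. Finally, using the compatibility~\eqref{eqn:completing localizations}, which identifies $\widehat{\pi_i}_Y$ with $\widehat{j_U^*\pi_i}_Y = \widehat{\pi_i^U}_Y$, we obtain the desired exactness of $0 \to \widehat{\pi_1^U}_Y \to \widehat{\pi_2^U}_Y \to \widehat{\pi_3^U}_Y \to 0$.

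There is no real obstacle; the substance of the statement has already been absorbed into Corollary~\ref{cor:completion is exact}, and the only genuine point to verify is the lifting of the short exact sequence, which is a standard argument using the exactness of $j_U^*$ and the Noetherian nature of $\cA^{\fg}$. The role of~\eqref{eqn:completing localizations} is purely bookkeeping.
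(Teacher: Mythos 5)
Your proposal is correct and matches the paper's argument: the paper simply asserts that any exact sequence in $\cA_U^{\fg}$ lifts to an exact sequence in $\cA^{\fg}$ and then cites Corollary~\ref{cor:completion is exact} and~\eqref{eqn:completing localizations}, whereas you spell out the lifting step (choosing $\pi_2$, forming $\pi_3$ as the image of $\pi_2 \to j_{U*}\pi_3^U$, and taking $\pi_1$ as the kernel). The extra detail is sound and is exactly what the paper leaves implicit.
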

\begin{proof}
By~\cite[III.1, Corollaire~1]{Gabrielthesis}, any short exact sequence in $\cA_V^{\fg}$ may be written as the image under $j_V^*$
of a short exact sequence in~$\cA^{\fg}$.  The claim then follows from
Corollary~\ref{cor:completion is exact} and~\eqref{eqn:completing localizations}
\end{proof}

\begin{lemma}
Let $\pi_V$ be an object of $\cA_V^{\fg}$,
and %
write $\displaystyle \widehat{(\pi_V)}_Y = \quoteslim{I} \pi_i,$
where the $\pi_i$  are objects of $\cA^{\fg}_Y$.
If $\pi'$ is an object of $\cA^{\fg}_Y$,
then, for any value of~$n$, there are natural 
isomorphisms
$$\varinjlim_{i} \Ext^n_{\cA^{\fg}_Y}(\pi_i,\pi') \iso \Ext^n_{\Pro(\cA^{\fg}_Y)}(\widehat{(\pi_V)}_Y,\pi') \iso \Ext^n_{\cA^{\fg}_V}(\pi_V,j_V^*\pi').$$
\end{lemma}
\begin{proof}
If we write $\pi_V = j_V^* \pi$
for some object $\pi$ of~$\cA^{\fg}$,
take into account~\eqref{eqn:completing localizations}
and apply Lemma~\ref{lem:ext comparison},
then the lemma follows from the chain of isomorphisms
and adjunctions %
\begin{multline*}
\Ext^n_{\cA^{\fg}}(\pi,\pi')
= \Ext^n_{\cA}(\pi, \pi')
\iso \Ext^n_{\cA}(\pi,j_{V*} j_V^* \pi')
= \Ext^n_{\cA_V}(j_V^*\pi,j_V^*\pi')\\
= \Ext^n_{\cA^{\fg}_V}(j_V^*\pi,j_V^*\pi')
= \Ext^n_{\cA^{\fg}_V}(\pi_V,j_V^*\pi')
\end{multline*}
(the isomorphism being the consequence of Corollary~\ref{cor:localizing from inside}
that we've already noted, the first and third equalities being a consequence of Lemma~\ref{lem:f.g. Ext}, 
and the second equality being a consequence of Lemma~\ref{lem:adjunction-ext-iso-j} below).
\end{proof}

\begin{lemma}\label{lem:adjunction-ext-iso-j}
Let~$V \subset X$ be open, and $\pi, \pi' \in \cA$.
Then there is a natural isomorphism
\[
\Ext^i_{\cA_V}(j^*_V\pi, j^*_V\pi') \isom \Ext^i_{\cA}(\pi, j_{V*}j^*_V\pi').
\] 
\end{lemma}
\begin{proof}
Since~$\cA_V$ is a Grothendieck category, we can compute the $\Ext^i_{\cA_V}$-group via an injective resolution of~$j^*_V \pi'$.
Then the lemma follows from the corresponding statement for~$\Hom$, because~$j_{V*}$ is exact and preserves injectives (being right adjoint to the exact functor~$j_V^*$).
\end{proof}

\subsection{The functors \texorpdfstring{$\Gamma_Y$}{GammaY} and \texorpdfstring{$R^1\Gamma_Y$}{R1GammaY}}%
\label{subsec:local cohomology}
We fix throughout this subsection a closed subset~$Y \subset X$ with open complement~$U$, and omit~$U$ from the notation in the various localization functors.
\begin{defn}\label{defn: Gamma_Y}
We define a functor
$\Gamma_Y: \cA \to \cA_Y$ as follows:  $\Gamma_Y(\pi)$ is the maximal subobject
of $\pi$ lying in $\cA_Y$.  
\end{defn}
Equivalently, $\Gamma_Y(\pi)$ is the kernel
of the natural morphism $\pi \to j_*j^* \pi$. 
The functor $\Gamma_Y$ is right adjoint to the inclusion of $\cA_Y$
in $\cA$; since this latter functor is exact, we see that $\Gamma_Y$
takes injectives in $\cA$ to injectives in $\cA_Y$.
Corollary~\ref{cor:ext comparison} then shows that $\Gamma_Y$ in fact
takes injectives in $\cA$ to injectives in~$\cA$.
We may consider the derived functors $R^i\Gamma_Y$ of $\Gamma_Y$. 
Their computation is facilitated by the following result.

\begin{lemma}
\label{lem:injective surjective}
If $I$ is an injective object of $\cA$,
then the natural map $I \to j_*j^*I$ is surjective.
\end{lemma}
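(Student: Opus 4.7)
The plan is to reduce to the case $\Gamma_Y(I)=0$ by splitting off this subobject, and then invoke the standard characterization of the essential image of $j_*$ that was already used in the proof of Proposition~\ref{prop:localizing compact inductions}(1).

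First I would observe that the functor $\Gamma_Y : \cA \to \cA_Y$ is right adjoint to the (exact) inclusion $\cA_Y \hookrightarrow \cA$, so it preserves injectives; thus $\Gamma_Y(I)$ is injective in $\cA_Y$. By Corollary~\ref{cor:ext comparison}, an injective object of $\cA_Y$ remains injective when viewed in $\cA$. Consequently, the inclusion $\Gamma_Y(I) \hookrightarrow I$ admits a retraction, and we obtain a splitting $I \cong \Gamma_Y(I) \oplus I'$, where $I'$ is also injective (as a summand of an injective). Because $\Gamma_Y(I)$ is by definition the \emph{maximal} subobject of $I$ lying in $\cA_Y$, the complement satisfies $\Gamma_Y(I') = 0$.

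Since $j^*\Gamma_Y(I) = 0$, the unit map $I \to j_*j^*I$ factors as the composition of the projection $I \twoheadrightarrow I'$ with the unit $I' \to j_*j^*I'$; it therefore suffices to show that this second map is an isomorphism, i.e.\ that $I'$ lies in the essential image of $j_*$. Using the characterization of the essential image invoked in the proof of Proposition~\ref{prop:localizing compact inductions}(1), this amounts to verifying that $\Hom_{\cA}(\pi, I') = 0$ and $\Ext^1_{\cA}(\pi, I') = 0$ for every $\pi$ in $\cA_Y$. The $\Hom$-vanishing follows from the adjunction $\Hom_{\cA}(\pi, I') = \Hom_{\cA_Y}(\pi, \Gamma_Y(I')) = 0$, and the $\Ext^1$-vanishing is immediate from the injectivity of $I'$ in $\cA$.

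The only real content is the splitting step, and the crucial ingredient there is that injectives in the subcategory $\cA_Y$ remain injective in $\cA$ — a nontrivial fact that was established in Corollary~\ref{cor:ext comparison}. Everything after that reduction is formal.
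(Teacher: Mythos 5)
Your proof is correct and follows essentially the same route as the paper's: both use Corollary~\ref{cor:ext comparison} to show $\Gamma_Y(I)$ is injective in $\cA$, split off the inclusion $\Gamma_Y(I) \hookrightarrow I$, and then apply the characterization of the essential image of $j_*$ (Lemma~\ref{lem:adjoint image}) to the complementary summand (the paper works with the image $J$ of $I$ in $j_*j^*I$, which is isomorphic to your $I'$, and obtains the $\Hom$-vanishing from $J$ being a summand of $j_*j^*I$ rather than directly from $\Gamma_Y(I')=0$, but this is a cosmetic difference).
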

\begin{proof}
As already noted, it follows from Corollary~\ref{cor:ext comparison}
that $\Gamma_Y(I)$ is again an injective object of $\cA$.
Thus the inclusion $\Gamma_Y(I) \hookrightarrow I$ is split, and 
so the image $J$ of $I$ in $j_*j^*I$ is yet again injective.
The inclusion $J \hookrightarrow j_*j^*I$ is thus also split.
This shows that the cokernel~$C$ of~$I \to j_*j^*I$ injects into~$j_*j^*I$.
However, $C$ is an object of~$\cA_Y$, and so the adjoint of the inclusion $C \to j_*j^*I$
is the zero map.
Hence $C \to j_*j^*I$ is the zero map, and so $C = 0$, as desired.
\end{proof}

\begin{cor}
\label{cor:local cohom}
For any object $\pi$ of $\cA$,
the derived functors of $\Gamma_Y$ are computed by the complex
$$\pi \to j_*j^*\pi.$$
In particular, the only non-trivial
higher derived functor is~$R^1\Gamma_Y$.
\end{cor}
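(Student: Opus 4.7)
The plan is to compute $R^i\Gamma_Y(\pi)$ via an injective resolution and the short exact sequence of complexes supplied by Lemma~\ref{lem:injective surjective}. Choose an injective resolution $\pi \to I^\bullet$ in $\cA$. For each $n \geq 0$, the natural morphism $I^n \to j_*j^* I^n$ is surjective by Lemma~\ref{lem:injective surjective}, and its kernel is by definition $\Gamma_Y(I^n)$. Assembling these term-by-term yields a short exact sequence of complexes
\[
0 \to \Gamma_Y(I^\bullet) \to I^\bullet \to j_*j^* I^\bullet \to 0.
\]

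The key input is that the right-hand complex is a resolution of $j_*j^*\pi$. Indeed, the localization functor $j^* = (j_U)^*$ is exact (being the quotient functor onto a Serre quotient), and $j_* = (j_U)_*$ is exact by Proposition~\ref{prop:localizing compact inductions}(2); hence $j_*j^*$ is exact. Applying it to the resolution $\pi \to I^\bullet$ therefore gives a resolution $j_*j^*\pi \to j_*j^* I^\bullet$, so $H^0(j_*j^* I^\bullet) = j_*j^*\pi$ and $H^i(j_*j^* I^\bullet) = 0$ for $i > 0$.

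The long exact sequence in cohomology associated to the short exact sequence of complexes then reads
\[
0 \to R^0\Gamma_Y(\pi) \to \pi \to j_*j^*\pi \to R^1\Gamma_Y(\pi) \to 0,
\]
with $R^i\Gamma_Y(\pi) = 0$ for $i \geq 2$, since both $H^i(I^\bullet)$ and $H^i(j_*j^* I^\bullet)$ vanish in positive degrees. The identification $R^0\Gamma_Y(\pi) = \Gamma_Y(\pi) = \ker(\pi \to j_*j^*\pi)$ is automatic, and the exact sequence gives $R^1\Gamma_Y(\pi) = \coker(\pi \to j_*j^*\pi)$. This is precisely the assertion that the two-term complex $\pi \to j_*j^*\pi$ computes the derived functors of $\Gamma_Y$, and that $R^i\Gamma_Y = 0$ for $i \geq 2$.

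There is no real obstacle here: all the substantive work has already been done in Proposition~\ref{prop:localizing compact inductions}(2) (exactness of $j_*$) and Lemma~\ref{lem:injective surjective} (surjectivity of the counit-like map on injectives). The remaining argument is a formal manipulation of the long exact sequence.
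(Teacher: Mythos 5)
Your proof is correct and takes essentially the same route as the paper: both use Lemma~\ref{lem:injective surjective} to obtain the term-wise short exact sequence of complexes $0 \to \Gamma_Y(I^\bullet) \to I^\bullet \to j_*j^*I^\bullet \to 0$, and both invoke the exactness of $j_*$ from Proposition~\ref{prop:localizing compact inductions}(2) to see that $j_*j^*I^\bullet$ resolves $j_*j^*\pi$. The only difference is cosmetic — the paper phrases the conclusion via the cone quasi-isomorphism, while you spell it out through the long exact sequence in cohomology.
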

\begin{proof}
If $I^{\bullet}$ is an injective resolution of~$\pi$,
then the various $R^{\bullet}\Gamma_Y(\pi)$ are computed
as the cohomology of the kernel of the morphism of complexes
$I^{\bullet} \to j_*j^*I^{\bullet}$.  Lemma~\ref{lem:injective surjective}
shows that this kernel coincides with the cone (up to a shift).  Now since $j_*$
is exact, the complex $j_*j^*I^{\bullet}$ resolves $j_*j^*\pi$,
and so this shifted cone is quasi-isomorphic to the complex
$\pi \to j_*j^* \pi$, proving the lemma.
(Alternatively, one can use the snake lemma to construct a $\delta$-functor with amplitude in~$[0, 1]$ from the cohomology of~$\pi \to j_*j^*\pi$, which can be shown to be effaceable using Lemma~\ref{lem:injective surjective}.)
\end{proof}

Since $\Gamma_Y$ preserves injectives, we obtain, for any objects
$\pi$ of $\cA_Y$ and $\pi'$ of $\cA$, a spectral sequence
\begin{equation}\label{eqn: spectral sequence for Ext against AY}E_2^{p,q} \coloneq  \Ext^p_{\cA_Y}\bigl(\pi, R^q\Gamma_Y(\pi')\bigr)
\implies \Ext^{p+q}_{\cA}(\pi,\pi').\end{equation}
In the case when $\pi' \to j_*j^* \pi'$ is injective, i.e.\ when $\Gamma_Y(\pi') = 0$,
this simplifies to the formula
\begin{equation}\label{eqn:simplified spectral sequence}
\Ext^p_{\cA_Y}\bigl(\pi,R^1\Gamma_Y(\pi')\bigr) = \Ext^{p+1}_{\cA}
(\pi,\pi'),
\end{equation}
which can also be obtained directly by computing the long exact sequence of $\Ext$'s
arising from the short exact sequence
$$0 \to \pi' \to j_*j^*\pi' \to R^1\Gamma_Y(\pi') \to 0,$$
and taking into account Corollary~\ref{cor:ext vanishing}.

Lemma~\ref{lem:f.g. adjoint} shows that if %
$R^1\Gamma_Y(\pi)$ is finitely generated, then in fact $R^1\Gamma_Y(\pi) = 0$.
Thus $R^1\Gamma_Y(\pi)$ is typically {\em not} finitely generated.  Nevertheless,
we can gain some control over it, using the following results.
(By contrast, if~$\pi$ is finitely generated then so is~$\Gamma_Y(\pi)$, by Theorem~\ref{thm:noetherian}, since it is a submodule of~$\pi$.)

\begin{lemma}
\label{lem:R1Gamma on c-Ind of wts}
Suppose that $Y$ is a closed subset of~$X$,
and that $\sigma$ is a Serre weight.
As 
in the statement 
of Proposition~{\em \ref{prop:localizing compact inductions}}, 
write $f_{\sigma}^{-1}(Y) = V(g)$ for some $g \in \cH(\sigma)$.
If $g = 0$, then
$\Gamma_Y(\cInd_{KZ}^G \sigma) = \cInd_{KZ}^G \sigma$
and $R^1\Gamma_Y(\cInd_{KZ}^G \sigma) = 0,$
while if $g \neq 0,$ 
then $\Gamma_Y(\cInd_{KZ}^G \sigma) = 0,$
while 
$R^1\Gamma_Y(\cInd_{KZ}^G \sigma) = 
(\cInd_{KZ}^G \sigma)[1/g]/(\cInd_{KZ}^G \sigma)
.$
\end{lemma}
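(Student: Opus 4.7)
The plan is to apply Corollary~\ref{cor:local cohom}, which shows that $R^\bullet\Gamma_Y(\cInd_{KZ}^G\sigma)$ is computed by the two-term complex $\cInd_{KZ}^G\sigma \to j_*j^*\cInd_{KZ}^G\sigma$, and to identify the target using Proposition~\ref{prop:localizing compact inductions}(1). Thus $\Gamma_Y(\cInd_{KZ}^G\sigma)$ is the kernel and $R^1\Gamma_Y(\cInd_{KZ}^G\sigma)$ is the cokernel of the unit map
\[
\cInd_{KZ}^G\sigma \longrightarrow j_{U*}j_U^*\cInd_{KZ}^G\sigma.
\]

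First I would dispose of the case $g = 0$. In this situation, $f_\sigma^{-1}(Y) = \Spec\cH(\sigma)$, which (by the relationship between $f_\sigma$ and the blocks of $\cA^{\ladm}$ used already in the proof of Proposition~\ref{prop:localizing compact inductions}) forces every Jordan--H\"older factor of $\cInd_{KZ}^G\sigma$ to lie in a block corresponding to a point of~$Y$. Hence $\cInd_{KZ}^G\sigma$ is itself an object of $\cA_Y$, so $j_U^*\cInd_{KZ}^G\sigma = 0$, the unit map is the zero map into~$0$, and the claim is immediate.

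Now suppose $g \neq 0$. Proposition~\ref{prop:localizing compact inductions}(1) identifies the unit morphism with the natural inclusion
\[
\cInd_{KZ}^G\sigma \hookrightarrow (\cInd_{KZ}^G\sigma)[1/g].
\]
By Lemma~\ref{lem:Hecke algs}, $\cInd_{KZ}^G\sigma$ is free over $\cH(\sigma) \cong \kbase[T]$, so multiplication by the nonzero element $g$ is injective on $\cInd_{KZ}^G\sigma$, whence the localization map is injective. Therefore the kernel vanishes, giving $\Gamma_Y(\cInd_{KZ}^G\sigma) = 0$, and the cokernel is precisely $(\cInd_{KZ}^G\sigma)[1/g]/(\cInd_{KZ}^G\sigma)$, as claimed.

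There is essentially no obstacle here: the lemma is a direct repackaging of Corollary~\ref{cor:local cohom} with the explicit computation of $j_*j^*$ on compact inductions already carried out in Proposition~\ref{prop:localizing compact inductions}(1), together with the observation of $\cH(\sigma)$-freeness from Lemma~\ref{lem:Hecke algs}.
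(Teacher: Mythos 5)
Your proof is correct and follows exactly the same route as the paper: compute $R^\bullet\Gamma_Y$ via Corollary~\ref{cor:local cohom} and identify $j_*j^*\cInd_{KZ}^G\sigma$ via Proposition~\ref{prop:localizing compact inductions}(1). You add a few more words of justification (the $\cA_Y$-membership in the $g=0$ case, and $\cH(\sigma)$-freeness for injectivity of the localization map when $g\neq 0$), but the substance is identical.
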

\begin{proof}
Proposition~\ref{prop:localizing compact inductions}
shows that the exact  sequence
$$0 \to \Gamma_Y(\cInd_{KZ}^G \sigma)
\to \cInd_{KZ}^{G}\sigma
\to j_*j^*\cInd_{KZ}^{G}\sigma
\to R^1\Gamma_Y(\cInd_{KZ}^G \sigma)\to 0$$
reduces to
either
$$0 \to \cInd_{KZ}^G\sigma \to \cInd_{KZ}^G \sigma \to 0 \to 0 \to 0$$
or
$$0 \to 0 \to \cInd_{KZ}^G\sigma \to 
(\cInd_{KZ}^G \sigma)[1/g]
\to  
(\cInd_{KZ}^G \sigma)[1/g]/(\cInd_{KZ}^G \sigma) \to 0,
$$
depending on whether $g = 0$ or not.
\end{proof}

\begin{lemma}
\label{lem:R1Gamma on fl}
If $Y$ is a closed subset of~$X$ and $\pi$ is of finite length,
then $R^1\Gamma_Y(\pi) = 0.$
\end{lemma}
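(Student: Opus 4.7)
The plan is to reduce to a direct sum decomposition coming from the block structure of $\cA^{\ladm}$, and then handle each summand separately using tools already established.

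First I would invoke Corollary~\ref{cor:local cohom} to identify $R^1\Gamma_Y(\pi)$ with the cokernel of the unit $\pi \to j_*j^*\pi$, so it suffices to show that this map is surjective when $\pi$ has finite length. Since $\pi$ is of finite length it lies in $\cA^{\ladm}$, and since it has only finitely many Jordan--H\"older constituents, each belonging to some block, there is a finite closed subset $W \subset X$ such that $\pi$ is an object of $\cA_W$. Write $W = W' \sqcup W''$ with $W' = W \cap Y$ and $W'' = W \setminus Y$.

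Next I would argue that $\pi$ decomposes as $\pi_1 \oplus \pi_2$ with $\pi_1 \in \cA_{W'} \subseteq \cA_Y$ and $\pi_2 \in \cA_{W''}$. Since $W'$ and $W''$ are disjoint closed subsets of $X$, Corollary~\ref{cor:disjoint closed localization lemma} gives $\Ext^1_{\cA}(\pi'',\pi')=0$ for any $\pi' \in \cA_{W'}$ and $\pi'' \in \cA_{W''}$, so any finite filtration of $\pi$ by its block components splits (alternatively, one can invoke the block decomposition of $\cA^{\ladm}$ together with Lemma~\ref{lem: smooth Ext equals admissible Ext}).

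It then remains to verify $R^1\Gamma_Y = 0$ on each summand. For $\pi_1 \in \cA_Y$, by definition $j^*\pi_1 = 0$, hence $j_*j^*\pi_1 = 0$ and the cokernel vanishes trivially. For $\pi_2 \in \cA_{W''}$, since $W''$ is a closed subset of $U := X \setminus Y$, Corollary~\ref{cor:localizing from inside} applied to the inclusion $W'' \subseteq U$ shows that the unit morphism $\pi_2 \to j_{U*}j_U^* \pi_2$ is an isomorphism, so again the cokernel vanishes. Combining the two cases yields $R^1\Gamma_Y(\pi) = 0$, as desired. There is no real obstacle here: the argument is entirely formal once the block decomposition and the two earlier corollaries are in hand.
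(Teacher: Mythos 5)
Your proof is correct and follows essentially the same route as the paper's: you decompose $\pi$ over the finite closed set of blocks it lies in, split off the part supported in $Y$ from the part supported off $Y$, and observe that $\pi \to j_*j^*\pi$ is the zero map for the first summand and an isomorphism (via Corollary~\ref{cor:localizing from inside}) for the second. The only cosmetic difference is that you make explicit the two ingredients the paper leaves implicit, namely Corollary~\ref{cor:local cohom} to identify $R^1\Gamma_Y$ as the relevant cokernel and Corollary~\ref{cor:localizing from inside} for the off-$Y$ summand.
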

\begin{proof} 
Since $\pi$ is of finite length, it is an object of $\cA_Z$ for some
finite closed subset $Z$ of~$X$, which we write in the form
$Z \coloneq  Z_1 \coprod Z_2,$ 
where $Z_1 = Y \cap Z$ and $Z_2  = Z \setminus Z_1.$ 
Correspondingly we may write $\pi = \pi_1 \oplus \pi_2$ with $\pi_i$  an object of $\cA_{Z_i}^{\fg}$.
By Lemma~\ref{localization of locally admissible objects}, the complexes 
$$0 \to \Gamma_Y(\pi_i) \to \pi_i \to j_*j^* \pi_i \to R^1\Gamma_Y(\pi_i) \to 0$$
($i =1$ or $2$)
then reduce to
$$
0 \to \pi_1 \to \pi_1 \to 0 \to 0 \to 0$$
and
$$0 \to 0 \to \pi_2 \to \pi_2 \to 0 \to 0$$
respectively.
In particular, in either case we have $R^1\Gamma_Y(\pi_i) = 0$, and so $R^1\Gamma_Y(\pi) = 0.$
\end{proof}

We may now prove the following result.

\begin{lemma}
\label{lem:R1Gamma on fg}
If $Y$ is a closed subset of~$X$ and $\pi$ is finitely generated,
then $R^1\Gamma_Y(\pi)$ is an Artinian object of $\cA_Y$.
\end{lemma}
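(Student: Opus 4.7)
The plan is to reduce, by compact induction and d\'evissage, to the case $\pi = \cInd_{KZ}^G \sigma$ for a Serre weight $\sigma$, then apply Lemma~\ref{lem:R1Gamma on c-Ind of wts} together with a partial-fractions decomposition of $\cH(\sigma)[1/g]/\cH(\sigma)$ to reduce further to Lemma~\ref{lem:Artinian}.

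First I would observe that Artinian objects form a Serre subcategory of $\cA_Y$ (closed under subobjects, quotients, and extensions), so it suffices to build $R^1\Gamma_Y(\pi)$ out of Artinian pieces via the long exact sequence. Since $\pi$ is finitely generated, choose a surjection $\cInd_{KZ}^G V \twoheadrightarrow \pi$ for some finite length smooth $\cO[KZ]$-module $V$, and let $K$ be its kernel. Because $R^i\Gamma_Y = 0$ for $i \geq 2$ by Corollary~\ref{cor:local cohom}, the long exact sequence yields a surjection $R^1\Gamma_Y(\cInd_{KZ}^G V) \twoheadrightarrow R^1\Gamma_Y(\pi)$, so it suffices to prove the claim for $\cInd_{KZ}^G V$.

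Next, pick a Jordan--H\"older filtration $0 = V_0 \subset V_1 \subset \cdots \subset V_n = V$ with graded pieces Serre weights $\sigma_i$. Since $\cInd_{KZ}^G$ is exact, each short exact sequence $0 \to \cInd_{KZ}^G V_{i-1} \to \cInd_{KZ}^G V_i \to \cInd_{KZ}^G \sigma_i \to 0$ produces a long exact sequence
\[
R^1\Gamma_Y(\cInd_{KZ}^G V_{i-1}) \to R^1\Gamma_Y(\cInd_{KZ}^G V_i) \to R^1\Gamma_Y(\cInd_{KZ}^G \sigma_i) \to 0,
\]
so by induction on $n$ we reduce to showing $R^1\Gamma_Y(\cInd_{KZ}^G \sigma)$ is Artinian for a single Serre weight~$\sigma$.

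By Lemma~\ref{lem:R1Gamma on c-Ind of wts}, either this module is zero, or it equals $(\cInd_{KZ}^G \sigma)[1/g]/(\cInd_{KZ}^G \sigma)$ for a nonzero $g \in \cH(\sigma)$ with $V(g) = f_\sigma^{-1}(Y)$. Factor $g = \prod_{i=1}^r g_i^{n_i}$ with the $g_i \in \cH(\sigma) \cong k[T]$ distinct irreducible polynomials; since $(\cInd_{KZ}^G \sigma)[1/g_i^{n_i}] = (\cInd_{KZ}^G \sigma)[1/g_i]$, a standard partial-fractions argument, combined with the freeness of $\cInd_{KZ}^G \sigma$ over $\cH(\sigma)$ provided by Lemma~\ref{lem:Hecke algs}, gives a decomposition
\[
(\cInd_{KZ}^G \sigma)[1/g]/(\cInd_{KZ}^G \sigma) \cong \bigoplus_{i=1}^{r} (\cInd_{KZ}^G \sigma)[1/g_i]/(\cInd_{KZ}^G \sigma)
\]
as objects of $\cA$. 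Each summand is Artinian by Lemma~\ref{lem:Artinian}, and a finite direct sum of Artinian objects is Artinian, completing the proof.

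The main technical point is the combinatorial reduction via d\'evissage and the partial-fractions decomposition; once that is in place, the key geometric input is Lemma~\ref{lem:Artinian}, which does all the real work.
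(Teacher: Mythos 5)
Your proof is correct and follows essentially the same strategy as the paper's — d\'evissage to compact inductions of Serre weights, then Lemma~\ref{lem:R1Gamma on c-Ind of wts} followed by Lemma~\ref{lem:Artinian} — but with two noteworthy differences. First, your d\'evissage is slightly more economical: by taking a surjection $\cInd_{KZ}^G V \twoheadrightarrow \pi$ and using $R^2\Gamma_Y=0$ (Corollary~\ref{cor:local cohom}) to get a surjection on $R^1\Gamma_Y$, you bypass the finite-length case entirely, whereas the paper's proof reduces to ``$\pi$ of finite length or $\pi = \cInd_{KZ}^G\sigma$'' and invokes Lemma~\ref{lem:R1Gamma on fl} for the former. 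Second, and more substantively, you explicitly carry out a reduction that the paper leaves implicit: Lemma~\ref{lem:Artinian} is stated only for \emph{irreducible} $g\in\cH(\sigma)$, yet the proof of Lemma~\ref{lem:R1Gamma on fg} applies it to an arbitrary nonzero $g$. Your partial-fractions decomposition $(\cInd_{KZ}^G\sigma)[1/g]/(\cInd_{KZ}^G\sigma) \cong \bigoplus_i (\cInd_{KZ}^G\sigma)[1/g_i]/(\cInd_{KZ}^G\sigma)$, valid by freeness of $\cInd_{KZ}^G\sigma$ over $k[T]$ (Lemma~\ref{lem:Hecke algs}), and $G$-equivariant because $G$ acts by $\cH(\sigma)$-linear automorphisms preserving the $g_i$-primary decomposition, bridges that gap cleanly. (The alternative would be to observe that the proof of Lemma~\ref{lem:Artinian} itself works for arbitrary nonzero $g$, since it only uses Corollary~\ref{cor:cind subobjects} and the fact that $\Pi/g\Pi$ has finite length.) Both approaches are sound; yours is more self-contained as written.
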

\begin{proof} By the usual d\'evissage,
we reduce to the case when $\pi$ is a quotient of $\cInd_{KZ}^G(\sigma)$.
Since the derived functors of~$\Gamma_Y$ vanish in degree~$>1$, by Corollary~\ref{cor:local cohom}, we obtain a surjection
\[
R^1\Gamma_Y(\cInd_{KZ}^G(\sigma)) \to R^1\Gamma_Y(\pi).
\]
Hence Lemma~\ref{lem:R1Gamma on c-Ind of wts}
reduces us to showing that if $g$ is a non-zero element of $\cH(\sigma)$,
then $(\cInd_{KZ}^G \sigma)[1/g]/(\cInd_{KZ}^G \sigma)$ is Artinian,
which is %
Lemma~\ref{lem: uniserial}.
\end{proof}

\begin{df}\label{defn: pi fl}
If $\pi$ is any object of $\cA$,
then we let $\pi_{\finl}$ denote the maximal subobject of $\pi$ which
is locally of finite length; equivalently, this is the maximal subobject
of $\pi$ which is locally admissible.  
\end{df}

We may write $\pi_{\finl} = \varinjlim_Y \Gamma_Y(\pi)$,
where $Y$ runs over all finite closed subsets of~$X$.  
We then see that this functor has a single non-vanishing higher derived
functor, namely $\varinjlim_Y R^1\Gamma_Y$.

\subsection{More on \texorpdfstring{$\Pro(\cA_Y^{\fg})$}{Pro(AYfg)} for finite~\texorpdfstring{$Y$}{Y}}\label{subsec: more on Pro}
Assuming that~$Y$ is finite, we now extend some of the preceding constructions to the context 
of $\Pro(\cA_Y^{\fg})$, %
since we will need them  for the Beauville--Laszlo-type results
of the next section.
Since $\cA_Y^{\fg}$ is a finite length category, Lemma~\ref{Serre subcategory of Pro} applies to it. 
Hence $\cA_Y^{\fg}$ is a Serre subcategory of $\Pro(\cA_Y^{\fg})$,
and we may form the quotient category  $\Pro(\cA_Y^{\fg})_U \coloneq  \Pro(\cA_Y^{\fg})/
\cA_Y^{\fg}$.
Just as in the case of $\cA$ considered before, we let $j^*:
\Pro(\cA_Y^{\fg})
\to
\Pro(\cA_Y^{\fg})_U$ denote the canonical functor.

\begin{rem}\label{not a Serre subcategory of Pro}
We have just seen that, when~$Y$ is finite, Lemma~\ref{Serre subcategory of Pro} implies that $\cA_Y^{\fg}$ is a Serre subcategory of $\Pro(\cA_Y^{\fg})$.
However, it is no longer true that~$\cA_Y^{\fg}$ is Artinian when~$Y$ is infinite, and indeed $\cA_Y^{\fg} \subset \Pro(\cA_Y^{\fg})$ 
is no longer a Serre subcategory when~$Y$ is infinite.
For example, we can let~$Y$ be an irreducible component of~$X$, and then consider an object of~$\cA_Y^{\fg}$ of the form $\pi = \cInd_{KZ}^G(\sigma)$ for a Serre weight~$\sigma$.
Then the characterization of the essential image of $\cA_Y^{\fg} \to \Pro(\cA_Y^{\fg})$ given in~\cite[Proposition~6.2.1]{MR2182076} implies that the subobject
\[
\quoteslim{n}T^n\cInd_{KZ}^G(\sigma) \to \pi
\]
is not contained in this essential image.
We are grateful to the referee for alerting us to this counterexample.
\end{rem}

By Lemma~\ref{extending quotient functor}, the Ind-extension of~$j^{*}$ (which we denote by the same symbol)
\[
j^*: \Ind \Pro(\cA_Y^{\fg}) \to \Ind \left ( \Pro(\cA_Y^{\fg})_U \right )
\]
is the quotient functor with kernel $\Ind \cA_Y^{\fg} \cong \cA_Y$, which is a localizing subcategory of $\Ind \Pro(\cA_Y^{\fg})$.
It therefore admits a fully faithful right adjoint.
Since this adjoint is not constructed as an $\Ind$-extension, we will denote it by
\[
j_*^{\Ind}: \Ind \left ( \Pro(\cA_Y^{\fg})_U \right ) \to
\Ind \Pro(\cA_Y^{\fg}).
\]

\begin{rem}
We will often consider the restriction of~$j_*^{\Ind}$ to $\Pro(\cA_Y^{\fg})_U$, and denote it by the same symbol.
We emphasize that the target of this restriction is still $\Ind \Pro(\cA_Y^{\fg})$.
\end{rem}

The completion functor
$\widehat{(\text{--})}:
\cA^{\fg} \to \Pro(\cA_Y^{\fg})$
induces a functor
$$\cA_U^{\fg} \coloneq  
\cA^{\fg}/\cA_Y^{\fg} \to \Pro(\cA_Y^{\fg})/\cA_Y^{\fg} \eqcolon 
\Pro(\cA_Y^{\fg})_U,
$$
which we again denote by~$\widehat{(\text{--})}.$
By construction, then, the  diagram
\begin{equation}
\label{eqn:pull-back and completion}
\xymatrix{ 
\cA^{\fg}
\ar^-{j^*}[d]
\ar^-{\widehat{(\text{--})}}[r]
& \Pro(\cA_Y^{\fg}) 
\ar^-{j^*}[d]
\\
\cA^{\fg}_U
\ar^-{\widehat{(\text{--})}}[r]
& \Pro(\cA_Y^{\fg})_U}
\end{equation}
commutes up to natural transformation. 

Passing to Ind-categories, we obtain a completion functor
\[\widehat{(\text{--})}:
\cA_U \to \Ind\Pro(\cA_Y^{\fg})/\cA_Y \eqcolon 
\Ind\Pro(\cA_Y^{\fg})_U.\]
\begin{prop}
\label{prop:push-pull comparison}
Assume that $Y$ is finite.
If $\pi$ is an object of $\cA$, %
then there is a natural isomorphism 
$ \widehat{ j_*j^* \pi} \iso  j^{\Ind}_*j^* \widehat{\pi}.$
\end{prop}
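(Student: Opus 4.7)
The plan is to apply the exact completion functor to the local cohomology exact sequence expressing $j_*j^*\pi$ and to compare the result with an analogous sequence obtained inside $\Pro(\cA_Y^{\fg})$ itself. First, I would invoke Corollary~\ref{cor:local cohom} to obtain the exact sequence $0 \to \Gamma_Y(\pi) \to \pi \to j_*j^*\pi \to R^1\Gamma_Y(\pi) \to 0$. Using that $\pi \in \cA^{\fg}$, Theorem~\ref{thm:noetherian} and Lemma~\ref{lem:finite sets} force $\Gamma_Y(\pi)$ to be a finite-length object of $\cA_Y^{\fg}$ (here the finiteness of $Y$ is essential), while by Lemma~\ref{lem:R1Gamma on fg} the term $R^1\Gamma_Y(\pi)$ is an Artinian object of $\cA_Y$. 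The crucial preliminary observation is that for every object $\tau$ of $\cA_Y$ the natural morphism $\tau \to \widehat{\tau}_Y$ is an isomorphism in $\Ind\Pro(\cA_Y^{\fg})$: this follows from the universal property of $\widehat{(-)}_Y$ as a left adjoint, together with the fact that when $Y$ is finite every object of $\cA_Y$ is a filtered colimit of its finite-length subobjects and hence already lies in $\Ind(\cA_Y^{\fg})$. Applying the exact Ind-extension of $\widehat{(-)}_Y$ (Corollary~\ref{cor:completion is exact} and Section~\ref{subsubsec:ind-completion}) therefore yields a four-term exact sequence
\[ 0 \to \Gamma_Y(\pi) \to \widehat{\pi} \to \widehat{j_*j^*\pi} \to R^1\Gamma_Y(\pi) \to 0 \qquad (\star) \]
in $\Ind\Pro(\cA_Y^{\fg})$.

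Next, I would establish a parallel four-term exact sequence inside $\Pro(\cA_Y^{\fg})$ for the object $\widehat{\pi}$. The unit of adjunction $j^* \dashv j_*$ gives a morphism $\widehat{\pi} \to j_*j^*\widehat{\pi}$, and the aim is to identify its kernel and cokernel with $\Gamma_Y(\pi)$ and $R^1\Gamma_Y(\pi)$ respectively, producing an exact sequence
\[ 0 \to \Gamma_Y(\pi) \to \widehat{\pi} \to j_*j^*\widehat{\pi} \to R^1\Gamma_Y(\pi) \to 0 \qquad (\star\star). \]
The first identification rests on the explicit description of $\widehat{\pi}$ as a cofiltered system of finite-length quotients of $\pi$ lying in $\cA_Y$ (Lemma~\ref{lem:completion description}); the maximal $\cA_Y^{\fg}$-subobject of $\widehat{\pi}$ coincides with $\Gamma_Y(\pi)$ via the Hom-computation of Lemma~\ref{lem:ext comparison} applied to test objects in $\cA_Y^{\fg}$. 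The second identification requires an analog of Lemma~\ref{lem:injective surjective} within $\Pro(\cA_Y^{\fg})$, coupled with the exactness of the right adjoint $j_*$ in that setting; Corollary~\ref{cor:adjoint iso} and its proof should adapt to guarantee that the image of $\widehat{\pi}$ in $j_*j^*\widehat{\pi}$ behaves predictably on finite-length subquotients.

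Finally, functoriality of the unit $\pi \to j_*j^*\pi$ and its image under $\widehat{(-)}_Y$ produces a morphism of exact sequences from $(\star)$ to $(\star\star)$ which induces the identity on the outer terms, and then the five lemma yields the desired natural isomorphism $\widehat{j_*j^*\pi} \iso j_*j^*\widehat{\pi}$. The hardest step will be rigorously establishing $(\star\star)$, particularly the analog of Lemma~\ref{lem:injective surjective} within $\Pro(\cA_Y^{\fg})$; I expect this to rely on the block decomposition of $\cA^{\ladm}$ underlying Lemma~\ref{lem:finite sets} together with Pa\v{s}k\={u}nas' description of the endomorphism rings of injective envelopes (as used in Section~\ref{subsec: Bernstein centres of blocks}), which controls the injective objects of $\Pro(\cA_Y^{\fg})$ explicitly enough to make the local-cohomology computation tractable.
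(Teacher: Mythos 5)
Your overall blueprint is the same as the paper's: apply the exact completion functor to the local cohomology sequence $0 \to \Gamma_Y(\pi) \to \pi \to j_*j^*\pi \to R^1\Gamma_Y(\pi) \to 0$, compare the result with the four-term sequence built from the kernel and cokernel of $\widehat{\pi} \to j_*j^*\widehat{\pi}$, and apply the five lemma. The issue is entirely in how you propose to identify the outer terms.

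For the kernel you cite Lemma~\ref{lem:ext comparison}, but that lemma points the wrong way: it computes $\Ext^n_{\Pro(\cA_Y)}(\widehat{\pi}_Y,\pi')$ for a \emph{target} $\pi'$ in $\cA_Y$, whereas what you need here is control of $\Hom_{\Pro(\cA_Y^{\fg})}(\sigma, \widehat{\pi})$ for a \emph{source} $\sigma$ in $\cA_Y^{\fg}$ (the kernel of $\widehat{\pi} \to j_*j^*\widehat{\pi}$ is, by Lemma~\ref{lem:adjoint image}, the maximal $\cA_Y^{\fg}$-subobject of $\widehat{\pi}$, so you must compute maps \emph{into} $\widehat{\pi}$). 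Lemma~\ref{lem:ext comparison} simply does not address this variance. For the cokernel you correctly recognize the difficulty and hand off to an unstated analogue of Lemma~\ref{lem:injective surjective} in $\Pro(\cA_Y^{\fg})$, to be justified by the Bernstein-centre material; that is never brought to a precise statement.

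The precise tool that does both jobs — and which the paper has already proved in the same subsection — is Proposition~\ref{prop:Gamma Y and completion}: for $\sigma \in \cA^{\fg}_Y$ and $\pi' \in \cA^{\fg}$ one has $\Ext^i_{\cA^{\fg}}(\sigma, \pi') \isom \Ext^i_{\Pro(\cA_Y^{\fg})}(\sigma, \widehat{\pi}'_Y)$. The $\Ext^0$ case supplies the kernel identification ($\Hom(\sigma, \widehat{\pi}) = \Hom(\sigma,\pi) = \Hom(\sigma,\Gamma_Y(\pi))$), and the $\Ext^1$ case supplies the cokernel identification via the long exact sequences (as the paper does in the commutative square of functors following~\eqref{eqn:completion comparison}). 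You are right that Propositions~\ref{centreelementsI} and~\ref{centreelementsII} are where the depth lies, but that depth is entirely absorbed into the proof of Proposition~\ref{prop:Gamma Y and completion}, which can be applied here as a black box; there is no need to re-develop injective-theoretic machinery inside $\Pro(\cA_Y^{\fg})$.
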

\begin{proof}By the full faithfulness of~$j_{*}$ (which implies that $j^*j_* \to \id$ is an isomorphism), and by the definition of $\widehat{(\text{--})}:\cA_U \to 
\Ind\Pro(\cA_Y^{\fg})_U$ (i.e.\ by the commutativity of
the $\Ind$-extension of~\eqref{eqn:pull-back and completion}), we have a %
sequence of isomorphisms
$$
j^*(\widehat{j_*j^* \pi})\iso \widehat{j^* j_*j^* \pi} \iso \widehat{j^*\pi}\iso j^*\widehat{\pi}.
$$
By the adjunction between $j^*$ and~$j_*^{\Ind}$,
we obtain a morphism
\begin{equation}
\label{eqn:push-pull comparison}
\widehat{j_*j^*\pi} \to j^{\Ind}_*j^*\widehat{\pi},
\end{equation}
which we must show is an isomorphism.
Equivalently, by Lemma~\ref{lem:adjoint image}
(which applies because $\cA_Y$ is localizing in $\Ind \Pro (\cA_Y^{\fg})$), 
we need to prove that $\widehat{j_*j^*\pi}$ is contained in the image of $j^{\Ind}_*$.
To do so, again by Lemma~\ref{lem:adjoint image}, it suffices to prove that for all~$\tau \in \cA_Y$ we have
\[
\Hom_{\Ind \Pro (\cA_Y^{\fg})}(\tau, \widehat{j_*j^*\pi}) = 0 \text{ and } \Ext^1_{\Ind \Pro (\cA_Y^{\fg})}(\tau, \widehat{j_*j^*\pi}) = 0.
\]
This is a consequence of Proposition~\ref{prop:Gamma Y and completion II} and Corollary~\ref{cor:ext vanishing}.
\end{proof}
It follows from Proposition~\ref{prop:push-pull comparison} (and passing to Ind-categories in~\eqref{eqn:pull-back and completion}) that we have a commutative diagram 
\begin{equation}
\label{eqn:pull-back and completion II}
\xymatrix{ \cA \ar^-{\widehat{(\text{--})}}[r] \ar[d]_{j^*} & \Ind\Pro(\cA_Y^{\fg}) \ar[d]^{j^*} \\ \cA_U \ar^-{\widehat{(\text{--})}}[r] \ar[d]_{j_*} & \Ind\Pro(\cA_Y^{\fg})_U \ar[d]^{j_*^{\Ind}} \\ \cA \ar^-{\widehat{(\text{--})}}[r] & \Ind\Pro(\cA_Y^{\fg}) }
\end{equation}

\subsection{Beauville--Laszlo-type gluing for finite~\texorpdfstring{$Y$}{Y}}%
\label{subsec: BL gluing}
We finish this section by proving Theorem~\ref{thm:BL gluing}, which  is an analogue of Beauville--Laszlo gluing in our context. It was suggested to us by the main theorem of~\cite{DEGcategoricalLanglands}, which relates smooth representations of
$\GL_2(\Qp)$ to sheaves on stacks, but it is logically independent of the results of that paper.

Recall that Beauville--Laszlo gluing of sheaves on a scheme  involves  gluing sheaves over formal completions at a closed
locus to sheaves on the complementary open subscheme. In our setting, 
which involves a finite closed subset $Y \subset X$,
the category~$\Pro(\cA_Y^{\fg})$ (and its Ind-completion $\Ind\Pro(\cA_Y^{\fg})$) will play the role of the category of sheaves on the formal completion, and the category~$\cA_U$ will correspond to the category of sheaves on the complementary open. We glue along the ``overlap'' $\Pro(\cA_Y^{\fg})_{U}$ (and its Ind-completion $\Ind\Pro(\cA_Y^{\fg})_{U}=\Ind\Pro(\cA_Y^{\fg})/\cA_Y$).
Restriction of sheaves to the ``overlap'' corresponds in our setting to the functors $j^{*}:\Ind\Pro(\cA_Y^{\fg})\to \Ind\Pro(\cA_Y^{\fg})_U$ and 
$\widehat{(\text{--})}:\cA_U\to \Ind\Pro(\cA_Y^{\fg})_U$.

We are therefore led to consider the category \[\Ind\Pro(\cA_Y^{\fg}) \times_{\Ind\Pro(\cA_Y^{\fg})_U} \cA_U,\]whose objects are triples $(\widehat{\pi}_1 ,\pi_{U,2} ,f: j^{*}\widehat{\pi}_1\isoto \widehat{\pi_{U,2}} )$ %
where $\widehat{\pi}_1$, 
$\pi_{U,2}$ are objects of $\Ind\Pro(\cA_Y^{\fg})$ and $\cA_U$ respectively.
(A morphism $(\widehat{\pi}_1 ,\pi_{U,2} ,f)\to (\widehat{\pi}'_1 ,\pi'_{U,2} ,f')$ is a pair of morphisms $\widehat{\pi}_1\to\widehat{\pi}'_{1}$ ,$\pi_{U,2}\to \pi'_{U,2}$ satisfying an obvious compatibility with ~$f,f'$, see~\cite[\href{https://stacks.math.columbia.edu/tag/003R}{Tag 003R}]{stacks-project}.)

By construction, i.e.\ by virtue of the commutativity of the diagram~\eqref{eqn:pull-back and completion II}, %
the functors $\widehat{(\text{--})}:\cA\to\Ind\Pro(\cA_Y^{\fg})$ %
and %
$j^{*}:\cA\to\cA_U$ %
induce a natural functor \begin{equation}\label{eqn:functor-for-BL}\cA \, \longrightarrow \, \Ind\Pro(\cA_Y^{\fg}) \times_{\Ind\Pro(\cA_Y^{\fg})_U} \cA_U.\end{equation}

In the proof of the next result, we will use without comment that $\cA_Y$ is a Serre subcategory of $\Ind \Pro \cA_Y^{\fg}$ (being the
$\Ind$-completion of $\cA_Y^{\fg} \subset \Pro(\cA_Y^{\fg})$, which by Remark~\ref{not a Serre subcategory of Pro} is a Serre subcategory, since~$Y$ is finite),
and $\Ind \Pro \cA_Y^{\fg}$ is a Serre subcategory of $\Ind \Pro \cA^{\fg}$
(being the $\Ind \Pro$ completion of the Serre subcategory $\cA_Y^{\fg} \subset \cA^{\fg}$).
Furthermore, while~$\cA^{\fg}$ is not a Serre subcategory of $\Pro(\cA^{\fg})$,
it follows fromy~\cite[Cor.\ 8.6.8, Prop.\ 8.6.11]{MR2182076} %
that it is closed under kernels, cokernels and extensions in $\Pro(\cA^{\fg})$,
while$\cA = \Ind(\cA^{\fg})$ is closed under kernels, cokernels and extensions in 
$\Ind \Pro \cA^{\fg}$.
A consequence of this statement, which we will use in the proof of the next result, is that given an exact sequence
\[X_1 \to X_2 \to X_3 \to X_4 \to X_5\] in $\Ind \Pro \cA^{\fg},$ if $X_1, X_2, X_4, X_5$ are objects of~$\cA$, then so too is~$X_3$.

\begin{thm}%
\label{thm:BL gluing}
Suppose that $Y$ is a finite closed subset of~$X$,
with open complement~$U$.
Then the functor~\eqref{eqn:functor-for-BL} 
is an equivalence of categories, which restricts to an equivalence of categories  \begin{equation}\label{eqn:fg-BL}\cA^{\fg} \, \longrightarrow \, \Pro(\cA_Y^{\fg}) \times_{\Pro(\cA_Y^{\fg})_U} \cA_U^{\fg}.\end{equation}
\end{thm}
\begin{proof}
  We will construct a quasi-inverse functor to \eqref{eqn:functor-for-BL}.
We begin by noting that if~$\pi\in\cA$ then since completion is exact, we have a commutative diagram in $\Ind\Pro(\cA^{\fg})$ %
\begin{equation}\label{eqn:completion-of-RGammaY}\begin{tikzcd}
0 \arrow[r] & \Gamma_Y(\pi) \arrow[r]  \arrow[d, "\cong"] & \pi \arrow[r] \arrow[d] & j_*j^* \pi \arrow[r] \arrow[d] & R^1\Gamma_Y(\pi) \arrow[r] \arrow[d, "\cong"] & 0 \\
0 \arrow[r] & \Gamma_Y(\pi) \arrow[r] & \widehat{\pi} \arrow[r] & \widehat{j_*j^*\pi} \arrow[r] & R^1\Gamma_Y(\pi) \arrow[r] & 0
\end{tikzcd}\end{equation}where every term of the first row is in~ $\cA$, and every term of the second row is in $\Ind\Pro(\cA_Y^{\fg})$. 
We claim that the middle square of~\eqref{eqn:completion-of-RGammaY} is Cartesian and coCartesian.
To see this, it suffices to introduce $J \coloneq  \operatorname{im}(\pi \to j_*j^*\pi$), and to observe that the rightmost square in
\[
\begin{tikzcd}
0 \arrow[r] & \Gamma_Y(\pi) \arrow[r]  \arrow[d, "\cong"] & \pi \arrow[r] \arrow[d] & J \arrow[r] \arrow[d] & 0\\
0 \arrow[r] & \Gamma_Y(\pi) \arrow[r] & \widehat{\pi} \arrow[r] & \widehat{J} \arrow[r] & 0,
\end{tikzcd}
\]
resp.\ the leftmost square in
\[
\begin{tikzcd}
0 \arrow[r] & J \arrow[r]  \arrow[d] & j_*j^*\pi \arrow[r] \arrow[d] & R^1\Gamma_Y(\pi) \arrow[r] \arrow[d, "\cong"] & 0\\
0 \arrow[r] & \widehat{J} \arrow[r] & \widehat{j_*j^*\pi} \arrow[r] & R^1\Gamma_Y \arrow[r] & 0
\end{tikzcd}
\]
are Cartesian and coCartesian.
This implies in particular that we can reconstruct ~$\pi$ from the knowledge of~$j_*j^* \pi$ and the morphism $\widehat{\pi} \to \widehat{j_*j^*\pi}$. 
Furthermore, taking into account Proposition~\ref{prop:push-pull comparison}, %
the morphism $\widehat{\pi} \to \widehat{j_*j^*\pi}$ corresponds via adjunction to the natural isomorphism $j^* \widehat{\pi}\isoto \widehat{j^{*}\pi}$ used in the definition of~\eqref{eqn:functor-for-BL}.

Reversing this logic, we define a functor \begin{equation}\label{eqn:inverse-to-BL}\Ind\Pro(\cA_Y^{\fg}) \times_{\Ind\Pro(\cA_Y^{\fg})_U} \cA_U\to \cA\end{equation} as follows.
Given an object $(\widehat{\pi}_1 ,\pi_{U,2} ,f: j^{*}\widehat{\pi}_1\isoto \widehat{\pi_{U,2}} )$ of the source category, by adjunction we obtain a morphism $\widehat{\pi}_1\to j_*^{\Ind}\widehat{\pi_{U,2}} $, which by Proposition~\ref{prop:push-pull comparison} yields a morphism \begin{equation}\label{eqn:adjunction-version-of-gluing-condition}\widehat{\pi}_1\to \widehat{j_{*}\pi_{U,2}}%
.\end{equation}
Writing~$K$ and~$C$ for respectively the kernel and cokernel of~\eqref{eqn:adjunction-version-of-gluing-condition}, 
we may form a commutative diagram in $\Ind\Pro(\cA^{\fg})$ 
\begin{equation}\label{eqn:fabricated-completion-of-RGammaY}\begin{tikzcd}
0 \arrow[r] & K \arrow[r]  \arrow[d, "\cong"] & \pi' \arrow[r] \arrow[d] & j_*\pi_{U,2} \arrow[r] \arrow[d] & C' \arrow[r] \arrow[d, hook] & 0 \\
0 \arrow[r] & K \arrow[r] & \widehat{\pi}_{1} \arrow[r] & \widehat{j_*\pi_{U,2}} \arrow[r] & C \arrow[r] & 0
\end{tikzcd}\end{equation}
whose middle square is (by definition) Cartesian, and whose rows are (by definition) exact.

Note that the arrow $C' \hookrightarrow C$ is injective (as indicated).
We are now going to prove that it is an isomorphism, which implies that the middle square is Cartesian and coCartesian.
Since $f: j^{*}\widehat{\pi}_1\to \widehat{\pi_{U,2}}$ is an isomorphism by hypothesis, both~$K$ and~$C$ are objects of~$\cA_Y$.
It follows that~$C'$ is also an object of~$\cA_Y$.
The commutativity of the rightmost square in~\eqref{eqn:fabricated-completion-of-RGammaY} implies that $C' \hookrightarrow C$ becomes a surjection
after applying $\widehat{(-)}$, but then it must be a surjection, since both $C', C \in \cA_Y$
(and $\widehat{(-)}$ restricts to the identity on this category).
This concludes the proof that $C \isoto C'$ and that the middle square of~\eqref{eqn:fabricated-completion-of-RGammaY}
is Cartesian and coCartesian.

Every object in the first row of~\eqref{eqn:fabricated-completion-of-RGammaY}, except possibly~$\pi'$, 
is now known to be an object of~$\cA$, so ~$\pi'$ is also an object of~$\cA$.
Then the functor ~\eqref{eqn:inverse-to-BL} is defined by sending $(\widehat{\pi}_1 ,\pi_{U,2} ,f: j^{*}\widehat{\pi}_1\isoto \widehat{\pi_{U,2}} )$ to~$\pi'$.

Comparing~\eqref{eqn:completion-of-RGammaY} to~\eqref{eqn:fabricated-completion-of-RGammaY}, it is easy to see the functor \eqref{eqn:inverse-to-BL} is quasi-inverse to the functor~\eqref{eqn:functor-for-BL}, 
which is therefore an equivalence of categories.
More precisely, it is immediate from the definitions that the composite
\[
  \cA\xrightarrow{\eqref{eqn:functor-for-BL}}\Ind\Pro(\cA_Y^{\fg}) \times_{\Ind\Pro(\cA_Y^{\fg})_U} \cA_U \xrightarrow{\eqref{eqn:inverse-to-BL}} \cA
\]is an equivalence of categories.
For the other composition, note firstly that the morphism $\pi' \to j_*\pi_{U, 2}$ in~\eqref{eqn:fabricated-completion-of-RGammaY}
has kernel and cokernel in~$\cA_Y$, and is therefore adjoint to an isomorphism $j^*\pi' \isoto \pi_{U, 2}$.
Similarly, using the five lemma we see that $\pi' \to \widehat \pi_1$ is adjoint to an isomorphism $\widehat \pi' \to \widehat \pi_1$.
We thus find an isomorphism between the diagrams $\widehat \pi' \leftarrow \pi' \to j_*j^*\pi$
and $\widehat \pi_1 \leftarrow \pi' \to j_*\pi_{U, 2}$.
Passing to the pushout, and using that the middle squares in each
of~\eqref{eqn:completion-of-RGammaY} and~\eqref{eqn:fabricated-completion-of-RGammaY}
are coCartesian, we obtain an isomorphism 
\[
(\widehat \pi', j^*\pi', j^*\widehat \pi' \isoto \widehat{j^*\pi'} ) \to (\widehat{\pi}_1 ,\pi_{U,2} ,f: j^{*}\widehat{\pi}_1\isoto \widehat{\pi_{U,2}} )
\]
in~$\Ind\Pro(\cA_Y^{\fg}) \times_{\Ind\Pro(\cA_Y^{\fg})_U} \cA_U$, as required.

Finally, to see that the induced functor~\eqref{eqn:fg-BL} is an equivalence, we only need to show essential surjectivity, and for this it suffices to show
that if~$\pi\in \cA$ is such that $\widehat{\pi}$ is an object of~$\Pro(\cA_Y^{\fg})$ and $j^{*}\pi$ is an object of ~$\cA_U^{\fg}$, then~$\pi\in \cA^{\fg}$. Write $\pi=\colim_i\pi_i$ as a filtered colimit with injective transition maps of finitely generated subobjects. Noting that the exact functors $\widehat{(\text{--})}:\cA\to\Ind\Pro(\cA_Y^{\fg})$ 
and 
$j^{*}:\cA\to\cA_U$ preserve compact objects and filtered colimits, since~\eqref{eqn:functor-for-BL} is an equivalence we see that the identity morphism $\pi\to\pi$ factors through an isomorphism $\pi\isoto \pi_i$ for some~$i$, as required.
\end{proof}

\section{Applications and examples}\label{sec:examples}
\subsection{The structure of smooth representations}\label{subsec: structure of smooth
representations}%
In many of our arguments we have used d\'evissage to reduce to the
cases of irreducible representations, and representations of the form
$\cInd_{KZ}^G\sigma$. We can use our results to make more precise the
way in which these representations interact. We begin with the
following structural results. %
Recall from Definition~\ref{defn: pi fl} that for any~$\pi \in \cA$ we write $\pi_{\finl}$
for the maximal subobject of~$\pi$ which is locally of finite length
(equivalently, locally admissible). 

\begin{prop}
  \label{prop: Krull dimension of A}The category $\cA/\cA^{\ladm}$ is locally finite.%
\end{prop}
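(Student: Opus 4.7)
The plan is to show that for each Serre weight $\sigma$ the image of $\cInd_{KZ}^G \sigma$ is a simple object of $\cA/\cA^{\ladm}$, to deduce that every finitely generated object of $\cA$ has image of finite length in the quotient, and finally to pass to filtered colimits.

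Write $Q \colon \cA \to \cA/\cA^{\ladm}$ for the quotient functor. The subcategory $\cA^{\ladm}$ is a Serre subcategory of $\cA$ closed under arbitrary direct sums and filtered colimits (local admissibility being preserved under these operations), so it is localizing, and $Q$ is exact and commutes with all colimits, being a left adjoint to the natural fully faithful inclusion. The key claim is that $Q(\cInd_{KZ}^G \sigma)$ is a simple object for every Serre weight $\sigma$. By the standard structure of Serre quotient categories, every subobject of $Q(\cInd_{KZ}^G \sigma)$ can be represented as $Q(\pi')$ for some subobject $\pi' \subseteq \cInd_{KZ}^G \sigma$ in $\cA$. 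If $\pi' \neq 0$, Corollary~\ref{cor:cofinite length} shows that $\cInd_{KZ}^G \sigma/\pi'$ is of finite length, and therefore lies in $\cA^{\ladm}$, so applying the exact functor $Q$ to the short exact sequence $0 \to \pi' \to \cInd_{KZ}^G \sigma \to \cInd_{KZ}^G \sigma/\pi' \to 0$ gives $Q(\pi') = Q(\cInd_{KZ}^G \sigma)$; and if $\pi' = 0$, then $Q(\pi') = 0$. Moreover $Q(\cInd_{KZ}^G \sigma)$ is nonzero, since $\cInd_{KZ}^G \sigma$ is finitely generated and not admissible, while any finitely generated locally admissible object of $\cA$ is admissible.

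Next, any finitely generated $\pi \in \cA$ admits a surjection from $\cInd_{KZ}^G V$ for some finite-length smooth $\cO[KZ]$-representation $V$. A composition series of $V$ induces, via the exactness of $\cInd_{KZ}^G(-)$, a finite filtration of $\cInd_{KZ}^G V$ whose graded pieces are of the form $\cInd_{KZ}^G \sigma$ for Serre weights $\sigma$. Applying the exact functor $Q$ together with the simplicity established above, $Q(\cInd_{KZ}^G V)$ admits a finite filtration by simple objects and so has finite length, and hence so does its quotient $Q(\pi)$.

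Finally, by Corollary~\ref{cor:noetherian} every object $\pi$ of $\cA$ is the filtered union of its finitely generated subobjects $\pi_\alpha$; since $Q$ is exact, $Q(\pi) = \bigcup_\alpha Q(\pi_\alpha)$ realizes $Q(\pi)$ as a filtered union of subobjects of finite length, which is exactly the statement that $Q(\pi)$ is locally of finite length. The main obstacle is the simplicity claim in the first step, which depends crucially on the structural Corollary~\ref{cor:cofinite length}, itself a consequence of Lemma~\ref{lem:cind homs} and the Barthel--Livn\'e--Breuil classification of $G$-equivariant morphisms between compact inductions of Serre weights.
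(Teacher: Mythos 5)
Your proof is correct, and it takes a genuinely different (and lighter) route at the crucial step. Both you and the paper reduce to showing that the image $Q(\cInd_{KZ}^G\sigma)$ in $\cA/\cA^{\ladm}$ is simple, and both then finish by the same d\'evissage to $\cInd_{KZ}^G V$ and passage to filtered colimits. The difference is in how simplicity is established. The paper represents an arbitrary nonzero morphism $Q(X)\to Q(\cInd_{KZ}^G\sigma)$ by a co-roof in $\cA$, reduces to a finitely generated essential overmodule of $\cInd_{KZ}^G\sigma$, and then invokes Lemma~\ref{lem:ext control} to realize that overmodule inside $\tfrac{1}{g}\cInd_{KZ}^G\sigma$ --- and Lemma~\ref{lem:ext control} rests on Proposition~\ref{prop:localizing compact inductions}, which in turn rests on the uncountable-field/Dixmier argument of Lemma~\ref{lem:ext vanishing}. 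You instead use the standard fact that subobjects of $Q(A)$ in a Gabriel quotient are represented by subobjects $\pi'\subseteq A$ (take $j_*S\hookrightarrow j_*j^*A$ and pull back along the unit $A\to j_*j^*A$), and then conclude directly from Corollary~\ref{cor:cofinite length}: a nonzero $\pi'$ has $\cInd_{KZ}^G\sigma/\pi'$ of finite length, hence in $\cA^{\ladm}$, hence killed by $Q$. This only uses the elementary Section~2 material, not the localization machinery of Section~3, so your argument is self-contained in a way the paper's is not. One small wording nit: you describe $Q$ as ``left adjoint to the natural fully faithful inclusion'' --- the fully faithful functor in question is the right adjoint $j_*:\cA/\cA^{\ladm}\to\cA$, not the inclusion $\cA^{\ladm}\hookrightarrow\cA$; the logic you use is fine, but the phrasing could mislead.
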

\begin{proof}
We need to prove that $\cA/\cA^{\ladm}$ has a set of generators of finite length.
Using the set of generators in the proof of Lemma~\ref{lem:Grothendieckcategory}, 
it thus suffices to prove that $\pi \coloneq  \cInd_{KZ}^G(\sigma)$ is irreducible in~$\cA/\cA^{\ladm}$ for all Serre weights~$\sigma$.
In order to do this, it suffices to prove that every nonzero arrow $\alpha: X \to \pi$ in the $\cA/\cA^{\ladm}$ is surjective.

Write $j^*: \cA \to \cA/\cA^{\ladm}$ for the quotient functor (which is the identity on objects).
By definition of the morphisms in $\cA/\cA^{\ladm}$ (see~\cite[\S III.1]{Gabrielthesis})
there exist a subobject $X' \to X$, a quotient $\pi \to \pi'$, 
and an arrow $\alpha': X' \to \pi'$ in~$\cA$
such that $\coker(X' \to X), \ker(\pi \to \pi') \in \cA^{\ladm}$
and the following diagram in $\cA/\cA^{\ladm}$ commutes:
\[
\begin{tikzcd}
X' \arrow[d, "\sim"] \arrow[r, "j^*(\alpha')"] & \pi'\\
X \arrow[r, "\alpha"] & \pi \arrow[u, "\sim"'].
\end{tikzcd}
\]
It thus suffices to prove that $j^*(\alpha')$ is surjective.
Since, by Lemma~\ref{quotient by polynomial}~(1), $\pi$ does not have any nonzero admissible subobject, 
the map $\pi \to \pi'$ is an isomorphism in~$\cA$.
By Corollary~\ref{cor:cofinite length}, this implies that the cokernel of~$\alpha'$ is admissible, 
and so $j^*(\alpha')$ is surjective, as desired. %
\end{proof}

\begin{rem}%
  \label{rem: Gabriel Krull dimension}By definition (see e.g.\
  \cite[\S 2.4]{MR4200808}), Proposition~\ref{prop: Krull dimension of
    A} says that the Krull--Gabriel dimension of~$\cA$ is~$1$.
\end{rem}%

We have the following more concrete variant of Proposition~\ref{prop:
  Krull dimension of A}. %

\begin{prop}
\label{prop:structure}
If $\pi \in \cA$ is finitely generated, 
then $\pi/\pi_{\finl}$ is a successive extension of representations
isomorphic to submodules of~$\cInd_{KZ}^G \sigma$, for various Serre weights ~$\sigma$.
\end{prop}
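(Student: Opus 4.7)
The plan is to replace $\pi$ by $\pi' := \pi/\pi_{\finl}$, which is still finitely generated (hence Noetherian by Theorem~\ref{thm:noetherian}), and to observe that $(\pi')_{\finl} = 0$: indeed, the preimage in $\pi$ of any locally admissible subobject of $\pi'$ would itself be an extension of locally admissible objects, hence locally admissible, hence contained in $\pi_{\finl}$. It therefore suffices to prove that any Noetherian $\pi' \in \cA$ with $(\pi')_{\finl}=0$ is a successive extension of submodules of representations of the form $\cInd_{KZ}^G \sigma$. I will do this by induction on the length $n$ of $\pi'$ in the Serre quotient $\cA/\cA^{\ladm}$. This length is finite: $\pi'$ is a quotient of some $\cInd_{KZ}^G V$ with $V$ of finite $KZ$-length, which in the quotient category has a finite filtration with graded pieces $\cInd_{KZ}^G \sigma_i$, each irreducible by Proposition~\ref{prop: Krull dimension of A}.

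The base case $n = 0$ is trivial: $\pi'$ lies in $\cA^{\ladm}$, so equals $(\pi')_{\finl} = 0$. For the inductive step, pick any Serre weight $\sigma \subseteq \pi'$ (which exists by smoothness), and consider the resulting nonzero map $\cInd_{KZ}^G \sigma \to \pi'$. Its image cannot be a proper quotient of $\cInd_{KZ}^G \sigma$, for such a quotient has finite length by Corollary~\ref{cor:cofinite length} and therefore would provide a nonzero admissible subobject of $\pi'$, contradicting $(\pi')_{\finl} = 0$. Hence the map is injective, and we obtain an embedding $\tau := \cInd_{KZ}^G \sigma \hookrightarrow \pi'$.

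Next, let $\tau' \subseteq \pi'$ denote the preimage of $(\pi'/\tau)_{\finl}$, so that $\tau'/\tau$ is locally admissible and, being a submodule of the Noetherian object $\pi'/\tau$, even of finite length. The inclusion $\tau \hookrightarrow \tau'$ is essential: any nonzero $\mu \subseteq \tau'$ with $\mu \cap \tau = 0$ would inject into $\tau'/\tau$, hence be of finite length, giving a nonzero locally admissible subobject of $\pi'$, which is impossible. Lemma~\ref{lem:ext control} then produces a nonzero $g \in \cH(\sigma)$ with $\tau' \subseteq \tfrac{1}{g}\cInd_{KZ}^G \sigma$, and multiplication by $g$ identifies $\tau'$ with a submodule of $\cInd_{KZ}^G \sigma$. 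By construction $\pi'/\tau' = (\pi'/\tau)/(\pi'/\tau)_{\finl}$ has trivial locally admissible part and is Noetherian, and its length in $\cA/\cA^{\ladm}$ is $n-1$, since $\tau' \cong \tau = \cInd_{KZ}^G\sigma$ in the quotient (they differ by a finite length object) and this class is irreducible there. The inductive hypothesis applied to $\pi'/\tau'$ completes the proof. There is no real obstacle here: the entire argument is a routine assembly of Corollary~\ref{cor:cofinite length}, Lemma~\ref{lem:ext control}, and Proposition~\ref{prop: Krull dimension of A}, and the only subtle point is arranging the inductive step so that $(\pi'/\tau')_{\finl}$ vanishes, which is the purpose of enlarging $\tau$ to $\tau'$.
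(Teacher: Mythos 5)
Your proof is correct and follows the same core strategy as the paper's: reduce to $\pi_{\finl}=0$, embed a $\cInd_{KZ}^G\sigma$, enlarge it by pulling back the finite-length part of the quotient, identify the result as a submodule of some $\cInd_{KZ}^G\sigma'$ via Lemma~\ref{lem:ext control}, and recurse. The one genuine difference is the induction parameter. The paper writes $\pi$ as a quotient of $\cInd_{KZ}^G V$ for $V$ of finite $KZ$-length and inducts on $\operatorname{length}(V)$, choosing $\sigma\subseteq V$ and discarding $\sigma$ whenever the induced map $\cInd_{KZ}^G\sigma\to\pi$ is zero; this is entirely self-contained and does not invoke Proposition~\ref{prop: Krull dimension of A}. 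You instead induct on the length of $\pi$ in the quotient category $\cA/\cA^{\ladm}$, choosing $\sigma$ directly inside $\pi$ (which sidesteps the zero-map case), and you rely on Proposition~\ref{prop: Krull dimension of A} (and the finiteness it entails) to see that the induction terminates and that the length actually drops. This is slightly cleaner to run, at the cost of making the result depend on the Krull--Gabriel dimension statement, which the paper keeps independent. Both are valid; there is no gap.
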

\begin{proof}
We suppose that $\pi_{\finl} = 0$, and show that
$\pi$ is a successive extension of submodules of representations of the
form~$\cInd_{KZ}^G \sigma$.  We write $\pi$ as the quotient
of $\cInd_{KZ}^G V$ for some smooth $\cO[KZ]_\zeta$-representation~$V$ of finite $\cO[KZ]$-length,
and argue by induction on the length of~$V$. %
Let $\sigma$ be an irreducible subrepresentation of $V$,
and consider the induced morphism $\alpha: \cInd_{KZ}^G \sigma \to \pi$.
If this morphism is zero, we may replace $V$ by $V/\sigma$,
and thus we may assume that~$\alpha \ne 0$.  Since $\pi_{\finl} = 0$ by assumption,
it follows from Corollary~\ref{cor:cofinite length} that~$\alpha$ is then injective. %
Now consider the short exact sequence
$$0 \to \cInd_{KZ}^G \sigma \to \pi \to \pi/\cInd_{KZ}^G \sigma \to 0$$
Pulling back along the finite length part of the target, we obtain a
short exact sequence
$$0 \to \cInd_{KZ}^G \sigma \to \pi' \to (\pi/\cInd_{KZ}^G \sigma)_{\finl}
\to 0$$
in which the second arrow is furthermore essential, %
since $\pi'_{\finl} \subseteq \pi_{\finl} = 0.$
Lemma~\ref{lem:ext control} 
then shows that $\pi'$ is isomorphic to a submodule of $\cInd_{KZ}^G \sigma$.
By construction, $(\pi/\pi')_{\finl} = 0$, and $\pi/\pi'$ is a quotient
of~$\cInd_{KZ}^G (V/\sigma).$   The result follows by induction.
\end{proof}

\begin{rem}\label{rem: submodules of cInd Serre weight}
If~$\sigma$ is not isomorphic to a twist of~$\Sym^0$ or~$\Sym^{p-1}$,
it follows from Lemma~\ref{lem:cind homs} that every submodule of~$\cInd_{KZ}^G(\sigma)$ is isomorphic to $\cInd_{KZ}^G(\sigma)$, and in fact coincides with the image of a Hecke operator in~$\cH(\sigma)$.
\end{rem}

\subsection{Extensions between irreducible representations and compact inductions}
\label{subsec: extensions between irreps and compact inductions}
It is %
possible to use our results to compute
$\Ext$ groups between irreducible representations and compact
inductions in complete generality.
We begin with the following qualitative statement, whose proof depends upon the main results of~\cite{MR3150248}.

\begin{lem}
  \label{lem: Dospinescu asked about this}If~$\pi$ is of finite length,
  and $\pi'$ is finitely generated,
  then the $\cO$-module
$\Ext^i_{\cA}(\pi,\pi')$
is of finite length for all~$i\geq 0$.
\end{lem}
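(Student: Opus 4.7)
The plan is to reduce to two base cases via Proposition~\ref{prop:structure} and then handle each one using Pa\v{s}k\={u}nas' block description.

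First, by Theorem~\ref{thm:noetherian} the subobject $\pi'_{\finl} \subseteq \pi'$ is finitely generated; being locally admissible and finitely generated, it is admissible and of finite length. Proposition~\ref{prop:structure} writes $\pi'/\pi'_{\finl}$ as a finite successive extension of subobjects of compact inductions $\cInd_{KZ}^G \sigma$. The long exact $\Ext$ sequence then reduces the proof to two sub-cases: (a) $\pi'$ is of finite length, and (b) $\pi'$ is a subobject of $\cInd_{KZ}^G\sigma$ for some Serre weight $\sigma$.

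For case (a), Lemma~\ref{lem: smooth Ext equals admissible Ext} identifies $\Ext^i_\cA(\pi, \pi')$ with $\Ext^i_{\cA^{\ladm}}(\pi,\pi')$. Both $\pi$ and $\pi'$ decompose into pieces supported in finitely many blocks, each of which is equivalent (possibly after a finite unramified extension of scalars handled via Proposition~\ref{prop:ext and base-change}) to the category of compact modules over a Noetherian ring $\tld E_x$, the Noetherianity being recorded in the proof of Proposition~\ref{injectiveArtinian}. Finite length objects of $\cA_x$ correspond to finite length $\tld E_x$-modules, and taking projective resolutions over the Noetherian ring $\tld E_x$ shows that all their mutual $\Ext^i$ are of finite length over $\cO$.

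For case (b), Corollary~\ref{cor:cofinite length} says $\cInd_{KZ}^G\sigma/\pi'$ has finite length, so by case (a) and the long exact sequence I may replace $\pi'$ with $\cInd_{KZ}^G\sigma$ itself. Let $Y$ be the finite closed subset of $X$ supporting $\pi$. Since $\cInd_{KZ}^G\sigma$ has Jordan--H\"older factors in infinitely many blocks, $\Gamma_Y(\cInd_{KZ}^G\sigma) = 0$, and Lemma~\ref{lem:R1Gamma on c-Ind of wts} identifies $R^1\Gamma_Y(\cInd_{KZ}^G\sigma)$ with $(\cInd_{KZ}^G\sigma)[1/g]/\cInd_{KZ}^G\sigma$ for some non-zero $g \in \cH(\sigma)$ with $V(g) = f_\sigma^{-1}(Y)$ (the case $Y \cap f_\sigma(\bA^1) = \emptyset$, for which $g$ is a unit, being trivial); by Lemma~\ref{lem:Artinian} this module is Artinian in $\cA_Y$. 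Since $R^q\Gamma_Y$ vanishes for $q \geq 2$ by Corollary~\ref{cor:local cohom}, the spectral sequence~\eqref{eqn: spectral sequence for Ext against AY} reduces the claim to showing that $\Ext^p_{\cA_Y}(\pi, M)$ is of finite length over $\cO$ whenever $M \in \cA_Y$ is Artinian.

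The hardest step will be this last reduction, since an Artinian $M$ can have infinite length, so the conclusion is not automatic. I would handle it via Pontryagin duality: under Pa\v{s}k\={u}nas' equivalence an Artinian object of $\cA_x$ corresponds to a finitely generated compact module over the Noetherian ring $\tld E_x$. Such a module admits a resolution by finitely generated compact projectives, and pairing this resolution with the finite length dual of $\pi$ produces $\Hom$-complexes whose cohomology consists of finite length $\tld E_x$-modules, and in particular is of finite length over $\cO$.
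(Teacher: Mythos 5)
Your proposal is correct and follows essentially the same route as the paper's proof: d\'evissage to the cases where $\pi'$ has finite length or $\pi' = \cInd_{KZ}^G\sigma$, then the spectral sequence~\eqref{eqn: spectral sequence for Ext against AY} together with Lemmas~\ref{lem:R1Gamma on c-Ind of wts} and~\ref{lem:Artinian} to reduce to showing $\Ext^p_{\cA_Y}(\pi, M)$ has finite $\cO$-length for $M$ Artinian in $\cA_Y$. (Your detour through Proposition~\ref{prop:structure} to organize the d\'evissage is fine, though slightly heavier than needed; the paper just filters a presentation $\cInd_{KZ}^G V \twoheadrightarrow \pi'$ by the $KZ$-length of $V$, using Corollary~\ref{cor:cofinite length}.)

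The one place you diverge is the final reduction. You pass to compact $\tld E_x$-modules via Pontryagin duality and resolve the (finitely generated, by Noetherianity) image of $M$ by finitely generated projectives, then observe that $\Hom$ against the finite length image of $\pi$ is finite over $\cO$. The paper instead stays on the representation-theoretic side: it uses Proposition~\ref{injectiveArtinian} to embed $M$ into an \emph{Artinian injective} object $J$ with Artinian cokernel, and dimension-shifts along $0 \to M \to J \to M' \to 0$, so that the problem reduces to bounding $\Hom_{\cA_Y}(\pi, -)$ on Artinian objects, which is controlled by the (finite) socle. These are equivalent arguments under the anti-equivalence with compact $\tld E_x$-modules: your projective resolution is the Pontryagin dual of the paper's injective resolution by Artinian injectives. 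Your version makes the role of Noetherianity of $\tld E_x$ more explicit; the paper's avoids naming $\tld E_x$ at this step and is closer to the language of the rest of Section~\ref{subsec: irred reps}.
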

\begin{proof}
By d\'evissage it suffices to prove the claim when~$\pi'$ has finite length or~$\pi'$ is the compact induction of a Serre weight.
The first case is a consequence of~\cite{MR3150248}, so we assume that~$\pi' = \cInd_{KZ}^G(\sigma)$.
The representation~$\pi$ is an object
  of~$\cA_Y^{\fg}$ for some finite~$Y$, and the spectral sequence~\eqref{eqn: spectral sequence for Ext against AY} implies that
\[
\Ext^{i+1}_\cA(\pi, \pi') \cong \Ext^i_{\cA_Y}(\pi, R^1\Gamma_Y(\pi')).
\]
By Lemma~\ref{lem:R1Gamma on fg}, we know that~$R^1\Gamma_Y(\pi')$ is Artinian, and so it suffices to prove that $\Ext^i_{\cA_Y}(\pi, \tau)$ has finite $\cO$-length whenever~$\tau$ 
is an Artinian object of~$\cA_Y$.
If~$\tau$ is Artinian, then the $\cA$-socle of~$\tau$ has finite length, because a semisimple object of infinite length is not Artinian. 
Hence, by Proposition~\ref{injectiveArtinian}, the injective envelope~$J$ of~$\soc_\cA \tau$ is Artinian.
Writing~$\tau' \coloneq  J/\tau$,
it follows from the exact sequence
\[
0 \to \tau \to J \to \tau' \to 0
\]
that~$\tau'$ is also Artinian, and so by dimension shifting and induction it suffices to prove that~$\Hom_{\cA_Y}(\pi, \tau')$ has finite $\cO$-length whenever~$\tau'$ is Artinian.
By d\'evissage, we can also assume that~$\pi$ is irreducible.
Then the finiteness follows from another application of Proposition~\ref{injectiveArtinian}.
\end{proof}

For extensions of absolutely irreducible representations and compact inductions of Serre weights we can be a lot more precise, as in the following proposition.

\begin{prop}\label{ColmezquestionI}
Let~$\sigma$ be a Serre weight and~$\pi$ an absolutely irreducible object of~$\cA$.
Then $\dim_k \Ext^1_\cA(\pi, \cInd_{KZ}^G(\sigma))\leq 1$, and if it is not zero then one of the following is true:
\begin{enumerate}
\item $\pi$ is a quotient of~$\cInd_{KZ}^G(\sigma)$ by~$T-\lambda$ for some~$\lambda \in \bF$, and the extension is isomorphic to
\[
0 \to \cInd_{KZ}^G(\sigma) \xrightarrow{T-\lambda} \cInd_{KZ}^G(\sigma) \to \pi \to 0.
\]
\item $\pi = \chi \circ \det$, $\sigma \cong \Sym^{p-1} \otimes(\chi \circ \det)|_{KZ}$, and the extension is isomorphic to the preimage of~$\pi$ under
\[
0 \to \cInd_{KZ}^G(\sigma) \to \cInd_{KZ}^G(\Sym^{0} \otimes (\chi \circ \det)|_{KZ}) \to \pi \oplus (\pi \otimes (\nr_{-1} \circ \det)) \to 0.
\]
This preimage is isomorphic to $\cInd_{N}^G(\chi \circ \det)$.
\item $\pi = (\chi \circ \det) \otimes \St$, $\sigma \cong (\chi \circ \det)|_{KZ}$, and the extension is isomorphic to the preimage of~$\pi$ under
\[
0 \to \cInd_{KZ}^G(\sigma) \to \cInd_{KZ}^G(\Sym^{p-1} \otimes (\chi \circ \det)|_{KZ}) \to \pi \oplus (\pi \otimes (\nr_{-1} \circ \det)) \to 0.
\]
\end{enumerate}
\end{prop}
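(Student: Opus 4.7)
The plan is to convert $\Ext^1_\cA(\pi, \cInd_{KZ}^G\sigma)$ into a $\Hom$ computation via the local cohomology machinery of Section~\ref{subsec:local cohomology}. Let $Y = \{x\}$ be the closed point of $X$ corresponding to the block of $\pi$, and set $U = X \setminus Y$. If $x \notin f_\sigma(\A^1)$ then $\cInd_{KZ}^G\sigma$ lies in $\cA_U$ by the classification of blocks given in Section~\ref{coordinates}, so Corollary~\ref{cor:disjoint closed localization lemma} yields the vanishing of the $\Ext$ group. Otherwise, let $g \in \cH(\sigma) \cong \F[T]$ satisfy $f_\sigma^{-1}(\{x\}) = V(g)$. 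Combining Proposition~\ref{prop:localizing compact inductions}, Corollary~\ref{cor:ext vanishing}, and Lemma~\ref{lem:R1Gamma on c-Ind of wts} (and using that $\Gamma_Y(\cInd_{KZ}^G\sigma) = 0$), the long exact sequence of $\Ext^\bullet_\cA(\pi, -)$ attached to
\[
0 \to \cInd_{KZ}^G\sigma \to (\cInd_{KZ}^G\sigma)[1/g] \to M \to 0,
\]
where $M := (\cInd_{KZ}^G\sigma)[1/g]/\cInd_{KZ}^G\sigma$, yields a natural isomorphism $\Ext^1_\cA(\pi, \cInd_{KZ}^G\sigma) \cong \Hom_\cA(\pi, M)$.

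By Lemma~\ref{lem:Artinian}, $M$ is an essential extension of $\cInd_{KZ}^G\sigma/g\cInd_{KZ}^G\sigma$, so $\soc_G M$ embeds into $\soc_G(\cInd_{KZ}^G\sigma/g\cInd_{KZ}^G\sigma)$, and any non-zero map $\pi \to M$ factors through this socle. Since $\pi$ is absolutely irreducible, this forces $g$ to have degree one: otherwise Theorem~\ref{classifyirreducibles} shows the Jordan--H\"older factors of $\cInd_{KZ}^G\sigma/g\cInd_{KZ}^G\sigma$ are irreducible but not absolutely so. Write $g = T - \lambda$ with $\lambda \in \F$. In the generic case where $(\sigma, \lambda)$ is not exceptional in the sense of Theorem~\ref{classifyirreducibles}(3), the quotient $\cInd_{KZ}^G\sigma/(T-\lambda)$ is absolutely irreducible and must coincide with $\pi$, so essentialness forces $\Hom_\cA(\pi, M) \cong \F$. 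Pulling back $0 \to \cInd_{KZ}^G\sigma \to \frac{1}{T-\lambda}\cInd_{KZ}^G\sigma \to \pi \to 0$ along $\id_\pi$ and identifying $\frac{1}{T-\lambda}\cInd_{KZ}^G\sigma$ with $\cInd_{KZ}^G\sigma$ via multiplication by $T - \lambda$ produces the extension of case~(1).

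In the exceptional case, after twisting we may assume $\chi$ is trivial, and by Remark~\ref{rem: ambiguity of choice} we may take $\lambda = 1$. Lemma~\ref{lem: quotient giving St and trivial} identifies $\soc_G(\cInd_{KZ}^G\sigma/(T-1))$ as $1_G$ when $\sigma = \Sym^{p-1}$ and as $\St_G$ when $\sigma = \Sym^0$, and Lemma~\ref{lem: uniserial} shows that $M$ is uniserial, with Jordan--H\"older factors alternating between $1_G$ and $\St_G$. Hence $\Hom_\cA(\pi, M) \cong \F$ precisely when $\pi$ coincides with that socle, and vanishes for all other $\pi$ in the block; in particular the principal series $\Ind_B^G(\omega \otimes \omega^{-1})$ in the type-(4) block yields no extension, since it does not occur in $M$. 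This accounts for the dimension bound in cases~(2) and~(3).

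Finally, the non-split extensions in the exceptional cases arise by pulling back $0 \to \cInd_{KZ}^G\sigma \to \frac{1}{T-1}\cInd_{KZ}^G\sigma \to \cInd_{KZ}^G\sigma/(T-1) \to 0$ along the embedding of $\pi$ into the socle. For case~(2), comparing with~\eqref{tree}---twisted and reindexed using the Hecke-equivariant maps $\alpha, \beta$ of Lemma~\ref{lem:cind homs}(3) relating $\cInd_{KZ}^G\Sym^0$ and $\cInd_{KZ}^G\Sym^{p-1}$---identifies the pullback with $\cInd_N^G(\chi \circ \det)$. For case~(3), the key point is that $\coker(\alpha) \cong \pi \oplus (\pi \otimes (\nr_{-1} \circ \det))$, so the preimage construction in the proposition coincides with our pullback extension. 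The main obstacle will be this explicit identification of $\coker(\alpha)$: its length-two structure from Lemma~\ref{lem:cind homs}(3) must be pinned down using the relation $\alpha\beta = \beta\alpha = T^2 - 1$ together with the socle computations from Lemma~\ref{lem: quotient giving St and trivial} to locate the Steinberg summands.
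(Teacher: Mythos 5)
Your proof follows the same route as the paper's: reduce $\Ext^1_\cA(\pi,\cInd_{KZ}^G\sigma)$ to $\Hom_\cA(\pi, R^1\Gamma_Y(\cInd_{KZ}^G\sigma))$ via the local cohomology formalism of Section~\ref{subsec:local cohomology}, then control the socle of $M = (\cInd_{KZ}^G\sigma)[1/g]/\cInd_{KZ}^G\sigma$ using Lemma~\ref{lem:Artinian}, and finally invoke Lemma~\ref{lem: quotient giving St and trivial} and Lemma~\ref{lem: uniserial} in the exceptional cases. This is exactly what the paper does, with the same level of explicitness about the extension classes in cases (2) and (3).

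One slip worth flagging: the claim that absolute irreducibility of $\pi$ ``forces $g$ to have degree one'' is not correct when $\sigma$ is a twist of $\Sym^{p-2}$ and the block $x$ is of type~(2), since there the map $f_\sigma$ is two-to-one and $g$ is $(T-\lambda)(T-\lambda^{-1})$ with $\lambda\ne\lambda^{-1}$; Theorem~\ref{classifyirreducibles} then gives absolutely irreducible Jordan--H\"older factors of $\cInd_{KZ}^G\sigma/g$, not non-absolutely-irreducible ones, so the stated reason collapses. (Relatedly, Lemma~\ref{lem:Artinian} is stated only for irreducible $g$, so in this case one should first split $M$ along the two coprime factors by the Chinese remainder theorem and apply the lemma to each summand.) The intended conclusion nonetheless survives: $\pi$ can embed into at most one of the two distinct irreducible summands of $\soc M$, giving the dimension bound and placing us in case~(1) with $g$ replaced by the appropriate linear factor $T-\lambda$. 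You should rephrase the argument to make this decomposition step explicit rather than asserting $\deg g = 1$.
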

\begin{proof}
Let~$x$ be the block of~$\pi$ and write~$Y \coloneq  \{x\}$.
We begin by showing that $\dim \Ext^1_{\cA}(\pi, \cInd_{KZ}^G(\sigma))\leq 1$.
Since~$\Gamma_Y(\cInd_{KZ}^G(\sigma)) = 0$ the spectral sequence~\eqref{eqn: spectral sequence for Ext against AY} yields an isomorphism
\[
\Hom_\cA(\pi, R^1\Gamma_Y(\cInd_{KZ}^G(\sigma))) \isom \Ext^1_\cA(\pi, \cInd_{KZ}^G(\sigma)).
\]
By Lemma~\ref{lem:R1Gamma on c-Ind of wts}, we know that
\[
R^1\Gamma_Y(\cInd_{KZ}^G(\sigma)) \cong \cInd_{KZ}^G(\sigma)[1/f_Y]/\cInd_{KZ}^G(\sigma),
\]
where~$f_Y$ is the polynomial defined in Definition~\ref{f_Y}.
Then Lemma~\ref{lem: uniserial} implies that the $G$-socle of $R^1\Gamma_Y(\cInd_{KZ}^G(\sigma))$ has multiplicity one, which implies the assertion on dimensions.
(Note that when~$\sigma$ is a twist of~$\Sym^{p-2}$, the polynomial~$f_Y$ need not be irreducible, and so Lemma~\ref{lem: uniserial} does not apply directly. 
However, $f_Y$ is then a product of two distinct linear factors, and so $R^1\Gamma_Y(\cInd_{KZ}^G(\sigma))$ is a direct sum of two objects, with non-isomorphic socle, to which Lemma~\ref{lem: uniserial} applies directly.)

We now assume that 
$\Ext^1_\cA(\pi, \cInd_{KZ}^G(\sigma)) \ne 0$
(so it has dimension~$1$)
and let~$E$ be a nonzero extension class.
To complete the proof of the proposition, we need to give an explicit description of~$E$.

Assume first that $\pi$ is not a character or a Steinberg twist.
In this case, Lemma~\ref{lem: uniserial} implies that the $G$-socle of $R^1\Gamma_Y(\cInd_{KZ}^G(\sigma))$ is isomorphic to a quotient of $\cInd_{KZ}^G(\sigma)$, 
and so if $\Ext^1_\cA(\pi, \cInd_{KZ}^G(\sigma))$ does not vanish then $\pi$ is an absolutely irreducible quotient of~$\cInd_{KZ}^G(\sigma)$.
Thus
$\pi = \cInd_{KZ}^G(\sigma)/(T-\lambda)\cInd_{KZ}^G(\sigma)$ for some~$\lambda \in \bF$, and
since we have already proved that $\dim_k \Ext^1_\cA(\pi, \cInd_{KZ}^G(\sigma)) = 1$, it follows that~$E$ must be the extension described in part~(1) of the proposition.

Assume now that~$\pi$ is a character; the case of Steinberg twists is treated similarly. 
Twisting by~$\chi^{-1} \circ \det$ we can assume that~$\pi$ is trivial. 
Because $\Ext^1_{\cA}(\pi, \cInd_{KZ}^G(\sigma)) \ne 0$ we have $\sigma \in \{\Sym^0, \Sym^{p-1}\}$, since otherwise $f_Y$ is a unit and so $R^1\Gamma_Y(\cInd_{KZ}^G(\sigma)) = 0$.
Now we deduce from Lemma~\ref{lem: uniserial} that
\begin{gather*}
\soc_G(R^1\Gamma_Y(\cInd_{KZ}^G(\Sym^{p-1}))) = \soc_G(\cInd_{KZ}^G(\Sym^{p-1})/(T-1)) \cong 1\\
\soc_G(R^1\Gamma_Y(\cInd_{KZ}^G(\Sym^0))) = \soc_G(\cInd_{KZ}^G(\Sym^0)/(T-1)) \cong \St
\end{gather*}
hence $\Ext_{\cA}^1(\pi, \cInd_{KZ}^G(\sigma)) \ne 0$ implies that~$\sigma = \Sym^{p-1}$. %
To find a representative for the nonsplit extension class it now suffices to consider the exact sequence
\[
0 \to \cInd_{KZ}^G \Sym^{p-1} \xrightarrow{\beta} \cInd_{KZ}^G \Sym^0 \to \pi \oplus (\nr_{-1} \otimes \det) \to 0
\]
defined in Lemma~\ref{lem:cind homs}, and let~$E$ be the preimage of~$\pi$.
Since~$E$ is a submodule of $\cInd_{KZ}^G \Sym^0$, it is a nonsplit extension of~$\pi$ by~$\cInd_{KZ}^G \Sym^{p-1}$
(the non-splitness following from Proposition~\ref{cind subobjects}, which implies that $\cInd_{KZ}^G \sigma$ has no nonzero admissible submodules). %
Finally, $E$ contains the image of the natural map $\cInd_{N}^G(1) \to \cInd_{KZ}^G(\Sym^0)$, whose cokernel is one-dimensional, by~\eqref{tree}.
Hence~$E = \cInd_N^G(1)$.
This concludes the proof.
\end{proof}

\begin{rem}\label{ColmezquestionII}
Let~$Y$ be the block of the trivial representation of~$G$.
Using Lemma~\ref{lem: uniserial} we can refine the conclusion of Proposition~\ref{ColmezquestionI}. 
For example, if~$\tau$ is an extension of~$1$ by~$\St$ which is not a quotient of~$\cInd_{KZ}^G(\Sym^0)$, then
\[
\Hom_{\cA}(\tau, R^1\Gamma_Y(\cInd_{KZ}^G(1))) = 0
\]
and so there are no nonsplit extensions of~$\tau$ by~$\cInd_{KZ}^G(1)$.
Note that $\dim_\bF\Ext^1_{\cA}(1, \St) = 2$ by~\cite[(164)]{MR3150248}, so there exist nonsplit extensions of~$1$ by~$\St$ which are not quotients of $\cInd_{KZ}^G(\Sym^0)$.
\end{rem}

Using our results in Section~\ref{subsec: Bernstein
  centres of blocks} it is also possible to compute $\Ext^i$-groups in the other direction.
We prove a general finiteness result, and then illustrate it by computing the dimension of~$\Ext^1$ for generic Serre weights.

\begin{prop}\label{prop: Colmez backwards extensions}%
Let~$\sigma$ be a Serre weight, and let~$\pi$ be an irreducible object of~$\cA$. 
Then 
\begin{enumerate}
\item $\Ext^n_{\cA}(\cInd_{KZ}^G \sigma, \pi)$ is a finite-dimensional $\bF$-vector space for all~$n \geq 0$, and it vanishes if the completion of~$\cInd_{KZ}^G(\sigma)$ at the block of~$\pi$ is zero.
\item Assume that~$\sigma$ is not a twist of~$\Sym^0, \Sym^{p-2}$ or~$\Sym^{p-1}$, that~$\pi$ is absolutely irreducible, and that the completion of~$\cInd_{KZ}^G(\sigma)$ at the block of~$\pi$ is not zero.
  Then
  \[
\dim \Ext^1_{\mathcal{A}}(\cInd_{KZ}^G\sigma, \pi) = 
\begin{cases} 
2 & \text{if } \pi \text{ is supersingular,} \\
1 & \text{otherwise.}
\end{cases}
\]
\end{enumerate}
\end{prop}
\begin{proof}
We begin with~(1).
Let~$Y$ be the block of~$\pi$. If~$\cInd_{KZ}^G(\sigma)^{\wedge}_Y = 0$ then
\[
\Ext^n_{\cA}(\cInd_{KZ}^G(\sigma), \pi) = 0
\]
for all~$n \geq 0$ by Lemma~\ref{lem:ext comparison}.
Otherwise, by Proposition~\ref{prop:ext and base-change}, to prove that $\Ext^n_{\cA}(\cInd_{KZ}^G \sigma, \pi)$ is finite-dimensional 
we can assume without loss of generality that~$\pi$ is absolutely irreducible.
We claim that~$f_Y$ is nilpotent on~$\Ext^n_{\cA}(\cInd_{KZ}^G(\sigma), \pi)$, and zero if~$Y$ does not have type~(3). Assuming the claim, we conclude the proof of part~(1) as follows.
Let $\tau_1 \coloneq  \cInd_{KZ}^G(\sigma)/f_Y$ be the maximal multiplicity-free quotient of~$\cInd_{KZ}^G(\sigma)$ in the same block of~$\pi$, 
and let $\tau_i \coloneq  \cInd_{KZ}^G(\sigma)/f_Y^i$.
If~$f_Y^j$ acts by~$0$ on $\Ext^n_{\cA}(\cInd_{KZ}^G(\sigma), \pi)$, the long exact sequence associated to
\begin{equation}\label{exact sequence for nilpotent f}
0 \to \cInd_{KZ}^G(\sigma) \xrightarrow{f_Y^j} \cInd_{KZ}^G(\sigma) \to \tau_j \to 0
\end{equation}
implies that we have a surjection
\[
\Ext^n_{\cA}(\tau_j, \pi) \to \Ext^n_{\cA}(\cInd_{KZ}^G(\sigma), \pi) \to 0.
\]Since~$\Ext^n_{\cA}(\tau_j, \pi)$ is finite-dimensional (by Lemma~\ref{lem: Dospinescu asked about this}), so is
~$\Ext^n_{\cA}(\cInd_{KZ}^G(\sigma), \pi)$ as required.

We now prove the claim.
By Lemma~\ref{lem:ext comparison}, for all~$n$ there is an isomorphism
\begin{equation}\label{vanishing of f(T) via colimits}
\varinjlim_i \Ext^n_{\cA_Y}(\tau_i, \pi) \isom \Ext^n_{\cA}(\cInd_{KZ}^G(\sigma), \pi)
\end{equation}
which is equivariant for the action of~$\cH(\sigma)$ on the first factor.
Recall that either~$f_Y = T-\lambda$ for some $\lambda \in \bF$, or~$\sigma$ is a twist of $\Sym^{p-2}$ and~$f_Y = (T-\lambda)(T-\lambda^{-1})$ for some $\lambda \in \bF^\times \setminus \{\pm 1\}$.
In the first case, if~$Y$ does not have type~(3), then by %
Proposition~\ref{centreelementsI} there exists an element~$z$ of the Bernstein centre of~$\cA_Y$ 
inducing the action of~$f_Y \in \cH(\sigma)$ on the first factor of $\Ext^n_{\cA_Y}(\tau_i, \pi)$.
Since~$f_Y$ acts by zero on~$\tau_1$, we find that~$z$ is zero on all irreducible subquotients of~$\tau_1$.
Since these are in the same block as~$\pi$, it follows that~$z$ is zero on~$\pi$, hence on $\Ext^n_{\cA_Y}(\tau_i, \pi)$.
We conclude that~$f_Y$ acts by zero on~$\Ext^n_{\cA_Y}(\tau_i, \pi)$ for all~$i$, and so on~$\Ext^n_{\cA}(\cInd_{KZ}^G(\sigma), \pi)$, by~\eqref{vanishing of f(T) via colimits}.
On the other hand, if~$Y$ has type~(3),
then we deduce from Proposition~\ref{centreelementsII} that there exist~$j \geq 1$ and~$z_0, \ldots, z_j$ in the maximal ideal of the centre of~$\cA_Y$ such that $f_Y = T\pm 1$ satisfies the equation
\[
f_Y^{j+1} + z_jf_Y^j + \cdots + z_0 = 0
\] 
in~$\End_G(\tau_1)$ (note that~$\tau_1 = \pi$).
Since~$z_i$ acts by~$0$ on~$\pi$ for all~$i$, we conclude in the same way as before that~$f_Y^{j+1}$ acts by~$0$ on $\Ext^n_{\cA_Y}(\tau_i, \pi)$ for all~$i$, as desired.
Finally, if $f_Y = (T-\lambda)(T-\lambda^{-1})$ for some $\lambda \in \bF^\times \setminus \{\pm 1\}$, then
\[\tau_i = \cInd_{KZ}^G(\sigma)/(T-\lambda)^i \oplus \cInd_{KZ}^G(\sigma)/(T-\lambda^{-1})^i,\]
and the same argument as before shows that $f_Y$ acts by zero on
\[\Ext^n_{\cA}(\cInd_{KZ}^G(\sigma)/(T-\lambda^{\pm 1})^i, \pi)\]
for all~$i$.
This concludes the proof of part~(1)

We now prove part~(2).
If~$\sigma$ is not a twist of~$\Sym^0, \Sym^{p-2}$ or~$\Sym^{p-1}$, and $\cInd_{KZ}^G(\sigma)^{\wedge}_Y \ne 0$, then~$Y$ does not have type~(3).
Furthermore, $f_Y = T-\lambda$ for some~$\lambda \in \bF$, and~$\tau_1$ is absolutely irreducible. 
So~\eqref{exact sequence for nilpotent f} with~$j=1$ yields an exact sequence
\begin{multline*}
0 \to \Hom_{\cA}(\tau_1, \pi) \to \Hom_\cA(\cInd_{KZ}^G(\sigma), \pi) \to \Hom_\cA(\cInd_{KZ}^G(\sigma), \pi) \to \\ 
\to \Ext^1_{\cA}(\tau_1, \pi) \to \Ext^1_{\cA}(\cInd_{KZ}^G(\sigma), \pi) \to 0.
\end{multline*}
The proposition follows from this sequence together with the computation of $\Ext^1$-groups between absolutely irreducible objects of~$\cA$.
More specifically, we know that $\Ext^1_{\cA}(\pi, \pi)$ has dimension~$3$ if~$\pi$ is supersingular~\cite[Theorem~10.13]{Paskunasextensions} and dimension~$2$ otherwise~\cite[Section~8]{MR3150248}.
On the other hand, if~$\tau_1$ is in the same block of~$\pi$ but is not isomorphic to~$\pi$ then~$\pi$ is not supersingular, 
the block~$Y$ of~$\pi$ has type~(2), and $\dim \Ext^1_{\cA}(\tau_1, \pi) = 1$, see~\cite[Section~8]{MR3150248}.

\end{proof}

\begin{cor}\label{finiteness in the other direction}
Let~$\pi, \pi' \in \cA$.
Assume that~$\pi$ is finitely generated and~$\pi'$ has finite length.
Then $\Ext^i_\cA(\pi, \pi')$ has finite $\cO$-length for all~$i \geq 0$.
\end{cor}
\begin{proof}
By d\'evissage, we can assume without loss of generality that~$\pi'$ is irreducible, and that~$\pi$ is irreducible or isomorphic to $\cInd_{KZ}^G(\sigma)$ for some Serre weight~$\sigma$.
Then the claim follows from Lemma~\ref{lem: Dospinescu asked about this} and Proposition~\ref{prop: Colmez backwards extensions}~(1). 
\end{proof}

The finiteness part of Proposition~\ref{prop: Colmez backwards extensions} can be used to prove the following counterpart to Corollary~\ref{cor:ext vanishing}.

\begin{lemma}\label{almost orthogonal}
Let~$\pi \in \cA$, let~$Y \subset X$ be a finite closed subset, and let~$\tau \in \cA^{\fg}_Y$.
Let~$U = X \setminus Y$.
Then
\[
\Ext^i_{\cA}(j_{U*}j_U^*\pi, \tau) = 0
\]
for all~$i$.
\end{lemma}
\begin{proof}
Write $\pi = \varinjlim_{i \in I} \pi_i$ as a filtered colimit of finitely generated subobjects with injective transition maps. 
Since~$j_{U*}$ and~$j_U^*$ are exact and preserve filtered colimits,
we have $j_{U,*}j_{U}^*\pi = \varinjlim_{i \in I} j_{U,*}j_{U}^*\pi_i$, and the filtered system $\{i \in I: j_*j^*\pi_i\}$ has injective transition maps.
Hence Lemma~\ref{Rlim spectral sequence} applies, and we obtain a spectral sequence
\[
E_2^{ij} = \operatorname{R}^i \varprojlim_{i \in I}\Ext^j_{\cA}(j_{U,*}j_{U}^*\pi_i, \tau) \Rightarrow \Ext^{i+j}_{\cA}(j_{U*}j_U^*\pi, \tau).
\]
Hence it suffices to prove the vanishing under the additional assumption that~$\pi$ is finitely generated.
By d\'evissage, we then reduce to the case where~$\tau$ is irreducible, and $\pi = \cInd_{KZ}^G(\sigma)$ or~$\pi$ has finite length.

If~$\pi$ has finite length, then~$\pi \in \cA^{\ladm}$, and so
$j_{U, *}j_U^* \pi = \bigoplus_{x \in U} \pi_x$, by Lemma~\ref{localization of locally admissible objects}.
Since $U$ and~$Y$ are disjoint, the vanishing 
$\Ext^i_{\cA}(j_{U*}j_U^*\pi, \tau) = 0$
is then an immediate consequence of the block decomposition of $\cA^{\ladm}$, in the form of Lemma~\ref{lem: block decomposition}. 
If $\pi = \cInd_{KZ}^G(\sigma)$ then
\[
j_{U*}j_U^*\pi \cong \varinjlim_{\times f} \pi
\]
for the polynomial~$f = f_Y \in \cH(\sigma)$ defined in Definition~\ref{f_Y}, and Lemma~\ref{Rlim spectral sequence} yields a spectral sequence
\[
E_2^{pq} = \operatorname{R}^p\varprojlim_{\times f}\Ext_{\cA}^q(\pi, \tau) \Rightarrow \Ext_{\cA}^{p+q}(j_{U*}j_U^*(\pi), \tau).
\]
By Proposition~\ref{prop: Colmez backwards extensions}, the $\F$-vector space $\Ext_{\cA}^q(\pi, \tau)$ is finite-dimensional, and so multiplication by~$f$ is a nilpotent endomorphism of this vector space, by Lemma~\ref{lem:another ext vanishing}.
Hence $E_2^{pq} = 0$ for all~$p, q$.
This concludes the proof. 
\end{proof}

\begin{rem}\label{not orthogonal} %
We emphasize that Lemma~\ref{almost orthogonal} fails without the assumption that~$\tau$ is finitely generated: a counterexample is provided by the surjection
\[
\cInd_{KZ}^G(\sigma)[1/T] \to \cInd_{KZ}^G(\sigma)[1/T]/\cInd_{KZ}^G(\sigma).
\]
This fact has significant structural consequences from the viewpoint of~\cite{DEGcategoricalLanglands}, as it prevents the recollements considered there from becoming orthogonal decompositions.
\end{rem}

\subsection{Extensions between compact inductions}\label{subsec:
  extensions between compact inductions}We can also consider the
$\Ext$ groups between full compact inductions. We have the following
general result.

\begin{lemma}
\label{lem:ext constraints}
Let~$\sigma_0$ and~$\sigma_1$ be Serre weights, and assume they are not twists of~$\Sym^{p-1}$.
If $\Ext_{\cA}^i(\cInd_{KZ}^G \sigma_1,\cInd_{KZ}^G \sigma_0)$ is non-zero for some~$i$,
then 
either $\sigma_0$ and $\sigma_1$ are isomorphic, or they are the Jordan--H\"older
factors of the reduction of a tame type.
\end{lemma}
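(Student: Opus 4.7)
The plan is to use the localization theory developed in Section~\ref{sec: localization of G reps} together with the structural classification of blocks via the components of~$X$. Since neither $\sigma_0$ nor $\sigma_1$ is a twist of~$\Sym^{p-1}$, each $\sigma_i$ is contained in a unique cuspidal type~$\tau_i$; set $Y_i := X(\tau_i)$, which is an irreducible component of~$X$. By the construction of $f_{\sigma_i}$ and the relationship between $f_{\sigma}$ and blocks established in Section~\ref{coordinates}, the representation $\cInd_{KZ}^G \sigma_i$ is an object of~$\cA_{Y_i}$, and we proceed by case analysis on $Y_0 \cap Y_1$.

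First, if $Y_0 \cap Y_1 = \emptyset$, then Corollary~\ref{cor:disjoint closed localization lemma} forces $\Ext^i = 0$, contradicting the hypothesis. Next, if $Y_0 = Y_1$, that is $\tau_0 = \tau_1$, then both $\sigma_0, \sigma_1$ lie in $\JH(\overline{\tau_0})$, so they are either isomorphic or together form the set of Jordan--H\"older factors of the reduction of the cuspidal type $\tau_0$; in either case we are done. Since $X$ is a chain of $\Pone$s, the only remaining case is that $Y_0 \ne Y_1$ meet at exactly one point~$p$, which is necessarily a singular point of~$X$ corresponding under Corollary~\ref{labelintersections} to a principal series type~$\tau'$.

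In this remaining case, if $\sigma_0, \sigma_1 \in \JH(\overline{\tau'})$ we are done, so suppose without loss of generality that $\sigma_0 \notin \JH(\overline{\tau'})$. A straightforward inspection of the definition of~$f_{\sigma_0}$ shows that $f_{\sigma_0}^{-1}(p) = \emptyset$: indeed, for a non-twist-of-$\Sym^{p-1}$ weight, $p$ lies in the image of~$f_{\sigma_0}$ precisely when the principal series type associated to~$p$ contains~$\sigma_0$ as a Jordan--H\"older factor. (One verifies this from the three definitional cases: for generic~$\sigma_0$, the map is a coordinate-preserving open immersion into $X(\tau_0) \cong \Pone$ hitting only the singular point of $X(\tau_0)$ corresponding to the principal series containing~$\sigma_0$; for $\Sym^0$ twists, $\sigma_0$ is in no principal series; for $\Sym^{p-2}$ twists, the unique singular point of~$X(\tau_0)$ corresponds to the unique principal series containing~$\sigma_0$.) Hence $f_{\sigma_0}^{-1}(Y_1) = f_{\sigma_0}^{-1}(Y_0 \cap Y_1) = \emptyset = V(1)$, and Lemma~\ref{lem:ext vanishing}, applied with $\sigma = \sigma_0$, $Y = Y_1$, $g = 1$, and $\pi = \cInd_{KZ}^G \sigma_1 \in \cA_{Y_1}$, gives $\Ext^i(\cInd_{KZ}^G \sigma_1, \cInd_{KZ}^G \sigma_0) = 0$, contradicting the hypothesis. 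The symmetric case $\sigma_1 \notin \JH(\overline{\tau'})$ is handled the same way, applying Lemma~\ref{lem:another ext vanishing} instead with $\sigma = \sigma_1$, $Y = Y_0$, $\pi = \cInd_{KZ}^G \sigma_0 \in \cA_{Y_0}$.

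The only non-routine step is the geometric verification that $p \notin f_{\sigma_0}(\bA^1)$ whenever $\sigma_0 \notin \JH(\overline{\tau'})$, and this is really a bookkeeping exercise in the three definitional branches of~$f_\sigma$; no deeper input (such as Pa\v{s}k\={u}nas' classification of blocks beyond what is encoded in the map $\tau \mapsto X(\tau)$) is needed.
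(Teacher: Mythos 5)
Your proof is correct and follows essentially the same route as the paper's: case split on whether the cuspidal types $\tau_0$, $\tau_1$ coincide, are non-adjacent, or are adjacent, and in the non-degenerate cases use the localization theory to show the $\Ext$ groups vanish. The paper invokes Proposition~\ref{prop:localizing compact inductions}(1) together with Corollaries~\ref{cor:ext vanishing} and~\ref{cor:another ext vanishing}, whereas you directly apply Lemmas~\ref{lem:ext vanishing} and~\ref{lem:another ext vanishing} with $g=1$; these are the underlying ingredients of those corollaries, so the content is the same. One small point in your favour: in the adjacent case, only one of the two vanishing statements ($f_{\sigma_0}^{-1}(X(\tau_1))=\emptyset$ or $f_{\sigma_1}^{-1}(X(\tau_0))=\emptyset$) need hold, and your ``without loss of generality'' phrasing handles the asymmetry in $\Ext$ cleanly by noting which lemma to invoke in each case.
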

\begin{proof}%
For~$i = 0, 1$ let~$\tau_i$ be the unique cuspidal type containing~$\sigma_i$ in its semisimplified mod~$p$ reduction.
If~$\tau_0 = \tau_1$ then the conclusion of the lemma is true, since $\sigma_0, \sigma_1$ are the constituents of the reduction of the tame type $\tau_0 = \tau_1$.
Assume that~$\tau_0 \ne \tau_1$.
Then $\sigma_0 \ne \sigma_1$, and we need to prove that if 
$\sigma_0, \sigma_1$ are not constituents of the same principal series type, then
$\Ext_{\cA}^i(\cInd_{KZ}^G \sigma_1,\cInd_{KZ}^G \sigma_0) = 0$ for all~$i$.

By Definition~\ref{f_sigma}, if $\tau_0 \ne \tau_1$ and $\sigma_0, \sigma_1$ are not constituents of the same principal series type then
the image of $f_{\sigma_0}$ 
does not intersect~$X(\tau_1)$, and similarly the image of~$f_{\sigma_1}$ does not intersect~$X(\tau_0)$.
Hence,  
writing~$j_{U_i}$ for the inclusion of the complement~$U_i = X \setminus X(\tau_i)$,
Proposition~\ref{prop:localizing
compact inductions}~(1) shows that the natural maps
\begin{equation} \label{localizetocomplement} 
\cInd_{KZ}^G(\sigma_i) \to j_{U_{1-i}*}j_{U_{1-i}}^*\cInd_{KZ}^G(\sigma_i)\end{equation}
are isomorphisms.
On the other hand, by Definition~\ref{localizingsubcategory}, $\cInd_{KZ}^G(\sigma_i)$ is an object of~$\cA_{X(\tau_i)}$.
Hence
\[
\Ext_{\cA}^i(\cInd_{KZ}^G \sigma_1,\cInd_{KZ}^G \sigma_0) = 0
\]
by Corollary~\ref{cor:ext vanishing}. 
\end{proof}
\begin{rem}
The same argument proves that if $\Ext_{\cA}^i(\cInd_{KZ}^G \Sym^{p-1}, \cInd_{KZ}^G \sigma)$ is non-zero for some~$i$ then $\sigma \in \{\Sym^0, \Sym^{p-1}, \Sym^{p-3} \otimes \det\}$.
\end{rem}

The computation of~$\Ext^0$ (i.e.\ of $\Hom$) between compact inductions of Serre weights is due to Barthel and Livn\'e~\cite{BarthelLivneDuke} and is treated in Lemma~\ref{lem:cind homs}.
In the remainder of this section we explicitly compute some instances
of the $\cH(\sigma_0)\otimes_\F \cH(\sigma_1)$-module  $\Ext_{\cA}^1(\cInd_{KZ}^G \sigma_1,\cInd_{KZ}^G \sigma_0)$.   If we write
$\cH(\sigma_0) = \F[S]$ and $\cH(\sigma_1)=\F[T]$, then
$\cH(\sigma_0)\otimes_{\F}\cH(\sigma_1) = \F[S,T]$. (Note that even if
$\sigma_0$ and $\sigma_1$ are isomorphic, the Hecke operators $S$ and
$T$ give distinct actions on $\Ext^i$, at least {\em a priori}: this is not the same situation as Lemma~\ref{lem:mapstoquotient}.)

\begin{lemma}\label{lem: PS Ext1}
Suppose that $\sigma_0$ and $\sigma_1$ are the Jordan--H\"older factors of the reduction
of an irreducible principal series type. Then
$\Ext_{\cA}^1(\cInd_{KZ}^G \sigma_1,\cInd_{KZ}^G \sigma_0)$  
is one-dimensional, spanned by the class of~$\cInd_{KZ}^G \tau$,
where~$\tau$ denotes the non-split extension of $\sigma_1$ by
$\sigma_0$. %
As an $\cH(\sigma_0)\otimes_\F \cH(\sigma_1)$-module it is isomorphic
to $\F[S,T]/(S,T).$ 
\end{lemma}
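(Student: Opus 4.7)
The plan is to use the localization theory of Section~\ref{subsec: localization} to identify $\Ext^1_\cA(\cInd_{KZ}^G\sigma_1, \cInd_{KZ}^G\sigma_0)$ with a $\Hom$ space living at the singular point $x := X(\tau_0) \cap X(\tau_1)$, where $\tau_i$ denotes the cuspidal type containing $\sigma_i$. Since $\sigma_0, \sigma_1$ are Jordan--H\"older factors of the reduction of an irreducible principal series type, neither is a twist of $\Sym^0$ or $\Sym^{p-1}$ (as computed in Section~\ref{subsec:blocks}); thus $f_{\sigma_0}$ embeds $\Spec \cH(\sigma_0) \cong \A^1$ onto $X(\tau_0)$ sending $0$ to $x$, whence $f_{\sigma_0}^{-1}(X(\tau_1)) = V(S)$. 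Setting $Y := X(\tau_1)$ and $U := X \setminus Y$, Proposition~\ref{prop:localizing compact inductions}(1) and Lemma~\ref{lem:R1Gamma on c-Ind of wts} yield the short exact sequence
\[
0 \to \cInd_{KZ}^G\sigma_0 \to (\cInd_{KZ}^G\sigma_0)[1/S] \to R^1\Gamma_Y(\cInd_{KZ}^G\sigma_0) \to 0
\]
whose middle term is $j_{U*}j_U^*\cInd_{KZ}^G\sigma_0$. Since $\cInd_{KZ}^G\sigma_1 \in \cA_Y$, Corollary~\ref{cor:ext vanishing} gives $\Ext^i_\cA(\cInd_{KZ}^G\sigma_1, (\cInd_{KZ}^G\sigma_0)[1/S]) = 0$ for every $i$; combined with $\Hom_\cA(\cInd_{KZ}^G\sigma_1, \cInd_{KZ}^G\sigma_0) = 0$ from Lemma~\ref{lem:cind homs}, the long exact sequence produces
\[
\Ext^1_\cA(\cInd_{KZ}^G\sigma_1, \cInd_{KZ}^G\sigma_0) \isom \Hom_\cA(\cInd_{KZ}^G\sigma_1, R^1\Gamma_Y(\cInd_{KZ}^G\sigma_0)).
\]
By Lemma~\ref{lem: uniserial}, the right-hand target is uniserial with every composition factor isomorphic to the supersingular irreducible $\pi := \cInd_{KZ}^G\sigma_0/S$, which by the analysis of blocks at singular points in Section~\ref{coordinates} is also isomorphic to $\cInd_{KZ}^G\sigma_1/T$ and is the unique absolutely irreducible representation of the type-(1) block at $x$.

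For the identification of the generator, the non-split $KZ$-extension $0 \to \sigma_0 \to \tau \to \sigma_1 \to 0$ coming from a lattice in the principal series type compactly induces
\[
0 \to \cInd_{KZ}^G\sigma_0 \to \cInd_{KZ}^G\tau \to \cInd_{KZ}^G\sigma_1 \to 0,
\]
which is non-split: a $G$-splitting would force the identity-coset $KZ$-subrepresentation of $\cInd_{KZ}^G\tau$, which is precisely $\tau$, to decompose as $\sigma_0 \oplus \sigma_1$, contradicting non-splitness of $\tau$. Once one-dimensionality of the $\Ext^1$ is established, the Hecke bimodule structure will follow: Lemma~\ref{lem:another ext vanishing} applied with $Y = X(\tau_0)$ and $\sigma = \sigma_1$, using that $f_{\sigma_1}^{-1}(\{x\})$ is cut out by some power of $T$, shows that $T$ acts by a nilpotent scalar, hence by $0$; and the $S$-action on $R^1\Gamma_Y(\cInd_{KZ}^G\sigma_0)$, which sends $M_n := (1/S^n)\cInd_{KZ}^G\sigma_0/\cInd_{KZ}^G\sigma_0$ into $M_{n-1}$ and vanishes on the socle $M_1 = \pi$, is also locally nilpotent on the $\Hom$ space and must likewise act by $0$ on a one-dimensional module. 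This yields $\Ext^1 \cong \F[S,T]/(S,T)$.

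The main obstacle is the upper bound $\dim_\F \Hom_\cA(\cInd_{KZ}^G\sigma_1, R^1\Gamma_Y(\cInd_{KZ}^G\sigma_0)) \leq 1$. By Frobenius reciprocity this $\Hom$ equals $\Hom_{KZ}(\sigma_1, R^1\Gamma_Y(\cInd_{KZ}^G\sigma_0))$; since $\sigma_1$ appears in the $KZ$-socle of $\pi$ with multiplicity one, $\Hom_{KZ}(\sigma_1, M_1)$ is already one-dimensional. An iteration using $\ker(S) = M_1$ and $S \cdot M_n \subseteq M_{n-1}$ forces any $KZ$-embedding $\sigma_1 \hookrightarrow R^1\Gamma_Y(\cInd_{KZ}^G\sigma_0)$ to factor through $M_2$, so the remaining step is to verify that the connecting homomorphism $\Hom_{KZ}(\sigma_1, M_1) \to \Ext^1_{KZ}(\sigma_1, M_1)$ arising from $0 \to M_1 \to M_2 \to M_1 \to 0$ (where the final quotient is identified with $M_1$ via multiplication by $S$) is injective, equivalently that this specific $KZ$-extension does not admit a splitting through the unique copy of $\sigma_1$ in the top copy of $\pi$. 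I expect to establish this by a direct analysis of $M_2 \cong \cInd_{KZ}^G\sigma_0/S^2$ via Pa\v{s}k\={u}nas's equivalence between the block $\cA_x$ and discrete modules over the endomorphism ring of an injective envelope of $\pi$, using Theorem~\ref{centrethm} and Proposition~\ref{cyclicity} applied to the completions of $\cInd_{KZ}^G\sigma_0$ and $\cInd_{KZ}^G\sigma_1$ along $\{x\}$ to transfer the question into a concrete cyclicity computation.
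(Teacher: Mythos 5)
Your proposal follows the paper's route in its essential structure: both use the vanishing of $\Ext^i\bigl(\cInd_{KZ}^G\sigma_1, (\cInd_{KZ}^G\sigma_0)[1/S]\bigr)$ (via Corollary~\ref{cor:ext vanishing}) to identify $\Ext^1$ with a $\Hom$ into $R^1\Gamma_Y(\cInd_{KZ}^G\sigma_0) = \varinjlim_n M_n$, reduce the dimension count to a question at the singular point, and then deduce the bimodule structure $(S,T)\cdot \Ext^1 = 0$ from one-dimensionality by torsion arguments.

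However, there is a genuine gap at the crux of the argument. The dimension bound hinges on showing $\dim_{\F}\Hom_{KZ}(\sigma_1, M_2) \leq 1$, equivalently, injectivity of the connecting map $\Hom_{KZ}(\sigma_1, M_1) \to \Ext^1_{KZ}(\sigma_1, M_1)$. You correctly isolate this as the remaining obstacle, but do not prove it: you say you ``expect to establish it'' via Pa\v{s}k\={u}nas's equivalence, Theorem~\ref{centrethm}, and Proposition~\ref{cyclicity}. This is precisely the step the paper handles by citing \cite[Thm.~1.2, \S7.3]{MR3283706}, which asserts that the classes in $\Ext^1(\pi,\pi)$ of the two length-two thickenings $\cInd_{KZ}^G\sigma_0/S^2$ and $\cInd_{KZ}^G\sigma_1/T^2$ are linearly independent. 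Your proposed substitute is not carried out and is not obviously sufficient as stated: cyclicity of $\Hom_{\cA}(\cInd_{KZ}^G\sigma_1, J_\pi)^\vee$ over $E_\pi$ does not by itself bound the $\F$-dimension of $\Hom(\cInd_{KZ}^G\sigma_1, M_2)$ (cyclic $E_\pi$-modules can have arbitrary $\F$-dimension), so one would need concrete information about how the two Hecke operators interact with the generators of the maximal ideal of $E_\pi$ --- which amounts to reproving the cited result. Relatedly, the claim that the ``iteration using $\ker(S)=M_1$ forces any $KZ$-embedding to factor through $M_2$'' is not correct without already knowing the $M_2$ bound: the induction that shows $\Hom_{KZ}(\sigma_1, M_n) = \Hom_{KZ}(\sigma_1, M_1)$ requires the $n=2$ case as base step, so the logical order is reversed.

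A secondary issue: your non-splitting argument for $\cInd_{KZ}^G\tau$ is imprecise. A $G$-equivariant splitting of $0 \to \cInd_{KZ}^G\sigma_0 \to \cInd_{KZ}^G\tau \to \cInd_{KZ}^G\sigma_1 \to 0$ gives an abstract $G$-isomorphism with the direct sum, but this isomorphism need not carry the identity-coset $KZ$-subspace $\tau \subset \cInd_{KZ}^G\tau$ to the identity-coset subspaces of the summands, so one cannot conclude directly that $\tau$ splits. The paper's argument is cleaner: compose the splitting with the surjection onto $(\cInd_{KZ}^G\sigma_0)/S$, apply Frobenius reciprocity to obtain a $KZ$-map $\tau \to (\cInd_{KZ}^G\sigma_0)/S$ that is injective on $\sigma_0$ (hence an embedding, since $\sigma_0$ is the unique minimal submodule of $\tau$), and then observe no such $KZ$-embedding exists.
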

\begin{proof}
Recall from Definition~\ref{f_sigma} that the images of~$f_{\sigma_0}, f_{\sigma_1}$ intersect in a single point, corresponding to the supersingular representation
\[
\pi = (\cInd_{KZ}^G \sigma_1)/T (\cInd_{KZ}^G \sigma_1) \cong (\cInd_{KZ}^G \sigma_0)/S (\cInd_{KZ}^G \sigma_0).
\]
Corollary~\ref{cor:ext vanishing}   
then shows that
$\Ext_{\cA}^i\bigl(\cInd_{KZ}^G \sigma_1,(\cInd_{KZ}^G \sigma_0)[1/S]\bigr) = 0$
for all~$i$,
while Lemma~\ref{lem:ext and colimits} shows that
$$\Ext_{\cA}^i\bigl(\cInd_{KZ}^G \sigma_1,(\cInd_{KZ}^G \sigma_0)[1/S]\bigr)
= 
\Ext_{\cA}^i(\cInd_{KZ}^G \sigma_1,\cInd_{KZ}^G \sigma_0)[1/S];$$
thus we see that $\Ext_{\cA}^i(\cInd_{KZ}^G \sigma_1,\cInd_{KZ}^G \sigma_0)$ consists of
$S$-power torsion elements.

The long exact sequence in $\Ext_{\cA}$ associated to the short exact sequence
$$0 \to \cInd_{KZ}^G \sigma_0 \buildrel S^n \cdot \over \longrightarrow \cInd_{KZ}^G \sigma_0
\to (\cInd_{KZ}^G \sigma_0)/S^n (\cInd_{KZ}^G \sigma_0) \to 0$$
yields an isomorphism
\begin{multline}\label{S-torsion in Ext^1}
\Hom_{\cA}\bigl( \cInd_{KZ}^G \sigma_1 ,
(\cInd_{KZ}^G \sigma_0)/S^n (\cInd_{KZ}^G \sigma_0)\bigr)
\\
\iso S^n\text{-torsion in }
\Ext_{\cA}^1(\cInd_{KZ}^G\sigma_1,\cInd_{KZ}^G\sigma_0).
\end{multline}
When $n = 1$, the quotients
$(\cInd_{KZ}^G \sigma_1)/T (\cInd_{KZ}^G \sigma_1)$
and 
$(\cInd_{KZ}^G \sigma_0)/S (\cInd_{KZ}^G \sigma_0)$ are isomorphic
to~$\pi$,
but the thickenings 
$$(\cInd_{KZ}^G \sigma_1)/T^n (\cInd_{KZ}^G \sigma_1)$$
and
$$(\cInd_{KZ}^G \sigma_0)/S^n (\cInd_{KZ}^G \sigma_0)$$
don't coincide to any higher
order.
More precisely, the elements of~$\Ext^1_{\cA}(\pi, \pi)$ classifying these extensions for~$n = 2$ are linearly independent: see~\cite[Theorem~1.2, Section~7.3]{MR3283706} for a proof of this fact.
Hence Lemma~\ref{lem: uniserial} implies that for all~$n, m$ we have
\begin{multline*}
\Hom_{\cA}((\cInd_{KZ}^G \sigma_1)/T^n (\cInd_{KZ}^G \sigma_1), (\cInd_{KZ}^G \sigma_0)/S^m (\cInd_{KZ}^G \sigma_0)) = \\
\Hom_{\cA}((\cInd_{KZ}^G \sigma_1)/T (\cInd_{KZ}^G \sigma_1), (\cInd_{KZ}^G \sigma_0)/S^m (\cInd_{KZ}^G \sigma_0)),
\end{multline*}
and this space has $\bF$-dimension one.
We now note that
$$\Hom_{\cA}\bigl( \cInd_{KZ}^G \sigma_1 ,
(\cInd_{KZ}^G \sigma_0)/S^n (\cInd_{KZ}^G \sigma_0)\bigr)$$
is one-dimensional, no matter what the value of $n$ is. 
Indeed, any element of this group has to factor through an admissible quotient of $\cInd_{KZ}^G\sigma_1$ in the same block of $\pi$;
by Lemma~\ref{f_sigma preimages}, it has to factor through $\cInd_{KZ}^G \sigma_1/T^m (\cInd_{KZ}^G \sigma_1)$ for some~$m$, and so through $\cInd_{KZ}^G \sigma_1/T (\cInd_{KZ}^G \sigma_1)$, by the discussion above.
Together with~\eqref{S-torsion in Ext^1}, this implies that the $S$-power torsion module
$\Ext^1_{\cA}(\cInd_{KZ}^G\sigma_1,\cInd_{KZ}^G\sigma_0)$
is one-dimensional, and annihilated by $S$. We now prove that it is also annihilated by~$T$.

If $g \in \cH(\sigma_1)$ is coprime to $T$ (i.e.\ does not vanish at $T = 0$),
then Corollary~\ref{cor:ext vanishing} shows that
$\Ext^i_{\cA} \bigl ( (\cInd_{KZ}^G\sigma_1)/g(\cInd_{KZ}^G \sigma_1),\cInd_{KZ}^G\sigma_0 \bigr ) = 0$
for all~$i$.  A consideration of the long exact $\Ext$ sequence associated to
the short exact sequence
$$0 \to \cInd_{KZ}^G \sigma_1 \buildrel g \cdot \over \longrightarrow \cInd_{KZ}^G \sigma_1
\to (\cInd_{KZ}^G \sigma_1)/g (\cInd_{KZ}^G \sigma_1) \to 0$$
then shows that $g$ acts invertibly on 
$\Ext_{\cA}^1(\cInd_{KZ}^G\sigma_1,\cInd_{KZ}^G\sigma_0)$, and thus this Ext module is annihilated 
by $T$, since it has dimension one over~$\F$.

It remains to show that if~$\tau$ denotes the non-split extension of $\sigma_1$ by
$\sigma_0$, then $\cInd_{KZ}^G \tau$ is a non-split
extension. Suppose otherwise.
Then the surjection
$$\cInd_{KZ}^G \sigma_0 \to (\cInd_{KZ}^G \sigma_0)/S (\cInd_{KZ}^G \sigma_0)$$
would extend to a surjection
$$\cInd_{KZ}^G \tau  \to
(\cInd_{KZ}^G \sigma_0)/S (\cInd_{KZ}^G \sigma_0),$$
inducing a $KZ$-equivariant embedding $\tau \hookrightarrow 
(\cInd_{KZ}^G \sigma_0)/S (\cInd_{KZ}^G \sigma_0).$
However, the $K_1$-invariants of
$(\cInd_{KZ}^G \sigma_0)/S (\cInd_{KZ}^G \sigma_0)$
are known to be a direct sum of the mod~$\varpi$ reduction of two cuspidal types
(see for example \cite[Prop.\ 20.1]{BreuilPaskunas} and the explicit formulas in~\cite[\S 16]{BreuilPaskunas})
and so there is no such embedding.
So $\cInd_{KZ}^G \tau$ is indeed non-split.
\end{proof}%

\begin{rem}\label{rem: PS computation gives weight cycling Hecke operator}
  Here is another viewpoint on this computation.  If we let
  $\tau'$ denote the non-split extension of $\sigma_0$ by~$\sigma_1$,
  then there is an isomorphism
  $\cInd_{KZ}^G \tau \iso \cInd_{KZ}^G \tau'$: indeed, the source is
  isomorphic to $\cInd_{IZ}^G \chi$ (for some character $\chi$) and
  the target is isomorphic to $\cInd_{IZ}^G \chi^s$, and these are
  well-known to be isomorphic.  Furthermore, the composite
  $$\cInd_{KZ}^G \sigma_0 \to \cInd_{KZ}^G \tau
\iso \cInd_{KZ}^G \tau' \to \cInd_{KZ}^G \sigma_0$$ is equal to
the Hecke operator~$S$.  Thus we see that the element of the extension group
$\Ext_{\cA}^1(\cInd_{KZ}^G \sigma_1, \cInd_{KZ}^G \sigma_0 )$ classified by
$\cInd_{KZ}^G \tau$ is annihilated by~$S$.
\end{rem}

\begin{rem}
  \label{rem: splitting after inverting T} 
Let~$\pi_i = \cInd_{KZ}^G(\sigma_i)$ where~$\sigma_0, \sigma_1$ are as in Lemma~\ref{lem: PS Ext1}. 
If we push out the extension $\cInd_{KZ}^G\tau$ along the inclusion
$$\pi_0 \hookrightarrow \pi_0[1/S],$$ then the resulting extension {\em does}
split.
Indeed, if~$\rho$ is the cuspidal type containing~$\sigma_1$, then $\pi_1$ lies in~$\cA_{X(\rho)}$.
On the other hand, Proposition~\ref{prop:localizing compact inductions}
shows that $\pi_0[1/S] = j_{U*}j^*_U (\pi_0)$, where $U = X \setminus X(\rho)$.
So we see that $\Ext_{\cA}^1(\pi_1,\pi_0[1/S]) = 0$.

Concretely, this means that there is a $KZ$-equivariant embedding
$\tau \hookrightarrow \pi_0[1/S]$
extending the canonical embedding $\sigma_0 \hookrightarrow \pi_0 \hookrightarrow
\pi_0[1/S].$   This corresponds to the fact that $\pi_0[1/S]$ is a family
of principal series representations of Serre weight $\sigma_0$.
\end{rem}

\begin{lemma}%
\label{tamecuspidalExt1}
If $\sigma_0$, $\sigma_1$, are the constituents of a tame cuspidal
type, then there are isomorphisms %
\begin{multline*}
\Ext_{\cA}^i(\cInd_{KZ}^G \sigma_1, \cInd_{KZ}^G \sigma_0)
\\
\iso
\Ext_{\cA}^i\bigl((\cInd_{KZ}^G \sigma_1)[1/T], \cInd_{KZ}^G \sigma_0\bigr)
\\
\iso
\Ext_{\cA}^i\bigl(\cInd_{KZ}^G \sigma_1, (\cInd_{KZ}^G \sigma_0)[1/S]\bigr)
\\
\iso
\Ext_{\cA}^i\bigl((\cInd_{KZ}^G \sigma_1)[1/T], (\cInd_{KZ}^G \sigma_0)[1/S]\bigr).
\end{multline*}
\end{lemma}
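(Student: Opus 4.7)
The plan is to deduce all three isomorphisms from the long exact $\Ext$-sequences associated to the short exact sequences
\[
0 \to \pi_1 \to \pi_1[1/T] \to \pi_1[1/T]/\pi_1 \to 0 \qquad\text{and}\qquad 0 \to \pi_0 \to \pi_0[1/S] \to \pi_0[1/S]/\pi_0 \to 0,
\]
where $\pi_i := \cInd_{KZ}^G \sigma_i$. Let $\tau$ be the tame cuspidal type whose reduction contains $\sigma_0, \sigma_1$, let $Y_0 \in X(\tau)$ be the image of $V(S) \subset \Spec \cH(\sigma_0)$ under $f_{\sigma_0}$, and let $Y_1 \in X(\tau)$ be the image of $V(T) \subset \Spec \cH(\sigma_1)$ under $f_{\sigma_1}$. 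The combinatorial heart of the argument, which I expect to be the main obstacle, is the verification that $Y_0 \neq Y_1$ and more precisely that $Y_0 \not\in f_{\sigma_1}(\bA^1)$ and $Y_1 \not\in f_{\sigma_0}(\bA^1)$. In the generic case (when neither $\sigma_i$ is a twist of $\Sym^0$), this holds because $Y_0$ and $Y_1$ are the two singular points of $X(\tau)$ corresponding to the distinct principal series types containing $\sigma_0$ and $\sigma_1$ respectively (Corollary~\ref{labelintersections}), while each $f_{\sigma_i}$ is an embedding of $\bA^1$ into $X(\tau)$ hitting one marked point and missing the other. The same analysis covers the end-chain cases, where one of $Y_0, Y_1$ is the nonsingular marked point of $X(\tau)$. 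Together with Proposition~\ref{prop:localizing compact inductions}(1), this shows that $\pi_0 \cong j_{U_1*}j_{U_1}^*\pi_0$ and $\pi_1 \cong j_{U_0*}j_{U_0}^*\pi_1$, where $U_i := X \setminus Y_i$; in particular, each of these compact inductions lies in the essential image of the relevant pushforward functor.

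For the first isomorphism, Lemma~\ref{lem:R1Gamma on c-Ind of wts} identifies $\pi_1[1/T]/\pi_1$ with an object of $\cA_{Y_1}$, so Corollary~\ref{cor:ext vanishing}, applied to $\pi_0 = j_{U_1*}j_{U_1}^*\pi_0$, yields $\Ext^i_\cA(\pi_1[1/T]/\pi_1, \pi_0) = 0$ for all $i$, and the long exact sequence collapses to the desired isomorphism. For the second, $\pi_0[1/S]/\pi_0$ is an object of $\cA_{Y_0}$, and since $\pi_1$ is finitely generated and lies in the essential image of $j_{U_0*}$, Corollary~\ref{cor:another ext vanishing} gives $\Ext^i_\cA(\pi_1, \pi_0[1/S]/\pi_0) = 0$; the corresponding long exact sequence provides the isomorphism.

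For the third isomorphism, I first note that the identification $\pi_0 \cong j_{U_1*}j_{U_1}^*\pi_0$ extends to $\pi_0[1/S] \cong j_{U_1*}j_{U_1}^*\pi_0[1/S]$, since both $j_{U_1*}$ and $j_{U_1}^*$ commute with filtered colimits by Lemma~\ref{lem:adjoint and colimits} and $\pi_0[1/S] = \varinjlim_n S^{-n}\pi_0$. Applying Corollary~\ref{cor:ext vanishing} once more yields $\Ext^i_\cA(\pi_1[1/T]/\pi_1, \pi_0[1/S]) = 0$, whose long exact sequence combined with the second isomorphism produces
\[
\Ext^1(\pi_1[1/T], \pi_0[1/S]) \cong \Ext^1(\pi_1, \pi_0[1/S]) \cong \Ext^1(\pi_1, \pi_0),
\]
completing the proof.
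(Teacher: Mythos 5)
Your proof is correct, and it takes a somewhat different route from the paper. The paper first uses the standard localization argument to show $\Ext^i\bigl((\cInd_{KZ}^G\sigma_1)/T, \cInd_{KZ}^G\sigma_0\bigr) = 0$, concludes that $T$ acts invertibly on each $\Ext^q(\cInd_{KZ}^G\sigma_1, \cInd_{KZ}^G\sigma_0)$, writes $(\cInd_{KZ}^G\sigma_1)[1/T]$ as a colimit of copies of $\cInd_{KZ}^G\sigma_1$ along $T$, and then invokes the $R\varprojlim$ spectral sequence for $\Ext$ out of a countable colimit, noting that the higher derived limits vanish because the transition maps are isomorphisms. You instead feed the short exact sequence $0 \to \pi_1 \to \pi_1[1/T] \to \pi_1[1/T]/\pi_1 \to 0$ directly into the localization machinery: $\pi_1[1/T]/\pi_1$ lies in $\cA_{Y_1}$, while $\pi_0$ (and, after commuting $j_{U_1*}$ past filtered colimits, $\pi_0[1/S]$ as well) lies in the essential image of $j_{U_1*}$, so Corollary~\ref{cor:ext vanishing} gives the needed vanishing, and the long exact sequence finishes the job. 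Your version avoids the derived-limit spectral sequence entirely and also spells out explicitly the combinatorial point that $Y_0$ and $Y_1$ are the two distinct marked points of $X(\tau)$ and that each $f_{\sigma_i}(\bA^1)$ misses the other one, which the paper leaves implicit in the phrase ``the usual argument''; it is perhaps worth also remarking, as your argument implicitly relies on it, that since $\sigma_0$ and $\sigma_1$ arise as Jordan--H\"older factors of a cuspidal type neither can be a twist of $\Sym^{p-2}$ or $\Sym^{p-1}$, so only the generic and $\Sym^0$ cases of the definition of $f_\sigma$ can occur.
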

\begin{proof}
Let~$Y$ be the block of $(\cInd_{KZ}^G \sigma_1)/T(\cInd_{KZ}^G \sigma_1)$, and let~$U = X \setminus Y$.
Then Proposition~\ref{prop:localizing compact inductions} implies that $\cInd_{KZ}^G \sigma_0 \cong j_*j^*\cInd_{KZ}^G \sigma_0$.
Hence Corollary~\ref{cor:ext vanishing} implies that
$\Ext^i_{\cA}\bigl( (\cInd_{KZ}^G \sigma_1)/T(\cInd_{KZ}^G \sigma_1), 
\cInd_{KZ}^G \sigma_0 \bigr) = 0$ for all $i$,
so that multiplication by $T$ induces an isomorphism from
$\Ext^i_{\cA}(\cInd_{KZ}^G \sigma_1, \cInd_{KZ}^G \sigma_0)$ 
to itself.  
Writing $(\cInd_{KZ}^G\sigma_1)[1/T]$ as the colimit
of $\cInd_{KZ}^G\sigma_1$ under multiplication by $T$, 
we obtain from Lemma~\ref{Rlim spectral sequence} a spectral sequence
\begin{multline*}
E_2^{p,q} \coloneq  \operatorname{R}^p\varprojlim_{T \cdot} \Ext^q_{\cA}(\cInd_{KZ}^G \sigma_1,
\cInd_{KZ}^G \sigma_0 ) \\ \implies
\Ext_{\cA}^{p+q}\bigl( (\cInd_{KZ}^G \sigma_1)[1/T], \cInd_{KZ}^G \sigma_0\bigr).
\end{multline*}
Recalling that for an inverse system of surjections, the higher
derived inverse limits all vanish, we obtain the first claimed isomorphism of the lemma.

By Corollary~\ref{cor:another ext vanishing}, we have
\[
\Ext_{\cA}^i(\cInd_{KZ}^G \sigma_1,  (\cInd_{KZ}^G \sigma_0)/S (\cInd_{KZ}^G \sigma_0)) = 0
\]
for all~$i$, 
hence~$S$ is invertible on $\Ext_{\cA}^i(\cInd_{KZ}^G \sigma_1, \cInd_{KZ}^G \sigma_0)$. 
Taking the colimit of multiplication by~$S$ and applying Lemma~\ref{lem:ext and colimits}, we obtain the isomorphism 
\[
\Ext_{\cA}^i(\cInd_{KZ}^G \sigma_1, \cInd_{KZ}^G \sigma_0) \cong \Ext_{\cA}^i\bigl(\cInd_{KZ}^G \sigma_1, (\cInd_{KZ}^G \sigma_0)[1/S]\bigr).
\]

Finally, the exact sequence
\[
0 \to \cInd_{KZ}^G \sigma_1 \to \cInd_{KZ}^G \sigma_1[1/T] \to \varinjlim_n (\cInd_{KZ}^G \sigma_1)/T^n\cInd_{KZ}^G \sigma_1 \to 0
\]
implies that
\[
\Ext_{\cA}^i(\cInd_{KZ}^G \sigma_1, \cInd_{KZ}^G \sigma_0[1/S]) \cong \Ext_{\cA}^i\bigl(\cInd_{KZ}^G \sigma_1[1/T], (\cInd_{KZ}^G \sigma_0)[1/S]\bigr),
\]
because
\[
\Ext_{\cA}^i(\varinjlim_n (\cInd_{KZ}^G \sigma_1)/T^n\cInd_{KZ}^G \sigma_1,  (\cInd_{KZ}^G \sigma_0)[1/S]) = 0
\]
for all~$i$, by Corollary~\ref{cor:ext vanishing}.
\end{proof}

Note that the localizations $(\cInd_{KZ}^G \sigma_i)[1/T]$ are algebraic families of principal series over the multiplicative group~$\bG_m$, a fact that goes back to~\cite{BarthelLivneDuke}.
It follows that the Ext groups in Lemma~\ref{tamecuspidalExt1} can be computed using the techniques of
\cite[\S 6-7]{BreuilPaskunas} to relate these~$\Ext^1$ of compact inductions to certain $\Ext^1$ of Hecke modules.
The result is the following analogue of Lemma~\ref{lem: PS Ext1}, which would also follow by the method of~\cite[Section~4]{MR2667883}, if the $\delta$-functor of derived ordinary parts 
had an extension to non-admissible representations.
It can also be proved using the the techniques of~\cite{Heyerderived}.
However, from the optic of this paper, a more natural way to prove Lemma~\ref{lem: lemma we won't prove about cuspidal types}
is as a consequence of our expected results on Bernstein centres explained in Section~\ref{subsec:
  Bernstein centre}, %
so we don't give a proof here.
\begin{lemma}\label{lem: lemma we won't prove about cuspidal types}
Let $\sigma_0$, $\sigma_1$ be the constituents of a tame cuspidal
type, and let~$\tau$ be the nonsplit extension of~$\sigma_1$ by~$\sigma_0$.
Then as an $\F[S,T]$-module,
$\Ext_{\cA}^1(\cInd_{KZ}^G \sigma_1, \cInd_{KZ}^G \sigma_0)$
is free of rank one over
$\F[S,T]/(ST-1)$,
generated by the class of $\cInd_{KZ}^G \tau$.
\end{lemma}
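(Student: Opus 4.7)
The plan is to compute this Ext module via a fiberwise analysis, exploiting the localized identifications provided by Lemma~\ref{tamecuspidalExt1}.

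First, by Lemma~\ref{tamecuspidalExt1}, the module in question is isomorphic to $\Ext^1((\cInd_{KZ}^G\sigma_1)[1/T], (\cInd_{KZ}^G\sigma_0)[1/S])$ as an $\F[S,T]$-module, and in particular both $S$ and $T$ act invertibly on it. Thus it suffices to show that this module is free of rank one over $\F[S^{\pm 1},T^{\pm 1}]/(ST-1)$, with generator given by the class of $\cInd_{KZ}^G\tau$.

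Second, I would determine the fibers of the Ext module at each closed point $(\lambda, \mu) \in \Gm \times \Gm$ (enlarging $\F$ if necessary via Proposition~\ref{prop:ext and base-change}). A dimension-shifting argument applied to the short exact sequences
$$0 \to \cInd_{KZ}^G\sigma_0 \xrightarrow{S-\lambda} \cInd_{KZ}^G\sigma_0 \to \cInd_{KZ}^G\sigma_0/(S-\lambda) \to 0,$$
and similarly for $\sigma_1$ with the Hecke operator $T-\mu$, reduces the computation to
$$\Ext^1_{\cA}\bigl(\cInd_{KZ}^G\sigma_1/(T-\mu),\, \cInd_{KZ}^G\sigma_0/(S-\lambda)\bigr),$$
once the vanishing of the relevant $\Hom$ terms (Lemma~\ref{lem:cind homs}) and the control on higher Ext's (Lemma~\ref{lem:another ext vanishing}) are noted. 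By the block classification of Section~\ref{coordinates}, the two irreducible principal series on the right lie in the same (type-(2)) block of $\cA^{\ladm}$ precisely when $\mu = \lambda^{-1}$, in which case this Ext is one-dimensional by~\cite[\S 8]{MR3150248}; otherwise it vanishes by Corollary~\ref{cor:disjoint closed localization lemma}. The fibers are thus supported on the hyperbola $ST = 1$, and have dimension one there.

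Third, the class of $\cInd_{KZ}^G\tau$ is everywhere a generator: in each nonzero fiber, it reduces to the unique non-split extension of $\cInd_{KZ}^G\sigma_0/(S-\lambda)$ by $\cInd_{KZ}^G\sigma_1/(T-\lambda^{-1})$, with non-splitness following from an argument parallel to the last paragraph of Lemma~\ref{lem: PS Ext1} (a splitting would yield a $KZ$-equivariant embedding $\tau \hookrightarrow \cInd_{KZ}^G\sigma_0/(S-\lambda)$, which does not exist). Combined with a direct verification that $(ST-1)$ annihilates $[\cInd_{KZ}^G\tau]$, one concludes the structure result via a Nakayama-type argument. The main obstacle is precisely this last step: while fiberwise one-dimensionality over the hyperbola follows from the dimension shifting, promoting this to a clean statement of freeness over $\F[S,T]/(ST-1)$ requires either a direct representation-theoretic proof that $(ST-1)\cdot[\cInd_{KZ}^G\tau] = 0$, or an appropriate finiteness property of the Ext module over $\F[S,T]$. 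A more conceptual route, in line with the remark following the lemma statement, would be to interpret the module directly as a rank-one module over the Bernstein centre of a suitable block-theoretic object using the structure of Section~\ref{subsec: Bernstein centres of blocks}, with the relation $ST = 1$ arising from the fact that the central elements lifting the Hecke operators $S$ and $T$ produced by Proposition~\ref{centreelementsI} satisfy this relation on every type-(2) block lying in $X(\tau)$.
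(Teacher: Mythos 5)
The paper itself does not prove this lemma --- it is explicitly deferred, with three possible routes sketched: the Breuil--Pa\v{s}k\={u}nas technique of translating compact-induction $\Ext^1$'s into $\Ext^1$'s of Hecke modules, a hypothetical extension of derived ordinary parts to non-admissible representations, or (the authors' preferred route) the forthcoming Bernstein centre results. Your fiberwise/support-theoretic approach is therefore a fourth route, distinct from all three, and the comparison can only be with the problem itself, not with a written proof.

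The overall strategy --- pass to $\F[S^{\pm1},T^{\pm1}]$ via Lemma~\ref{tamecuspidalExt1}, show the fibers are concentrated on the hyperbola $ST=1$ with one-dimensional stalks there, and then promote --- is sensible and in the spirit of the paper. You also correctly and honestly flag the two main obstacles: (i) proving $(ST-1)\cdot[\cInd_{KZ}^G\tau]=0$, and (ii) a finiteness input needed to pass from fiberwise information to a module isomorphism. Both are genuine gaps, and neither is filled in the proposal.

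There is, however, an additional gap that you should be aware of. Your appeal to Lemma~\ref{lem:another ext vanishing} for ``control on higher $\Ext$'s'' does not actually apply here: that lemma requires a $g \in \cH(\sigma_1)$ with $f_{\sigma_1}^{-1}(Y)=V(g)$, but for the natural choice $Y=X(\tau)$ (the component containing both weights) we have $f_{\sigma_1}^{-1}(Y)$ equal to all of $\Spec\cH(\sigma_1)$, forcing $g=0$, so the conclusion of the lemma is vacuous. As a result, the dimension shift does not straightforwardly identify the fiber $E^1/(S-\lambda,T-\mu)$ with $\Ext^1(\cInd_{KZ}^G\sigma_1/(T-\mu), \cInd_{KZ}^G\sigma_0/(S-\lambda))$: in the long exact sequence
\[
0 \to E^1/(S-\lambda) \to \Ext^1\bigl(\cInd_{KZ}^G\sigma_1,\ \cInd_{KZ}^G\sigma_0/(S-\lambda)\bigr) \to E^2[S-\lambda] \to \cdots
\]
the term $E^2[S-\lambda]$ is not obviously zero, and without it your surjectivity claim (equivalently, the nonvanishing of $E^1/(S-\lambda)$) is not secure. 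The cleaner way to get the one-dimensional \emph{upper bound} is via Proposition~\ref{prop: Colmez backwards extensions}: since $\cInd_{KZ}^G\sigma_0/(S-\lambda)$ is an absolutely irreducible non-supersingular quotient whose block is touched by $\cInd_{KZ}^G\sigma_1$, and since $\sigma_1$ is generic (never a twist of $\Sym^0$, $\Sym^{p-2}$ or $\Sym^{p-1}$ when $\sigma_0,\sigma_1$ are the two constituents of a cuspidal type), $\Ext^1(\cInd_{KZ}^G\sigma_1, \cInd_{KZ}^G\sigma_0/(S-\lambda))$ is one-dimensional, and $E^1/(S-\lambda)$ injects into it. But the \emph{lower bound} --- that the image of $[\cInd_{KZ}^G\tau]$ in that one-dimensional space is nonzero, i.e.\ that the pushout
\[
0 \to \cInd_{KZ}^G\sigma_0/(S-\lambda) \to \cInd_{KZ}^G\tau/(S-\lambda) \to \cInd_{KZ}^G\sigma_1 \to 0
\]
is nonsplit --- is not established by the one-paragraph non-splitness sketch you give; note that, in contrast to Lemma~\ref{lem: PS Ext1}, here both $\sigma_0$ and $\sigma_1$ may occur in the $KZ$-socle of the middle term, so the ``no such embedding'' argument needs more care. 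In summary: the scaffolding is reasonable, but each of the three remaining pieces --- fiberwise nonvanishing on the hyperbola, the annihilation by $ST-1$, and the finiteness needed to globalize --- requires a substantive new input that the proposal does not supply.
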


\begin{rem}\label{rem: atomes automorphes remark}
In the context of Lemma~\ref{lem: lemma we won't prove about cuspidal types}, both compact inductions are objects of~$\cA_{X(\tau)}$ for the same cuspidal type~$\tau$. 
The compact induction $\cInd_{KZ}^G \tau$ is supported on the entirety of $X(\tau) \cong \bP^1$, in the sense that it lies in $\cA_{X(\tau)}$ and has irreducible quotients lying in every block parameterized by the closed points of $X(\tau)$. 
Indeed, its quotient $\cInd_{KZ}^G \sigma_1$ has irreducible quotients lying over the whole of $f_{\sigma_1}(\bA^1)$; the remaining point $X(\tau) \setminus f_{\sigma_1}(\bA^1)$ corresponds to the supersingular quotient $(\cInd_{KZ}^G \sigma_0)/T(\cInd_{KZ}^G \sigma_0)$, which contains a copy of $\tau$ and is thus also a quotient of $\cInd_{KZ}^{G} \tau$.

  If we let $U \cong \bG_m$ denote the complement of the marked points in~$X(\tau)$, then
  $j_{U*}j^*_U(\cInd_{KZ}^G \tau)$ ``lies over'' $U$ in
  an intuitive sense, and gives a family of {\em atomes automorphes}.
  It is an extension of
$$j_{U*}j^*_U(\cInd_{KZ}^G \sigma_1) = (\cInd_{KZ}^G \sigma_1)[1/T]$$
by
$$j_{U*}j^*_U(\cInd_{KZ}^G \sigma_0) = (\cInd_{KZ}^G \sigma_0)[1/S].$$
(Both the equalities follow from Proposition~\ref{prop:localizing
  compact inductions}.)  

To see the relationship between Lemma~\ref{lem: lemma we won't prove about cuspidal types} and the Bernstein centre, note that the equality~$ST = 1$ in the endomorphism ring of 
\[
\Ext_{\cA}^1(\cInd_{KZ}^G \sigma_1[1/T], \cInd_{KZ}^G \sigma_0[1/S])
\]
implies that there exists an automorphism~$T^+$ of $j_{U*}j^*_U(\cInd_{KZ}^G \tau)$ which induces~$S^{-1}$ on the subobject and~$T$ on the quotient.
We expect the automorphism~$T^+$ to be induced by the Bernstein centre of $\cA_U$.
Taking the cokernel of~$T^+-\lambda$ for~$\lambda \in \kbase^\times$ corresponds to taking the fibre at~$\lambda \in \bG_m$, and yields the corresponding \emph{atome automorphe}.
\end{rem}

\appendix
\section{Category-theoretic background}\label{sec: cat theory background}
In this appendix we recall various results about abelian categories
and their localizations. Much of this material goes back to Gabriel's
thesis~\cite{Gabrielthesis}, and most of the rest of it can be found
in~\cite{MR2182076}. Some of the results that we need from the
previous two references are collected in~\cite[\S2]{MR4200808}, which
we sometimes refer to for convenience.

In order to be able to use these references we need to fix a %
Grothendieck universe, which we do without further comment. Sets are
\emph{small} if they belong to this fixed universe, and all limits and
colimits are assumed to be small, i.e.\ can be written as (co)limits
over small indexing categories. %

\subsection{Grothendieck and locally Noetherian categories}
We recall that an abelian category~$\cA$ is a \emph{Grothendieck
category} if it 
satisfies (AB5) (which is to say that $\cA$ is cocomplete
and that the formation of filtered colimits in $\cA$ is exact),
and it furthermore admits a set of generators (i.e.\ a small set of objects
$\{G_i: i \in I\}$ with the property that for any nonzero morphism $f:X\to Y$ in
$\cA$, there is a morphism $g:G_i\to X$ for some~$G_i$ such that
$fg\ne 0$). Every object in a Grothendieck category admits an injective envelope~\cite[\S II.6 Thm. 2]{Gabrielthesis}.  %

Recall that an object~$X$ of an abelian category is \emph{Noetherian}, resp.\ \emph{Artinian}, if it
satisfies the ascending chain condition on subobjects, resp.\ the descending chain condition on subobjects.
An object is~\emph{finite} if it is both Noetherian and Artinian.
An abelian category is called {\em Noetherian}, resp.\ {\em Artinian}, if so are all its objects.
An abelian category is called {\em locally Noetherian}, resp.\ \emph{locally finite} if it is
Grothendieck, and furthermore admits a set of generators that are
Noetherian, resp.\ finite.
Recall also that an object~$X$ is 
\emph{compact} if $\Hom(X,-)$ commutes with filtered colimits.

\begin{prop}
  \label{prop: properties of locally Noetherian categories}Suppose
  that~$\cA$ is a locally Noetherian abelian category. Then
  \begin{enumerate}
  \item\label{item: compact iff Noetherian} An object of~$\cA$ is
    compact if and only if it is Noetherian.
  \item\label{item: filtered colimit of injective is injective}A
    filtered colimit of injective objects of~$\cA$ is injective.
  \item\label{item: Exts commute with colimits in the right hand
      variable}If $X$ is a Noetherian object of~$\cA$, then for each
    $n\ge 0$ the functor $\Ext^n_{\cA}(X,-)$ commutes with filtered colimits.
  \end{enumerate}
\end{prop}
\begin{proof}
For~\eqref{item: compact iff Noetherian}, note that any Noetherian object of~$\cA$ is compact by \cite[\S2.4
  Cor.\ 1]{Gabrielthesis}. Conversely if~$X$ is compact then by writing
  $X=\varinjlim_iX_i$ as a filtered colimit of Noetherian objects, we see
  that we can factor the identity morphism $1_X$ through some~$X_i$;
  so~$X$  is a retract of a Noetherian object and is thus itself
  Noetherian. Part~\eqref{item: filtered colimit of injective is
    injective} is part of \cite[\S2.4
  Cor.\ 1]{Gabrielthesis}. Part~\eqref{item: Exts commute with colimits
    in the right hand variable} is~\cite[Prop.\ 2.7]{MR4200808}; alternatively, it is a special case of Lemma~\ref{lem:limitExt}~(1) below.
\end{proof}

Proposition~\ref{prop: properties of locally Noetherian categories}\eqref{item: Exts commute with colimits in the right hand variable} is about $\Ext$ and colimits in the second variable. 
We will also make use of the following standard result about $\Ext$ and colimits in the first variable, which does not require Noetherian assumptions.
(The assumption in Lemma~\ref{Rlim spectral sequence} that the transition maps of~$F$ are monomorphisms can be relaxed, making use of the construction in \cite[Chapter~5]{MR0407091},
but we won't need this more general result.)

\begin{lemma}\label{Rlim spectral sequence}
Let~$\cA$ be a Grothendieck abelian category. %
Let~$I$ be a filtered small set, and $F: I \to \cA$ a diagram whose transition maps are monomorphisms.
Then there is a spectral sequence
\[
E_2^{pq} = \mathrm{R}^p\varprojlim{}_{i \in I}\Ext^q_{\cA}(F(i), -) \Rightarrow \Ext^{p+q}_{\cA}(\varinjlim{}_{i \in I}F(i), -).
\]
\end{lemma}
\begin{proof}
Since
\[
\Hom_{\cA}(\varinjlim{}_{i \in I}F(i), -) \cong \varprojlim{}_{i \in I}\Hom_{\cA}(F(i), -),
\]
it suffices to prove that for every injective object~$\cI \in \cA$, the system 
\[
\left ( \Hom_{\cA}(F(i), \cI) \right )_{i \in I} \in \Fun(I^{\op}, \Mod(\bZ)) 
\]
is acyclic for $\varprojlim{}_{i \in I}$.
By~\cite[Thm.\ 1.8]{MR0407091}, a sufficient condition for acyclicity is \emph{weak flasqueness}, 
in the sense of~\cite[Lem.\ 1.3, Definition]{MR0407091}.
This means that it suffices to check that if $J \subset I$ is a filtered subset, then
\[
\lim_{i \in I} \Hom_{\cA}(F(i), \cI) \to \lim_{j \in J}\Hom_{\cA}(F(j), \cI)
\]
is surjective.
However, this map is isomorphic to
\[
\Hom_{\cA}(\colim_{i \in I}F(i), \cI) \to \Hom_{\cA}(\colim_{j \in J}F(j), \cI),
\]
which is surjective because~$\cI$ is injective and (under our assumptions on the transition maps) 
$\colim_{j \in J}F(j) \to \colim_{i \in I}F(i)$ is a monomorphism.
\end{proof}

\subsection{Localizing categories}\label{app: localizing cats}

Suppose that $\cA$ is a Grothendieck category.
If $\cB$ is a Serre subcategory of $\cA$ (i.e.\ a non-empty full
subcategory closed under the formation of subquotients and extensions in $\cA$),
then we may form the quotient category $\cA/\cB$.    We are interested in the question
of when the natural functor $\cA \to \cA/\cB$ has a right adjoint; if such
a right adjoint exists, we say that $\cB$ is a {\em localizing subcategory}.  
This right adjoint, if it exists, is necessarily left-exact: indeed, being a right
adjoint, it preserves all limits. 
Furthermore, the counit of the adjunction is necessarily an isomorphism~\cite[\S III.2 Proposition~3]{Gabrielthesis}, hence the right adjoint is fully faithful if it exists.

Since left adjoints
necessarily preserve colimits, we see that for such a right adjoint to exist,
the Serre subcategory $\cB$ must be closed under the formation of (small) colimits in $\cA$.
Since $\cB$ is closed under forming quotients in $\cA$, this is equivalent
to asking that $\cB$ be closed under the formation of arbitrary direct sums in~$\cA$.
It turns out %
that this condition is also sufficient for $\cB$ to be localizing.  The quotient 
category $\cA/\cB$ is then also a Grothendieck category. (See e.g.\
\cite[Rem.\ 2.21]{MR4200808} for these facts.) %

We note that 
the construction of the adjoint is not too difficult, modulo set-theoretic issues.
Namely, if $A$ is an object of $\cA$, we consider the category of morphisms
$A \to A'$ which project to an isomorphism in~$\cA/\cB$ (equivalently,
whose kernel and cokernel both lie in~$\cB$).    Then taking the colimit
of $A'$ over this category gives the value of the adjoint on $A$.

We assume throughout the rest of this subsection that $\cB$ is a
localizing subcategory of~$\cA$, we let $j^*:\cA \to \cA/\cB$ denote the
natural projection, and we let $j_*:\cA/\cB \to \cA$ denote the right
adjoint to~$j^*$. Then~$j^{*}$ is exact by construction, while as already noted, since $j_*$ is a right adjoint,
it is automatically left-exact.  However, it need not be exact in
general.  Another way to think of this is that the essential image of
$j_*$ is a full subcategory of $\cA$ which is intrinsically an abelian
category, but is not necessarily an abelian subcategory of~$\cA$.
Indeed, this essential image will be an abelian subcategory of $\cA$
if and only if $j_*$ is actually exact.

\begin{rem}%
  \label{rem: where our notation comes from}Our notation is motivated
  by the following example. If $j:U \subseteq X$ is the inclusion
  of a retrocompact open subset in a scheme~$X$, then we have
the adjoint pair $(j^*,j_*): \QCoh(U) \to \QCoh(X)$, which realizes 
$\QCoh(U)$ as a Serre quotient category of $\QCoh(X)$: it is the quotient
of $\QCoh(X)$ by the Serre subcategory $\QCoh_Z(X)$
consisting of quasicoherent sheaves,
all of whose sections are supported on $Z \coloneq  X \setminus U$. (See~\cite[VI.1, Prop.\ 3]{Gabrielthesis}.)
\end{rem}

The essential image of $j_*$ admits the following characterization:

\begin{lemma}
\label{lem:adjoint image}
An object $A$ of $\cA$ lies in the essential image of $j_*$ if and only
if $\Hom_{\cA}(B,A) = \Ext_{\cA}^1(B,A)=0$ for any object $B$ of~$\cB$, if and only if the unit $A \to j_*j^*A$ is an isomorphism.
\end{lemma}
\begin{proof}
This is immediate from~\cite[III.2, Lem.\ 1(b), Lem.\ 2 Cor.]{Gabrielthesis}.  
\end{proof}

We now have the following result, which relates higher Ext's to higher 
direct images of $j_*$.

\begin{lemma}\label{lem: vanishing Ext vanish Ri}
Let $A'$ be an object of $\cA/\cB$, and fix some $n \geq 1$. 
Then the following are equivalent:
\begin{enumerate}
\item $\Ext^i_{\cA}(B,j_*A') = 0$ for all objects $B$ of~$\cB$, and all $0 \leq i \leq n$.
\item $R^i j_* A' = 0$ for all $1 \leq i \leq n-1$.
\end{enumerate}
\end{lemma}
\begin{proof}
Let $A'\hookrightarrow I^{\bullet}$ be an injective resolution (in the category~$\cA/\cB$, which is a Grothendieck category, hence has enough injectives).
Then $j_*I^{\bullet}$ computes the various~$R^ij_*A'$.  Since $j^*j_*I^{\bullet} \iso
I^{\bullet}$, we find that $j^* R^i j_*A' = 0$ for all~$i~>~0$,
and thus that $R^ij_*A'$ is an object of $\cB$ for~$i~>~0$.
In particular, we see that $R^ij_*A' = 0$ if and only if $\Hom_{\cA}(B, R^ij_*A') = 0$
for all objects $B$ of~$\cB$.

Now since $j_*$ is right adjoint to an exact functor, it preserves injectives,
and thus $\Hom_{\cA}(B,j_*I^{\bullet})$ computes
$\RHom_{\cA}(B,j_*I^{\bullet}),$
for any object $B$ of~$\cB$.   But this $\Hom$ is identically zero,
by Lemma~\ref{lem:adjoint image}, and so 
$\RHom_{\cA}(B,j_*I^{\bullet})~=~0.$
On the other hand, we have the usual $E_2$ spectral sequence computing this $\RHom$,
which thus becomes 
\begin{equation}\label{converges to zero}
E_2^{p,q} \coloneq   \Ext^p_{\cA}(B, R^qj_* A') \implies 0.
\end{equation}
We now prove by induction on~$n$ that~(1) implies~(2).
The base case~$n=1$ is true since there is nothing to prove.
We can now assume that $R^qj_* A' = 0$ 
for all $1 \leq q \leq n-2$, and we need to prove $R^{n-1}j_*A' = 0$.
By the conclusion of the previous paragraph, it suffices to prove that $E_2^{0, n-1} = \Hom_{\cA}(B, R^{n-1}j_* A') = 0$ for all~$B \in \cB$.
Since $R^qj_*A' = 0$, all differentials out of $E_r^{0, n-1}$ vanish except possibly $d_n : E_n^{0, n-1} \to E_n^{n, 0}$.
However, Assumption~(1) implies that $E_2^{n, 0} = 0$, hence also~$d_n = 0$, and so $E_2^{0, n-1} = E_\infty^{0, n-1}$.
Since~\eqref{converges to zero} converges to zero, we conclude that~$E_2^{0, n-1} = 0$.

We now prove that~(2) implies~(1), again by induction on~$n$.
The base case~$n = 1$ is true by Lemma~\ref{lem:adjoint image}.
Fix~$B \in \cB$.
By the inductive assumption, it suffices to prove that $E_2^{n, 0} = \Ext^n_\cA(B, j_*A') = 0$, which can be done by a similar argument as in the previous paragraph, 
since all the differentials into $E_r^{n,0}$ vanish, by Assumption~(2).
\end{proof}%

As a corollary, we have the following result.

\begin{cor}
\label{cor:exactness criterion}
The following are equivalent:
\begin{enumerate}
\item  The functor $j_*$ is exact.
\item The derived functor $R^1 j_*$ is identically zero.
\item For all objects $A$ lying in the essential image of~$j_*$,
and for all objects $B$ of~$\cB$, we have
$\Ext^2_{\cA}(B,A) = 0$.
\end{enumerate}
If these equivalent conditions hold, then in fact all the derived functors
$R^ij_*$ ($i \geq 1$) vanish, and $\Ext_{\cA}^i(B,A) = 0$ for all $i \geq 0$
and objects $A$ lying in the essential image of $j_*$ and $B$ lying in~$\cB$.
\end{cor}
\begin{proof}
The equivalence of~(1) and~(2) is immediate.
Lemma~\ref{lem: vanishing Ext vanish Ri} with~$n = 2$ yields that~(2) implies~(3), and the converse implication follows from Lemma~\ref{lem: vanishing Ext vanish Ri} and Lemma~\ref{lem:adjoint image}.
\end{proof}

We also note the following result, which in practice can simplify
the checking of the various vanishing conditions introduced in the preceding
results.

\begin{lemma}
\label{lem:checking on generators}
If $\{B_j\}$ is a system of generators of $\cB$, 
and $A$ is an object of $\cA$, then the following are equivalent:
\begin{enumerate}
\item $\Ext_{\cA}^i( B_j, A) = 0$ for all $B_j$ and all $i \leq n$.
\item $\Ext_{\cA}^i(B, A) = 0$ for all objects $B$ of $\cB$, and all $i \leq n$.
\end{enumerate}
\end{lemma}
\begin{proof} Clearly (2) implies~(1), and so we focus on proving the
converse.
To say that $\{B_j\}$ is a system of generators of $\cB$ is
to say that for any object $B$ of $\cB$, we may find an epimorphism
$ \bigoplus_j B_j^{\oplus I_j} \to B.$
Since $\cB$ is a Serre subcategory which is closed under the formation of colimits 
in~$\cA$, the kernel of this epimorphism is again an object of $\cB$.
A straightforward dimension-shifting argument reduces us to checking
the vanishing of~(2) in the case when 
$ B =  \bigoplus_j B_j^{\oplus I_j}.$
In this case, we immediately compute that
$\Ext_{\cA}^i(B,A) = \prod_j \Ext_{\cA}^i(B_j,A)^{I_j}$.  The vanishing claimed in~(2)
thus follows from the vanishing assumed in~(1).
\end{proof}

We conclude with some results that are specific to the locally Noetherian case.

\begin{lemma}
\label{lem:adjoint and colimits abstract version}%
Suppose that $\cA$ is locally Noetherian.
Then:
\begin{enumerate}
\item
$j_*$ commutes with filtered colimits.
\item
Both $\cB$ and $\cA/\cB$ are locally Noetherian.
\item
An object of $\cB$ is Noetherian if and only if it is Noetherian as an object of~$\cA$,
while an object of $\cA/\cB$ is Noetherian
if and only if it is isomorphic to the image of a Noetherian object of~$\cA$.
Furthermore, the induced functor $\cA^{\fg} \to (\cA/\cB)^{\fg}$ is a Serre quotient with kernel~$\cB^{\fg}$.
\end{enumerate}
\end{lemma} 
\begin{proof}These are immediate from  \cite[\S III.4  Cor.\
  1, Prop.\ 9]{Gabrielthesis} together with~\cite[Prop.\ 2.22]{MR4200808}.
\end{proof}

\begin{lemma}\label{Rj_* commutes with filtered colimits}
Assume that~$\cA$ is locally Noetherian.
Then the derived functors~$R^ij_*: \cA/\cB \to \cA$ commute with filtered colimits.
\end{lemma}
\begin{proof}
Since~$\cA/\cB$ is a Grothendieck abelian category, filtered colimits are exact in~$\cA/\cB$.
Since it is furthermore locally Noetherian, by Lemma~\ref{lem:adjoint and colimits abstract version}(2), filtered colimits of injective objects of~$\cA/\cB$ are injective, by Proposition~\ref{prop: properties of locally Noetherian categories}(2).
So in~$\cA/\cB$ we can form an injective resolution of a filtered colimit by taking a colimit of injective resolutions.
This reduces the claim to the case~$i = 0$, which is proved in Lemma~\ref{lem:adjoint and colimits abstract version}(1).
\end{proof}

\subsection{Pro-categories (and Ind-categories)}\label{subsec: pro
  categories}
We now recall some standard properties of $\Pro$- and
$\Ind$-categories. It suffices to develop the theory of
$\Pro$-categories, because for any category~$\cC$, there is an
equivalence $\Ind(\cC)\cong\Pro(\cC^\op)^\op$; in the body of the
paper we make far more use of $\Pro$-categories than
$\Ind$-categories, so we do not explicitly state results for
$\Ind$-categories in this appendix. (Note also that some of our
references, in particular~\cite{MR2182076}, take the opposite
approach, so the corresponding statements for $\Ind$-categories are
often more readily available in the literature.)

If $\cC$ is a category,
one can define its associated  pro-category $\Pro(\cC)$
to be the category whose objects are the diagrams
$F: I \to \cC$ indexed by a cofiltered small category~$I$,
with the morphisms between two diagrams $F: I \to \cC, G: J \to \cC$ being defined
by the formula
\[
\Hom_{\Pro(\cC)}(F, G) = \varprojlim_J \varinjlim_I \Hom_\cC(F(i), G(j)).
\]
We will sometimes denote the diagram $F: I \to \cC$ via
$\quoteslim{I}  F(i)$,  %
and refer to it as a pro-object of~$\cC$.
The point of this notation is to distinguish the pro-object
from the limit $\varprojlim_I F(i)$ in $\cC$ itself,
if this limit happens to exist.
Often, when employing this notation, we write $X_i$ rather than~$F(i)$,
and so denote the  object  $F$ of $\Pro(\cC)$ as $\quoteslim{I} X_i.$

As a special case of the definition of morphisms,
we note that
if~$X \in \cC$ and $F: I \to \cC$ is a cofiltered diagram,
then the definitions imply that
\begin{equation}
\label{eqn:first hom formula}
\Hom_{\Pro(\cC)}( \quoteslim{I} X_i, X) = \varinjlim_I \Hom_\cC(X_i, X)
\end{equation}
and
\begin{equation}
\label{eqn:second hom formula}
\Hom_{\Pro(\cC)}(X, \quoteslim{I} X_i)
= \varprojlim_I \Hom_\cC(X, X_i).  
\end{equation}

There is  an equivalent definition of $\Pro(\cC)$, in which
the object $\quoteslim{I} F(i)$ is interpreted as the functor %
\begin{equation}
\label{eqn:pro-object as functor}
\varinjlim_i \Hom_{\cC}\bigl(F(i),\text{--})
\end{equation}
on~$\cC$.
To give a little more detail:
$\cC$ admits its co-Yoneda embedding $\cC \hookrightarrow \Fun(\cC, \Sets)^{\op}$
via $X \mapsto \Hom_{\cC}(X,\text{--}).$  
If $F:I \to \cC$ is a diagram indexed by a cofiltered small category,
then we can form the actual projective limit $\varprojlim_I F(i)$  in $\Fun(\cC,\Sets)^{\op}$; 
this yields precisely the functor~\eqref{eqn:pro-object as functor}.
In this way we obtain a functor $\Pro(\cC) \to \Fun(\cC,\Sets)^{\op}$,
which is fully faithful.

This functor $\Pro(\cC) \to \Fun(\cC,\Sets)^{\op}$ is an instance of
a more general construction, coming from the universal mapping
property that $\Pro(\cC)$ satisfies: namely, if $\cD$ is a category 
that admits all cofiltered limits, then pullback of functors 
along the embedding $\cC \hookrightarrow \Pro(\cC)$ induces an equivalence
of categories of functors %
\begin{equation}
\label{eqn:universal pro-property}
\Fun'\bigl(\Pro(\cC),\cD) \iso \Fun(\cC,\cD),
\end{equation}
where the domain denotes the full subcategory of
$\Fun\bigl(\Pro(\cC),\cD)$ consisting  
of those functors that preserve cofiltered limits. (See  \cite[Cor.\ 6.3.2]{MR2182076}.)
The quasi-inverse generalizes the preceding construction:
if $F: \cC \to \cD$ is a functor whose target admits cofiltered
limits, we extend $F$ to $\Pro(\cC)$ via 
$$F(\quoteslim{I} X_i) \coloneq  \varprojlim_I F(X_i).$$
We now assume, until the end of the paper, that~$\cC$ is an abelian category.
Then
one can alternatively define~$\Pro(\cC)$ as the opposite of the category of left-exact covariant functors $\cC \to \mathrm{Ab}$,
the point being that~\eqref{eqn:pro-object as functor} is now abelian group-valued
and left exact. The category $\Pro(\cC)$ is again abelian and has exact cofiltered
limits,
and the natural fully faithful functor $\cC\to\Pro(\cC)$ is exact
(see~\cite[Thm.\ 8.6.5]{MR2182076} for these facts).

\subsubsection{Finite limits}
\label{subsubsec:equalizers}
One useful fact \cite[Cor.\ 6.1.14]{MR2182076} is that any morphism in $\Pro(\cC)$ can be written
as a cofiltered limit, over some small category~$I$,
of morphisms $X_i \to Y_i$ in~$\cC$. 
Similarly, any pair of morphisms having the same domain and codomain may be written
as a cofiltered limit, over some small category~$I$,
of morphism pairs $X_i \rightrightarrows Y_i$ in~$\cC$ (see \cite[Cor.\ 6.1.15]{MR2182076}). 
Then, if $\cC$ admits equalizers, the same is true
of~$\Pro(\cC)$, 
and furthermore we can compute the equalizer of $\quoteslim{I} (X_i
\rightrightarrows Y_i)$ 
as the limit in $\Pro(\cC)$ of the equalizers of the morphisms
$X_i \rightrightarrows Y_i$ (see~\cite[Appendix, Prop.\
4.1]{ArtinMazur}).
Likewise, if~$\cC$ admits finite products, then so does~$\Pro(\cC)$, and these are computed ``pointwise" in the same way as equalizers.
It follows that if~$\cC$ admits finite limits then so does~$\Pro(\cC)$, and then $\Pro(\cC)$ admits {\em all} small limits
(since it admits finite limits and cofiltered limits).
Compare~\cite[Proposition\ 6.1.18]{MR2182076}.

\subsubsection{Finite colimits}
\label{subsubsec:finite colimits}
If $\cC$ admits finite colimits, the same
is true of $\Pro(\cC)$ (see \cite[Cor.\ 6.1.17]{MR2182076}), and finite coproducts and coequalizers can be computed ``pointwise", as for equalizers: this follows again from an application of~\cite[Appendix, Prop.\
4.1]{ArtinMazur} (see also \cite[Prop.\ 6.1.16]{MR2182076}).
More precisely, the coequalizer of  $\quoteslim{I} (X_i) \rightrightarrows Y_i$
is the limit in $\Pro(\cC)$ of the coequalizers of the various $X_i \rightrightarrows Y_i$,
while 
$$\quoteslim{I} (X_i) \coprod \quoteslim{J} (Y_j) = \quoteslim{I\times J} \left(X_i \coprod Y_j\right).$$
In particular, the inclusion $\cC \to \Pro(\cC)$ preserves finite colimits.
Furthermore, finite colimits commute with cofiltered limits:
compare~\cite[Prop.\ 6.1.19]{MR2182076}.

\subsubsection{Pro-adjoints}
\label{subsubsec:pro-adjoints}
Suppose that $F:\cC_1  \to \cC_2$ is a functor preserving finite limits, between
categories each of which admits finite limits.  Then we obtain an induced
functor $\Pro(F): \Pro(\cC_1) \to \Pro(\cC_2)$, which,
by the discussions of Section~\ref{subsubsec:equalizers},   %
also preserves equalizers and finite products, hence finite limits. 
The functor $\Pro(F)$ can also be regarded as
corresponding, under the equivalence~\eqref{eqn:universal pro-property},
to the composite $\cC_1 \to \cC_2 \to \Pro(\cC_2)$, and so is seen to preserve
cofiltered limits (or see \cite[Prop.\ 6.1.9]{MR2182076}).  Thus $\Pro(F)$ preserves arbitrary limits, and hence admits
a left adjoint $G: \Pro(\cC_2) \to \Pro(\cC_1).$ %
(This is a consequence of the special adjoint functor theorem, which applies
because~$\Pro(\cC_i)$ is complete, as recalled in Section~\ref{subsubsec:equalizers}, and has a
cogenerator, see e.g.~\cite[Thm.\ 8.6.5]{MR2182076}.)  %

\subsubsection{Monomorphisms}
The following lemma provides a criterion for testing if a morphism
in~$\Pro(\cC)$ is a monomorphism.

\begin{lemma}
\label{lem:monomorphism criterion}
Suppose given a morphism
$Y \to Z$ in $\Pro(\cC)$ with the property that,
for any morphism $Y \to Y'$ with $Y'$ an object  of~$\cC$, 
we may find a commutative square
\begin{equation}
\label{eqn:monomorphism diagram}
\xymatrix{Y \ar[r]\ar[d] & Z \ar[d] \\ Y' \ar[r] & Z'}
\end{equation}
in $\Pro(\cC)$ in which the
bottom horizontal arrow is a monomorphism in~$\cC$;
then the given morphism $Y \to Z$ is a monomorphism.
\end{lemma}
\begin{proof}
  This is (the equivalent $\Pro$-category version of) \cite[Prop.\ 8.6.9]{MR2182076}.
\end{proof}

\subsubsection{Ext groups.}
We continue to assume that~$\cC$ is an abelian category.
The following results will be used to compare $\Ext$ groups in $\cC$ and~$\Pro(\cC)$.

\begin{lemma}\label{lem:staysinjective}
Let~$\cC$ be an abelian category. Then:
\begin{enumerate}
\item If~$X \in \cC$ is an injective object then it remains injective in~$\Pro(\cC)$.
\item If~$\cC$ has enough injectives, then $\Pro(\cC)$ has enough injectives.
\item If~$\cC$ is small, then $\Ind(\cC)$ has enough injectives, and $\Pro(\cC)$ has enough projectives.
\end{enumerate}
\end{lemma}
\begin{proof}
The second statement is dual to~\cite[Prop.\ 1.3.3.13]{LurieHA}.
We now prove the first statement.
Let~$\alpha: F \to G$ be a monomorphism in~$\Pro(\cC)$.
By~\cite[Appendix, Prop.\ 4.6]{ArtinMazur} 
one can represent~$F$ and~$G$ by diagrams $F: I \to \cC, G: I \to \cC$ from the same index category~$I$ in such a way that~$\alpha$ is 
represented by a natural transformation $\alpha: F \to G$ such that $\alpha(i): F(i) \to G(i)$ is a monomorphism for all~$i \in I$.
Assume given a map $\lambda: F \to X$. 
By definition, it arises from a map $\lambda: F(i) \to X$ for some~$i$.
Since~$\alpha(i)$ is a monomorphism and~$X$ is injective, we can extend~$\lambda$ to a map $G(i) \to X$, which proves that~$X$ is injective in~$\Pro(\cC)$.

Finally, the third statement is a consequence of~\cite[Thm.\ 8.6.5~(vi), Thm.\ 9.6.2]{MR2182076}.
\end{proof}

\begin{lemma}\label{proExtgroups}%
Let~$\cC$ be an abelian category.
If~$X,Y \in \cC$, then the natural maps $\Ext^i_{\cC}(X, Y) \to \Ext^i_{\Pro(\cC)}(X, Y)$
and $\Ext^i_{\cC}(X, Y) \to \Ext^i_{\Ind(\cC)}(X, Y)$
are isomorphisms for all~$i \geq 0$.
\end{lemma}
\begin{proof}
This is a consequence of~\cite[Thm.\ 15.3.1]{MR2182076}.
\end{proof}

\begin{lemma}\label{lem:limitExt}
Let~$\cC$ be a small abelian category.
Let~$X$ be an object of~$\cC$.
\begin{enumerate}
\item The functor $\Ext^i_{\Ind(\cC)}(X, -)$ commutes with filtered colimits. %
\item The functor $\Ext^i_{\Pro(\cC)}(-, X)$ takes cofiltered limits to filtered colimits.
\end{enumerate}
\end{lemma}
\begin{proof}
  Part~(2) is dual to part~(1), which in turn is a consequence of \cite[~15.3.9]{MR2182076} (and its proof).
  A slight variant on that argument is as follows: write 
\[
F \coloneq  \Hom_{\cC}(X, -) : \cC \to \Mod(\bZ),
\] 
so that the functor $\Hom_{\Ind \cC}(X, -)$ is the composite
\[
\Ind(\cC) \xrightarrow{\Ind F} \Ind(\Mod(\bZ)) \xrightarrow{\colim} \Mod(\bZ).
\]
Since~$\colim$ is exact, and preserves colimits (being a left adjoint),
it suffices to prove that the derived functors of $\Ind F$ preserve filtered colimits.
This follows from~\cite[Corollary~15.3.6]{MR2182076}. 
Note that \cite[Assumption (15.3.7)]{MR2182076} holds, because~$\cC$ is small.
\qedhere
\end{proof}

In general,
it's harder to say anything about Ext groups computed in the other direction,
i.e.\ of the form $\Ext^n_{\Pro(\cC)}(X, -),$
but we have the following result, which treats a special case.

\begin{lemma}\label{lem:limitExt 2}%
Assume that $\cC$ is Artinian and $\cO$-linear, for some commutative ring~$\cO$.
Let $X = \quoteslim{i} X_i$ be a countably indexed object of~$\Pro(\cC)$ such that 
the $\cO$-modules $\Ext_{\Pro(\cC)}^n(X,Y)$ are of finite length, for all $n$, and all objects
$Y$ of~$\cC$.
If $\quoteslim{m}Y_m$ is a countably indexed object of $\Pro(\cC)$,
then the natural map 
\begin{equation}
\label{eqn:pro-ext map}
\Ext^n_{\Pro(\cC)}(X, \quoteslim{m}Y_m) \to \varprojlim_m \Ext^n_{\Pro(\cC)}(X, Y_m)
\end{equation}
is an isomorphism for every~$n$.
\end{lemma}
\begin{proof}
Replacing each $Y_m$ by the ``infimum'' of the collection of images of
transition morphisms $Y_{m'}  \to Y_m$  for $m' \geq m$ (since~$\cC$ is Artinian,
this descending  sequence  of images stabilizes) --- which replaces $\quoteslim{m} Y_m$
by an isomorphic pro-object ---
we may, and do, assume that the transition morphisms between  the $Y_m$ are epimorphisms.
Since~$\Pro(\cC)$ has enough projectives (by Lemma~\ref{lem:staysinjective}~(3)),
we can compute~$\displaystyle \Ext^n_{\Pro(\cC)}(X, \quoteslim{m} Y_m )$
by taking a projective resolution $P_\bullet \to X$.

The projectivity of the terms of $P_{\bullet}$,
together with the fact that the transition maps between the $Y_m$ are epic,
implies that the transition maps in the inverse system
of complexes
$\Hom_{\Pro(\cC)}(P_\bullet, Y_m)$
are surjective, and hence have vanishing $\operatorname{R}^1\varprojlim.$
We thus find that
\begin{equation}\label{projectivecomplex}
\Hom_{\Pro(\cC)}(P_\bullet, \quoteslim{m} Y_m)
\iso
\varprojlim_m
\Hom_{\Pro(\cC)}(P_\bullet, Y_m)
\iso
\operatorname{R}\varprojlim_m
\Hom_{\Pro(\cC)}(P_\bullet, Y_m)
\end{equation}
The composite isomorphism gives rise to a spectral sequence
\begin{multline*}
E_2^{p,q} \coloneq   
\operatorname{R}^p\varprojlim_m \Ext^q_{\Pro(\cC)}(X,Y_m)
\implies H^{p+q}\bigl(
\Hom_{\Pro(\cC)}(P_\bullet, \quoteslim{m} Y_m)
\big)
\\
= \Ext^{p+q}_{\Pro(\cC)}(X,\quoteslim{m}Y_m).
\end{multline*}
Of course the $R^p\varprojlim_m$ vanish automatically  for
$p \geq 2$,
but they also vanish for $p = 1$, since the inverse systems
$\Ext^q_{\Pro(\cC)}(X,Y_m)$ are inverse systems of finite length $\cO$-modules
by assumption, and so satisfy the Mittag--Leffler condition.
This proves the lemma.
\end{proof}

\subsubsection{\texorpdfstring{$\cA$}{A} vs.\ \texorpdfstring{$\cA^{\fg}$}{Afg}}
\label{subsubsec:fg cat}
If $\cA$ is a locally Noetherian  abelian  category,
we write $\cA^{\fg}$ for the full subcategory of compact (or equivalently,
by Proposition~\ref{prop: properties of locally Noetherian categories}, Noetherian) 
objects.
The canonical functor $\Ind(\cA^{\fg})\to\cA$
given by evaluating inductive limits in $\cA$ is
then an equivalence, by~\cite[Prop.\ 6.3.4]{MR2182076}.

We can't expect the category $\cA^{\fg}$ to have enough injectives,
but we can define $\Ext^i_{\cA^{\fg}}$ as a Yoneda extension group.  We then
have the following result.

\begin{lemma}
\label{lem:f.g. Ext}
Suppose that~$\cA$ is locally Noetherian.
If $X$ and $Y$ are objects of $\cA^{\fg}$, 
then  $\Ext^i_{\cA^{\fg}}(X,Y) \iso  \Ext^i_{\cA}(X,Y)$
for all~$i$.
\end{lemma}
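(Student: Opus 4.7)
The plan is to identify both sides with groups of Yoneda $n$-extension classes and show the natural comparison map is a bijection. The left side is Yoneda Ext by the definition used in the paper. For the right side, since $\cA$ is Grothendieck and hence has enough injectives, the classical Retakh--Yoneda theorem identifies the derived functor $\Ext^i_{\cA}(X,Y)$ with the group of Yoneda $i$-extension classes in~$\cA$. The natural map sends the class of a Yoneda extension in $\cA^{\fg}$ to its class viewed in $\cA$; this is well-defined because morphisms of Yoneda extensions in $\cA^{\fg}$ are, \emph{a fortiori}, morphisms in $\cA$, so Yoneda equivalence in $\cA^{\fg}$ implies Yoneda equivalence in $\cA$.

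For surjectivity, the key step is the approximation claim: every Yoneda extension $\eta \colon 0 \to Y \to E_{n-1} \to \cdots \to E_0 \to X \to 0$ in $\cA$ with $X,Y \in \cA^{\fg}$ admits a morphism $\tilde\eta \to \eta$ (identity on $X$ and $Y$) with $\tilde\eta$ an exact sequence in $\cA^{\fg}$. I would construct $\tilde\eta$ by inductively choosing Noetherian subobjects $F_i \subseteq E_i$. First choose a Noetherian $F_0 \subseteq E_0$ surjecting onto~$X$: this exists because $E_0$ is a filtered union of its Noetherian subobjects (by local Noetherianity of~$\cA$) and $X$ is compact. The kernel $K_0 := \ker(F_0 \to X)$ is a subobject of the Noetherian object $F_0$, hence Noetherian, and is contained in $\operatorname{image}(E_1 \to E_0)$. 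Pulling $K_0$ back to a subobject $P \subseteq E_1$ and using that $P$ is a filtered union of Noetherian subobjects together with the compactness of $K_0$, one finds a Noetherian $F_1 \subseteq P \subseteq E_1$ already surjecting onto~$K_0$. After replacing $F_0$ by $F_0 + \operatorname{image}(F_1 \to E_0)$ (still Noetherian, still surjecting onto~$X$) we arrange $\operatorname{image}(F_1 \to F_0) = \ker(F_0 \to X)$. Iterating and, at the final step, enlarging $F_{n-1}$ to contain~$Y$ (possible since $Y$ is Noetherian) so that $\ker(F_{n-1} \to F_{n-2}) = F_{n-1} \cap Y = Y$, produces the desired $\tilde\eta \subseteq \eta$.

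For injectivity, suppose Yoneda extensions $\xi,\xi' \in \cA^{\fg}$ become equivalent in $\cA$, witnessed by a zigzag of morphisms of Yoneda extensions $\xi = \xi_0, \xi_1, \ldots, \xi_k = \xi'$ in $\cA$. The surjectivity argument already gives approximations $\tilde\xi_j \subseteq \xi_j$ with $\tilde\xi_j \in \cA^{\fg}$, and we strengthen the construction so that $\tilde\xi_j$ also contains the image of every morphism from a previously-treated neighbour $\tilde\xi_{j\pm 1}$ (which is a Noetherian subobject of $\xi_j$ because morphisms in $\cA$ from objects of $\cA^{\fg}$ have Noetherian images); this preservation of exactness in the enlargement step is the same as in the surjectivity argument, since enlarging $F_i$ by a Noetherian subobject does not disturb the inductively-built exactness. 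Iterating along the zigzag produces an equivalence between $\xi$ and $\xi'$ entirely within $\cA^{\fg}$.

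The hardest step is the approximation construction for surjectivity, which, while elementary, requires careful bookkeeping to ensure exactness after each enlargement of the~$F_i$; once established, the injectivity argument proceeds by the same idea applied simultaneously to all intermediate extensions in the zigzag.
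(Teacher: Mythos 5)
Your proof takes a genuinely different route from the paper's, which writes a class in $\Ext^i_{\cA}(X,Y)$ as a Yoneda cup product $c\cup d$ with $c\in\Ext^{i-1}_{\cA}(X,X')$ and $d\in\Ext^1_{\cA}(X',Y)$, replaces $X'$ by a Noetherian subobject using the commutation of $\Ext^{i-1}_{\cA}(X,\text{--})$ with filtered colimits, and deduces injectivity by a short $\delta$-functor argument; you instead approximate the entire Yoneda $n$-extension by a Noetherian sub-$n$-extension. Your surjectivity construction is correct as written, though the replacement of $F_0$ by $F_0+\operatorname{im}(F_1\to E_0)$ is vacuous: since $F_1\subseteq P$ surjects onto $K_0$, one already has $\operatorname{im}(F_1\to E_0)=K_0\subseteq F_0$.

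The injectivity argument, however, has a real gap. You assert that ``enlarging $F_i$ by a Noetherian subobject does not disturb the inductively-built exactness,'' but this is false in general: the termwise sum $(F_\bullet+G_\bullet)$ of an exact subcomplex $(F_\bullet)$ with another subcomplex $(G_\bullet)$ of $(E_\bullet)$ need not be exact, because $(F_i+G_i)\cap\ker(E_i\to E_{i-1})$ can strictly contain $\bigl(F_i\cap\ker(E_i\to E_{i-1})\bigr)+\bigl(G_i\cap\ker(E_i\to E_{i-1})\bigr)$. What actually holds, and what you should prove in place of the preservation principle, is a \emph{relativized} approximation lemma: given a Noetherian subcomplex $(G_\bullet)\subseteq(E_\bullet)$ with $Y\subseteq G_{n-1}$, your inductive construction can be carried out with the lower bound $G_i\subseteq F_i$ built in at each step. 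This works because once $F_{i-1}\supseteq G_{i-1}$ has been chosen, the inclusions $d(G_i)\subseteq G_{i-1}\subseteq F_{i-1}$ and $d(G_i)\subseteq\ker(E_{i-1}\to E_{i-2})$ (from $d^2=0$) give $d(G_i)\subseteq K_{i-1}$, so $G_i$ lies in the preimage $P_i$ and can be absorbed when choosing the Noetherian $F_i\subseteq P_i$. Finally, ``previously-treated neighbour'' hides a nontrivial point: you must process the zigzag in a topological order for the digraph whose edges are the arrows $\xi_{j'}\to\xi_j$ with $j$ an interior vertex, and you should observe that such an order exists --- it does, because each edge of the underlying path contributes at most one directed arrow, so this digraph is acyclic --- since the zigzag may have arrows in both directions and a consistent processing order is not automatic.
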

\begin{proof}
Since $\Ind(\cA^{\fg}) \isoto \cA$, this is a special case of Lemma~\ref{proExtgroups}. \qedhere 

\end{proof}

\begin{rem}\label{rem:Popescu result}
Lemma~\ref{lem:limitExt}~(1) generalizes 
Proposition~\ref{prop: properties of locally Noetherian categories}~\eqref{item: Exts commute with colimits in the right hand variable},
because if $\cA$ is locally Noetherian, then $\Ind(\cA^{\fg}) \isoto \cA$.
On the other hand, Proposition~\ref{prop: properties of locally Noetherian categories}~\eqref{item: filtered colimit of injective is injective}
does not generalize, in the sense that if $\cC$ is a small abelian category (not necessarily Noetherian), 
then filtered colimits of injectives in $\Ind(\cC)$ are not necessarily injective.
In fact,
by \cite[Theorem~5.8.7]{Popescubook}, 
locally Noetherian categories are characterized amongst compactly generated Grothendieck categories by the property that filtered colimits of injective objects remain injective.

Correspondingly, Lemma~\ref{lem:limitExt}~(1) does not imply that filtered colimits of injectives in $\Ind \cC$ are injective.
It \emph{does} imply that if $I\coloneq  \colim_{j \in J}I_j \in \Ind \cC$ is a filtered colimit of injective objects of $\Ind \cC$, 
then $\Ext^i_{\Ind \cC}(-, I)$ is zero on objects of~$\cC$.
But this property need not imply that~$I$ is injective in $\Ind \cC$, 
because 
if $X = \colim_{j \in J}X_j$ and~$X_j \in \cC$, then the derived functors of~$\lim_{j \in J}$ can intervene in the computation
of $\Ext^i_{\Ind \cC}(X, I)$.
\end{rem}

\subsubsection{Completion along an abelian subcategory}
\label{subsubsec:completion}
Suppose that $k_*: \cC_0 \hookrightarrow \cC$ is the inclusion of an exact abelian subcategory (i.e.\ $\cC_0$ is a full subcategory
which is also abelian and for which the inclusion is exact).  
Then we may apply the discussion
of Section~\ref{subsubsec:pro-adjoints} to see that the induced inclusion $k_* : \Pro(\cC_0) \hookrightarrow
\Pro(\cC)$ (which is evidently fully faithful,
and by the discussions of Sections~\ref{subsubsec:equalizers}
and~\ref{subsubsec:finite colimits} is again exact)
admits a left adjoint $\Pro(\cC) \to \Pro(\cC_0)$,
which we refer to as the functor of {\em completion} of $\cC$
along~$\cC_0$,
and which we denote by $X \mapsto \widehat{X}$.

The identity map from $\widehat{X}$ to itself induces, by the adjunction that
defines~$\widehat{X}$, a canonical morphism (the unit of the adjunction)
$X \to k_*\widehat{X}$ in $\Pro(\cC)$.
Concretely, 
the unit is a canonical morphism %
$X \to \widehat{X}$ (in $\Pro(\cC)$) through which any morphism $X \to k_*X_0$,
with $X_0$ an object of~$\cC_0$,
uniquely factors.

The following lemma describes $\widehat{X}$ explicitly for objects $X$ of $\cC$, 
in the case when $\cC_0$ is furthermore closed under the formation of subobjects
in~$\cC$ (e.g.\ a Serre subcategory of~$\cC$).

\begin{lemma}
\label{lem:abstract completion description}
If $\cC_0$ is closed under the formation  of subobjects in~$\cC$, and~$X$ is an
object of~$\cC$,
then there is a natural isomorphism
\begin{equation}
\label{eqn:abstract completion description}
\widehat{X} \iso \quoteslim{}X',
\end{equation}
where $X'$ runs over the cofiltered set %
of quotients of~$X$ 
lying in~$\cC_0$.  
\end{lemma}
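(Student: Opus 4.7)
The plan is to verify the universal property defining $\widehat{X}$ directly for the claimed pro-object.

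First I would show that the collection of (isomorphism classes of) quotients $p: X \twoheadrightarrow X'$ with $X' \in \cC_0$ forms a small, cofiltered poset under the refinement ordering --- so that $\quoteslim{} X'$ is genuinely a well-defined object of $\Pro(\cC_0)$. The poset is non-empty since $\cC_0$, being abelian, contains the zero object, so $X \twoheadrightarrow 0$ is a legitimate quotient in $\cC_0$. Cofilteredness of a poset (given non-emptiness) reduces to the existence of common refinements: given $p_i: X \twoheadrightarrow X_i'$ with $X_i' \in \cC_0$ for $i=1,2$, set $X_3' := X/(\ker p_1 \cap \ker p_2)$. Then the canonical map $X_3' \to X_1' \oplus X_2'$ induced by $(p_1,p_2)$ is a monomorphism, and the biproduct $X_1' \oplus X_2'$ lies in $\cC_0$ since $\cC_0$ is an abelian subcategory; by the assumed closure of $\cC_0$ under subobjects, $X_3' \in \cC_0$. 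The quotient $X \twoheadrightarrow X_3'$ is then a common refinement, admitting compatible maps $X_3' \to X_i'$.

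Next I would identify $\quoteslim{} X'$ with $\widehat{X}$ by verifying, naturally in $Y \in \Pro(\cC_0)$, the adjunction
$$\Hom_{\Pro(\cC_0)}(\quoteslim{} X', Y) \,\cong\, \Hom_{\Pro(\cC)}(X, Y).$$
Writing $Y = \quoteslim{J} Y_j$ with $Y_j \in \cC_0$ and using~\eqref{eqn:second hom formula} on the right and the pro-object definition of $\Hom$ on the left, both sides are naturally $\varprojlim_j$ of the corresponding expressions for $Y_j$; so it suffices to handle $Y = Y_0 \in \cC_0$. In that case, using~\eqref{eqn:first hom formula}, the claim becomes that the natural composition map
$$\varinjlim_{X'} \Hom_{\cC_0}(X', Y_0) \longrightarrow \Hom_{\cC}(X, Y_0), \qquad (X' \to Y_0) \mapsto (X \to X' \to Y_0),$$
is a bijection.

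For surjectivity, given $f: X \to Y_0$ in $\cC$, factor it through its image using the abelian structure of $\cC$ as $X \twoheadrightarrow X'_f \hookrightarrow Y_0$. Since $X'_f$ is a subobject of $Y_0 \in \cC_0$, the closure hypothesis places $X'_f$ in $\cC_0$, so $X \twoheadrightarrow X'_f$ is one of the quotients indexing our pro-system, and the inclusion $X'_f \hookrightarrow Y_0$ is a preimage of $f$. For injectivity, suppose $f_i: X_i' \to Y_0$ ($i=1,2$) induce the same composite $X \to Y_0$. Pass to the common refinement $X_3'$ constructed above; the two composites $X_3' \to X_i' \to Y_0$ agree after precomposition with the epimorphism $X \twoheadrightarrow X_3'$, hence agree. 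Thus $f_1$ and $f_2$ represent the same element of the colimit, giving the bijection and completing the identification.

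The argument is almost entirely formal; the only conceptual input is the hypothesis that $\cC_0$ is closed under subobjects in $\cC$, which I expect to be the one thing I must remember to invoke, at two places: once to construct the common refinement (via its embedding in the biproduct) and once to place the image of an arbitrary map $X \to Y_0$ inside $\cC_0$. No genuine obstacle is anticipated beyond bookkeeping with size and with the small cofiltered diagram of quotients.
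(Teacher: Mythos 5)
Your proof is correct and takes essentially the same approach as the paper's, just with more of the bookkeeping spelled out. The paper constructs the comparison morphism $\widehat{X}\to\quoteslim{}X'$ using the already-established adjunction and then verifies it induces an isomorphism on Hom-functors over $\cC_0$; you instead verify the adjunction $\Hom_{\Pro(\cC_0)}(\quoteslim{}X',Y)\cong\Hom_{\Pro(\cC)}(X,Y)$ directly, which amounts to the same Yoneda-style check. In both arguments the real content is identical: surjectivity comes from factoring a map $X\to Y_0$ through its image and invoking closure of $\cC_0$ under subobjects, and injectivity comes from passing to the common refinement $X/(\ker p_1\cap\ker p_2)$ and cancelling the epimorphism out of $X$ (the paper compresses this into ``injective by construction''). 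Your explicit verification that the quotients form a cofiltered poset, again via closure under subobjects applied to the embedding into $X_1'\oplus X_2'$, is left implicit in the paper but is the same observation.
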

\begin{proof}
If $X \to k_* X'$ is a surjection
with $X'$ lying in~$\cC_0$, then by adjunction there is an induced morphism
$\widehat{X}\to X'.$ 
These induced morphisms collectively give rise
to the morphism~\eqref{eqn:abstract completion description}, 
which we claim is an isomorphism.
To see this, it suffices to show that the induced morphism of functors 
\begin{equation}
\label{eqn:completion morphism}
\Hom_{\Pro(\cC_0)}(\quoteslim{} X',\text{--}) 
\to \Hom_{\Pro(\cC_0)}(\widehat{X},\text{--})
\end{equation}
on~$\Pro\cC_0$ is an isomorphism. %
It in turn suffices to check that the restriction of~\eqref{eqn:completion
  morphism} to~$\cC_0$ is an isomorphism. Now, if $Y\in\cC_0 $ is arbitrary, then by adjunction
we have \[\Hom_{\Pro(\cC_0)}(\widehat{X},Y)=\Hom_{\Pro(\cC)}(X,k_*Y)=\Hom_{\cC}(X,k_*Y). \]
On the other hand, we
have \[\Hom_{\Pro(\cC_0)}(\quoteslim{} X',Y)=\varinjlim_{X'}\Hom_{\cC_0
  }(X',Y)=\varinjlim_{X'}\Hom_{\cC
  }(k_*X',k_*Y)=\Hom_{\cC}(X,k_*Y),\] 
and~\eqref{eqn:completion morphism} becomes the identity, as required. 
(In more detail, the first equality follows from the definition of $\Hom_{\Pro(\cC_0)}$, the second equality is a consequence of the full faithfulness of $k_*$,
and the third equality follows from the fact that~$\cC_0$ is closed under the formation of subobjects in~$\cC$.)
\end{proof}

In the context of the preceding lemma, the unit of adjunction
$X \to k_*\widehat{X} = \quoteslim{} k_*X'$ is the morphism induced by the various
quotient morphisms $X  \to  k_*X'$.
It also follows from Lemma~\ref{lem:abstract completion description} that the counit of adjunction 
$X' \to \widehat{k_* X'}$
is an isomorphism for all objects~$X'$ of~$\cC_0$.

\begin{lemma}
\label{lem:abstract ext comparison}
Let $\cC$ be a small abelian category, let $k_*: \cC_0 \to \cC$ be the inclusion of a Serre subcategory,
and assume that the completion functor $\Pro(\cC) \to \Pro(\cC_0)$ is exact.
Let $X$ be an object of $\cC$,
and write 
\[
\displaystyle \widehat{X} = \quoteslim{I} X_i,
\]
as in Lemma~{\em \ref{lem:abstract completion description}}.
If $X'$ is an object of $\cC_0$,
then for any value of~$n$, the natural morphism 
\begin{equation}\label{chain of maps for ext comparison app version}
\Ext^n_{\Pro(\cC_0)}(\widehat{X},X') 
\inverseiso
\varinjlim_{I} \Ext^n_{\cC_0}(X_i,X')
\to \Ext^n_{\cC}(X, k_* X')
\end{equation}
is an isomorphism, whose inverse is the composite
\[
\Ext^n_{\cC}(X, k_*X') \xrightarrow{\widehat{(-)}} \Ext^n_{\Pro (\cC_0)}(\widehat X, \widehat{k_* X'}) \xrightarrow{\mathrm{counit}_*} \Ext^n_{\Pro (\cC_0)}(\widehat X, X').
\]
\end{lemma}
\begin{proof}
We note that the first isomorphism in~\eqref{chain of maps for ext comparison app version} is an instance of Lemma~\ref{lem:limitExt} (bearing in mind Lemma~\ref{proExtgroups}).
The composite of~\eqref{chain of maps for ext comparison app version} and the isomorphism
\[
\Ext^n_{\cC}(X, k_*X') \isoto \Ext^n_{\Pro(\cC)}(X, k_*X'),
\]
of Lemma~\ref{proExtgroups} %
is the map
\begin{equation}\label{n-extensions isomorphism}
\Ext^n_{\Pro(\cC_0)}(\widehat{X},X') \to \Ext^n_{\Pro(\cC)}(X,k_*X'),
\end{equation}
induced by the inclusion $k_*: \Pro(\cC_0) \to \Pro(\cC)$ and the unit $X \to k_*\widehat X$. 
Since~$k_*$ is exact, \eqref{n-extensions isomorphism} 
is part of a morphism of $\delta$-functors on $\Pro(\cC)$
\[
\Ext^n_{\Pro (\cC_0)}\bigl(\widehat {(-)}, X'\bigr) \to \Ext^n_{\Pro(\cC)}(-, k_* X'), 
\]
and it coincides with the adjunction morphism when~$n = 0$.
On the other hand, since $\widehat{(-)}$ is assumed to be exact, the composite
\begin{equation}\label{inverse of n-extension isomorphism}
\Ext^n_{\Pro(\cC)}(-,k_*X') \xrightarrow{\widehat{(-)}} \Ext^n_{\Pro(\cC_0)}\bigl(\widehat{(-)}, \widehat{k_* X'}\bigr) \xrightarrow{\mathrm{counit}_*}
\Ext^n_{\Pro(\cC_0)}\bigl(\widehat{(-)}, X'\bigr)
\end{equation}
is also part of a morphism of $\delta$-functors on $\Pro(\cC)$, and it coincides with the adjunction map in degree zero.
The morphisms~\eqref{n-extensions isomorphism} and~\eqref{inverse of n-extension isomorphism} are therefore mutually inverse in degree zero.
Using the fact that $\Pro(\cC)$ has enough projectives, by Lemma~\ref{lem:staysinjective}~(3), 
and that the completion functor preserves projectives (being left adjoint to the exact functor~$k_*$)
we see that
the $\delta$-functors $\Ext^n_{\Pro(\cC)}(-,k_*X')$ and $\Ext^n_{\Pro(\cC_0)}(\widehat{(-)}, \widehat{k_* X'})$ are effaceable.
Hence~\eqref{n-extensions isomorphism} and~\eqref{inverse of n-extension isomorphism} are inverse in all degrees, as desired. \qedhere
\end{proof}

\subsubsection{Completion and quotients}
In Sections~\ref{subsec: more on Pro} and ~\ref{subsec: BL gluing} we  make use of the following results 
about the interaction of $\Ind$-categories and $\Pro$-categories with Serre quotients.

\begin{lemma}\label{Serre subcategory of Pro}
If~$\cB$ is an Artinian abelian category, then the essential image of $\cB \to \Pro(\cB)$ is a Serre subcategory of~$\Pro(\cB)$.
\end{lemma}
\begin{proof}
By~\cite[Proposition~8.6.11]{MR2182076}, the essential image is %
closed under extensions, kernels and cokernels in~$\Pro(\cB)$.
So it suffices to prove that for all~$\pi \in \cB$, all the subobjects and quotients of~$\pi$ in $\Pro(\cB)$ are isomorphic to objects of~$\cB$.
In fact, since~$\cB$ is closed under cokernels, it suffices to prove the statement about subobjects.
Let $\alpha: \quoteslim{i \in I} \tau_i \to \pi$ be a monomorphism, for some cofiltered index category~$I$.
The definition of $\Hom$-sets in $\Pro(\cB)$ shows that~$\alpha$ factors through a map $\alpha_i : \tau_i \to \pi$ for some~$i$.

Let~$(I/i)$ be the category of morphisms in~$I$ to~$i$.
Then $(I/i) \to I$ is a ``co-cofinal functor'' in the sense of~\cite[Definition~2.5.1]{MR2182076}, as can be seen by checking condition~(iii) in~\cite[Proposition~3.2.2]{MR2182076}.
Hence the natural map
\[\quoteslim{k \in (I/i)} \tau_k \to \quoteslim{j \in I} \tau_j\]
is an isomorphism, by~\cite[Proposition~2.5.2(ii)]{MR2182076}.
Replacing~$I$ with $(I/i)$, and~$\pi$ with the constant pro-object indexed by~$(I/i)$ with value~$\pi$, 
we can therefore assume that~$\alpha$ is the cofiltered limit of maps $\alpha_k: \tau_k \to \pi$.

Since~$\alpha$ is an isomorphism onto its image, and cofiltered limits are exact in~$\Pro(\cB)$, it now suffices to prove that the pro-object 
$\quoteslim{k}(\operatorname{im} \alpha_k)$ is isomorphic to an object of~$\cB$.
For this, it suffices to prove that there exists~$i_0 \in I$ such that for all maps $j \to i_0$, the induced map 
\[
\operatorname{im}\alpha_j \to \operatorname{im}\alpha_{i_0}
\]
is an isomorphism.
This is a consequence of the fact that~$\pi$ is Artinian.
\end{proof}

Assume now that~$\cB$ is an Artinian category. %
We now recall some results of~\cite{MR1426488}, where the compact objects of any additive category~$\cC$ with filtered colimits are called {\em finitely presented},
and form a full subcategory denoted~$\fp(\cC)$.
The essential image of $\Pro \cB \to \Ind(\Pro \cB)$ is the subcategory of compact objects of~$\Ind(\Pro \cB)$ (and similarly for $\cB \to \Ind(\cB)$),
since abelian categories are idempotent-complete.
Hence the categories~$\Ind \Pro \cB$ and~$\Ind \cB$ are {\em locally coherent}, meaning that they are compactly generated, and their full subcategories of compact objects are abelian.
Conversely, if~$\cA$ is a locally coherent category then the natural map $\Ind(\fp \cA) \to \cA$ is an equivalence.

By Lemma~\ref{Serre subcategory of Pro}, $\cB$ is a Serre subcategory of~$\Pro \cB$.
Hence~\cite[Theorem~2.6]{MR1426488} implies that $\Ind(\cB)$ is a localizing Serre subcategory of~$\Ind(\Pro \cB)$.
(Alternatively, use that the $\Ind$-completion of a Serre subcategory is a Serre subcategory, by~\cite[Prop.\ 8.6.6, Prop.\ 8.6.12]{MR2182076},
and that the functor $\Ind(\cB) \to \Ind(\Pro \cB)$ preserves filtered colimits, by~\cite[Proposition~6.1.9]{MR2182076}.)
Furthermore, $\Ind \cB \subset \Ind (\Pro \cB)$ has {\em finite type} in the sense that the right adjoint to the inclusion $\Ind \cB \to \Ind (\Pro \cB)$ preserves filtered colimits.

\begin{lemma}\label{extending quotient functor}
Let~$\cB$ be an Artinian category. %
Let $j^*: \Pro \cB \to \Pro \cB/\cB$ be the quotient functor.
Then $\Ind(j^*): \Ind(\Pro \cB) \to \Ind(\Pro \cB/\cB)$ is the quotient functor with kernel $\Ind(\cB)$, i.e.\ the exact functor
\[i: \Ind (\Pro \cB)/\Ind \cB \to \Ind(\Pro \cB/\cB)\]
induced by~$\Ind(j^*)$ is an equivalence of categories.
\end{lemma}
\begin{proof}
By~\cite[Theorem~2.6]{MR1426488}, the quotient $\Ind (\Pro \cB)/\Ind \cB$ is a locally coherent category, 
so it is enough to prove that~$i$ preserves filtered colimits and induces an equivalence on compact objects.

By~{\em loc.\ cit.}, the restriction of the quotient functor 
$k^*: \Ind (\Pro \cB) \to \Ind (\Pro \cB)/\Ind \cB$ 
to~$\Pro \cB$ induces an equivalence of $\Pro \cB/\cB$ onto the category of compact objects of~$\Ind (\Pro \cB)/\Ind \cB$.
Hence 
\[
i : \fp(\Ind (\Pro \cB)/\Ind \cB) \to \fp(\Ind(\Pro \cB/\cB)) 
\]
is an equivalence, because there is a commutative triangle
\[
\begin{tikzcd}
&\Pro \cB \arrow[dl, "k^*|_{\Pro \cB}"] \arrow[dr, "\Ind(j^*)|_{\Pro \cB}"] &\\
\fp(\Ind (\Pro \cB)/\Ind\cB) \arrow[rr, "i"] & & \fp(\Ind(\Pro \cB/\cB))
\end{tikzcd}
\]
where both diagonal arrows are quotient functors with kernel~$\cB$.
(We have used that $\Ind(j^*) = ik^*$, by definition of~$i$.)

There remains to prove that~$i$ preserves filtered colimits.
Let $k_*$ be the right adjoint of~$k^*$.
By~\cite[Theorem~2.6]{MR1426488} once again, $k_*$ preserves filtered colimits.
Let $\{X_t: t \in T\}$ be a filtered system of objects of~$\Ind (\Pro \cB)/ \Ind \cB$, and let $X \coloneq  \varinjlim_t X_t$.
We need to prove that the natural map $\varinjlim_t i X_t \to iX$ is an isomorphism.
Since the counit $k^*k_* \to 1$ is an isomorphism, this map is isomorphic to
$\varinjlim_t ik^*k_* X_t \to ik^*k_*X$.
Since $k_*$ and $ik^* = \Ind(j^*)$ preserve filtered colimits,
this map is in turn isomorphic to $ik^*k_*(\varinjlim_t X_t \isoto X)$, which is an isomorphism, as desired.
\end{proof}

\bigskip

\emergencystretch=3em
\bibliographystyle{amsalpha}
\bibliography{universalBM}

\end{document}
